\providecommand{\U}[1]{\protect\rule{.1in}{.1in}}
\theoremstyle{definition}
\newtheorem{theo}{Theorem}[section]
\newenvironment{theorem}[1][]
{\begin{theo}[#1]\begin{leftbar}}
{\end{leftbar}\end{theo}}
\newtheorem{lem}[theo]{Lemma}
\newenvironment{lemma}[1][]
{\begin{lem}[#1]\begin{leftbar}}
{\end{leftbar}\end{lem}}
\newtheorem{prop}[theo]{Proposition}
\newenvironment{proposition}[1][]
{\begin{prop}[#1]\begin{leftbar}}
{\end{leftbar}\end{prop}}
\newtheorem{defi}[theo]{Definition}
\newenvironment{definition}[1][]
{\begin{defi}[#1]\begin{leftbar}}
{\end{leftbar}\end{defi}}
\newtheorem{remk}[theo]{Remark}
\newenvironment{remark}[1][]
{\begin{remk}[#1]\begin{leftbar}}
{\end{leftbar}\end{remk}}
\newtheorem{coro}[theo]{Corollary}
\newenvironment{corollary}[1][]
{\begin{coro}[#1]\begin{leftbar}}
{\end{leftbar}\end{coro}}
\newtheorem{conv}[theo]{Convention}
\newenvironment{convention}[1][]
{\begin{conv}[#1]\begin{leftbar}}
{\end{leftbar}\end{conv}}
\newtheorem{quest}[theo]{Question}
\newtheorem{warn}[theo]{Warning}
\newtheorem{conj}[theo]{Conjecture}
\newtheorem{exmp}[theo]{Example}
\newenvironment{example}[1][]
{\begin{exmp}[#1]\begin{leftbar}}
{\end{leftbar}\end{exmp}}
\newenvironment{statement}{\begin{quote}}{\end{quote}}
\newenvironment{verlong}{}{}
\newenvironment{vershort}{}{}
\newenvironment{noncompile}{}{}
\let\sumnonlimits\sum
\let\prodnonlimits\prod
\renewcommand{\sum}{\sumnonlimits\limits}
\renewcommand{\prod}{\prodnonlimits\limits}
\begin{document}

\title{Integrality over ideal semifiltrations}
\author{Darij Grinberg}
\date{
\today
\footnote{updated and improved version of undergraduate work from 2010}}
\maketitle

\begin{abstract}
\textbf{Abstract.} We study integrality over rings (all commutative in this
paper) and over ideal semifiltrations (a generalization of integrality over
ideals). We begin by reproving classical results, such as a version of the
\textquotedblleft faithful module\textquotedblright\ criterion for integrality
over a ring, the transitivity of integrality, and the theorem that sums and
products of integral elements are again integral. Then, we define the notion
of integrality over an ideal semifiltration (a sequence $\left(  I_{0}%
,I_{1},I_{2},\ldots\right)  $ of ideals satisfying $I_{0}=A$ and $I_{a}%
I_{b}\subseteq I_{a+b}$ for all $a,b\in\mathbb{N}$), which generalizes both
integrality over a ring and integrality over an ideal (as considered, e.g., in
Swanson/Huneke \cite{2}). We prove a criterion that reduces this general
notion to integrality over a ring using a variant of the Rees algebra. Using
this criterion, we study this notion further and obtain transitivity and
closedness under sums and products for it as well. Finally, we prove the
curious fact that if $u$, $x$ and $y$ are three elements of a (commutative)
$A$-algebra (for $A$ a ring) such that $u$ is both integral over $A\left[
x\right]  $ and integral over $A\left[  y\right]  $, then $u$ is integral over
$A\left[  xy\right]  $. We generalize this to integrality over ideal
semifiltrations, too.

\end{abstract}
\tableofcontents

\section*{Introduction}

The purpose of this paper is to state (and prove) some theorems and proofs
related to integrality in commutative algebra in somewhat greater generality
than is common in the literature. I claim no novelty, at least not for the
underlying ideas, but I hope that this paper will be useful as a reference (at
least for myself).

Section \ref{sect.1} (Integrality over rings) mainly consists of known facts
(Theorem~\ref{Theorem1}, Theorem~\ref{Theorem4}, Theorem~\ref{Theorem5bc}) and
a generalized exercise from \cite{1} (Corollary~\ref{Corollary3}) with a few
minor variations (Theorem~\ref{Theorem2} and Corollary~\ref{Corollary6}).

Section \ref{sect.2} (Integrality over ideal semifiltrations) merges the
concept of integrality over rings (as considered in Section \ref{sect.1}) and
integrality over ideals (a less popular but still highly useful notion; the
book \cite{2} is devoted to it) into one general notion: that of integrality
over ideal semifiltrations (Definition~\ref{Definition9}). This notion is very
general, yet it can be reduced to the basic notion of integrality over rings
by a suitable change of base ring (Theorem~\ref{Theorem7}). This reduction
allows to extend some standard properties of integrality over rings to the
general case (Theorem~\ref{Theorem8b}, Theorem~\ref{Theorem8c} and
Theorem~\ref{Theorem9}).

Section \ref{sect.3} (Generalizing to two ideal semifiltrations) continues
Section \ref{sect.2}, adding one more layer of generality. Its main results
are a \textquotedblleft relative\textquotedblright\ version of
Theorem~\ref{Theorem7} (Theorem~\ref{Theorem11}) and a known fact generalized
once more (Theorem~\ref{Theorem13}).

Section \ref{sect.4} (Accelerating ideal semifiltrations) generalizes
Theorem~\ref{Theorem11} (and thus also Theorem~\ref{Theorem7}) a bit further
by considering accelerated ideal semifiltrations (a generalization of powers
of an ideal).

Section \ref{sect.5} (On a lemma by Lombardi) is about an auxiliary result
Henri Lombardi used in \cite{3} to prove Kronecker's
Theorem\footnote{\textbf{Kronecker's Theorem.} Let $B$ be a ring
(\textquotedblleft ring\textquotedblright\ always means \textquotedblleft
commutative ring with unity\textquotedblright\ in this paper). Let $g$ and $h$
be two elements of the polynomial ring $B\left[  X\right]  $. Let $g_{\alpha}$
be any coefficient of the polynomial $g$. Let $h_{\beta}$ be any coefficient
of the polynomial $h$. Let $A$ be a subring of $B$ which contains all
coefficients of the polynomial $gh$. Then, the element $g_{\alpha}h_{\beta}$
of $B$ is integral over the subring $A$.}. Here we show a variant of this
result (generalized in one direction, less general in another).

This paper is supposed to be self-contained (only linear algebra and basic
knowledge about rings, modules, ideals and polynomials is assumed).

\begin{vershort}
All proofs given in this paper are constructive (with the exception of the
proof of Lemma~\ref{Lemma18}, which proceeds by contradiction out of sheer
convenience; a constructive version of this proof can be found in
\cite{verlong}).
\end{vershort}

\begin{verlong}
All proofs given in this paper are constructive.
\end{verlong}

\subsection*{Note on the level of detail}

\begin{vershort}
This is the short version of this paper. It constitutes an attempt to balance
clarity and brevity in the proofs. See \cite{verlong} for a more detailed version.
\end{vershort}

\begin{verlong}
This is the long version of this paper, with all proofs maximally detailed.
For all practical purposes, the brief version \cite{vershort} should be
sufficient (and quite possibly easier to read).
\end{verlong}

\subsection*{Note on an old preprint}

This is an updated and somewhat generalized version of my preprint
\textquotedblleft A few facts on integrality\textquotedblright, which is still
available in its old form as well:

\begin{itemize}
\item brief version: \newline\url{https://www.cip.ifi.lmu.de/~grinberg/IntegralityBRIEF.pdf}

\item long version: \newline%
\url{https://www.cip.ifi.lmu.de/~grinberg/Integrality.pdf} .
\end{itemize}

\noindent Be warned that said preprint has been written in 2009--2010 when I
was an undergraduate, and suffers from bad writing and formatting.

\subsection*{Acknowledgments}

I thank Irena Swanson and Marco Fontana for enlightening conversations, and
Irena Swanson in particular for making her book \cite{2} freely available
(which helped me discover the subject as an undergraduate).

\setcounter{section}{-1}

\section{Definitions and notations}

We begin our study of integrality with some classical definitions and
conventions from commutative algebra:

\begin{definition}
\label{Definition1} In the following, \textquotedblleft ring\textquotedblright%
\ will always mean \textquotedblleft commutative ring with
unity\textquotedblright. Furthermore, if $A$ is a ring, then \textquotedblleft%
$A$-algebra\textquotedblright\ shall always mean \textquotedblleft commutative
$A$-algebra with unity\textquotedblright. The unity of a ring $A$ will be
denoted by $1_{A}$ or by $1$ if no confusion can arise.

We denote the set $\left\{  0,1,2,\ldots\right\}  $ by $\mathbb{N}$, and the
set $\left\{  1,2,3,\ldots\right\}  $ by $\mathbb{N}^{+}$.
\end{definition}

\begin{definition}
\label{Definition2} Let $A$ be a ring. Let $M$ be an $A$-module.

If $n\in\mathbb{N}$, and if $m_{1},m_{2},\ldots,m_{n}$ are $n$ elements of
$M$, then we define an $A$-submodule $\left\langle m_{1},m_{2},\ldots
,m_{n}\right\rangle _{A}$ of $M$ by%
\[
\left\langle m_{1},m_{2},\ldots,m_{n}\right\rangle _{A}=\left\{
\sum\limits_{i=1}^{n}a_{i}m_{i}\ \mid\ \left(  a_{1},a_{2},\ldots
,a_{n}\right)  \in A^{n}\right\}  .
\]
This $A$-submodule $\left\langle m_{1},m_{2},\ldots,m_{n}\right\rangle _{A}$
is known as the $A$-submodule of $M$ generated by $m_{1},m_{2},\ldots,m_{n}$
(or as the $A$\textit{-linear span} of $m_{1},m_{2},\ldots,m_{n}$). It
consists of all $A$-linear combinations of $m_{1},m_{2},\ldots,m_{n}$, and in
particular contains all $n$ elements $m_{1},m_{2},\ldots,m_{n}$. Thus, it
satisfies $\left\{  m_{1},m_{2},\ldots,m_{n}\right\}  \subseteq\left\langle
m_{1},m_{2},\ldots,m_{n}\right\rangle _{A}$.

Also, if $S$ is a finite set, and $m_{s}$ is an element of $M$ for every $s\in
S$, then we define an $A$-submodule $\left\langle m_{s}\ \mid\ s\in
S\right\rangle _{A}$ of $M$ by%
\[
\left\langle m_{s}\ \mid\ s\in S\right\rangle _{A}=\left\{  \sum\limits_{s\in
S}a_{s}m_{s}\ \mid\ \left(  a_{s}\right)  _{s\in S}\in A^{S}\right\}  .
\]
This $A$-submodule $\left\langle m_{s}\ \mid\ s\in S\right\rangle _{A}$ is
known as the $A$-submodule of $M$ generated by the family $\left(
m_{s}\right)  _{s\in S}$ (or as the $A$\textit{-linear span} of $\left(
m_{s}\right)  _{s\in S}$). It consists of all $A$-linear combinations of the
elements $m_{s}$ with $s\in S$, and in particular contains all these elements themselves.

Of course, if $m_{1},m_{2},\ldots,m_{n}$ are $n$ elements of $M$, then
\[
\left\langle m_{1},m_{2},\ldots,m_{n}\right\rangle _{A}=\left\langle
m_{s}\ \mid\ s\in\left\{  1,2,\ldots,n\right\}  \right\rangle _{A}.
\]

\end{definition}

Let us observe a trivial fact that we shall use (often tacitly):

\begin{lemma}
\label{lem.mil} Let $A$ be a ring. Let $M$ be an $A$-module. Let $N$ be an
$A$-submodule of $M$. Let $S$ be a finite set; let $m_{s}$ be an element of
$N$ for every $s\in S$. Then, $\left\langle m_{s}\ \mid\ s\in S\right\rangle
_{A}\subseteq N$.
\end{lemma}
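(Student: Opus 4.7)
This is a straightforward closure-under-linear-combinations argument. The plan is to take an arbitrary element of $\left\langle m_{s}\ \mid\ s\in S\right\rangle _{A}$ and show directly that it lies in $N$, using only the $A$-submodule axioms for $N$ together with the hypothesis that each generator $m_{s}$ already belongs to $N$.

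Concretely, I would start by fixing an arbitrary $x\in\left\langle m_{s}\ \mid\ s\in S\right\rangle _{A}$. By the definition of $\left\langle m_{s}\ \mid\ s\in S\right\rangle _{A}$ given in Definition~\ref{Definition2}, there exists a family $\left(a_{s}\right) _{s\in S}\in A^{S}$ such that $x=\sum_{s\in S}a_{s}m_{s}$. Since $N$ is an $A$-submodule of $M$ and $m_{s}\in N$ for every $s\in S$, each product $a_{s}m_{s}$ lies in $N$ (closure of $N$ under scalar multiplication). Because $S$ is finite and $N$ is closed under addition (and contains $0$), the finite sum $\sum_{s\in S}a_{s}m_{s}$ lies in $N$ as well. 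Hence $x\in N$, and since $x$ was arbitrary, $\left\langle m_{s}\ \mid\ s\in S\right\rangle _{A}\subseteq N$.

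There is no real obstacle here; the only thing to be slightly careful about is the edge case $S=\varnothing$, in which the empty sum equals $0\in N$, so the conclusion still holds. Given how elementary the statement is, the proof can essentially be written as a one-liner, and it is exactly the sort of result the author flagged as a "trivial fact" to be used tacitly.
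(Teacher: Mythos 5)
Your proof is correct and follows essentially the same route as the paper's: express an arbitrary element of $\left\langle m_{s}\ \mid\ s\in S\right\rangle _{A}$ as a finite $A$-linear combination $\sum_{s\in S}a_{s}m_{s}$ and invoke closure of the submodule $N$ under such combinations. The only cosmetic difference is that you argue element-by-element while the paper phrases the same step as a set containment; the remark about $S=\varnothing$ is a harmless extra.
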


\begin{verlong}
\begin{proof}
[Proof of Lemma \ref{lem.mil}.]We have $m_{s}\in N$ for every $s\in S$. Thus,
$\sum\limits_{s\in S}a_{s}m_{s}\in N$ for every $\left(  a_{s}\right)  _{s\in
S}\in A^{S}$ (since $N$ is an $A$-submodule of $M$, and thus is closed under
$A$-linear combination). But the definition of $\left\langle m_{s}\ \mid\ s\in
S\right\rangle _{A}$ yields
\[
\left\langle m_{s}\ \mid\ s\in S\right\rangle _{A}=\left\{  \sum\limits_{s\in
S}a_{s}m_{s}\ \mid\ \left(  a_{s}\right)  _{s\in S}\in A^{S}\right\}
\subseteq N
\]
(since $\sum\limits_{s\in S}a_{s}m_{s}\in N$ for every $\left(  a_{s}\right)
_{s\in S}\in A^{S}$). This proves Lemma \ref{lem.mil}.
\end{proof}
\end{verlong}

\begin{definition}
\label{Definition3} Let $A$ be a ring, and let $n\in\mathbb{N}$. Let $M$ be an
$A$-module. We say that the $A$-module $M$ is $n$\textit{-generated} if there
exist $n$ elements $m_{1},m_{2},\ldots,m_{n}$ of $M$ such that $M=\left\langle
m_{1},m_{2},\ldots,m_{n}\right\rangle _{A}$. In other words, the $A$-module
$M$ is $n$-generated if and only if there exists a set $S$ and an element
$m_{s}$ of $M$ for every $s\in S$ such that $\left\vert S\right\vert =n$ and
$M=\left\langle m_{s}\ \mid\ s\in S\right\rangle _{A}$.
\end{definition}

\begin{verlong}
We shall use the standard basic properties of submodules of algebras, such as
the following:

\begin{proposition}
Let $A$ be a ring. Let $B$ be an $A$-algebra.

\textbf{(a)} For any two $A$-submodules $U$ and $V$ of $B$, we let $U\cdot V$
denote the $A$-submodule of $B$ spanned by all products of the form $uv$ with
$\left(  u,v\right)  \in U\times V$. This $A$-submodule $U\cdot V$ is also
denoted by $UV$. Thus we have defined a binary operation $\cdot$ on the set of
all $A$-submodules of $B$. Equipped with this operation, the set of all
$A$-submodules of $B$ becomes an abelian monoid, with neutral element
$A\cdot1_{B}$.

This all applies, in particular, to the case when $B=A$; in this case, the
$A$-submodules of $B$ are the same as the ideals of $A$. Thus, the set of all
ideals of $A$ becomes an abelian monoid, with neutral element $A\cdot1_{A}=A$.

Likewise, we can define $U\cdot V$ when $U$ is an ideal of $A$ while $V$ is an
$A$-submodule of $B$. These \textquotedblleft product\textquotedblright%
\ operations satisfy the rules one would expect, such as%
\[
U\left(  V+W\right)  =UV+UW;\ \ \ \ \ \ \ \ \ \ \left(  U+V\right)
W=UW+VW;\ \ \ \ \ \ \ \ \ \ \left(  UV\right)  W=U\left(  VW\right)
\]
(whenever these expressions make sense).

\textbf{(b)} Let $S$ be a finite set. Let $m_{s}$ be an element of $B$ for
each $s\in S$. Then, for any $b\in B$, we have%
\[
b\cdot\left\langle m_{s}\ \mid\ s\in S\right\rangle _{A}=\left\langle
bm_{s}\ \mid\ s\in S\right\rangle _{A}.
\]

\textbf{(c)} Let $S$ be a finite set. Let $m_{s}$ be an element of $B$ for
each $s\in S$. Let $T$ be a finite set. Let $n_{t}$ be an element of $B$ for
each $t\in T$. Then,%
\[
\left\langle m_{s}\ \mid\ s\in S\right\rangle _{A}\cdot\left\langle
n_{t}\ \mid\ t\in T\right\rangle _{A}=\left\langle m_{s}n_{t}\ \mid\ \left(
s,t\right)  \in S\times T\right\rangle _{A}.
\]

\end{proposition}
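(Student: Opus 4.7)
The plan is to verify (a), (b), (c) by first recording the explicit description of $U \cdot V$ as the set of finite sums $\sum u_i v_i$ with $u_i \in U$ and $v_i \in V$. This follows directly from the spanning definition: every such sum is already an $A$-linear combination (with coefficients $1$) of products $uv$, and conversely any $A$-linear combination $\sum a_i u_i v_i = \sum (a_i u_i) v_i$ is itself of this form because $U$ is closed under the $A$-action. With this reformulation in hand, every remaining verification becomes a straightforward manipulation of generators and Lemma \ref{lem.mil}.

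For (a), commutativity is immediate, since $uv = vu$ implies the generating sets of $UV$ and $VU$ coincide. For the neutral element, I would check that $U \cdot (A \cdot 1_B)$ contains each $u = u \cdot (1 \cdot 1_B)$, giving $U \subseteq U \cdot (A \cdot 1_B)$; the reverse inclusion follows because each generator $u \cdot (a \cdot 1_B) = au$ lies in $U$ by $A$-linearity. Associativity is the longest step: I would show that both $(UV)W$ and $U(VW)$ coincide with the $A$-submodule spanned by all triple products $uvw$, expanding generators of the form $(uv)w$ and $u(vw)$ and using the explicit finite-sum description. Distributivity is immediate since, for example, the generators $u(v+w) = uv + uw$ of $U(V+W)$ already lie in $UV + UW$, and conversely $UV, UW \subseteq U(V+W)$ because $V, W \subseteq V + W$.

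For (b), I would unfold both sides directly. A typical element of $b \cdot \langle m_s \mid s \in S\rangle_A$ has the form $b \sum_s a_s m_s = \sum_s a_s(bm_s)$, so it lies in $\langle bm_s \mid s \in S\rangle_A$. Conversely each generator $bm_s$ of the right-hand side manifestly lies in the left-hand side, and Lemma \ref{lem.mil} yields the reverse inclusion.

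For (c), the inclusion $\supseteq$ follows because each generator $m_s n_t$ lies in $\langle m_s\rangle_A \cdot \langle n_t\rangle_A$ by definition, so Lemma \ref{lem.mil} gives $\langle m_s n_t \mid (s,t) \in S \times T\rangle_A \subseteq \langle m_s\rangle_A \cdot \langle n_t\rangle_A$. For $\subseteq$, I would expand a generating product as $\bigl(\sum_s a_s m_s\bigr)\bigl(\sum_t b_t n_t\bigr) = \sum_{(s,t)} a_s b_t \, m_s n_t$, which visibly lies in $\langle m_s n_t \mid (s,t) \in S \times T\rangle_A$, and again apply Lemma \ref{lem.mil}. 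No real obstacle is anticipated; the whole proposition is a bookkeeping exercise, with associativity in (a) being the only step that demands more than a line of manipulation.
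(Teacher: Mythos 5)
The paper states this proposition (in its long version) as a collection of standard basic facts and gives \emph{no} proof of it, so there is nothing to compare against. Your blind proof is correct and complete: the key move of recording $U\cdot V$ as the set of finite sums $\sum u_iv_i$ with $u_i\in U$, $v_i\in V$ is the right reduction, and from there every part of (a), (b), (c) is the routine generator manipulation you describe. The one place where care is needed --- associativity, by identifying both $(UV)W$ and $U(VW)$ with the span of all triple products $uvw$ --- is handled correctly. In short, you have supplied a proof that the paper itself omits as folklore, and it checks out.
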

\end{verlong}

\begin{definition}
\label{Definition4} Let $A$ be a ring. Let $B$ be an $A$-algebra. (Let us
recall that both rings and algebras are always understood to be commutative
and unital in this paper.)

If $u_{1},u_{2},\ldots,u_{n}$ are $n$ elements of $B$, then we define an
$A$-subalgebra $A\left[  u_{1},u_{2},\ldots,u_{n}\right]  $ of $B$ by%
\[
A\left[  u_{1},u_{2},\ldots,u_{n}\right]  =\left\{  P\left(  u_{1}%
,u_{2},\ldots,u_{n}\right)  \ \mid\ P\in A\left[  X_{1},X_{2},\ldots
,X_{n}\right]  \right\}
\]
(where $A\left[  X_{1},X_{2},\ldots,X_{n}\right]  $ denotes the polynomial
ring in $n$ indeterminates $X_{1},X_{2},\ldots,X_{n}$ over $A$).

In particular, if $u$ is an element of $B$, then the $A$-subalgebra $A\left[
u\right]  $ of $B$ is defined by%
\[
A\left[  u\right]  =\left\{  P\left(  u\right)  \ \mid\ P\in A\left[
X\right]  \right\}
\]
(where $A\left[  X\right]  $ denotes the polynomial ring in a single
indeterminate $X$ over $A$). Since
\[
A\left[  X\right]  =\left\{  \sum\limits_{i=0}^{m}a_{i}X^{i}\ \mid
\ m\in\mathbb{N}\text{ and }\left(  a_{0},a_{1},\ldots,a_{m}\right)  \in
A^{m+1}\right\}  ,
\]
this becomes
\begin{align*}
A\left[  u\right]   &  =\left\{  \left(  \sum\limits_{i=0}^{m}a_{i}%
X^{i}\right)  \left(  u\right)  \ \mid\ m\in\mathbb{N}\text{ and }\left(
a_{0},a_{1},\ldots,a_{m}\right)  \in A^{m+1}\right\} \\
&  \ \ \ \ \ \ \ \ \ \ \left(
\begin{array}
[c]{c}%
\text{where }\left(  \sum\limits_{i=0}^{m}a_{i}X^{i}\right)  \left(  u\right)
\text{ means the}\\
\text{polynomial }\sum\limits_{i=0}^{m}a_{i}X^{i}\text{ evaluated at }X=u
\end{array}
\right) \\
&  =\left\{  \sum\limits_{i=0}^{m}a_{i}u^{i}\ \mid\ m\in\mathbb{N}\text{ and
}\left(  a_{0},a_{1},\ldots,a_{m}\right)  \in A^{m+1}\right\} \\
&  \ \ \ \ \ \ \ \ \ \ \left(  \text{because }\left(  \sum\limits_{i=0}%
^{m}a_{i}X^{i}\right)  \left(  u\right)  =\sum\limits_{i=0}^{m}a_{i}%
u^{i}\right)  .
\end{align*}
Obviously, $uA\left[  u\right]  \subseteq A\left[  u\right]  $ (since
$A\left[  u\right]  $ is an $A$-algebra and $u\in A\left[  u\right]  $).
\end{definition}

\begin{definition}
\label{def.superring-as-alg}Let $B$ be a ring, and let $A$ be a subring of
$B$. Then, $B$ canonically becomes an $A$-algebra. The $A$-module structure of
this $A$-algebra $B$ is given by multiplication inside $B$.
\end{definition}

Definition \ref{def.superring-as-alg} shows that theorems about $A$-algebras
(for a ring $A$) are always more general than theorems about rings that
contain $A$ as a subring. Hence, we shall study $A$-algebras in the following,
even though most of the applications of the results we shall see are found at
the level of rings containing $A$.

\section{\label{sect.1}Integrality over rings}

\subsection{The fundamental equivalence}

Most of the theory of integrality is based upon the following result:

\begin{theorem}
\label{Theorem1} Let $A$ be a ring. Let $B$ be an $A$-algebra. Thus, $B$ is
canonically an $A$-module. Let $n\in\mathbb{N}$. Let $u\in B$. Then, the
following four assertions $\mathcal{A}$, $\mathcal{B}$, $\mathcal{C}$ and
$\mathcal{D}$ are equivalent:

\begin{itemize}
\item \textit{Assertion }$\mathcal{A}$\textit{:} There exists a monic
polynomial $P\in A\left[  X\right]  $ with $\deg P=n$ and $P\left(  u\right)
=0$.

\item \textit{Assertion }$\mathcal{B}$\textit{:} There exist a $B$-module $C$
and an $n$-generated $A$-submodule $U$ of $C$ such that $uU\subseteq U$ and
such that every $v\in B$ satisfying $vU=0$ satisfies $v=0$. (Here, $C$ is an
$A$-module, since $C$ is a $B$-module and $B$ is an $A$-algebra.)

\item \textit{Assertion }$\mathcal{C}$\textit{:} There exists an $n$-generated
$A$-submodule $U$ of $B$ such that $1\in U$ and $uU\subseteq U$. (Here and in
the following, \textquotedblleft$1$\textquotedblright\ means \textquotedblleft%
$1_{B}$\textquotedblright, that is, the unity of the ring $B$.)

\item \textit{Assertion }$\mathcal{D}$\textit{:} We have $A\left[  u\right]
=\left\langle u^{0},u^{1},\ldots,u^{n-1}\right\rangle _{A}$.
\end{itemize}
\end{theorem}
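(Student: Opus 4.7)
The plan is to prove the cycle $\mathcal{A}\Rightarrow\mathcal{D}\Rightarrow\mathcal{C}\Rightarrow\mathcal{B}\Rightarrow\mathcal{A}$, which is the most efficient route since the first three implications are essentially bookkeeping and only the last uses real content (the determinant trick).

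For $\mathcal{A}\Rightarrow\mathcal{D}$, write $P=X^n+\sum_{i=0}^{n-1}a_iX^i$. Then $P(u)=0$ gives $u^n=-\sum_{i=0}^{n-1}a_iu^i\in\langle u^0,u^1,\ldots,u^{n-1}\rangle_A$. A straightforward induction on $k$ shows that $u^k\in\langle u^0,u^1,\ldots,u^{n-1}\rangle_A$ for every $k\in\mathbb{N}$ (in the induction step, multiply $u\cdot u^{k-1}=\sum_{i=0}^{n-1}c_iu^i$ to get $u^k=\sum c_iu^{i+1}$ and reduce $u^n$ when it appears). Since $A[u]=\{\sum_{i=0}^{m}a_iu^i\mid m\in\mathbb{N},\,a_i\in A\}$, this shows $A[u]\subseteq\langle u^0,\ldots,u^{n-1}\rangle_A$; the reverse inclusion is immediate from $u^i\in A[u]$. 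For $\mathcal{D}\Rightarrow\mathcal{C}$, take $U=A[u]$: it is $n$-generated by $\mathcal{D}$, it contains $1=u^0$, and $uU\subseteq U$ since $u\in A[u]$ and $A[u]$ is a subalgebra. For $\mathcal{C}\Rightarrow\mathcal{B}$, take $C=B$ (viewed as a $B$-module over itself) and the same $U$ as in $\mathcal{C}$; the condition $1\in U$ forces any $v\in B$ with $vU=0$ to satisfy $v=v\cdot 1=0$, so $U$ is faithful in the required sense.

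The meat of the argument is $\mathcal{B}\Rightarrow\mathcal{A}$, which uses the classical determinant (adjugate / Cayley--Hamilton) trick. Pick a generating list $U=\langle m_1,m_2,\ldots,m_n\rangle_A$. Since $uU\subseteq U$, for each $i$ there exist scalars $a_{ij}\in A$ with
\[
um_i=\sum_{j=1}^{n}a_{ij}m_j\qquad\text{in }C.
\]
Let $M=(a_{ij})\in A^{n\times n}$ and consider the matrix $uI_n-M$ over $B$. Written in vector form, the above equations say $(uI_n-M)\cdot(m_1,\ldots,m_n)^T=0$ in $C^n$. Multiplying on the left by the adjugate $\operatorname{adj}(uI_n-M)$ and using $\operatorname{adj}(N)\cdot N=\det(N)\cdot I_n$ (valid over any commutative ring, here the ring $B$), we deduce $\det(uI_n-M)\cdot m_i=0$ in $C$ for every $i$. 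By Lemma~\ref{lem.mil} (or just because $m_i$ generate $U$), the element $\det(uI_n-M)\in B$ annihilates all of $U$. The faithfulness clause of $\mathcal{B}$ then forces $\det(uI_n-M)=0$. Setting $P(X):=\det(XI_n-M)\in A[X]$, we obtain a monic polynomial of degree $n$ with coefficients in $A$ and $P(u)=\det(uI_n-M)=0$, establishing $\mathcal{A}$.

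The only real obstacle is this last step: one must be careful that the adjugate identity is applied over the commutative ring $B$ (not $A$) to a matrix whose entries lie in $B$, and that the resulting scalar $\det(uI_n-M)$ really does land in $B$ so that the faithfulness hypothesis on $B$ can be invoked. The other three implications involve no subtleties beyond unpacking definitions.
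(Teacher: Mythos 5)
Your proof is correct and follows essentially the same route as the paper: the same cycle $\mathcal{A}\Rightarrow\mathcal{D}\Rightarrow\mathcal{C}\Rightarrow\mathcal{B}\Rightarrow\mathcal{A}$, with the adjugate/determinant trick over $B$ acting on a column vector with entries in $C$ for the implication $\mathcal{B}\Rightarrow\mathcal{A}$. The only point the paper makes slightly more explicit is the mixed associativity $(\operatorname{adj}(N)\cdot N)\cdot v = \operatorname{adj}(N)\cdot(N\cdot v)$ for a $B$-matrix times a $C$-vector, which you use implicitly but which holds for exactly the reason you indicate.
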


We shall soon prove this theorem; first, let us explain what it is for:

\begin{definition}
\label{Definition5} Let $A$ be a ring. Let $B$ be an $A$-algebra. Let
$n\in\mathbb{N}$. Let $u\in B$. We say that the element $u$ of $B$ is
$n$\textit{-integral over }$A$ if it satisfies the four equivalent assertions
$\mathcal{A}$, $\mathcal{B}$, $\mathcal{C}$ and $\mathcal{D}$ of
Theorem~\ref{Theorem1}.

Hence, in particular, the element $u$ of $B$ is $n$-integral over $A$ if and
only if it satisfies the assertion $\mathcal{A}$ of Theorem~\ref{Theorem1}. In
other words, $u$ is $n$-integral over $A$ if and only if there exists a monic
polynomial $P\in A\left[  X\right]  $ with $\deg P=n$ and $P\left(  u\right)
=0$.
\end{definition}

The notion of \textquotedblleft$n$-integral\textquotedblright\ elements that
we have just defined is a refinement of the classical notion of integrality of
elements over rings (see, e.g., \cite[Definition (10.21)]{AllKle14} or
\cite[Chapter V, \S 1.1, Definition 1]{Bourba72} or \cite[Definition
8.1.1]{ChaLoi14} for this classical notion, and \cite[Definition 2.1.1]{2} for
its particular case when $A$ is a subring of $B$). Indeed, the classical
notion defines an element $u$ of $B$ to be \textit{integral} over $A$ if and
only if (using the language of our Definition~\ref{Definition5}) there exists
some $n\in\mathbb{N}$ such that $u$ is $n$-integral over $A$. Since I believe
the concrete value of $n$ to be worth more than its mere existence, I prefer
the specificity of the \textquotedblleft$n$-integral\textquotedblright%
\ concept to the slickness of \textquotedblleft integral\textquotedblright.

Theorem~\ref{Theorem1} is one of several similar results providing equivalent
criteria for the integrality of an element of an $A$-algebra. See
\cite[Proposition (10.23)]{AllKle14}, \cite[Chapter V, Section 1.1, Theorem
1]{Bourba72} or \cite[Theorem 8.1.6]{ChaLoi14} for other such results (some
very close to Theorem~\ref{Theorem1}, and all proven in similar ways).

Before we prove Theorem~\ref{Theorem1}, let us recall a classical property of matrices:

\begin{lemma}
\label{lem.adj} Let $B$ be a ring. Let $n\in\mathbb{N}$. Let $M$ be an
$n\times n$-matrix over $B$. Then,%
\[
\det M\cdot I_{n}=\operatorname*{adj}M\cdot M.
\]
(Here, $I_{n}$ means the $n\times n$ identity matrix and $\operatorname*{adj}%
M$ denotes the adjugate of the matrix $M$. The expressions \textquotedblleft%
$\det M\cdot I_{n}$\textquotedblright\ and \textquotedblleft%
$\operatorname*{adj}M\cdot M$\textquotedblright\ have to be understood as
\textquotedblleft$\left(  \det M\right)  \cdot I_{n}$\textquotedblright\ and
\textquotedblleft$\left(  \operatorname*{adj}M\right)  \cdot M$%
\textquotedblright, respectively.)
\end{lemma}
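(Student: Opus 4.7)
The plan is to prove the identity entry by entry, using the cofactor expansion of the determinant. This is a standard result, but I want to make sure it works over an arbitrary commutative ring $B$, so I will rely only on the Leibniz formula / multilinearity of $\det$, not on any field-theoretic facts.

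First I would recall the definition of the adjugate: $\left(\operatorname{adj} M\right)_{ij} = (-1)^{i+j} \det \widetilde{M}_{ji}$, where $\widetilde{M}_{ji}$ denotes the $(n-1)\times(n-1)$ matrix obtained from $M$ by deleting row $j$ and column $i$. Then I would compute, for any indices $i,j \in \{1,\ldots,n\}$,
\[
\left(\operatorname{adj} M \cdot M\right)_{ij} = \sum_{k=1}^{n} \left(\operatorname{adj} M\right)_{ik} M_{kj} = \sum_{k=1}^{n} (-1)^{i+k} \det\widetilde{M}_{ki} \cdot M_{kj}.
\]

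The key observation is that the right-hand side is precisely the cofactor expansion along column $i$ of the matrix $M^{(i\leftarrow j)}$ obtained from $M$ by replacing its $i$-th column with its $j$-th column (leaving column $j$ itself untouched). This is because deleting row $k$ and column $i$ from $M^{(i\leftarrow j)}$ gives the same minor $\widetilde{M}_{ki}$ as one gets from $M$ (the deleted column is gone either way), while the remaining factor $M_{kj}$ is exactly the $(k,i)$-entry of $M^{(i\leftarrow j)}$. Hence
\[
\left(\operatorname{adj} M \cdot M\right)_{ij} = \det M^{(i\leftarrow j)}.
\]

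Now I finish by casework. If $i = j$, then $M^{(i\leftarrow j)} = M$, so the entry equals $\det M$. If $i \neq j$, then $M^{(i\leftarrow j)}$ has two equal columns (columns $i$ and $j$ both coincide with the $j$-th column of $M$), so by the alternating property of $\det$ (valid over any commutative ring, since it follows from the Leibniz formula) its determinant vanishes. In either case, $\left(\operatorname{adj} M \cdot M\right)_{ij} = \det M \cdot \delta_{ij} = \left(\det M \cdot I_n\right)_{ij}$, which gives the claim. The only potential snag is justifying cofactor expansion over a general commutative ring, but this is immediate from the Leibniz formula by grouping the permutations according to the image of a fixed index; there is no real obstacle here.
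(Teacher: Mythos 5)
Your proof is correct, and it uses the standard cofactor-expansion argument that works over any commutative ring. The paper itself does not prove Lemma~\ref{lem.adj}; it simply cites it as well-known (referring to \cite[Theorem 6.100]{detnotes}), so there is no internal proof to compare against. Your argument — identifying $\left(\operatorname{adj}M\cdot M\right)_{ij}$ with the cofactor expansion along column $i$ of the matrix obtained from $M$ by replacing column $i$ with column $j$, then invoking the alternating property for $i\neq j$ — is exactly the classical proof that such a citation implicitly points to, and you are right that nothing in it depends on $B$ being a field: both the Laplace expansion and the fact that a matrix with two equal columns has zero determinant follow directly from the Leibniz formula over any commutative ring. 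One small point worth being explicit about if you wrote this up fully: the alternating property over an arbitrary commutative ring requires the \emph{two-equal-columns} form (grouping permutations into pairs $\sigma,\sigma\tau$ where $\tau$ swaps the two equal columns), rather than the characteristic-$\neq 2$ argument via antisymmetry; your parenthetical remark suggests you already know this, so the proof stands.
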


Lemma~\ref{lem.adj} is well-known (for example, it follows from \cite[Theorem
6.100]{detnotes}, applied to $\mathbb{K}=B$ and $A=M$).

\begin{proof}
[Proof of Theorem~\ref{Theorem1}.]We will prove the implications
$\mathcal{A}\Longrightarrow\mathcal{C}$, $\mathcal{C}\Longrightarrow
\mathcal{B}$, $\mathcal{B}\Longrightarrow\mathcal{A}$, $\mathcal{A}%
\Longrightarrow\mathcal{D}$ and $\mathcal{D}\Longrightarrow\mathcal{C}$.

\textit{Proof of the implication }$\mathcal{A}\Longrightarrow\mathcal{C}%
$\textit{.} Assume that Assertion $\mathcal{A}$ holds. Then, there exists a
monic polynomial $P\in A\left[  X\right]  $ with $\deg P=n$ and $P\left(
u\right)  =0$. Consider this $P$. Since $P\in A\left[  X\right]  $ is a monic
polynomial with $\deg P=n$, there exist elements $a_{0},a_{1},\ldots,a_{n-1}$
of $A$ such that $P\left(  X\right)  =X^{n}+\sum\limits_{k=0}^{n-1}a_{k}X^{k}%
$. Consider these $a_{0},a_{1},\ldots,a_{n-1}$. Substituting $u$ for $X$ in
the equality $P\left(  X\right)  =X^{n}+\sum\limits_{k=0}^{n-1}a_{k}X^{k}$, we
find $P\left(  u\right)  =u^{n}+\sum\limits_{k=0}^{n-1}a_{k}u^{k}$. Hence, the
equality $P\left(  u\right)  =0$ (which holds by definition of $P$) rewrites
as $u^{n}+\sum\limits_{k=0}^{n-1}a_{k}u^{k}=0$. Hence, $u^{n}=-\sum
\limits_{k=0}^{n-1}a_{k}u^{k}$.

Let $U$ be the $A$-submodule $\left\langle u^{0},u^{1},\ldots,u^{n-1}%
\right\rangle _{A}$ of $B$. Then, $U=\left\langle u^{0},u^{1},\ldots
,u^{n-1}\right\rangle _{A}$ and
\[
u^{n}=-\sum\limits_{k=0}^{n-1}a_{k}u^{k}\in\left\langle u^{0},u^{1}%
,\ldots,u^{n-1}\right\rangle _{A}=U.
\]
Moreover, the $n$ elements $u^{0},u^{1},\ldots,u^{n-1}$ belong to $U$ (since
$U=\left\langle u^{0},u^{1},\ldots,u^{n-1}\right\rangle _{A}$). In other
words,%
\begin{equation}
u^{i}\in U\ \ \ \ \ \ \ \ \ \ \text{for each }i\in\left\{  0,1,\ldots
,n-1\right\}  . \label{pf.Theorem1.AC.1}%
\end{equation}
This relation also holds for $i=n$ (since $u^{n}\in U$); thus, it holds for
all $i\in\left\{  0,1,\ldots,n\right\}  $. In other words, we have%
\begin{equation}
u^{i}\in U\ \ \ \ \ \ \ \ \ \ \text{for each }i\in\left\{  0,1,\ldots
,n\right\}  . \label{pf.Theorem1.AC.2}%
\end{equation}

\begin{vershort}
\noindent Applying this to $i=0$, we find $u^{0}\in U$. Thus, $1=u^{0}\in U$.
\end{vershort}

\begin{verlong}
\noindent Applying this to $i=0$, we find $u^{0}\in U$ (since $0\in\left\{
0,1,\ldots,n\right\}  $). This rewrites as $1\in U$ (since $u^{0}=1$).
\end{verlong}

Recall that $U=\left\langle u^{0},u^{1},\ldots,u^{n-1}\right\rangle _{A}$.
Hence, $U$ is an $n$-generated $A$-module (since $u^{0},u^{1},\ldots,u^{n-1}$
are $n$ elements of $U$).

Now, for each $s\in\left\{  0,1,\ldots,n-1\right\}  $, we have $s+1\in\left\{
1,2,\ldots,n\right\}  \subseteq\left\{  0,1,\ldots,n\right\}  $ and thus
$u^{s+1}\in U$ (by (\ref{pf.Theorem1.AC.2}), applied to $i=s+1$). Hence,
Lemma~\ref{lem.mil} (applied to $M=B$, $N=U$, $S=\left\{  0,1,\ldots
,n-1\right\}  $ and $m_{s}=u^{s+1}$) yields%
\[
\left\langle u^{s+1}\ \mid\ s\in\left\{  0,1,\ldots,n-1\right\}  \right\rangle
_{A}\subseteq U.
\]
Now, from $U=\left\langle u^{0},u^{1},\ldots,u^{n-1}\right\rangle _{A}$, we
obtain%
\begin{align*}
uU  &  =u\left\langle u^{0},u^{1},\ldots,u^{n-1}\right\rangle _{A}%
=\left\langle u\cdot u^{0},u\cdot u^{1},\ldots,u\cdot u^{n-1}\right\rangle
_{A}\\
&  =\left\langle \underbrace{u\cdot u^{s}}_{=u^{s+1}}\ \mid\ s\in\left\{
0,1,\ldots,n-1\right\}  \right\rangle _{A}=\left\langle u^{s+1}\ \mid
\ s\in\left\{  0,1,\ldots,n-1\right\}  \right\rangle _{A}\subseteq U.
\end{align*}

Thus, we have found an $n$-generated $A$-submodule $U$ of $B$ such that $1\in
U$ and $uU\subseteq U$. Hence, Assertion $\mathcal{C}$ holds. Hence, we have
proved that $\mathcal{A}\Longrightarrow\mathcal{C}$.

\textit{Proof of the implication }$\mathcal{C}\Longrightarrow\mathcal{B}%
$\textit{.} Assume that Assertion $\mathcal{C}$ holds. Then, there exists an
$n$-generated $A$-submodule $U$ of $B$ such that $1\in U$ and $uU\subseteq U$.
Consider this $U$. Every $v\in B$ satisfying $vU=0$ satisfies $v=0$ (since
$1\in U$ and $vU=0$ yield $v\cdot\underbrace{1}_{\in U}\in vU=0$ and thus
$v\cdot1=0$, so that $v=0$). Set $C=B$. Then, $C$ is a $B$-module, and $U$ is
an $n$-generated $A$-submodule of $C$ (since $U$ is an $n$-generated
$A$-submodule of $B$, and $C=B$) such that $uU\subseteq U$ and such that every
$v\in B$ satisfying $vU=0$ satisfies $v=0$. Thus, Assertion $\mathcal{B}$
holds. Hence, we have proved that $\mathcal{C}\Longrightarrow\mathcal{B}$.

\textit{Proof of the implication }$\mathcal{B}\Longrightarrow\mathcal{A}%
$\textit{.} Assume that Assertion $\mathcal{B}$ holds. Then, there exist a
$B$-module $C$ and an $n$-generated $A$-submodule\footnote{where $C$ is an
$A$-module, since $C$ is a $B$-module and $B$ is an $A$-algebra} $U$ of $C$
such that $uU\subseteq U$, and such that every $v\in B$ satisfying $vU=0$
satisfies $v=0$. Consider these $C$ and $U$.

The $A$-module $U$ is $n$-generated. In other words, there exist $n$ elements
$m_{1},m_{2},\ldots,m_{n}$ of $U$ such that $U=\left\langle m_{1},m_{2}%
,\ldots,m_{n}\right\rangle _{A}$. Consider these $m_{1},m_{2},\ldots,m_{n}$.
For any $k\in\left\{  1,2,\ldots,n\right\}  $, we have $m_{k}\in U$ (since
$U=\left\langle m_{1},m_{2},\ldots,m_{n}\right\rangle _{A}$) and thus%
\[
um_{k}\in uU\subseteq U=\left\langle m_{1},m_{2},\ldots,m_{n}\right\rangle
_{A},
\]
so that there exist $n$ elements $a_{k,1},a_{k,2},\ldots,a_{k,n}$ of $A$ such
that
\begin{equation}
um_{k}=\sum\limits_{i=1}^{n}a_{k,i}m_{i}. \label{pf.Theorem1.BA.defaki}%
\end{equation}
Consider these $a_{k,1},a_{k,2},\ldots,a_{k,n}$.

The $A$-algebra $B$ gives rise to a canonical ring homomorphism $\iota
:A\rightarrow B$ (sending each $a\in A$ to $a\cdot1_{B}\in B$). This ring
homomorphism, in turn, induces a ring homomorphism $\iota^{n\times
n}:A^{n\times n}\rightarrow B^{n\times n}$ (which acts on an $n\times
n$-matrix by applying $\iota$ to each entry of the matrix).

\begin{vershort}
We are also going to work with matrices over $U$ (that is, matrices whose
entries lie in $U$). This might sound somewhat strange, because $U$ is not a
ring; however, we can still define matrices over $U$ just as one defines
matrices over any ring. While we cannot multiply two matrices over $U$
(because $U$ is not a ring), we can define the product of a matrix over $A$
with a matrix over $U$ as follows: If $P\in A^{\alpha\times\beta}$ is a matrix
over $A$, and $Q\in U^{\beta\times\gamma}$ is a matrix over $U$ (where
$\alpha,\beta,\gamma\in\mathbb{N}$), then we define the product $PQ\in
U^{\alpha\times\gamma}$ by setting%
\[
\left(  PQ\right)  _{x,y}=\sum_{z=1}^{\beta}P_{x,z}Q_{z,y}%
\ \ \ \ \ \ \ \ \ \ \text{for all }x\in\left\{  1,2,\ldots,\alpha\right\}
\text{ and }y\in\left\{  1,2,\ldots,\gamma\right\}  .
\]
(Here, for any matrix $T$ and any integers $x$ and $y$, we denote by $T_{x,y}$
the entry of the matrix $T$ in the $x$-th row and the $y$-th column.)

It is easy to see that whenever $P\in A^{\alpha\times\beta}$, $Q\in
A^{\beta\times\gamma}$ and $R\in U^{\gamma\times\delta}$ are three matrices,
then
\begin{equation}
\left(  PQ\right)  R=P\left(  QR\right)  . \label{(PQ)R=P(QR)}%
\end{equation}
This is proven in the same way as the fact that the multiplication of matrices
over a ring is associative.

Now define a matrix $V\in U^{n\times1}$ by setting
\[
V_{i,1}=m_{i}\ \ \ \ \ \ \ \ \ \ \text{for all }i\in\left\{  1,2,\ldots
,n\right\}  .
\]

Define another matrix $S\in A^{n\times n}$ by setting
\[
S_{k,i}=a_{k,i}\ \ \ \ \ \ \ \ \ \ \text{for all }k\in\left\{  1,2,\ldots
,n\right\}  \text{ and }i\in\left\{  1,2,\ldots,n\right\}  .
\]

Then, for any $k\in\left\{  1,2,\ldots,n\right\}  $, we have
\begin{align*}
\left(  uV\right)  _{k,1}  &  =u\underbrace{V_{k,1}}_{\substack{=m_{k}%
\\\text{(by the definition of }V\text{)}}}=um_{k}%
\ \ \ \ \ \ \ \ \ \ \text{and}\\
\left(  SV\right)  _{k,1}  &  =\sum\limits_{i=1}^{n}\underbrace{S_{k,i}%
}_{\substack{=a_{k,i}\\\text{(by the definition of }S\text{)}}%
}\underbrace{V_{i,1}}_{\substack{=m_{i}\\\text{(by the definition of
}V\text{)}}}=\sum\limits_{i=1}^{n}a_{k,i}m_{i}.
\end{align*}
Hence, the equality (\ref{pf.Theorem1.BA.defaki}) rewrites as $\left(
uV\right)  _{k,1}=\left(  SV\right)  _{k,1}$. Since this holds for every
$k\in\left\{  1,2,\ldots,n\right\}  $, we conclude that $uV=SV$. Thus,%
\begin{equation}
0=uV-SV=uI_{n}V-SV=\left(  uI_{n}-S\right)  V.
\label{pf.Theorem1.BA.short.matprod=0}%
\end{equation}
Here, the \textquotedblleft$S$\textquotedblright\ in \textquotedblleft%
$uI_{n}-S$\textquotedblright\ means not the matrix $S\in A^{n\times n}$
itself, but rather its image under the ring homomorphism $\iota^{n\times
n}:A^{n\times n}\rightarrow B^{n\times n}$; thus, the matrix $uI_{n}-S$ is a
well-defined matrix in $B^{n\times n}$.

Now, let $P\in A\left[  X\right]  $ be the characteristic polynomial of the
matrix $S\in A^{n\times n}$. Then, $P$ is monic, and $\deg P=n$. Besides,
$P\left(  X\right)  =\det\left(  XI_{n}-S\right)  $, so that $P\left(
u\right)  =\det\left(  uI_{n}-S\right)  $. Thus,%
\begin{align*}
P\left(  u\right)  \cdot V  &  =\det\left(  uI_{n}-S\right)  \cdot
V=\underbrace{\det\left(  uI_{n}-S\right)  I_{n}}%
_{\substack{=\operatorname{adj}\left(  uI_{n}-S\right)  \cdot\left(
uI_{n}-S\right)  \\\text{(by Lemma \ref{lem.adj},}\\\text{applied to }%
M=uI_{n}-S\text{)}}}\cdot V\\
&  =\left(  \operatorname{adj}\left(  uI_{n}-S\right)  \cdot\left(
uI_{n}-S\right)  \right)  \cdot V\\
&  =\operatorname{adj}\left(  uI_{n}-S\right)  \cdot\underbrace{\left(
\left(  uI_{n}-S\right)  V\right)  }_{\substack{=0\\\text{(by
\eqref{pf.Theorem1.BA.short.matprod=0})}}}\ \ \ \ \ \ \ \ \ \ \left(  \text{by
\eqref{(PQ)R=P(QR)}}\right) \\
&  =0.
\end{align*}
Since the entries of the matrix $V$ are $m_{1},m_{2},\ldots,m_{n}$, this
yields $P\left(  u\right)  \cdot m_{k}=0$ for every $k\in\left\{
1,2,\ldots,n\right\}  $. Now, from $U=\left\langle m_{1},m_{2},\ldots
,m_{n}\right\rangle _{A}$, we obtain%
\begin{align*}
P\left(  u\right)  \cdot U  &  =P\left(  u\right)  \cdot\left\langle
m_{1},m_{2},\ldots,m_{n}\right\rangle _{A}=\left\langle P\left(  u\right)
\cdot m_{1},P\left(  u\right)  \cdot m_{2},\ldots,P\left(  u\right)  \cdot
m_{n}\right\rangle _{A}\\
&  =\left\langle 0,0,\ldots,0\right\rangle _{A}\ \ \ \ \ \ \ \ \ \ \left(
\text{since }P\left(  u\right)  \cdot m_{k}=0\text{ for any }k\in\left\{
1,2,\ldots,n\right\}  \right) \\
&  =0.
\end{align*}
This implies $P\left(  u\right)  =0$ (since every $v\in B$ satisfying $vU=0$
satisfies $v=0$). Thus, Assertion $\mathcal{A}$ holds. Hence, we have proved
that $\mathcal{B}\Longrightarrow\mathcal{A}$.
\end{vershort}

\begin{verlong}
We introduce two notations:

\begin{itemize}
\item For any matrix $T$ and any integers $x$ and $y$, we denote by $T_{x,y}$
the entry of the matrix $T$ in the $x$-th row and the $y$-th column.

\item For any assertion $\mathcal{U}$, we denote by $\left[  \mathcal{U}%
\right]  $ the Boolean value of the assertion $\mathcal{U}$ (that is, $\left[
\mathcal{U}\right]  =%
\begin{cases}
1, & \text{if }\mathcal{U}\text{ is true;}\\
0, & \text{if }\mathcal{U}\text{ is false}%
\end{cases}
$). This value $\left[  \mathcal{U}\right]  $ is an element of $\left\{
0,1\right\}  $ and is also known as the \textit{truth value} of $\mathcal{U}$.
\end{itemize}

Clearly, the $n\times n$ identity matrix $I_{n}$ satisfies
\[
\left(  I_{n}\right)  _{k,i}=\left[  k=i\right]  \ \ \ \ \ \ \ \ \ \ \text{for
every }k\in\left\{  1,2,\ldots,n\right\}  \text{ and }i\in\left\{
1,2,\ldots,n\right\}  .
\]

Note that for every $k\in\left\{  1,2,\ldots,n\right\}  $, we have%
\begin{equation}
m_{k}=\sum\limits_{i=1}^{n}\left(  I_{n}\right)  _{k,i}m_{i}, \label{1}%
\end{equation}
since%
\begin{align*}
\underbrace{\sum\limits_{i=1}^{n}}_{=\sum\limits_{i\in\left\{  1,2,\ldots
,n\right\}  }}\underbrace{\left(  I_{n}\right)  _{k,i}}_{\substack{=\left[
k=i\right]  \\=\left[  i=k\right]  }}m_{i}  &  =\sum\limits_{i\in\left\{
1,2,\ldots,n\right\}  }\left[  i=k\right]  m_{i}\\
&  =\sum\limits_{\substack{i\in\left\{  1,2,\ldots,n\right\}  \\\text{such
that }i=k}}\underbrace{\left[  i=k\right]  }_{\substack{=1\\\text{(since
}i=k\text{ is true)}}}m_{i}+\sum\limits_{\substack{i\in\left\{  1,2,\ldots
,n\right\}  \\\text{such that }i\neq k}}\underbrace{\left[  i=k\right]
}_{\substack{=0\\\text{(since }i=k\text{ is false}\\\text{(since }i\neq
k\text{))}}}m_{i}\\
&  =\sum\limits_{\substack{i\in\left\{  1,2,\ldots,n\right\}  \\\text{such
that }i=k}}\underbrace{1m_{i}}_{=m_{i}}+\underbrace{\sum
\limits_{\substack{i\in\left\{  1,2,\ldots,n\right\}  \\\text{such that }i\neq
k}}0m_{i}}_{=0}=\sum\limits_{\substack{i\in\left\{  1,2,\ldots,n\right\}
\\\text{such that }i=k}}m_{i}+0\\
&  =\sum\limits_{\substack{i\in\left\{  1,2,\ldots,n\right\}  \\\text{such
that }i=k}}m_{i}=\sum\limits_{i\in\left\{  k\right\}  }m_{i}\\
&  \ \ \ \ \ \ \ \ \ \ \left(
\begin{array}
[c]{c}%
\text{since }\left\{  i\in\left\{  1,2,\ldots,n\right\}  \ \mid\ i=k\right\}
=\left\{  k\right\} \\
\text{(because }k\in\left\{  1,2,\ldots,n\right\}  \text{)}%
\end{array}
\right) \\
&  =m_{k}.
\end{align*}
Hence, for every $k\in\left\{  1,2,\ldots,n\right\}  $, we have%
\begin{align}
\sum\limits_{i=1}^{n}\left(  u\left(  I_{n}\right)  _{k,i}-a_{k,i}\right)
m_{i}  &  =\sum\limits_{i=1}^{n}\left(  u\left(  I_{n}\right)  _{k,i}%
m_{i}-a_{k,i}m_{i}\right)  =u\underbrace{\sum\limits_{i=1}^{n}\left(
I_{n}\right)  _{k,i}m_{i}}_{\substack{=m_{k}\\\text{(by \eqref{1})}}%
}-\sum\limits_{i=1}^{n}a_{k,i}m_{i}\nonumber\\
&  =um_{k}-\sum\limits_{i=1}^{n}a_{k,i}m_{i}=0\ \ \ \ \ \ \ \ \ \ \left(
\text{by \eqref{pf.Theorem1.BA.defaki}}\right)  .
\label{pf.Theorem1.BA.long.matprod=0}%
\end{align}

Define a matrix $S\in A^{n\times n}$ by%
\[
\left(  S_{k,i}=a_{k,i}\text{ for all }k\in\left\{  1,2,\ldots,n\right\}
\text{ and }i\in\left\{  1,2,\ldots,n\right\}  \right)  .
\]

Define a matrix $T\in B^{n\times n}$ by%
\[
T=\operatorname{adj}\left(  uI_{n}-S\right)  .
\]
Here, the \textquotedblleft$S$\textquotedblright\ in \textquotedblleft%
$uI_{n}-S$\textquotedblright\ means not the matrix $S\in A^{n\times n}$
itself, but rather its image under the ring homomorphism $\iota^{n\times
n}:A^{n\times n}\rightarrow B^{n\times n}$; thus, the matrix $uI_{n}-S$ is a
well-defined matrix in $B^{n\times n}$.

Let $P\in A\left[  X\right]  $ be the characteristic polynomial of the matrix
$S\in A^{n\times n}$. Then, $P$ is monic, and $\deg P=n$. Besides, the
definition of $P$ yields $P\left(  X\right)  =\det\left(  XI_{n}-S\right)  $,
so that $P\left(  u\right)  =\det\left(  uI_{n}-S\right)  $. Therefore,%
\begin{align*}
P\left(  u\right)  \cdot I_{n}  &  =\det\left(  uI_{n}-S\right)  \cdot
I_{n}=\underbrace{\operatorname{adj}\left(  uI_{n}-S\right)  }_{=T}%
\cdot\left(  uI_{n}-S\right) \\
&  \ \ \ \ \ \ \ \ \ \ \left(  \text{by Lemma \ref{lem.adj}, applied to
}M=uI_{n}-S\right) \\
&  =T\cdot\left(  uI_{n}-S\right)  .
\end{align*}

Now, for every $\tau\in\left\{  1,2,\ldots,n\right\}  $, we have%
\begin{align*}
P\left(  u\right)  \cdot m_{\tau}  &  =P\left(  u\right)  \cdot\sum
\limits_{i=1}^{n}\left(  I_{n}\right)  _{\tau,i}m_{i}\\
&  \ \ \ \ \ \ \ \ \ \ \left(  \text{since \eqref{1} (applied to }%
k=\tau\text{) yields }m_{\tau}=\sum\limits_{i=1}^{n}\left(  I_{n}\right)
_{\tau,i}m_{i}\right) \\
&  =\sum_{i=1}^{n}\underbrace{P\left(  u\right)  \cdot\left(  I_{n}\right)
_{\tau,i}}_{=\left(  P\left(  u\right)  \cdot I_{n}\right)  _{\tau,i}}%
m_{i}=\sum_{i=1}^{n}\left(  \underbrace{P\left(  u\right)  \cdot I_{n}%
}_{=T\cdot\left(  uI_{n}-S\right)  }\right)  _{\tau,i}m_{i}\\
&  =\sum_{i=1}^{n}\underbrace{\left(  T\cdot\left(  uI_{n}-S\right)  \right)
_{\tau,i}}_{\substack{=\sum\limits_{k=1}^{n}T_{\tau,k}\left(  uI_{n}-S\right)
_{k,i}\\\text{(by the definition of}\\\text{the product of two matrices)}%
}}m_{i}=\sum_{i=1}^{n}\sum\limits_{k=1}^{n}T_{\tau,k}\left(  uI_{n}-S\right)
_{k,i}m_{i}\\
&  =\sum\limits_{k=1}^{n}T_{\tau,k}\sum_{i=1}^{n}\underbrace{\left(
uI_{n}-S\right)  _{k,i}}_{=u\left(  I_{n}\right)  _{k,i}-S_{k,i}}m_{i}%
=\sum\limits_{k=1}^{n}T_{\tau,k}\sum_{i=1}^{n}\left(  u\left(  I_{n}\right)
_{k,i}-\underbrace{S_{k,i}}_{=a_{k,i}}\right)  m_{i}\\
&  =\sum\limits_{k=1}^{n}T_{\tau,k}\underbrace{\sum_{i=1}^{n}\left(  u\left(
I_{n}\right)  _{k,i}-a_{k,i}\right)  m_{i}}_{\substack{=0\\\text{(by
\eqref{pf.Theorem1.BA.long.matprod=0})}}}=0.
\end{align*}
But from $U=\left\langle m_{1},m_{2},\ldots,m_{n}\right\rangle _{A}$, we
obtain%
\begin{align*}
P\left(  u\right)  \cdot U  &  =P\left(  u\right)  \cdot\left\langle
m_{1},m_{2},\ldots,m_{n}\right\rangle _{A}=\left\langle P\left(  u\right)
\cdot m_{1},P\left(  u\right)  \cdot m_{2},\ldots,P\left(  u\right)  \cdot
m_{n}\right\rangle _{A}\\
&  =\left\langle 0,0,\ldots,0\right\rangle _{A}\ \ \ \ \ \ \ \ \ \ \left(
\text{since }P\left(  u\right)  \cdot m_{\tau}=0\text{ for any }\tau
\in\left\{  1,2,\ldots,n\right\}  \right) \\
&  =0.
\end{align*}

But recall that every $v\in B$ satisfying $vU=0$ satisfies $v=0$. Applying
this to $v=P\left(  u\right)  $, we find $P\left(  u\right)  =0$ (since
$P\left(  u\right)  \cdot U=0$). Thus, we have found a monic polynomial $P\in
A\left[  X\right]  $ with $\deg P=n$ and $P\left(  u\right)  =0$. Therefore,
Assertion $\mathcal{A}$ holds. Hence, we have proved that $\mathcal{B}%
\Longrightarrow\mathcal{A}$.
\end{verlong}

\textit{Proof of the implication }$\mathcal{A}\Longrightarrow\mathcal{D}%
$\textit{.} Assume that Assertion $\mathcal{A}$ holds. Then, there exists a
monic polynomial $P\in A\left[  X\right]  $ with $\deg P=n$ and $P\left(
u\right)  =0$. Consider this $P$.

Let $U$ be the $A$-submodule $\left\langle u^{0},u^{1},\ldots,u^{n-1}%
\right\rangle _{A}$ of $B$. As in the Proof of the implication $\mathcal{A}%
\Longrightarrow\mathcal{C}$, we can show that $U$ is an $n$-generated
$A$-module, and that $1\in U$ and $uU\subseteq U$.

Now, it is easy to show that
\begin{equation}
u^{i}\in U\ \ \ \ \ \ \ \ \ \ \text{for any }i\in\mathbb{N}. \label{2}%
\end{equation}

\begin{vershort}
[Indeed, this follows easily from $uU\subseteq U$ and $1 \in U$ by induction
on $i$.]
\end{vershort}

\begin{verlong}
[\textit{Proof of \eqref{2}.} We will prove \eqref{2} by induction over $i$:

\textit{Induction base:} The assertion \eqref{2} holds for $i=0$ (since
$u^{0}=1\in U$). This completes the induction base.

\textit{Induction step:} Let $\tau\in\mathbb{N}$. If the assertion \eqref{2}
holds for $i=\tau$, then the assertion \eqref{2} holds for $i=\tau+1$ (because
if the assertion \eqref{2} holds for $i=\tau$, then $u^{\tau}\in U$, so that
$u^{\tau+1}=u\cdot\underbrace{u^{\tau}}_{\in U}\in uU\subseteq U$, so that
$u^{\tau+1}\in U$, and thus the assertion \eqref{2} holds for $i=\tau+1$).
This completes the induction step.

Hence, the induction is complete, and \eqref{2} is proven.]
\end{verlong}

\begin{vershort}
Hence, for any $m\in\mathbb{N}$ and any $\left(  a_{0},a_{1},\ldots
,a_{m}\right)  \in A^{m+1}$, we have $\sum\limits_{i=0}^{m}a_{i}u^{i}\in U$
(since $U$ is an $A$-module, and thus is closed under $A$-linear combination).

Now, the definition of $A\left[  u\right]  $ yields
\begin{align*}
A\left[  u\right]   &  =\left\{  \sum\limits_{i=0}^{m}a_{i}u^{i}\ \mid
\ m\in\mathbb{N}\text{ and }\left(  a_{0},a_{1},\ldots,a_{m}\right)  \in
A^{m+1}\right\} \\
&  \subseteq U\ \ \ \ \ \ \ \ \ \ \left(  \text{since }\sum\limits_{i=0}%
^{m}a_{i}u^{i}\in U\text{ for any }m\in\mathbb{N}\text{ and }\left(
a_{0},a_{1},\ldots,a_{m}\right)  \in A^{m+1}\right) \\
&  =\left\langle u^{0},u^{1},\ldots,u^{n-1}\right\rangle _{A}.
\end{align*}
Combining this with the (obvious) relation $\left\langle u^{0},u^{1}%
,\ldots,u^{n-1}\right\rangle _{A}\subseteq A\left[  u\right]  $, we find
$A\left[  u\right]  =\left\langle u^{0},u^{1},\ldots,u^{n-1}\right\rangle
_{A}$.
\end{vershort}

\begin{verlong}
But recall that $U$ is an $A$-module, and therefore is closed under $A$-linear
combination. Thus, for any $m\in\mathbb{N}$ and any $\left(  a_{0}%
,a_{1},\ldots,a_{m}\right)  \in A^{m+1}$, we have $\sum\limits_{i=0}^{m}%
a_{i}u^{i}\in U$, because each $i\in\left\{  0,1,\ldots,m\right\}  $ satisfies
$a_{i}\in A$ and $u^{i}\in U$ (by \eqref{2}).

Now, the definition of $A\left[  u\right]  $ yields%
\[
A\left[  u\right]  =\left\{  \sum\limits_{i=0}^{m}a_{i}u^{i}\ \mid
\ m\in\mathbb{N}\text{ and }\left(  a_{0},a_{1},\ldots,a_{m}\right)  \in
A^{m+1}\right\}  \subseteq U
\]
(since $\sum\limits_{i=0}^{m}a_{i}u^{i}\in U$ for any $m\in\mathbb{N}$ and any
$\left(  a_{0},a_{1},\ldots,a_{m}\right)  \in A^{m+1}$). On the other hand,
$U\subseteq A\left[  u\right]  $, since%
\begin{align*}
U  &  =\left\langle u^{0},u^{1},\ldots,u^{n-1}\right\rangle _{A}=\left\{
\sum\limits_{i=0}^{n-1}a_{i}u^{i}\ \mid\ \left(  a_{0},a_{1},\ldots
,a_{n-1}\right)  \in A^{n}\right\} \\
&  \subseteq\left\{  \sum\limits_{i=0}^{m}a_{i}u^{i}\ \mid\ m\in
\mathbb{N}\text{ and }\left(  a_{0},a_{1},\ldots,a_{m}\right)  \in
A^{m+1}\right\}  =A\left[  u\right]  .
\end{align*}
Combining this with $A\left[  u\right]  \subseteq U$, we obtain $U=A\left[
u\right]  $. Comparing this with $U=\left\langle u^{0},u^{1},\ldots
,u^{n-1}\right\rangle _{A}$, we obtain $A\left[  u\right]  =\left\langle
u^{0},u^{1},\ldots,u^{n-1}\right\rangle _{A}$.
\end{verlong}

Thus, Assertion $\mathcal{D}$ holds. Hence, we have proved that $\mathcal{A}%
\Longrightarrow\mathcal{D}$.

\textit{Proof of the implication }$\mathcal{D}\Longrightarrow\mathcal{C}%
$\textit{.} Assume that Assertion $\mathcal{D}$ holds. Then, $A\left[
u\right]  =\left\langle u^{0},u^{1},\ldots,u^{n-1}\right\rangle _{A}$.

\begin{vershort}
Let $U$ be the $A$-submodule $A\left[  u\right]  $ of $B$. Then, $U$ is an
$n$-generated $A$-module (since $U=A\left[  u\right]  =\left\langle
u^{0},u^{1},\ldots,u^{n-1}\right\rangle _{A}$). Besides, $1=u^{0}\in A\left[
u\right]  =U$. Finally, $U=A\left[  u\right]  $ yields $uU\subseteq U$. Thus,
Assertion $\mathcal{C}$ holds. Hence, we have proved that $\mathcal{D}%
\Longrightarrow\mathcal{C}$.
\end{vershort}

\begin{verlong}
Let $U$ be the $A$-submodule $\left\langle u^{0},u^{1},\ldots,u^{n-1}%
\right\rangle _{A}$ of $B$. Then, $u^{0},u^{1},\ldots,u^{n-1}$ are $n$
elements of $U$. Hence, $U$ is an $n$-generated $A$-module (since
$U=\left\langle u^{0},u^{1},\ldots,u^{n-1}\right\rangle _{A}$). Comparing
$U=\left\langle u^{0},u^{1},\ldots,u^{n-1}\right\rangle _{A}$ with $A\left[
u\right]  =\left\langle u^{0},u^{1},\ldots,u^{n-1}\right\rangle _{A}$, we
obtain $U=A\left[  u\right]  $. Now, $1=u^{0}\in A\left[  u\right]  =U$.

Also, from $U=A\left[  u\right]  $, we obtain $uU=u\cdot A\left[  u\right]
\subseteq A\left[  u\right]  =U$.

Thus, we have found an $n$-generated $A$-submodule $U$ of $B$ such that $1\in
U$ and $uU\subseteq U$. Hence, Assertion $\mathcal{C}$ holds. Thus, we have
proved that $\mathcal{D}\Longrightarrow\mathcal{C}$.
\end{verlong}

Now, we have proved the implications $\mathcal{A}\Longrightarrow\mathcal{D}$,
$\mathcal{D}\Longrightarrow\mathcal{C}$, $\mathcal{C}\Longrightarrow
\mathcal{B}$ and $\mathcal{B}\Longrightarrow\mathcal{A}$ above. Thus, all four
assertions $\mathcal{A}$, $\mathcal{B}$, $\mathcal{C}$ and $\mathcal{D}$ are
equivalent, and Theorem~\ref{Theorem1} is proven.
\end{proof}

For the sake of completeness (and as a very easy exercise), let us state a
basic property of integrality that we will not ever use:

\begin{proposition}
\label{prop.integr.u-v}Let $A$ be a ring. Let $B$ be an $A$-algebra. Let $u\in
B$. Let $q\in\mathbb{N}$ and $p\in\mathbb{N}$ be such that $p\geq q$. Assume
that $u$ is $q$-integral over $A$. Then, $u$ is $p$-integral over $A$.
\end{proposition}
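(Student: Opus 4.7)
The cleanest approach is to use the characterization $\mathcal{A}$ of Theorem~\ref{Theorem1}, which is the most concrete. Since $u$ is $q$-integral over $A$, there exists a monic polynomial $P \in A[X]$ with $\deg P = q$ and $P(u) = 0$. I would then define
\[
Q(X) = X^{p-q} \cdot P(X) \in A[X],
\]
which is legitimate because $p - q \in \mathbb{N}$.

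Next I would verify the two properties of $Q$ required by Assertion $\mathcal{A}$ for the integer $p$. First, $Q$ is monic of degree $p$: since $P$ is monic of degree $q$, its leading term is $X^q$, and multiplying by $X^{p-q}$ yields leading term $X^p$, so $\deg Q = p$ and $Q$ is monic. Second, $Q(u) = u^{p-q} P(u) = u^{p-q} \cdot 0 = 0$. Hence $Q \in A[X]$ is a monic polynomial with $\deg Q = p$ and $Q(u) = 0$, so Assertion $\mathcal{A}$ of Theorem~\ref{Theorem1} holds with $n = p$. By Definition~\ref{Definition5}, this means $u$ is $p$-integral over $A$.

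There is no real obstacle here; the only thing to be mildly careful about is the edge case $p = q$, but then $p - q = 0$ and $Q = P$ works trivially. One could alternatively argue via Assertion $\mathcal{D}$ (observing that $\langle u^0, \ldots, u^{q-1}\rangle_A \subseteq \langle u^0, \ldots, u^{p-1}\rangle_A \subseteq A[u]$, so $A[u] = \langle u^0, \ldots, u^{q-1}\rangle_A$ forces $A[u] = \langle u^0, \ldots, u^{p-1}\rangle_A$), but the polynomial argument is shorter and entirely mechanical.
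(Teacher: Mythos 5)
Your proof is correct, but it takes a different route from the paper's. You work with Assertion $\mathcal{A}$: starting from a monic $P \in A[X]$ of degree $q$ with $P(u)=0$, you multiply by $X^{p-q}$ to manufacture a monic polynomial of degree $p$ that still vanishes at $u$. The paper instead works with Assertion $\mathcal{D}$: from $A[u] = \langle u^0,\ldots,u^{q-1}\rangle_A$ and the trivial inclusions $\langle u^0,\ldots,u^{q-1}\rangle_A \subseteq \langle u^0,\ldots,u^{p-1}\rangle_A \subseteq A[u]$ it concludes $A[u] = \langle u^0,\ldots,u^{p-1}\rangle_A$ --- precisely the alternative you sketch in your final sentence. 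Both arguments are sound and comparably short; yours is slightly more self-contained (it never needs to reason about spans or invoke Lemma~\ref{lem.mil}), while the paper's keeps everything inside the module-theoretic framework that the subsequent results lean on. There is no gap in your argument, and your handling of the edge case $p=q$ is fine since $X^0 P = P$.
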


\begin{verlong}
\begin{proof}
[Proof of Proposition~\ref{prop.integr.u-v}.]The element $u$ is $q$-integral
over $A$. Thus, it satisfies the Assertion $\mathcal{D}$ of
Theorem~\ref{Theorem1}, stated for $q$ in lieu of $n$. In other words, it
satisfies $A\left[  u\right]  =\left\langle u^{0},u^{1},...,u^{q-1}%
\right\rangle _{A}$. Note that $\left\langle u^{0},u^{1},...,u^{p-1}%
\right\rangle _{A}$ is an $A$-submodule of $B$.

But $p\geq q$, thus $q\leq p$ and therefore $q-1\leq p-1$. Every $s\in\left\{
0,1,...,q-1\right\}  $ satisfies $s\in\left\{  0,1,...,q-1\right\}
\subseteq\left\{  0,1,...,p-1\right\}  $ (since $q-1\leq p-1$) and therefore
$u^{s}\in\left\{  u^{0},u^{1},...,u^{p-1}\right\}  \subseteq\left\langle
u^{0},u^{1},...,u^{p-1}\right\rangle _{A}$. Thus, $u^{s}$ is an element of
$\left\langle u^{0},u^{1},...,u^{p-1}\right\rangle _{A}$ for every
$s\in\left\{  0,1,\ldots,q-1\right\}  $. Hence, Lemma \ref{lem.mil} (applied
to $M=B$, $N=\left\langle u^{0},u^{1},...,u^{p-1}\right\rangle _{A}$,
$S=\left\{  0,1,\ldots,q-1\right\}  $ and $m_{s}=u^{s}$) shows that
\[
\left\langle u^{s}\ \mid\ s\in\left\{  0,1,\ldots,q-1\right\}  \right\rangle
_{A}\subseteq\left\langle u^{0},u^{1},...,u^{p-1}\right\rangle _{A}.
\]
Now,%
\[
A\left[  u\right]  =\left\langle u^{0},u^{1},...,u^{q-1}\right\rangle
_{A}=\left\langle u^{s}\ \mid\ s\in\left\{  0,1,\ldots,q-1\right\}
\right\rangle _{A}\subseteq\left\langle u^{0},u^{1},...,u^{p-1}\right\rangle
_{A}.
\]
Combining this with $\left\langle u^{0},u^{1},...,u^{p-1}\right\rangle
_{A}\subseteq A\left[  u\right]  $ (which is obvious, since every $A$-linear
combination of $u^{0},u^{1},...,u^{p-1}$ is a polynomial in $u$ with
coefficients in $A$), we obtain $A\left[  u\right]  =\left\langle u^{0}%
,u^{1},...,u^{p-1}\right\rangle _{A}$. In other words, $u$ satisfies the
Assertion $\mathcal{D}$ of Theorem~\ref{Theorem1}, stated for $p$ in lieu of
$n$. Hence, $u$ is $p$-integral over $A$. This proves
Proposition~\ref{prop.integr.u-v}.
\end{proof}
\end{verlong}

\subsection{Transitivity of integrality}

Let us now prove the first and probably most important consequence of
Theorem~\ref{Theorem1}:

\begin{theorem}
\label{Theorem4} Let $A$ be a ring. Let $B$ be an $A$-algebra. Let $v\in B$
and $u\in B$. Let $m\in\mathbb{N}$ and $n\in\mathbb{N}$. Assume that $v$ is
$m$-integral over $A$, and that $u$ is $n$-integral over $A\left[  v\right]
$. Then, $u$ is $nm$-integral over $A$.
\end{theorem}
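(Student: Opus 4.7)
The plan is to use the equivalence of assertions in Theorem~\ref{Theorem1}, specifically passing through Assertion~$\mathcal{C}$ at the end and using Assertion~$\mathcal{D}$ at the start. From the hypothesis that $v$ is $m$-integral over $A$, Theorem~\ref{Theorem1} (Assertion~$\mathcal{D}$) gives
\[
A\left[v\right] = \left\langle v^{0}, v^{1}, \ldots, v^{m-1}\right\rangle_{A}.
\]
From the hypothesis that $u$ is $n$-integral over $A\left[v\right]$, Theorem~\ref{Theorem1} (Assertion~$\mathcal{C}$, applied with $A\left[v\right]$ in place of $A$) gives an $n$-generated $A\left[v\right]$-submodule $U = \left\langle w_{1}, w_{2}, \ldots, w_{n}\right\rangle_{A\left[v\right]}$ of $B$ with $1 \in U$ and $uU \subseteq U$.

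The natural candidate for an $A$-submodule witnessing $nm$-integrality is
\[
V = \left\langle v^{i} w_{j} \ \mid\ \left(i,j\right) \in \left\{0,1,\ldots,m-1\right\} \times \left\{1,2,\ldots,n\right\}\right\rangle_{A}.
\]
This is visibly $nm$-generated as an $A$-module. To verify Assertion~$\mathcal{C}$ of Theorem~\ref{Theorem1} for $u$ over $A$, I would check the two conditions $1 \in V$ and $uV \subseteq V$. For the first: writing $1 = \sum_{j} c_{j} w_{j}$ with $c_{j} \in A\left[v\right]$, and then expanding each $c_{j} = \sum_{i} a_{ij} v^{i}$ with $a_{ij} \in A$ using the identity $A\left[v\right] = \left\langle v^{0}, \ldots, v^{m-1}\right\rangle_{A}$, we get $1 = \sum_{i,j} a_{ij} v^{i} w_{j} \in V$.

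For the closure $uV \subseteq V$, it suffices (by Lemma~\ref{lem.mil} and distributivity) to check that $u \cdot v^{i} w_{j} \in V$ for every generator. Since $uw_{j} \in uU \subseteq U$, write $uw_{j} = \sum_{k} b_{jk} w_{k}$ with $b_{jk} \in A\left[v\right]$. Then $u v^{i} w_{j} = \sum_{k} \left(v^{i} b_{jk}\right) w_{k}$, and each $v^{i} b_{jk}$ lies in $A\left[v\right] = \left\langle v^{0}, \ldots, v^{m-1}\right\rangle_{A}$, so it expands as an $A$-linear combination of $v^{0}, v^{1}, \ldots, v^{m-1}$. Substituting yields $u v^{i} w_{j}$ as an $A$-linear combination of the generators $v^{\ell} w_{k}$ of $V$, hence $u v^{i} w_{j} \in V$.

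The whole argument is essentially bookkeeping; the only conceptual step is noticing that the tensor-like basis $\left\{v^{i} w_{j}\right\}$ of size $nm$ is the correct object to look at, and that the two facts $A\left[v\right] = \left\langle v^{0}, \ldots, v^{m-1}\right\rangle_{A}$ and $uU \subseteq U$ combine to keep us inside $V$ after multiplying by $u$. The mildly annoying step will be the careful substitution $v^{i} b_{jk} \in \left\langle v^{0}, \ldots, v^{m-1}\right\rangle_{A}$, which requires that $A\left[v\right]$ be closed under multiplication by $v$ and then re-expanded in the generating set $v^{0}, \ldots, v^{m-1}$; this is exactly what Assertion~$\mathcal{D}$ buys us. Once these are in hand, Theorem~\ref{Theorem1} (Assertion~$\mathcal{C}$) applied with $nm$ in the role of $n$ concludes that $u$ is $nm$-integral over $A$.
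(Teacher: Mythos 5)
Your proposal is correct, and it is essentially the paper's argument: in both, one forms the $A$-linear span of products $v^i \cdot (\text{generators of }U)$ and verifies Assertion $\mathcal{C}$ for this $nm$-generated module. The only cosmetic difference is that you start from a generic Assertion-$\mathcal{C}$ witness $U = \langle w_1,\ldots,w_n\rangle_{A[v]}$, while the paper commits to the specific choice furnished by Assertion $\mathcal{D}$, namely $U = (A[v])[u] = \langle u^0,\ldots,u^{n-1}\rangle_{A[v]}$ (so $w_j = u^{j-1}$); this makes the paper's set $V$ just $(A[v])[u]$ itself, which absorbs multiplication by $u$ automatically and streamlines the bookkeeping slightly. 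Both routes rest on the same two facts — $A[v] = \langle v^0,\ldots,v^{m-1}\rangle_A$ to re-expand $v^i b$ back into the generating range, and $uU \subseteq U$ to stay in the span — and neither has a gap.
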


(Here, we are using the fact that if $A$ is a ring, and if $v$ is an element
of an $A$-algebra $B$, then $A\left[  v\right]  $ is a subring of $B$, and
therefore $B$ is an $A\left[  v\right]  $-algebra.)

\begin{proof}
[Proof of Theorem~\ref{Theorem4}.]Since $v$ is $m$-integral over $A$, we have
$A\left[  v\right]  =\left\langle v^{0},v^{1},\ldots,v^{m-1}\right\rangle
_{A}$ (this is the Assertion $\mathcal{D}$ of Theorem~\ref{Theorem1}, stated
for $v$ and $m$ in lieu of $u$ and $n$).

Since $u$ is $n$-integral over $A\left[  v\right]  $, we have $\left(
A\left[  v\right]  \right)  \left[  u\right]  =\left\langle u^{0},u^{1}%
,\ldots,u^{n-1}\right\rangle _{A\left[  v\right]  }$ (this is the Assertion
$\mathcal{D}$ of Theorem~\ref{Theorem1}, stated for $A\left[  v\right]  $ in
lieu of $A$).

Let $S=\left\{  0,1,\ldots,n-1\right\}  \times\left\{  0,1,\ldots,m-1\right\}
$. Then, $S$ is a finite set with size $\left\vert S\right\vert =nm$.

\begin{verlong}
[\textit{Proof:} From $S=\left\{  0,1,\ldots,n-1\right\}  \times\left\{
0,1,\ldots,m-1\right\}  $, we obtain
\begin{align*}
\left\vert S\right\vert  &  =\left\vert \left\{  0,1,\ldots,n-1\right\}
\times\left\{  0,1,\ldots,m-1\right\}  \right\vert \\
&  =\underbrace{\left\vert \left\{  0,1,\ldots,n-1\right\}  \right\vert }%
_{=n}\cdot\underbrace{\left\vert \left\{  0,1,\ldots,m-1\right\}  \right\vert
}_{=m}=nm.
\end{align*}
Thus, $S$ is finite.]
\end{verlong}

Let $x\in\left(  A\left[  v\right]  \right)  \left[  u\right]  $. Then, there
exist $n$ elements $b_{0},b_{1},\ldots,b_{n-1}$ of $A\left[  v\right]  $ such
that $x=\sum\limits_{i=0}^{n-1}b_{i}u^{i}$ (since $x\in\left(  A\left[
v\right]  \right)  \left[  u\right]  =\left\langle u^{0},u^{1},\ldots
,u^{n-1}\right\rangle _{A\left[  v\right]  }$). Consider these $b_{0}%
,b_{1},\ldots,b_{n-1}$.

For each $i\in\left\{  0,1,\ldots,n-1\right\}  $, there exist $m$ elements
$a_{i,0},a_{i,1},\ldots,a_{i,m-1}$ of $A$ such that $b_{i}=\sum\limits_{j=0}%
^{m-1}a_{i,j}v^{j}$ (because $b_{i}\in A\left[  v\right]  =\left\langle
v^{0},v^{1},\ldots,v^{m-1}\right\rangle _{A}$). Consider these $a_{i,0}%
,a_{i,1},\ldots,a_{i,m-1}$. Thus,%
\begin{align*}
x  &  =\sum\limits_{i=0}^{n-1}\underbrace{b_{i}}_{=\sum\limits_{j=0}%
^{m-1}a_{i,j}v^{j}}u^{i}=\sum\limits_{i=0}^{n-1}\sum\limits_{j=0}^{m-1}%
a_{i,j}v^{j}u^{i}=\sum\limits_{\left(  i,j\right)  \in\left\{  0,1,\ldots
,n-1\right\}  \times\left\{  0,1,\ldots,m-1\right\}  }a_{i,j}v^{j}u^{i}\\
&  =\sum\limits_{\left(  i,j\right)  \in S}a_{i,j}v^{j}u^{i}%
\ \ \ \ \ \ \ \ \ \ \left(  \text{since }\left\{  0,1,\ldots,n-1\right\}
\times\left\{  0,1,\ldots,m-1\right\}  =S\right) \\
&  \in\left\langle v^{j}u^{i}\ \mid\ \left(  i,j\right)  \in S\right\rangle
_{A}\ \ \ \ \ \ \ \ \ \ \left(  \text{since }a_{i,j}\in A\text{ for every
}\left(  i,j\right)  \in S\right)  .
\end{align*}

\begin{vershort}
So we have proved that $x\in\left\langle v^{j}u^{i}\ \mid\ \left(  i,j\right)
\in S\right\rangle _{A}$ for every $x\in\left(  A\left[  v\right]  \right)
\left[  u\right]  $. In other words, $\left(  A\left[  v\right]  \right)
\left[  u\right]  \subseteq\left\langle v^{j}u^{i}\ \mid\ \left(  i,j\right)
\in S\right\rangle _{A}$. Conversely, $\left\langle v^{j}u^{i}\ \mid\ \left(
i,j\right)  \in S\right\rangle _{A}\subseteq\left(  A\left[  v\right]
\right)  \left[  u\right]  $ (this is trivial). Combining these two relations,
we find $\left(  A\left[  v\right]  \right)  \left[  u\right]  =\left\langle
v^{j}u^{i}\ \mid\ \left(  i,j\right)  \in S\right\rangle _{A}$. Thus, the
$A$-module $\left(  A\left[  v\right]  \right)  \left[  u\right]  $ is
$nm$-generated (since $\left\vert S\right\vert =nm$).
\end{vershort}

\begin{verlong}
Now, forget that we fixed $x$. So we have proved that $x\in\left\langle
v^{j}u^{i}\ \mid\ \left(  i,j\right)  \in S\right\rangle _{A}$ for every
$x\in\left(  A\left[  v\right]  \right)  \left[  u\right]  $. In other words,
$\left(  A\left[  v\right]  \right)  \left[  u\right]  \subseteq\left\langle
v^{j}u^{i}\ \mid\ \left(  i,j\right)  \in S\right\rangle _{A}$. Conversely,
$\left\langle v^{j}u^{i}\ \mid\ \left(  i,j\right)  \in S\right\rangle
_{A}\subseteq\left(  A\left[  v\right]  \right)  \left[  u\right]  $ (since
$v^{j}\in A\left[  v\right]  $ for every $\left(  i,j\right)  \in S$, and thus
$\underbrace{v^{j}}_{\in A\left[  v\right]  }u^{i}\in\left(  A\left[
v\right]  \right)  \left[  u\right]  $ for every $\left(  i,j\right)  \in S$,
and therefore%
\begin{align*}
\left\langle v^{j}u^{i}\ \mid\ \left(  i,j\right)  \in S\right\rangle _{A}  &
=\left\{  \underbrace{\sum\limits_{\left(  i,j\right)  \in S}a_{i,j}v^{j}%
u^{i}}_{\substack{\in\left(  A\left[  v\right]  \right)  \left[  u\right]
\\\text{(since }v^{j}u^{i}\in\left(  A\left[  v\right]  \right)  \left[
u\right]  \text{ for every }\left(  i,j\right)  \in S\\\text{and since
}\left(  A\left[  v\right]  \right)  \left[  u\right]  \text{ is an
}A\text{-module)}}}\ \mid\ \left(  a_{i,j}\right)  _{\left(  i,j\right)  \in
S}\in A^{S}\right\} \\
&  \subseteq\left(  A\left[  v\right]  \right)  \left[  u\right]
\end{align*}
). Combining these two relations, we find $\left(  A\left[  v\right]  \right)
\left[  u\right]  =\left\langle v^{j}u^{i}\ \mid\ \left(  i,j\right)  \in
S\right\rangle _{A}$. Thus, the $A$-module $\left(  A\left[  v\right]
\right)  \left[  u\right]  $ is $nm$-generated (since $\left\vert S\right\vert
=nm$).
\end{verlong}

\begin{vershort}
Let $U=\left(  A\left[  v\right]  \right)  \left[  u\right]  $. Then, the
$A$-module $U$ is $nm$-generated. Besides, $U$ is an $A$-submodule of $B$, and
we have $1\in U$ and $uU\subseteq U$.
\end{vershort}

\begin{verlong}
Let $U=\left(  A\left[  v\right]  \right)  \left[  u\right]  $. Thus, the
$A$-module $U$ is $nm$-generated (since the $A$-module $\left(  A\left[
v\right]  \right)  \left[  u\right]  $ is $nm$-generated). Besides, $U$ is an
$A$-submodule of $B$, and we have $1=u^{0}\in\left(  A\left[  v\right]
\right)  \left[  u\right]  =U$ and%
\begin{align*}
uU  &  =u\left(  A\left[  v\right]  \right)  \left[  u\right]  \subseteq
\left(  A\left[  v\right]  \right)  \left[  u\right] \\
&  \ \ \ \ \ \ \ \ \ \ \left(  \text{since }\left(  A\left[  v\right]
\right)  \left[  u\right]  \text{ is an }A\left[  v\right]  \text{-algebra and
}u\in\left(  A\left[  v\right]  \right)  \left[  u\right]  \right) \\
&  =U.
\end{align*}

Altogether, we now know that the $A$-submodule $U$ of $B$ is $nm$-generated
and satisfies $1\in U$ and $uU\subseteq U$.
\end{verlong}

Thus, the element $u$ of $B$ satisfies the Assertion $\mathcal{C}$ of
Theorem~\ref{Theorem1} with $n$ replaced by $nm$. Hence, $u\in B$ satisfies
the four equivalent assertions $\mathcal{A}$, $\mathcal{B}$, $\mathcal{C}$ and
$\mathcal{D}$ of Theorem~\ref{Theorem1}, all with $n$ replaced by $nm$. Thus,
$u$ is $nm$-integral over $A$. This proves Theorem~\ref{Theorem4}.
\end{proof}

\subsection{Integrality of sums and products}

Before the next significant consequence of Theorem~\ref{Theorem1}, let us show
an essentially trivial fact:

\begin{theorem}
\label{Theorem5a} Let $A$ be a ring. Let $B$ be an $A$-algebra. Let $a\in A$.
Then, $a\cdot1_{B}\in B$ is $1$-integral over $A$.
\end{theorem}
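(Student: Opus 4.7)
The plan is to invoke Assertion $\mathcal{A}$ of Theorem~\ref{Theorem1} with $n=1$: it suffices to exhibit a monic polynomial $P \in A[X]$ of degree $1$ such that $P(a \cdot 1_B) = 0$.

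The obvious candidate is $P(X) = X - a \in A[X]$, which is monic of degree $1$. Evaluating at $a \cdot 1_B$ and using that the structure map $A \to B$ sends $a \in A$ to $a \cdot 1_B \in B$, one gets $P(a \cdot 1_B) = a \cdot 1_B - a \cdot 1_B = 0$. Hence $a \cdot 1_B$ satisfies Assertion $\mathcal{A}$ of Theorem~\ref{Theorem1} with $n = 1$, and therefore it is $1$-integral over $A$ by Definition~\ref{Definition5}.

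There is no real obstacle here; the only mild subtlety is the implicit identification of the coefficient $-a \in A$ of the polynomial $P$ with $(-a)\cdot 1_B \in B$ when evaluating $P$ at $a\cdot 1_B$, but this is exactly how polynomial evaluation in an $A$-algebra is defined.
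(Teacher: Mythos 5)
Your proof is correct and takes essentially the same route as the paper: both exhibit the monic degree-$1$ polynomial $P(X) = X - a \in A[X]$ and check $P(a \cdot 1_B) = a \cdot 1_B - a \cdot 1_B = 0$, then invoke Assertion $\mathcal{A}$ of Theorem~\ref{Theorem1}. The remark about identifying $-a$ with $(-a)\cdot 1_B$ under the structure map is a fine (if unnecessary) clarification.
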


\begin{proof}
[Proof of Theorem~\ref{Theorem5a}.]The polynomial $X-a\in A\left[  X\right]  $
is monic and satisfies \newline$\deg\left(  X-a\right)  =1$; moreover,
evaluating this polynomial at $a\cdot1_{B}\in B$ yields $a\cdot1_{B}%
-a\cdot1_{B}=0$. Hence, there exists a monic polynomial $P\in A\left[
X\right]  $ with $\deg P=1$ and $P\left(  a\cdot1_{B}\right)  =0$ (namely, the
polynomial $P\in A\left[  X\right]  $ defined by $P\left(  X\right)  =X-a$).
Thus, $a\cdot1_{B}$ is $1$-integral over $A$. This proves
Theorem~\ref{Theorem5a}.
\end{proof}

The following theorem is a standard result, generalizing (for example) the
classical fact that sums and products of algebraic integers are again
algebraic integers:

\begin{theorem}
\label{Theorem5bc} Let $A$ be a ring. Let $B$ be an $A$-algebra. Let $x\in B$
and $y\in B$. Let $m\in\mathbb{N}$ and $n\in\mathbb{N}$. Assume that $x$ is
$m$-integral over $A$, and that $y$ is $n$-integral over $A$.

\textbf{(a)} Then, $x+y$ is $nm$-integral over $A$.

\textbf{(b)} Furthermore, $xy$ is $nm$-integral over $A$.
\end{theorem}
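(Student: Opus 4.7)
The plan is to apply Assertion $\mathcal{C}$ of Theorem~\ref{Theorem1} to the single $A$-submodule $U = \left(A\left[x\right]\right)\left[y\right]$ of $B$. Both parts (a) and (b) will fall out from this one choice, because $U$ is a subring that already contains $x+y$ and $xy$.

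First I would check that $U$ is an $nm$-generated $A$-module. Since $y$ is $n$-integral over $A$, any monic polynomial in $A\left[X\right]$ witnessing this lies also in $\left(A\left[x\right]\right)\left[X\right]$, so $y$ is $n$-integral over $A\left[x\right]$. Assertion $\mathcal{D}$ of Theorem~\ref{Theorem1}, applied once over the base ring $A\left[x\right]$ (to $y$ and $n$) and once over the base ring $A$ (to $x$ and $m$), gives
\[
\left(A\left[x\right]\right)\left[y\right] = \left\langle y^0, y^1, \ldots, y^{n-1}\right\rangle_{A\left[x\right]}
\qquad\text{and}\qquad
A\left[x\right] = \left\langle x^0, x^1, \ldots, x^{m-1}\right\rangle_A.
\]
Substituting the second expression into the coefficients of the first (exactly as in the proof of Theorem~\ref{Theorem4}) shows that every element of $U$ is an $A$-linear combination of the $nm$ products $x^i y^j$ with $(i,j) \in \left\{0,1,\ldots,m-1\right\} \times \left\{0,1,\ldots,n-1\right\}$. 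Hence $U$ is an $nm$-generated $A$-submodule of $B$.

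Second, $U$ is a subring of $B$ containing $1_B$, so $1 \in U$. For part (a), since $x \in U$ and $y \in U$, we have $x+y \in U$; because $U$ is closed under multiplication, $\left(x+y\right)U \subseteq U$. Thus $U$ witnesses Assertion $\mathcal{C}$ of Theorem~\ref{Theorem1} with $u$ replaced by $x+y$ and $n$ replaced by $nm$, whence $x+y$ is $nm$-integral over $A$. For part (b), the identical argument with $xy$ in place of $x+y$ gives $\left(xy\right)U \subseteq U$ and yields that $xy$ is $nm$-integral over $A$.

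There is essentially no obstacle: the main computation — producing the $nm$ generators $x^i y^j$ of $U$ as an $A$-module — has already been carried out inside the proof of Theorem~\ref{Theorem4}, so in a detailed write-up it would suffice to cite that calculation. The only care needed is to observe that $y$ remains $n$-integral after the base ring is enlarged from $A$ to $A\left[x\right]$, which is immediate from Assertion $\mathcal{A}$ of Theorem~\ref{Theorem1}.
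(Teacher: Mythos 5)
Your proof is correct, and it takes a genuinely different route from the paper's. The paper argues separately for (a) and (b): for each, it constructs an explicit monic polynomial of degree $n$ over $A\left[x\right]$ annihilating $x+y$ (via Lemma~\ref{lem.monic-shift}, the substitution $X\mapsto X-x$) resp. $xy$ (via $Q(X)=X^n+\sum_{k=0}^{n-1}x^{n-k}a_kX^k$), thereby showing $n$-integrality over $A\left[x\right]$, and then invokes Theorem~\ref{Theorem4} to finish. Your approach bypasses both polynomial constructions: since $y$ is trivially $n$-integral over $A\left[x\right]$, the single $A$-subalgebra $U=\left(A\left[x\right]\right)\left[y\right]$ is $nm$-generated as an $A$-module (the same computation as in Theorem~\ref{Theorem4}'s proof), and because $U$ is a subring containing $x$ and $y$ it contains $x+y$ and $xy$ and is closed under multiplication by either; Assertion~$\mathcal{C}$ of Theorem~\ref{Theorem1} then gives both parts at once. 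Your argument is more uniform and a bit slicker — one module settles (a) and (b) simultaneously, and no ad hoc polynomial is needed — while the paper's version has the advantage of exhibiting an explicit annihilating polynomial over $A\left[x\right]$ at each intermediate stage, which is sometimes useful in its own right.
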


Our proof of this theorem will rely on a simple lemma:

\begin{lemma}
\label{lem.monic-shift} Let $A$ be a ring. Let $C$ be an $A$-algebra. Let
$x\in C$.

Let $n\in\mathbb{N}$. Let $P\in A\left[  X\right]  $ be a monic polynomial
with $\deg P=n$. Define a polynomial $Q\in C\left[  X\right]  $ by $Q\left(
X\right)  =P\left(  X-x\right)  $. Then, $Q$ is a monic polynomial with $\deg
Q=n$.
\end{lemma}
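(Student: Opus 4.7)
The plan is to expand $Q(X)$ term-by-term using the given form of $P$. First, since $P \in A[X]$ is monic with $\deg P = n$, I would write
\[
P(X) = X^n + \sum_{k=0}^{n-1} a_k X^k
\]
for some elements $a_0, a_1, \ldots, a_{n-1} \in A$. Substituting $X - x$ for $X$ then gives
\[
Q(X) = P(X-x) = (X-x)^n + \sum_{k=0}^{n-1} a_k (X-x)^k.
\]

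Next, I would analyze the degree and leading coefficient of $Q$ by examining each summand. By the binomial theorem applied inside the polynomial ring $C[X]$, we have
\[
(X-x)^k = X^k + \sum_{j=0}^{k-1} \binom{k}{j} (-x)^{k-j} X^j
\]
for every $k \in \mathbb{N}$. In particular, $(X-x)^k$ is a polynomial in $C[X]$ of degree exactly $k$ with leading coefficient $1$. Therefore, $(X-x)^n$ has degree $n$ with leading coefficient $1$, while each $a_k (X-x)^k$ with $k \in \{0, 1, \ldots, n-1\}$ has degree at most $k < n$ (it may be strictly smaller if $a_k = 0$, but that does not matter).

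Combining these observations, the sum $\sum_{k=0}^{n-1} a_k (X-x)^k$ contributes only terms of degree $< n$, so the coefficient of $X^n$ in $Q(X)$ comes entirely from $(X-x)^n$ and equals $1$, while no term of $Q(X)$ has degree exceeding $n$. Hence $Q$ is monic with $\deg Q = n$, proving the lemma. There is no real obstacle here — the only thing to be careful about is that \textbf{monic of degree $n$} requires both the leading coefficient to be $1$ \emph{and} the polynomial to have degree exactly $n$, but both follow immediately from the binomial expansion of $(X-x)^n$.
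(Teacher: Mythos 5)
Your proof is correct and follows essentially the same approach as the paper: write $P = X^n + \sum_{k=0}^{n-1} a_k X^k$, substitute $X - x$, expand each $(X-x)^k$ by the binomial theorem, and observe that the top-degree term $X^n$ comes solely from $(X-x)^n$ with coefficient $1$ while everything else has degree below $n$. The paper's short version suppresses the binomial expansion into the phrase ``lower order terms'' and its long version spells it out via the submodule $\langle X^0, \ldots, X^{n-1}\rangle_C$; your argument sits comfortably between the two and is fully adequate.
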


\begin{proof}
[Proof of Lemma \ref{lem.monic-shift}.]Recall that $P$ is a monic polynomial
with $\deg P=n$; hence, we can write $P$ in the form%
\begin{equation}
P=X^{n}+\sum_{i=0}^{n-1}a_{i}X^{i} \label{pf.Theorem5.b.1}%
\end{equation}
for some $a_{0},a_{1},\ldots,a_{n-1}\in A$. Consider these $a_{0},a_{1}%
,\ldots,a_{n-1}$.

\begin{vershort}
Now,%
\begin{align*}
Q\left(  X\right)   &  =P\left(  X-x\right)  =\underbrace{\left(  X-x\right)
^{n}}_{=X^{n}+\left(  \text{lower order terms}\right)  }+\underbrace{\sum
_{i=0}^{n-1}a_{i}\left(  X-x\right)  ^{i}}_{=\left(  \text{lower order
terms}\right)  }\\
&  \ \ \ \ \ \ \ \ \ \ \left(  \text{here, we have substituted }X-x\text{ for
}X\text{ in \eqref{pf.Theorem5.b.1}}\right) \\
&  =X^{n}+\left(  \text{lower order terms}\right)  ,
\end{align*}
where \textquotedblleft lower order terms\textquotedblright\ means a sum of
terms of the form $bX^{i}$ with $b\in C$ and $i<n$. Hence, $Q$ is a monic
polynomial with $\deg Q=n$. This proves Lemma \ref{lem.monic-shift}.
\qedhere

\end{vershort}

\begin{verlong}
Consider the $C$-submodule $\left\langle X^{0},X^{1},\ldots,X^{n-1}%
\right\rangle _{C}$ of $C\left[  X\right]  $. We have $X-x=X+\left(
-x\right)  $ and thus%
\begin{align}
\left(  X-x\right)  ^{n}  &  =\left(  X+\left(  -x\right)  \right)  ^{n}%
=\sum_{i=0}^{n}\dbinom{n}{i}X^{i}\left(  -x\right)  ^{n-i}%
\ \ \ \ \ \ \ \ \ \ \left(  \text{by the binomial formula}\right) \nonumber\\
&  =\sum_{i=0}^{n-1}\dbinom{n}{i}\underbrace{X^{i}\left(  -x\right)  ^{n-i}%
}_{=\left(  -x\right)  ^{n-i}X^{i}}+\underbrace{\dbinom{n}{n}}_{=1}%
X^{n}\underbrace{\left(  -x\right)  ^{n-n}}_{=\left(  -x\right)  ^{0}%
=1}\nonumber\\
&  \ \ \ \ \ \ \ \ \ \ \left(  \text{here, we have split off the addend for
}i=n\text{ from the sum}\right) \nonumber\\
&  =\sum_{i=0}^{n-1}\dbinom{n}{i}\left(  -x\right)  ^{n-i}X^{i}+X^{n}%
=X^{n}+\underbrace{\sum_{i=0}^{n-1}\dbinom{n}{i}\left(  -x\right)  ^{n-i}%
X^{i}}_{\in\left\langle X^{0},X^{1},\ldots,X^{n-1}\right\rangle _{C}%
}\nonumber\\
&  \in X^{n}+\left\langle X^{0},X^{1},\ldots,X^{n-1}\right\rangle _{C}.
\label{pf.Theorem5.b.2n}%
\end{align}
Furthermore, for each $i\in\left\{  0,1,\ldots,n-1\right\}  $, we have
\begin{align}
\left(  X-x\right)  ^{i}  &  =\left(  X+\left(  -x\right)  \right)
^{i}\ \ \ \ \ \ \ \ \ \ \left(  \text{since }X-x=X+\left(  -x\right)  \right)
\nonumber\\
&  =\sum_{j=0}^{i}\dbinom{i}{j}\underbrace{X^{j}\left(  -x\right)  ^{i-j}%
}_{=\left(  -x\right)  ^{i-j}X^{j}}\ \ \ \ \ \ \ \ \ \ \left(  \text{by the
binomial formula}\right) \nonumber\\
&  =\sum_{j=0}^{i}\dbinom{i}{j}\left(  -x\right)  ^{i-j}X^{j}\in\left\langle
X^{0},X^{1},\ldots,X^{i}\right\rangle _{C}\nonumber\\
&  \subseteq\left\langle X^{0},X^{1},\ldots,X^{n-1}\right\rangle _{C}
\label{pf.Theorem5.b.2i}%
\end{align}
(since $i\leq n-1$). Now,
\begin{align*}
Q\left(  X\right)   &  =P\left(  X-x\right)  =\underbrace{\left(  X-x\right)
^{n}}_{\substack{\in X^{n}+\left\langle X^{0},X^{1},\ldots,X^{n-1}%
\right\rangle _{C}\\\text{(by \eqref{pf.Theorem5.b.2n})}}}+\underbrace{\sum
_{i=0}^{n-1}a_{i}\left(  X-x\right)  ^{i}}_{\substack{\in\left\langle
X^{0},X^{1},\ldots,X^{n-1}\right\rangle _{C}\\\text{(by
\eqref{pf.Theorem5.b.2i})}}}\\
&  \ \ \ \ \ \ \ \ \ \ \left(  \text{here, we have substituted }X-x\text{ for
}X\text{ in \eqref{pf.Theorem5.b.1}}\right) \\
&  \in X^{n}+\underbrace{\left\langle X^{0},X^{1},\ldots,X^{n-1}\right\rangle
_{C}+\left\langle X^{0},X^{1},\ldots,X^{n-1}\right\rangle _{C}}%
_{\substack{\subseteq\left\langle X^{0},X^{1},\ldots,X^{n-1}\right\rangle
_{C}\\\text{(since }\left\langle X^{0},X^{1},\ldots,X^{n-1}\right\rangle
_{C}\text{ is a }C\text{-module)}}}\\
&  \subseteq X^{n}+\left\langle X^{0},X^{1},\ldots,X^{n-1}\right\rangle _{C}.
\end{align*}
In other words, $Q\left(  X\right)  =X^{n}+W$ for some $W\in\left\langle
X^{0},X^{1},\ldots,X^{n-1}\right\rangle _{C}$. Consider this $W$. We have
$W\in\left\langle X^{0},X^{1},\ldots,X^{n-1}\right\rangle _{C}$; thus, we can
write $W$ in the form $W=\sum_{i=0}^{n-1}w_{i}X^{i}$ for some $w_{0}%
,w_{1},\ldots,w_{n-1}\in C$. Consider these $w_{0},w_{1},\ldots,w_{n-1}$. Now,%
\[
Q\left(  X\right)  =X^{n}+\underbrace{W}_{=\sum_{i=0}^{n-1}w_{i}X^{i}}%
=X^{n}+\sum_{i=0}^{n-1}w_{i}X^{i}.
\]
Hence, $Q$ is a monic polynomial with $\deg Q=n$. This proves
Lemma~\ref{lem.monic-shift}.
\end{verlong}
\end{proof}

\begin{proof}
[Proof of Theorem~\ref{Theorem5bc}.]Since $y$ is $n$-integral over $A$, there
exists a monic polynomial $P\in A\left[  X\right]  $ with $\deg P=n$ and
$P\left(  y\right)  =0$. Consider this $P$.

\textbf{(a)} Let $C$ be the $A$-subalgebra $A\left[  x\right]  $ of $B$. Then,
$C=A\left[  x\right]  $, so that $x\in A\left[  x\right]  =C$.

Now, define a polynomial $Q\in C\left[  X\right]  $ by $Q\left(  X\right)
=P\left(  X-x\right)  $. Then, Lemma \ref{lem.monic-shift} shows that $Q$ is a
monic polynomial with $\deg Q=n$. Also, substituting $x+y$ for $X$ in the
equality $Q\left(  X\right)  =P\left(  X-x\right)  $, we obtain $Q\left(
x+y\right)  =P\left(  \underbrace{\left(  x+y\right)  -x}_{=y}\right)
=P\left(  y\right)  =0$.

Hence, there exists a monic polynomial $Q\in C\left[  X\right]  $ with $\deg
Q=n$ and $Q\left(  x+y\right)  =0$. Thus, $x+y$ is $n$-integral over $C$. In
other words, $x+y$ is $n$-integral over $A\left[  x\right]  $ (since
$C=A\left[  x\right]  $). Thus, Theorem~\ref{Theorem4} (applied to $v=x$ and
$u=x+y$) yields that $x+y$ is $nm$-integral over $A$. This proves
Theorem~\ref{Theorem5bc} \textbf{(a)}.

\textbf{(b)} Recall that $P\in A\left[  X\right]  $ is a monic polynomial with
$\deg P=n$. Thus, there exist elements $a_{0},a_{1},\ldots,a_{n-1}$ of $A$
such that $P\left(  X\right)  =X^{n}+\sum\limits_{k=0}^{n-1}a_{k}X^{k}$.
Consider these $a_{0},a_{1},\ldots,a_{n-1}$. Substituting $y$ for $X$ in
$P\left(  X\right)  =X^{n}+\sum\limits_{k=0}^{n-1}a_{k}X^{k}$, we find
$P\left(  y\right)  =y^{n}+\sum\limits_{k=0}^{n-1}a_{k}y^{k}$. Thus,%
\begin{equation}
y^{n}+\sum\limits_{k=0}^{n-1}a_{k}y^{k}=P\left(  y\right)  =0.
\label{pf.Theorem5.c.1}%
\end{equation}

Now, define a polynomial $Q\in\left(  A\left[  x\right]  \right)  \left[
X\right]  $ by $Q\left(  X\right)  =X^{n}+\sum\limits_{k=0}^{n-1}x^{n-k}%
a_{k}X^{k}$. Then,%
\begin{align*}
Q\left(  xy\right)   &  =\underbrace{\left(  xy\right)  ^{n}}_{=x^{n}y^{n}%
}+\sum\limits_{k=0}^{n-1}x^{n-k}\underbrace{a_{k}\left(  xy\right)  ^{k}%
}_{\substack{=a_{k}x^{k}y^{k}\\=x^{k}a_{k}y^{k}}}=x^{n}y^{n}+\sum
\limits_{k=0}^{n-1}\underbrace{x^{n-k}x^{k}}_{=x^{n}}a_{k}y^{k}\\
&  =x^{n}y^{n}+\sum\limits_{k=0}^{n-1}x^{n}a_{k}y^{k}=x^{n}\underbrace{\left(
y^{n}+\sum\limits_{k=0}^{n-1}a_{k}y^{k}\right)  }_{\substack{=0\\\text{(by
\eqref{pf.Theorem5.c.1})}}}=0.
\end{align*}
Also, recall that $Q\left(  X\right)  =X^{n}+\sum\limits_{k=0}^{n-1}%
x^{n-k}a_{k}X^{k}$; hence, the polynomial $Q\in\left(  A\left[  x\right]
\right)  \left[  X\right]  $ is monic and $\deg Q=n$. Thus, there exists a
monic polynomial $Q\in\left(  A\left[  x\right]  \right)  \left[  X\right]  $
with $\deg Q=n$ and $Q\left(  xy\right)  =0$. Thus, $xy$ is $n$-integral over
$A\left[  x\right]  $. Hence, Theorem~\ref{Theorem4} (applied to $v=x$ and
$u=xy$) yields that $xy$ is $nm$-integral over $A$. This proves
Theorem~\ref{Theorem5bc} \textbf{(b)}.
\end{proof}

\begin{corollary}
\label{cor.-x-int} Let $A$ be a ring. Let $B$ be an $A$-algebra. Let $x\in B$.
Let $m\in\mathbb{N}$. Assume that $x$ is $m$-integral over $A$. Then, $-x$ is
$m$-integral over $A$.
\end{corollary}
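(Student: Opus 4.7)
The plan is to explicitly produce a monic polynomial of degree $m$ in $A\left[X\right]$ that vanishes at $-x$. Since $x$ is $m$-integral over $A$, Assertion $\mathcal{A}$ of Theorem~\ref{Theorem1} supplies a monic polynomial $P\in A\left[X\right]$ with $\deg P=m$ and $P\left(x\right)=0$; write $P\left(X\right)=X^{m}+\sum_{k=0}^{m-1}a_{k}X^{k}$ with $a_{0},a_{1},\ldots,a_{m-1}\in A$. Define $Q\in A\left[X\right]$ by $Q\left(X\right)=\left(-1\right)^{m}P\left(-X\right)$. Expanding yields $Q\left(X\right)=X^{m}+\sum_{k=0}^{m-1}\left(-1\right)^{m-k}a_{k}X^{k}$, so $Q$ is monic of degree $m$. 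Substituting $X=-x$ gives $Q\left(-x\right)=\left(-1\right)^{m}P\left(x\right)=0$. Hence $-x$ satisfies Assertion $\mathcal{A}$ of Theorem~\ref{Theorem1} for $m$, so $-x$ is $m$-integral over $A$.

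There is also a slicker (but slightly weaker-looking) alternative: Theorem~\ref{Theorem5a}, applied to $a=-1\in A$, shows that $-1_{B}=\left(-1\right)\cdot 1_{B}$ is $1$-integral over $A$. Then Theorem~\ref{Theorem5bc}\textbf{(b)}, applied to the elements $-1_{B}$ and $x$ (with $n=1$ and multiplicity $m$), shows that $\left(-1_{B}\right)x=-x$ is $1\cdot m=m$-integral over $A$. This route has the advantage of requiring no computation at all, but it routes through the product theorem, whereas the direct construction above is self-contained.

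There is no real obstacle here; the only thing to watch is the sign bookkeeping in the substitution $X\mapsto -X$, which is why I prefer the explicit formula $Q\left(X\right)=\left(-1\right)^{m}P\left(-X\right)$: the factor $\left(-1\right)^{m}$ is exactly what is needed to cancel the leading $\left(-1\right)^{m}$ coming from $\left(-X\right)^{m}$ and keep $Q$ monic, regardless of the parity of $m$.
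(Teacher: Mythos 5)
Your proposal is correct, and in fact it contains both available proofs. Your primary argument — constructing $Q(X)=(-1)^{m}P(-X)$ and checking directly that it is monic of degree $m$ with $Q(-x)=0$ — is exactly the "direct proof using Assertion $\mathcal{A}$" that the paper alludes to in its first sentence but does not write out; your sign bookkeeping with the factor $(-1)^{m}$ is right, since $(-1)^{m}(-X)^{m}=X^{m}$ and the lower-order coefficients become $(-1)^{m-k}a_{k}\in A$. Your "slicker alternative" is precisely the proof the paper actually gives: Theorem~\ref{Theorem5a} with $a=-1$ shows $-1_{B}$ is $1$-integral, and Theorem~\ref{Theorem5bc}~\textbf{(b)} then gives $(-1_{B})\cdot x=-x$ is $1\cdot m$-integral. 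The direct construction is self-contained and avoids invoking the (heavier) sum/product machinery, at the small cost of a sign computation; the paper's route is shorter on the page but leans on Theorem~\ref{Theorem5bc}, which itself uses Theorem~\ref{Theorem4}. Either is fine here.
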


\begin{proof}
[Proof of Corollary~\ref{cor.-x-int}.]This is easy to prove directly (using
Assertion $\mathcal{A}$ of Theorem~\ref{Theorem1}), but the slickest proof is
using Theorem~\ref{Theorem5bc} \textbf{(b)}: The element $\left(  -1\right)
\cdot1_{B}\in B$ is $1$-integral over $A$ (by Theorem~\ref{Theorem5a}, applied
to $a=-1$). Thus, $x\cdot\left(  \left(  -1\right)  \cdot1_{B}\right)  $ is
$1m$-integral over $A$ (by Theorem~\ref{Theorem5bc} \textbf{(b)}, applied to
$y=\left(  -1\right)  \cdot1_{B}$ and $n=1$). In other words, $-x$ is
$m$-integral over $A$ (since $x\cdot\underbrace{\left(  \left(  -1\right)
\cdot1_{B}\right)  }_{=-1_{B}}=x\cdot\left(  -1_{B}\right)  =-x\cdot1_{B}=-x$
and $1m=m$). This proves Corollary~\ref{cor.-x-int}.
\end{proof}

\begin{corollary}
\label{cor.x-y-int} Let $A$ be a ring. Let $B$ be an $A$-algebra. Let $x\in B$
and $y\in B$. Let $m\in\mathbb{N}$ and $n\in\mathbb{N}$. Assume that $x$ is
$m$-integral over $A$, and that $y$ is $n$-integral over $A$. Then, $x-y$ is
$nm$-integral over $A$.
\end{corollary}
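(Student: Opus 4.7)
The plan is to reduce this to Theorem~\ref{Theorem5bc}~\textbf{(a)} (integrality of sums) via Corollary~\ref{cor.-x-int} (integrality is preserved under negation). The argument is essentially a one-liner: write $x - y = x + (-y)$ and apply the sum result to $x$ and $-y$.

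More concretely, I would first invoke Corollary~\ref{cor.-x-int} on $y$: since $y$ is $n$-integral over $A$, we conclude that $-y$ is $n$-integral over $A$. Next, since $x$ is $m$-integral over $A$ and $-y$ is $n$-integral over $A$, Theorem~\ref{Theorem5bc}~\textbf{(a)} (applied with $y$ replaced by $-y$) yields that $x + (-y)$ is $nm$-integral over $A$. Finally, I would observe that $x + (-y) = x - y$, which completes the proof.

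There is no real obstacle here: the corollary is a routine consequence of the two preceding results, and both of the hypotheses needed (integrality of $x$ and $-y$ of the required degrees) are immediately available. The only minor care point is to cite the correct form of Theorem~\ref{Theorem5bc}~\textbf{(a)}, making sure the roles of $m$ and $n$ match: $x$ plays the role of the element integral of degree $m$, while $-y$ plays the role of the element integral of degree $n$, so the conclusion is $nm$-integrality (not $mn$-integrality, which happens to be the same number but matches the statement's phrasing).
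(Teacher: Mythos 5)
Your proposal is correct and matches the paper's own proof exactly: apply Corollary~\ref{cor.-x-int} to get that $-y$ is $n$-integral over $A$, then apply Theorem~\ref{Theorem5bc} \textbf{(a)} to $x$ and $-y$, and rewrite $x + (-y) = x - y$. No gaps, and no meaningful deviation from the paper's argument.
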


\begin{proof}
[Proof of Corollary~\ref{cor.x-y-int}.]We know that $y$ is $n$-integral over
$A$. Hence, Corollary~\ref{cor.-x-int} (applied to $y$ and $n$ instead of $x$
and $m$) shows that $-y$ is $n$-integral over $A$. Thus,
Theorem~\ref{Theorem5bc} \textbf{(a)} (applied to $-y$ instead of $y$) shows
that $x+\left(  -y\right)  $ is $nm$-integral over $A$. In other words, $x-y$
is $nm$-integral over $A$ (since $x+\left(  -y\right)  =x-y$). This proves
Corollary~\ref{cor.x-y-int}.
\end{proof}

\subsection{Some further consequences}

\begin{theorem}
\label{Theorem2} Let $A$ be a ring. Let $B$ be an $A$-algebra. Let
$n\in\mathbb{N}^{+}$. Let $v\in B$. Let $a_{0},a_{1},\ldots,a_{n}$ be $n+1$
elements of $A$ such that $\sum\limits_{i=0}^{n}a_{i}v^{i}=0$. Let
$k\in\left\{  0,1,\ldots,n\right\}  $. Then, $\sum\limits_{i=0}^{n-k}%
a_{i+k}v^{i}$ is $n$-integral over $A$.
\end{theorem}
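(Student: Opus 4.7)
My plan is to directly exhibit an $n$-generated $A$-submodule $U$ of $B$ that is stable under multiplication by $w := \sum_{i=0}^{n-k} a_{i+k} v^i$ and contains $1$, and then invoke Assertion $\mathcal{C}$ of Theorem~\ref{Theorem1}. The natural candidate is
\[
U = \left\langle v^0, v^1, \ldots, v^{n-1} \right\rangle_A,
\]
which is $n$-generated and plainly contains $1 = v^0$.

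The only real task is to check $w \cdot U \subseteq U$, i.e.\ $w v^j \in U$ for every $j \in \{0, 1, \ldots, n-1\}$. The key identity is obtained by splitting the hypothesis $\sum_{i=0}^n a_i v^i = 0$ at index $k$: multiplying $w$ by $v^k$ gives
\[
v^k w = \sum_{i=0}^{n-k} a_{i+k} v^{i+k} = \sum_{i=k}^{n} a_i v^i = -\sum_{i=0}^{k-1} a_i v^i,
\]
and this latter sum manifestly lies in $U$ (as all exponents are at most $k-1 \leq n-1$). With this identity in hand I would treat two cases. For $j \in \{0, 1, \ldots, k-1\}$, expand $w v^j = \sum_{i=0}^{n-k} a_{i+k} v^{i+j}$ directly; the highest exponent appearing is $n-k+j \leq n-1$, so $w v^j \in U$. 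For $j \in \{k, k+1, \ldots, n-1\}$, rewrite $w v^j = v^{j-k} \cdot (v^k w) = -\sum_{i=0}^{k-1} a_i v^{i+j-k}$; here the highest exponent is $j-1 \leq n-2$, so again $w v^j \in U$.

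Having verified that $U$ is an $n$-generated $A$-submodule of $B$ with $1 \in U$ and $wU \subseteq U$, Assertion $\mathcal{C}$ of Theorem~\ref{Theorem1} (applied to $u = w$) concludes that $w$ is $n$-integral over $A$, as desired. I expect no real obstacles: the identity $v^k w = -\sum_{i=0}^{k-1} a_i v^i$ is the single substantive observation, and after it the case analysis is routine bookkeeping on exponents. The boundary cases $k=0$ (where $w = 0$, which is trivially $n$-integral, and Case~1 is vacuous) and $k=n$ (where $w = a_n \in A$, and Case~2 is vacuous) are handled uniformly by the same argument.
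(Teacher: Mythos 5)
Your proof is correct and takes the same route as the paper: same candidate module $U = \langle v^0, \ldots, v^{n-1}\rangle_A$, same key identity $v^k w = -\sum_{i=0}^{k-1} a_i v^i$, same two-case analysis on the exponent $j$ against $k$, and the same invocation of Assertion $\mathcal{C}$ of Theorem~\ref{Theorem1}. The bookkeeping on exponents in both cases is accurate.
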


\begin{proof}
[Proof of Theorem~\ref{Theorem2}.]Let $u=\sum\limits_{i=0}^{n-k}a_{i+k}v^{i}$.
Then,%
\begin{align*}
0  &  =\sum\limits_{i=0}^{n}a_{i}v^{i}=\sum\limits_{i=0}^{k-1}a_{i}v^{i}%
+\sum\limits_{i=k}^{n}a_{i}v^{i}=\sum\limits_{i=0}^{k-1}a_{i}v^{i}%
+\sum\limits_{i=0}^{n-k}a_{i+k}\underbrace{v^{i+k}}_{=v^{i}v^{k}}\\
&  \ \ \ \ \ \ \ \ \ \ \left(  \text{here, we substituted }i+k\text{ for
}i\text{\ in the second sum}\right) \\
&  =\sum\limits_{i=0}^{k-1}a_{i}v^{i}+\underbrace{\sum\limits_{i=0}%
^{n-k}a_{i+k}v^{i}v^{k}}_{=v^{k}\sum\limits_{i=0}^{n-k}a_{i+k}v^{i}}%
=\sum\limits_{i=0}^{k-1}a_{i}v^{i}+v^{k}\underbrace{\sum\limits_{i=0}%
^{n-k}a_{i+k}v^{i}}_{=u}=\sum\limits_{i=0}^{k-1}a_{i}v^{i}+v^{k}u,
\end{align*}
so that
\[
v^{k}u=-\sum\limits_{i=0}^{k-1}a_{i}v^{i}.
\]

\begin{vershort}
Let $U$ be the $A$-submodule $\left\langle v^{0},v^{1},\ldots,v^{n-1}%
\right\rangle _{A}$ of $B$. Thus, $U$ is an $n$-generated $A$-module, and
$1=v^{0}\in U$.
\end{vershort}

\begin{verlong}
Let $U$ be the $A$-submodule $\left\langle v^{0},v^{1},\ldots,v^{n-1}%
\right\rangle _{A}$ of $B$. Then, $v^{0},v^{1},\ldots,v^{n-1}$ are $n$
elements of $U$. Hence, $U$ is an $n$-generated $A$-module (since
$U=\left\langle v^{0},v^{1},\ldots,v^{n-1}\right\rangle _{A}$). Besides,
$n\in\mathbb{N}^{+}$ and thus $0\in\left\{  0,1,\ldots,n-1\right\}  $.
Therefore, $v^{0}$ is one of the $n$ elements $v^{0},v^{1},\ldots,v^{n-1}$;
hence, $v^{0}\in\left\langle v^{0},v^{1},\ldots,v^{n-1}\right\rangle _{A}=U$.
Thus, $1=v^{0}\in U$.

Note that $U$ is an $A$-submodule of $B$, and thus is closed under $A$-linear combination.
\end{verlong}

Now, we are going to show that%
\begin{equation}
uv^{s}\in U\ \ \ \ \ \ \ \ \ \ \text{for any }s\in\left\{  0,1,\ldots
,n-1\right\}  . \label{3}%
\end{equation}

\begin{vershort}
[\textit{Proof of \eqref{3}.} Let $s\in\left\{  0,1,\ldots,n-1\right\}  $.
Thus, we have either $s<k$ or $s\geq k$. In the case $s<k$, the relation
\eqref{3} follows from%
\begin{align*}
uv^{s}  &  =\sum\limits_{i=0}^{n-k}a_{i+k}\underbrace{v^{i}\cdot v^{s}%
}_{=v^{i+s}}\ \ \ \ \ \ \ \ \ \ \left(  \text{since }u=\sum\limits_{i=0}%
^{n-k}a_{i+k}v^{i}\right) \\
&  =\sum\limits_{i=0}^{n-k}a_{i+k}v^{i+s}\in U
\end{align*}
(since every $i\in\left\{  0,1,\ldots,n-k\right\}  $ satisfies $i+s\in\left\{
0,1,\ldots,n-1\right\}  $\ \ \ \ \footnote{Here we are using $s<k$.}, and thus
$\sum\limits_{i=0}^{n-k}a_{i+k}v^{i+s}\in\left\langle v^{0},v^{1}%
,\ldots,v^{n-1}\right\rangle _{A}=U$). In the case $s\geq k$, the relation
\eqref{3} follows from%
\begin{align*}
uv^{s}  &  =u\underbrace{v^{k+\left(  s-k\right)  }}_{=v^{k}v^{s-k}}%
=v^{k}u\cdot v^{s-k}=-\sum\limits_{i=0}^{k-1}a_{i}\underbrace{v^{i}\cdot
v^{s-k}}_{=v^{i+\left(  s-k\right)  }}\ \ \ \ \ \ \ \ \ \ \left(  \text{since
}v^{k}u=-\sum\limits_{i=0}^{k-1}a_{i}v^{i}\right) \\
&  =-\sum\limits_{i=0}^{k-1}a_{i}v^{i+\left(  s-k\right)  }\in U
\end{align*}
(since every $i\in\left\{  0,1,\ldots,k-1\right\}  $ satisfies $i+\left(
s-k\right)  \in\left\{  0,1,\ldots,n-1\right\}  $\ \ \ \ \footnote{Here we are
using $s\geq k$ and $s\in\left\{  0,1,\ldots,n-1\right\}  $.}, and thus
$-\sum\limits_{i=0}^{k-1}a_{i}v^{i+\left(  s-k\right)  }\in\left\langle
v^{0},v^{1},\ldots,v^{n-1}\right\rangle _{A}=U$). Hence, \eqref{3} is proven
in both possible cases, and thus the proof of \eqref{3} is complete.]
\end{vershort}

\begin{verlong}
[\textit{Proof of \eqref{3}.} Let $s\in\left\{  0,1,\ldots,n-1\right\}  $.
Thus, we are in one of the following two cases:

\textit{Case 1:} We have $s<k$.

\textit{Case 2:} We have $s\geq k$.

Let us first consider Case 1. In this case, we have $s<k$. Hence, $s\leq k-1$
(since $s$ and $k$ are integers) and thus $s\in\left\{  0,1,\ldots
,k-1\right\}  $ (since $s\in\left\{  0,1,\ldots,n-1\right\}  $).

For each $i\in\left\{  0,1,\ldots,n-k\right\}  $, we have $i\leq n-k$ and thus
$\underbrace{i}_{\leq n-k}+\underbrace{s}_{\leq k-1}\leq\left(  n-k\right)
+\left(  k-1\right)  =n-1$ and therefore $i+s\in\left\{  0,1,\ldots
,n-1\right\}  $ (since $\underbrace{i}_{\geq0}+\underbrace{s}_{\geq0}\geq0$)
and therefore%
\[
v^{i+s}\in\left\{  v^{0},v^{1},\ldots,v^{n-1}\right\}  \subseteq\left\langle
v^{0},v^{1},\ldots,v^{n-1}\right\rangle _{A}=U.
\]
Hence, $\sum\limits_{i=0}^{n-k}a_{i+k}v^{i+s}$ is an $A$-linear combination of
elements of the set $U$ (since the coefficients $a_{i+k}$ belong to $A$) and
therefore belongs to $U$ itself (since $U$ is closed under $A$-linear
combination). In other words, $\sum\limits_{i=0}^{n-k}a_{i+k}v^{i+s}\in U$.

Now, from $u=\sum\limits_{i=0}^{n-k}a_{i+k}v^{i}$, we obtain
\[
uv^{s}=\sum\limits_{i=0}^{n-k}a_{i+k}\underbrace{v^{i}\cdot v^{s}}_{=v^{i+s}%
}=\sum\limits_{i=0}^{n-k}a_{i+k}v^{i+s}\in U.
\]
Hence, \eqref{3} is proven in Case 1.

Let us next consider Case 2. In this case, we have $s\geq k$. Hence,
$s-k\geq0$. Also, $s\leq n-1$ (since $s\in\left\{  0,1,\ldots,n-1\right\}  $).

For each $i\in\left\{  0,1,\ldots,k-1\right\}  $, we have $i\geq0$ and $i\leq
k-1$ and thus $\underbrace{i}_{\leq k-1}+\left(  s-k\right)  \leq\left(
k-1\right)  +\left(  s-k\right)  =s-1\leq s\leq n-1$ and therefore $i+\left(
s-k\right)  \in\left\{  0,1,\ldots,n-1\right\}  $ (since $\underbrace{i}%
_{\geq0}+\left(  s-k\right)  \geq s-k\geq0$) and thus%
\[
v^{i+\left(  s-k\right)  }\in\left\{  v^{0},v^{1},\ldots,v^{n-1}\right\}
\subseteq\left\langle v^{0},v^{1},\ldots,v^{n-1}\right\rangle _{A}=U.
\]
Hence, $\sum\limits_{i=0}^{k-1}a_{i}v^{i+\left(  s-k\right)  }$ is an
$A$-linear combination of elements of the set $U$ (since the coefficients
$a_{i}$ belong to $A$) and therefore belongs to $U$ itself (since $U$ is
closed under $A$-linear combination). In other words, $\sum\limits_{i=0}%
^{k-1}a_{i}v^{i+\left(  s-k\right)  }\in U$.

From $s-k\geq0$, we obtain $s-k\in\mathbb{N}$ and thus $v^{s}=v^{k+\left(
s-k\right)  }=v^{k}v^{s-k}$. Hence,%
\begin{align*}
uv^{s}  &  =uv^{k}v^{s-k}=v^{k}u\cdot v^{s-k}=-\sum\limits_{i=0}^{k-1}%
a_{i}\underbrace{v^{i}\cdot v^{s-k}}_{=v^{i+\left(  s-k\right)  }%
}\ \ \ \ \ \ \ \ \ \ \left(  \text{since }v^{k}u=-\sum\limits_{i=0}^{k-1}%
a_{i}v^{i}\right) \\
&  =-\underbrace{\sum\limits_{i=0}^{k-1}a_{i}v^{i+\left(  s-k\right)  }}_{\in
U}\in-U\subseteq U\ \ \ \ \ \ \ \ \ \ \left(  \text{since }U\text{ is an
}A\text{-module}\right)  .
\end{align*}
Hence, \eqref{3} is proven in Case 2.

Hence, in both cases, we have proven \eqref{3}. This completes the proof of \eqref{3}.]
\end{verlong}

\begin{vershort}
Now, from $U=\left\langle v^{0},v^{1},\ldots,v^{n-1}\right\rangle _{A}$, we
obtain%
\begin{align*}
uU  &  =u\left\langle v^{0},v^{1},\ldots,v^{n-1}\right\rangle _{A}%
=\left\langle uv^{0},uv^{1},\ldots,uv^{n-1}\right\rangle _{A}\\
&  =\left\langle uv^{s}\ \mid\ s\in\left\{  0,1,\ldots,n-1\right\}
\right\rangle _{A}\subseteq U\ \ \ \ \ \ \ \ \ \ \left(  \text{by \eqref{3}
and Lemma \ref{lem.mil}}\right)  .
\end{align*}

\end{vershort}

\begin{verlong}
Thus we know that $uv^{s}\in U$ for every $s\in\left\{  0,1,\ldots
,n-1\right\}  $. Hence, Lemma \ref{lem.mil} (applied to $M=B$, $N=U$,
$S=\left\{  0,1,\ldots,n-1\right\}  $ and $m_{s}=uv^{s}$) yields%
\[
\left\langle uv^{s}\ \mid\ s\in\left\{  0,1,\ldots,n-1\right\}  \right\rangle
_{A}\subseteq U.
\]

Now, from $U=\left\langle v^{0},v^{1},\ldots,v^{n-1}\right\rangle _{A}$, we
obtain%
\begin{align*}
uU  &  =u\left\langle v^{0},v^{1},\ldots,v^{n-1}\right\rangle _{A}%
=\left\langle uv^{0},uv^{1},\ldots,uv^{n-1}\right\rangle _{A}\\
&  =\left\langle uv^{s}\ \mid\ s\in\left\{  0,1,\ldots,n-1\right\}
\right\rangle _{A}\subseteq U.
\end{align*}

\end{verlong}

Altogether, $U$ is an $n$-generated $A$-submodule of $B$ such that $1\in U$
and $uU\subseteq U$. Thus, $u\in B$ satisfies Assertion $\mathcal{C}$ of
Theorem~\ref{Theorem1}. Hence, $u\in B$ satisfies the four equivalent
assertions $\mathcal{A}$, $\mathcal{B}$, $\mathcal{C}$ and $\mathcal{D}$ of
Theorem~\ref{Theorem1}. Consequently, $u$ is $n$-integral over $A$. Since
$u=\sum\limits_{i=0}^{n-k}a_{i+k}v^{i}$, this means that $\sum\limits_{i=0}%
^{n-k}a_{i+k}v^{i}$ is $n$-integral over $A$. This proves
Theorem~\ref{Theorem2}.
\end{proof}

\begin{corollary}
\label{Corollary3} Let $A$ be a ring. Let $B$ be an $A$-algebra. Let
$\alpha\in\mathbb{N}$ and $\beta\in\mathbb{N}$ be such that $\alpha+\beta
\in\mathbb{N}^{+}$. Let $u\in B$ and $v\in B$. Let $s_{0},s_{1},\ldots
,s_{\alpha}$ be $\alpha+1$ elements of $A$ such that $\sum\limits_{i=0}%
^{\alpha}s_{i}v^{i}=u$. Let $t_{0},t_{1},\ldots,t_{\beta}$ be $\beta+1$
elements of $A$ such that $\sum\limits_{i=0}^{\beta}t_{i}v^{\beta-i}%
=uv^{\beta}$. Then, $u$ is $\left(  \alpha+\beta\right)  $-integral over $A$.
\end{corollary}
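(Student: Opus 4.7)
The plan is to reduce this to Theorem~\ref{Theorem2} by combining the two given identities into a single polynomial identity in $v$ with coefficients in $A$. Set $n = \alpha + \beta \in \mathbb{N}^+$. The element $u$ itself appears in both hypotheses, which prevents a direct application of Theorem~\ref{Theorem2}; the trick is to eliminate $u$ first, obtaining a dependence relation purely on powers of $v$, and then recover $u$ from that relation.

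First I would multiply the first hypothesis $\sum_{i=0}^{\alpha} s_i v^i = u$ by $v^\beta$, producing $\sum_{i=0}^{\alpha} s_i v^{i+\beta} = u v^\beta$. The second hypothesis says $u v^\beta = \sum_{i=0}^{\beta} t_i v^{\beta - i}$, so equating the right-hand sides gives
\[
\sum_{i=0}^{\beta} t_i v^{\beta - i} - \sum_{i=0}^{\alpha} s_i v^{i+\beta} = 0.
\]
I then collect this as $\sum_{j=0}^{n} c_j v^j = 0$ with $c_j \in A$ defined by $c_j = t_{\beta - j}$ for $j < \beta$, by $c_\beta = t_0 - s_0$, and by $c_j = -s_{j - \beta}$ for $j > \beta$. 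Since all $c_j$ lie in $A$ and $n \in \mathbb{N}^+$, the hypotheses of Theorem~\ref{Theorem2} are satisfied.

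Next I would apply Theorem~\ref{Theorem2} with $k = \beta \in \{0, 1, \ldots, n\}$. The resulting element $\sum_{i=0}^{n-\beta} c_{i+\beta} v^i = \sum_{i=0}^{\alpha} c_{i+\beta} v^i$ telescopes via the above formulas for $c_j$ to
\[
(t_0 - s_0) + \sum_{i=1}^{\alpha} (-s_i) v^i = t_0 - \sum_{i=0}^{\alpha} s_i v^i = t_0 - u,
\]
so Theorem~\ref{Theorem2} yields that $t_0 - u$ is $n$-integral over $A$. Finally, Theorem~\ref{Theorem5a} gives that $t_0 \cdot 1_B$ is $1$-integral over $A$, so Corollary~\ref{cor.x-y-int} (applied to $x = t_0 \cdot 1_B$ and $y = t_0 - u$) shows that $u = (t_0 \cdot 1_B) - (t_0 - u)$ is $n \cdot 1 = n = \alpha + \beta$-integral over $A$, as required.

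The main obstacle, and really the only genuine idea, is recognizing the correct way to merge the two hypotheses: multiplying the first by $v^\beta$ so that both sides feature $u v^\beta$, which can then be eliminated. After that, everything is bookkeeping—the key parameter $k = \beta$ is exactly the one that splits the relation so that the ``tail'' $\sum_{i=0}^{\alpha} c_{i+\beta} v^i$ reconstructs $-u$ (up to the constant $t_0$), letting Theorem~\ref{Theorem2} do the heavy lifting and Theorem~\ref{Theorem5bc} clean up the constant shift.
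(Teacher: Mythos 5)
Your proposal is correct and follows essentially the same route as the paper's first proof of Corollary~\ref{Corollary3}: the same coefficients $a_i$ (your $c_j$), the same choice $k=\beta$ in Theorem~\ref{Theorem2}, the same identification of the resulting element as $t_0\cdot 1_B - u$, and the same final shift by the constant $t_0\cdot 1_B$. The only cosmetic difference is that you finish in one step via Corollary~\ref{cor.x-y-int}, while the paper applies Corollary~\ref{cor.-x-int} and then Theorem~\ref{Theorem5bc}~\textbf{(a)} separately; since Corollary~\ref{cor.x-y-int} is derived from exactly those two facts, this is the same argument.
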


(This Corollary~\ref{Corollary3} generalizes \cite[Exercise 2-5]{1}, which
says that if $v$ is an invertible element of an $A$-algebra $B$, then every
element $u\in A\left[  v\right]  \cap A\left[  v^{-1}\right]  $ is integral
over $A$. To see how this follows from Corollary~\ref{Corollary3}, just pick
$\alpha\in\mathbb{N}^{+}$ and $\beta\in\mathbb{N}^{+}$ and $s_{0},s_{1}%
,\ldots,s_{\alpha}\in A$ and $t_{0},t_{1},\ldots,t_{\beta}\in A$ such that
$\sum\limits_{i=0}^{\alpha}s_{i}v^{i}=u$ and $\sum\limits_{i=0}^{\beta}%
t_{i}\left(  v^{-1}\right)  ^{i}=u$.)

\begin{proof}
[First proof of Corollary~\ref{Corollary3}.]Let $k=\beta$ and $n=\alpha+\beta
$. Then, $k\in\left\{  0,1,\ldots,n\right\}  $ (since $\alpha\in\mathbb{N}$
and $\beta\in\mathbb{N}$) and $n=\alpha+\beta\in\mathbb{N}^{+}$ and
$n-\beta=\alpha$ (since $n=\alpha+\beta$). Define $n+1$ elements $a_{0}%
,a_{1},\ldots,a_{n}$ of $A$ by%
\[
a_{i}=%
\begin{cases}
t_{\beta-i}, & \text{ if }i<\beta;\\
t_{0}-s_{0}, & \text{ if }i=\beta;\\
-s_{i-\beta}, & \text{ if }i>\beta
\end{cases}
\ \ \ \ \ \ \ \ \ \ \text{for every }i\in\left\{  0,1,\ldots,n\right\}  .
\]

\begin{vershort}
Then, from $n=\alpha+\beta$, we obtain%
\begin{align*}
\sum\limits_{i=0}^{n}a_{i}v^{i}  &  =\sum\limits_{i=0}^{\alpha+\beta}%
a_{i}v^{i}=\sum\limits_{i=0}^{\beta-1}\underbrace{a_{i}}_{=t_{\beta-i}}%
v^{i}+\underbrace{a_{\beta}}_{=t_{0}-s_{0}}v^{\beta}+\sum\limits_{i=\beta
+1}^{\alpha+\beta}\underbrace{a_{i}}_{\substack{=-s_{i-\beta}}}v^{i}\\
&  =\underbrace{\sum\limits_{i=0}^{\beta-1}t_{\beta-i}v^{i}}_{\substack{=\sum
\limits_{i=1}^{\beta}t_{i}v^{\beta-i}\\\text{(here, we have substituted
}i\\\text{for }\beta-i\text{ in the sum)}}}+\underbrace{\left(  t_{0}%
-s_{0}\right)  v^{\beta}}_{=t_{0}v^{\beta}-s_{0}v^{\beta}}+\underbrace{\sum
\limits_{i=\beta+1}^{\alpha+\beta}\left(  -s_{i-\beta}\right)  v^{i}%
}_{\substack{=-\sum\limits_{i=\beta+1}^{\alpha+\beta}s_{i-\beta}v^{i}%
=-\sum\limits_{i=1}^{\alpha}s_{i}v^{i+\beta}\\\text{(here, we have substituted
}i\\\text{for }i-\beta\text{ in the sum)}}}\\
&  =\sum\limits_{i=1}^{\beta}t_{i}v^{\beta-i}+t_{0}v^{\beta}-s_{0}v^{\beta
}-\sum\limits_{i=1}^{\alpha}s_{i}v^{i+\beta}\\
&  =\underbrace{\sum\limits_{i=1}^{\beta}t_{i}v^{\beta-i}+t_{0}v^{\beta}%
}_{=\sum\limits_{i=0}^{\beta}t_{i}v^{\beta-i}=uv^{\beta}}-\underbrace{\left(
s_{0}v^{\beta}+\sum\limits_{i=1}^{\alpha}s_{i}v^{i+\beta}\right)
}_{\substack{=\sum\limits_{i=0}^{\alpha}s_{i}v^{i+\beta}=\left(
\sum\limits_{i=0}^{\alpha}s_{i}v^{i}\right)  v^{\beta}=uv^{\beta
}\\\text{(since }\sum\limits_{i=0}^{\alpha}s_{i}v^{i}=u\text{)}}}=uv^{\beta
}-uv^{\beta}=0.
\end{align*}

\end{vershort}

\begin{verlong}
Then, from $n=\alpha+\beta$, we obtain%
\begin{align*}
\sum\limits_{i=0}^{n}a_{i}v^{i}  &  =\sum\limits_{i=0}^{\alpha+\beta}%
a_{i}v^{i}=\sum\limits_{i=0}^{\beta-1}\underbrace{a_{i}}_{\substack{=t_{\beta
-i}\\\text{(by the}\\\text{definition of }a_{i}\text{,}\\\text{since }%
i<\beta\text{)}}}v^{i}+\sum\limits_{i=\beta}^{\beta}\underbrace{a_{i}%
}_{\substack{=t_{0}-s_{0}\\\text{(by the}\\\text{definition of }a_{i}%
\text{,}\\\text{since }i=\beta\text{)}}}v^{i}+\sum\limits_{i=\beta+1}%
^{\alpha+\beta}\underbrace{a_{i}}_{\substack{=-s_{i-\beta}\\\text{(by
the}\\\text{definition of }a_{i}\text{,}\\\text{since }i>\beta\text{)}}%
}v^{i}\\
&  =\sum\limits_{i=0}^{\beta-1}t_{\beta-i}v^{i}+\underbrace{\sum
\limits_{i=\beta}^{\beta}\left(  t_{0}-s_{0}\right)  v^{i}}%
_{\substack{=\left(  t_{0}-s_{0}\right)  v^{\beta}\\=t_{0}v^{\beta}%
-s_{0}v^{\beta}}}+\underbrace{\sum\limits_{i=\beta+1}^{\alpha+\beta}\left(
-s_{i-\beta}\right)  v^{i}}_{=-\sum\limits_{i=\beta+1}^{\alpha+\beta
}s_{i-\beta}v^{i}}\\
&  =\sum\limits_{i=0}^{\beta-1}t_{\beta-i}v^{i}+t_{0}v^{\beta}-s_{0}v^{\beta
}-\sum\limits_{i=\beta+1}^{\alpha+\beta}s_{i-\beta}v^{i}\\
&  =\sum\limits_{i=0}^{\beta-1}t_{\beta-i}v^{i}+t_{0}v^{\beta}-\left(
s_{0}v^{\beta}+\sum\limits_{i=\beta+1}^{\alpha+\beta}s_{i-\beta}v^{i}\right)
\\
&  =\sum\limits_{i=0}^{\beta-1}t_{\beta-i}v^{i}+t_{0}v^{\beta}-\left(
s_{0}v^{\beta}+\sum\limits_{i=1}^{\alpha}\underbrace{s_{\left(  i+\beta
\right)  -\beta}}_{=s_{i}}\underbrace{v^{i+\beta}}_{=v^{i}v^{\beta}}\right) \\
&  \ \ \ \ \ \ \ \ \ \ \left(  \text{here, we substituted }i+\beta\text{ for
}i\text{ in the second sum}\right) \\
&  =\sum\limits_{i=0}^{\beta-1}t_{\beta-i}v^{i}+t_{0}v^{\beta}-\left(
s_{0}v^{\beta}+\sum\limits_{i=1}^{\alpha}s_{i}v^{i}v^{\beta}\right) \\
&  =\sum\limits_{i=1}^{\beta}\underbrace{t_{\beta-\left(  \beta-i\right)  }%
}_{=t_{i}}v^{\beta-i}+t_{0}\underbrace{v^{\beta}}_{=v^{\beta-0}}-\left(
s_{0}\underbrace{v^{\beta}}_{=v^{0}v^{\beta}}+\sum\limits_{i=1}^{\alpha}%
s_{i}v^{i}v^{\beta}\right) \\
&  \ \ \ \ \ \ \ \ \ \ \left(  \text{here, we substituted }\beta-i\text{ for
}i\text{ in the first sum}\right) \\
&  =\sum\limits_{i=1}^{\beta}t_{i}v^{\beta-i}+t_{0}v^{\beta-0}-\left(
s_{0}v^{0}v^{\beta}+\sum\limits_{i=1}^{\alpha}s_{i}v^{i}v^{\beta}\right) \\
&  =\underbrace{\sum\limits_{i=1}^{\beta}t_{i}v^{\beta-i}+t_{0}v^{\beta-0}%
}_{=\sum\limits_{i=0}^{\beta}t_{i}v^{\beta-i}=uv^{\beta}}-\left(
\underbrace{s_{0}v^{0}+\sum\limits_{i=1}^{\alpha}s_{i}v^{i}}_{=\sum
\limits_{i=0}^{\alpha}s_{i}v^{i}=u}\right)  v^{\beta}=uv^{\beta}-uv^{\beta}=0.
\end{align*}

\end{verlong}

Thus, Theorem~\ref{Theorem2} yields that $\sum\limits_{i=0}^{n-k}a_{i+k}v^{i}$
is $n$-integral over $A$.

\begin{vershort}
But $k=\beta$ and thus%
\begin{align*}
\sum\limits_{i=0}^{n-k}a_{i+k}v^{i}  &  =\sum\limits_{i=0}^{n-\beta}%
a_{i+\beta}v^{i}=\sum\limits_{i=\beta}^{n}a_{i}v^{i-\beta}%
\ \ \ \ \ \ \ \ \ \ \left(
\begin{array}
[c]{c}%
\text{here, we have substituted }i-\beta\\
\text{for }i\text{ in the sum}%
\end{array}
\right) \\
&  =\sum\limits_{i=\beta}^{\beta}\underbrace{a_{i}}_{\substack{=t_{0}%
-s_{0}\\\text{(by the}\\\text{definition of }a_{i}\text{,}\\\text{since
}i=\beta\text{)}}}v^{i-\beta}+\sum\limits_{i=\beta+1}^{n}\underbrace{a_{i}%
}_{\substack{=-s_{i-\beta}\\\text{(by the}\\\text{definition of }a_{i}%
\text{,}\\\text{since }i>\beta\text{)}}}v^{i-\beta}\\
&  =\underbrace{\sum\limits_{i=\beta}^{\beta}\left(  t_{0}-s_{0}\right)
v^{i-\beta}}_{\substack{_{\substack{=\left(  t_{0}-s_{0}\right)
v^{\beta-\beta}}}\\=\left(  t_{0}-s_{0}\right)  v^{0}\\=t_{0}v^{0}-s_{0}v^{0}%
}}+\underbrace{\sum\limits_{i=\beta+1}^{n}\left(  -s_{i-\beta}\right)
v^{i-\beta}}_{\substack{=-\sum\limits_{i=\beta+1}^{n}s_{i-\beta}v^{i-\beta
}=-\sum\limits_{i=1}^{n-\beta}s_{i}v^{i}\\\text{(here, we have substituted
}i\\\text{for }i-\beta\text{ in the sum)}}}\\
&  =t_{0}v^{0}-s_{0}v^{0}-\sum\limits_{i=1}^{n-\beta}s_{i}v^{i}=t_{0}%
v^{0}-s_{0}v^{0}-\sum\limits_{i=1}^{\alpha}s_{i}v^{i}%
\ \ \ \ \ \ \ \ \ \ \left(  \text{since }n-\beta=\alpha\right) \\
&  =t_{0}\underbrace{v^{0}}_{=1_{B}}-\underbrace{\left(  s_{0}v^{0}%
+\sum\limits_{i=1}^{\alpha}s_{i}v^{i}\right)  }_{=\sum\limits_{i=0}^{\alpha
}s_{i}v^{i}=u}=t_{0}\cdot1_{B}-u.
\end{align*}

\end{vershort}

\begin{verlong}
But $k=\beta$ and thus%
\begin{align*}
\sum\limits_{i=0}^{n-k}a_{i+k}v^{i}  &  =\sum\limits_{i=0}^{n-\beta}%
a_{i+\beta}v^{i}=\sum\limits_{i=\beta}^{n}\underbrace{a_{\left(
i-\beta\right)  +\beta}}_{=a_{i}}v^{i-\beta}\\
&  \ \ \ \ \ \ \ \ \ \ \left(  \text{here, we have substituted }i-\beta\text{
for }i\text{ in the sum}\right) \\
&  =\sum\limits_{i=\beta}^{n}a_{i}v^{i-\beta}=\sum\limits_{i=\beta}^{\beta
}\underbrace{a_{i}}_{\substack{=t_{0}-s_{0}\\\text{(by the}\\\text{definition
of }a_{i}\text{,}\\\text{since }i=\beta\text{)}}}v^{i-\beta}+\sum
\limits_{i=\beta+1}^{n}\underbrace{a_{i}}_{\substack{=-s_{i-\beta}\\\text{(by
the}\\\text{definition of }a_{i}\text{,}\\\text{since }i>\beta\text{)}%
}}v^{i-\beta}\\
&  =\underbrace{\sum\limits_{i=\beta}^{\beta}\left(  t_{0}-s_{0}\right)
v^{i-\beta}}_{\substack{=\left(  t_{0}-s_{0}\right)  v^{\beta-\beta}%
}}+\underbrace{\sum\limits_{i=\beta+1}^{n}\left(  -s_{i-\beta}\right)
v^{i-\beta}}_{\substack{=\sum\limits_{i=1}^{n-\beta}\left(  -s_{i}\right)
v^{i}\\\text{(here, we have substituted }i\\\text{for }i-\beta\text{ in the
sum)}}}\\
&  =\left(  t_{0}-s_{0}\right)  \underbrace{v^{\beta-\beta}}_{=v^{0}%
}+\underbrace{\sum\limits_{i=1}^{n-\beta}\left(  -s_{i}\right)  v^{i}%
}_{\substack{=-\sum\limits_{i=1}^{n-\beta}s_{i}v^{i}=-\sum\limits_{i=1}%
^{\alpha}s_{i}v^{i}\\\text{(since }n-\beta=\alpha\text{)}}%
}=\underbrace{\left(  t_{0}-s_{0}\right)  v^{0}}_{=t_{0}v^{0}-s_{0}v^{0}%
}+\left(  -\sum\limits_{i=1}^{\alpha}s_{i}v^{i}\right) \\
&  =t_{0}v^{0}-s_{0}v^{0}+\left(  -\sum\limits_{i=1}^{\alpha}s_{i}%
v^{i}\right)  =t_{0}\underbrace{v^{0}}_{=1_{B}}-\underbrace{\left(  s_{0}%
v^{0}+\sum\limits_{i=1}^{\alpha}s_{i}v^{i}\right)  }_{=\sum\limits_{i=0}%
^{\alpha}s_{i}v^{i}=u}\\
&  =t_{0}\cdot1_{B}-u.
\end{align*}

\end{verlong}

Thus, $t_{0}\cdot1_{B}-u$ is $n$-integral over $A$ (since $\sum\limits_{i=0}%
^{n-k}a_{i+k}v^{i}$ is $n$-integral over $A$). Thus,
Corollary~\ref{cor.-x-int} (applied to $x=t_{0}\cdot1_{B}-u$ and $m=n$) shows
that $-\left(  t_{0}\cdot1_{B}-u\right)  $ is $n$-integral over $A$. In other
words, $u-t_{0}\cdot1_{B}$ is $n$-integral over $A$ (since $-\left(
t_{0}\cdot1_{B}-u\right)  =u-t_{0}\cdot1_{B}$).

On the other hand, $t_{0}\cdot1_{B}$ is $1$-integral over $A$ (by
Theorem~\ref{Theorem5a}, applied to $a=t_{0}$). Thus, $t_{0}\cdot1_{B}+\left(
u-t_{0}\cdot1_{B}\right)  $ is $n\cdot1$-integral over $A$ (by
Theorem~\ref{Theorem5bc} \textbf{(a)}, applied to $x=t_{0}\cdot1_{B}$,
$y=u-t_{0}\cdot1_{B}$ and $m=1$). In other words, $u$ is $\left(  \alpha
+\beta\right)  $-integral over $A$ (since $t_{0}\cdot1_{B}+\left(
u-t_{0}\cdot1_{B}\right)  =u$ and $n\cdot1=n=\alpha+\beta$). This proves
Corollary~\ref{Corollary3}.
\end{proof}

We will provide a second proof of Corollary~\ref{Corollary3} in Section
\ref{sect.5}.

\begin{corollary}
\label{Corollary6} Let $A$ be a ring. Let $B$ be an $A$-algebra. Let
$n\in\mathbb{N}^{+}$ and $m\in\mathbb{N}$. Let $v\in B$. Let $b_{0}%
,b_{1},\ldots,b_{n-1}$ be $n$ elements of $A$, and let $u=\sum\limits_{i=0}%
^{n-1}b_{i}v^{i}$. Assume that $vu$ is $m$-integral over $A$. Then, $u$ is
$nm$-integral over $A$.
\end{corollary}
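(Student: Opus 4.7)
The plan is to show that $u$ is $n$-integral over the subring $A[vu]$, and then invoke transitivity of integrality (Theorem~\ref{Theorem4}) together with the hypothesis that $vu$ is $m$-integral over $A$ to conclude that $u$ is $nm$-integral over $A$. So the whole proof reduces to producing a single monic degree-$n$ polynomial equation for $u$ with coefficients in $A[vu]$.

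The key observation is a trivial identity exploiting commutativity of $B$. Starting from $u = \sum_{i=0}^{n-1} b_i v^i$ and multiplying by $u^{n-1}$, I would write
\[
u^n \;=\; u \cdot u^{n-1} \;=\; \sum_{i=0}^{n-1} b_i v^i u^{n-1} \;=\; \sum_{i=0}^{n-1} b_i (vu)^i \, u^{n-1-i},
\]
where the last step just regroups $v^i u^{n-1} = (v^i u^i)\, u^{n-1-i} = (vu)^i \, u^{n-1-i}$, which is legitimate because $n-1-i \geq 0$ for every $i \in \{0,1,\ldots,n-1\}$ and $B$ is commutative. Rearranging yields
\[
u^n - \sum_{i=0}^{n-1} b_i (vu)^i \, u^{n-1-i} \;=\; 0.
\]

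Next, I would interpret this as a polynomial equation: let $Q(Y) = Y^n - \sum_{i=0}^{n-1} b_i (vu)^i Y^{n-1-i} \in A[vu][Y]$. Since each coefficient $b_i (vu)^i$ lies in $A[vu]$, the polynomial $Q$ is indeed in $A[vu][Y]$; it is monic with $\deg Q = n$, and $Q(u) = 0$. Hence $u$ is $n$-integral over $A[vu]$ (by Assertion $\mathcal{A}$ of Theorem~\ref{Theorem1}, applied with $A[vu]$ in place of $A$).

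Finally, since $vu$ is $m$-integral over $A$ by hypothesis, Theorem~\ref{Theorem4} (applied with $vu$ in the role of $v$) immediately yields that $u$ is $nm$-integral over $A$. There is essentially no obstacle once one spots the identity $v^i u^{n-1} = (vu)^i u^{n-1-i}$; the only thing to be slightly careful about is the case $n = 1$ (where the sum in $Q$ collapses and $Q(Y) = Y - b_0$, still consistent with $u = b_0$ and the desired conclusion that $u$ is $m$-integral over $A$).
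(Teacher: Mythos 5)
Your proof is correct, and it takes a genuinely different route to showing that $u$ is $n$-integral over $A[vu]$. The paper applies Theorem~\ref{Theorem2} (with the base ring taken to be $A[vu]$ and $k=1$) to the relation $\sum_{i=0}^{n}a_iv^i=0$, where $a_0=-vu$ and $a_i=b_{i-1}\cdot 1_B$ for $i>0$; this relies on Theorem~\ref{Theorem2}'s module-theoretic argument (Assertion $\mathcal{C}$ of Theorem~\ref{Theorem1}) to certify integrality without exhibiting an explicit monic equation. You instead spot the identity $v^i u^{n-1}=(vu)^i u^{n-1-i}$, multiply $u=\sum_{i=0}^{n-1}b_iv^i$ by $u^{n-1}$, and read off an explicit monic degree-$n$ polynomial $Q(Y)=Y^n-\sum_{i=0}^{n-1}b_i(vu)^iY^{n-1-i}$ over $A[vu]$ annihilating $u$, i.e.\ you go through Assertion $\mathcal{A}$. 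Both proofs then finish identically by transitivity (Theorem~\ref{Theorem4}). Your approach is more self-contained in that it does not invoke Theorem~\ref{Theorem2}, and it is more constructive in that the monic polynomial is written down directly; the paper's approach has the advantage of reusing a lemma proved earlier and of treating the general truncation statement (arbitrary $k$) uniformly. Your remark on the $n=1$ case is unnecessary but not incorrect: when $n=1$ the sum in $Q$ is the single term $-b_0(vu)^0Y^0=-b_0$, so $Q(Y)=Y-b_0$, exactly as you say.
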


Corollary~\ref{Corollary6} generalizes a folklore fact about integrality,
which states that if $B$ is an $A$-algebra, and if an invertible $v\in B$
satisfies $v^{-1}\in A\left[  v\right]  $, then $v$ is integral over $A$.
(Indeed, this latter fact follows from Corollary~\ref{Corollary6} by setting
$u=v^{-1}$.)

\begin{proof}
[Proof of Corollary~\ref{Corollary6}.]Define $n+1$ elements $a_{0}%
,a_{1},\ldots,a_{n}$ of $A\left[  vu\right]  $ by%
\[
a_{i}=%
\begin{cases}
-vu, & \text{ if }i=0;\\
b_{i-1}\cdot1_{B}, & \text{ if }i>0
\end{cases}
\ \ \ \ \ \ \ \ \ \ \text{for every }i\in\left\{  0,1,\ldots,n\right\}  .
\]
(These are well-defined, since every positive $i\in\left\{  0,1,\ldots
,n\right\}  $ satisfies $i\in\left\{  1,2,\ldots,n\right\}  $ and thus
$i-1\in\left\{  0,1,\ldots,n-1\right\}  $ and thus $b_{i-1}\in A$ and
therefore $b_{i-1}\cdot1_{B}\in A\cdot1_{B}\subseteq A\left[  vu\right]  $.)

The definition of $a_{0}$ yields $a_{0}=-vu$. Also,%
\begin{align*}
\sum\limits_{i=0}^{n}a_{i}v^{i}  &  =\underbrace{a_{0}}_{=-vu}%
\underbrace{v^{0}}_{=1}+\sum\limits_{i=1}^{n}\underbrace{a_{i}}%
_{\substack{=b_{i-1}\cdot1_{B}\\\text{(by the definition}\\\text{of }%
a_{i}\text{, since }i>0\text{)}}}v^{i}=-vu+\sum\limits_{i=1}^{n}b_{i-1}%
\cdot\underbrace{1_{B}v^{i}}_{=v^{i}=v^{i-1}v}\\
&  =-vu+\sum\limits_{i=1}^{n}b_{i-1}v^{i-1}v=-vu+\underbrace{\sum
\limits_{i=0}^{n-1}b_{i}v^{i}}_{=u}v\\
&  \ \ \ \ \ \ \ \ \ \ \left(  \text{here, we substituted }i\text{ for
}i-1\text{\ in the sum}\right) \\
&  =-vu+uv=0.
\end{align*}

Let $k=1$. Then, $k=1\in\left\{  0,1,\ldots,n\right\}  $ (since $n\in
\mathbb{N}^{+}$).

Now, $A\left[  vu\right]  $ is a subring of $B$; hence, $B$ is an $A\left[
vu\right]  $-algebra. The $n+1$ elements $a_{0},a_{1},\ldots,a_{n}$ of
$A\left[  vu\right]  $ satisfy $\sum\limits_{i=0}^{n}a_{i}v^{i}=0$.

Hence, Theorem~\ref{Theorem2} (applied to the ring $A\left[  vu\right]  $ in
lieu of $A$) yields that $\sum\limits_{i=0}^{n-k}a_{i+k}v^{i}$ is $n$-integral
over $A\left[  vu\right]  $. But from $k=1$, we obtain%
\[
\sum\limits_{i=0}^{n-k}a_{i+k}v^{i}=\sum\limits_{i=0}^{n-1}\underbrace{a_{i+1}%
}_{\substack{=b_{\left(  i+1\right)  -1}\cdot1_{B}\\\text{(by the
definition}\\\text{of }a_{i+1}\text{, since }i+1>0\text{)}}}v^{i}%
=\sum\limits_{i=0}^{n-1}\underbrace{b_{\left(  i+1\right)  -1}}_{=b_{i}}%
\cdot\underbrace{1_{B}v^{i}}_{=v^{i}}=\sum\limits_{i=0}^{n-1}b_{i}v^{i}=u.
\]
Hence, $u$ is $n$-integral over $A\left[  vu\right]  $ (since $\sum
\limits_{i=0}^{n-k}a_{i+k}v^{i}$ is $n$-integral over $A\left[  vu\right]  $).
But $vu$ is $m$-integral over $A$. Thus, Theorem~\ref{Theorem4} (applied to
$vu$ in lieu of $v$) yields that $u$ is $nm$-integral over $A$. This proves
Corollary~\ref{Corollary6}.
\end{proof}

\section{\label{sect.2}Integrality over ideal semifiltrations}

\subsection{Definitions of ideal semifiltrations and integrality over them}

We now set our sights at a more general notion of integrality.

\begin{definition}
\label{Definition6} Let $A$ be a ring, and let $\left(  I_{\rho}\right)
_{\rho\in\mathbb{N}}$ be a sequence of ideals of $A$. Then, $\left(  I_{\rho
}\right)  _{\rho\in\mathbb{N}}$ is called an \textit{ideal semifiltration} of
$A$ if and only if it satisfies the two conditions%
\begin{align*}
I_{0}  &  =A;\\
I_{a}I_{b}  &  \subseteq I_{a+b}\ \ \ \ \ \ \ \ \ \ \text{for every }%
a\in\mathbb{N}\text{ and }b\in\mathbb{N}.
\end{align*}

\end{definition}

Two simple examples of ideal semifiltrations can easily be constructed from
any ideal:

\begin{example}
\label{exa.semifil.powers}Let $A$ be a ring. Let $I$ be an ideal of $A$. Then:

\textbf{(a)} The sequence $\left(  I^{\rho}\right)  _{\rho\in\mathbb{N}}$ is
an ideal semifiltration of $A$. (Here, $I^{\rho}$ denotes the $\rho$-th power
of $I$ in the multiplicative monoid of ideals of $A$; in particular, $I^{0}=A$.)

\textbf{(b)} The sequence $\left(  A,I,I,I,\ldots\right)  =\left(
\begin{cases}
A, & \text{if }\rho=0;\\
I, & \text{if }\rho>0
\end{cases}
\right)  _{\rho\in\mathbb{N}}$ is an ideal semifiltration of $A$.
\end{example}

\begin{proof}
[Proof of Example \ref{exa.semifil.powers}.]This is a straightforward exercise
in checking axioms.
\end{proof}

\begin{definition}
\label{Definition9} Let $A$ be a ring. Let $B$ be an $A$-algebra. Let $\left(
I_{\rho}\right)  _{\rho\in\mathbb{N}}$ be an ideal semifiltration of $A$. Let
$n\in\mathbb{N}$. Let $u\in B$.

We say that the element $u$ of $B$ is $n$\textit{-integral over }$\left(
A,\left(  I_{\rho}\right)  _{\rho\in\mathbb{N}}\right)  $ if there exists some
$\left(  a_{0},a_{1},\ldots,a_{n}\right)  \in A^{n+1}$ such that%
\[
\sum\limits_{k=0}^{n}a_{k}u^{k}=0,\ \ \ \ \ \ \ \ \ \ a_{n}%
=1,\ \ \ \ \ \ \ \ \ \ \text{and}\ \ \ \ \ \ \ \ \ \ a_{i}\in I_{n-i}\text{
for every }i\in\left\{  0,1,\ldots,n\right\}  .
\]

\end{definition}

This definition generalizes \cite[Definition 1.1.1]{2} in multiple ways.
Indeed, if $I$ is an ideal of a ring $A$, and if $u\in A$ and $n\in\mathbb{N}%
$, then $u$ is $n$-integral over $\left(  A,\left(  I_{\rho}\right)  _{\rho
\in\mathbb{N}}\right)  $ (here, $\left(  I^{\rho}\right)  _{\rho\in\mathbb{N}%
}$ is the ideal semifiltration from Example \ref{exa.semifil.powers}
\textbf{(a)}) if and only if there is an equation of integral dependence of
$u$ over $I$ (in the sense of \cite[Definition 1.1.1]{2}).

We further notice that integrality over an ideal semifiltration of a ring $A$
is a stronger claim than integrality over $A$:

\begin{proposition}
\label{prop.semifil.stronger-than-ring}Let $A$ be a ring. Let $B$ be an
$A$-algebra. Let $\left(  I_{\rho}\right)  _{\rho\in\mathbb{N}}$ be an ideal
semifiltration of $A$. Let $n\in\mathbb{N}$. Let $u\in B$ be such that $u$ is
$n$-integral over $\left(  A,\left(  I_{\rho}\right)  _{\rho\in\mathbb{N}%
}\right)  $. Then, $u$ is $n$-integral over $A$.
\end{proposition}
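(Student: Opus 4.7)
The plan is to unfold the definition of $n$-integrality over $\left(A, (I_\rho)_{\rho \in \mathbb{N}}\right)$ and observe that it is, essentially by inspection, a strengthening of Assertion $\mathcal{A}$ of Theorem \ref{Theorem1}, which characterizes $n$-integrality over $A$.

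More concretely, I would start from the hypothesis: since $u$ is $n$-integral over $\left(A, (I_\rho)_{\rho \in \mathbb{N}}\right)$, Definition \ref{Definition9} provides a tuple $(a_0, a_1, \ldots, a_n) \in A^{n+1}$ such that $\sum_{k=0}^{n} a_k u^k = 0$, $a_n = 1$, and $a_i \in I_{n-i}$ for every $i \in \{0, 1, \ldots, n\}$. I would then define the polynomial
\[
P(X) = \sum_{k=0}^{n} a_k X^k \in A[X].
\]
(Note that every $a_i$ lies in $A$, because $a_i \in I_{n-i}$ and $I_{n-i}$ is an ideal of $A$; in particular $I_{n-i} \subseteq A$. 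So $P$ is indeed a polynomial in $A[X]$.) Since $a_n = 1$, the polynomial $P$ is monic and $\deg P = n$; substituting $u$ for $X$ gives $P(u) = \sum_{k=0}^{n} a_k u^k = 0$.

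Therefore $P$ is a monic polynomial in $A[X]$ with $\deg P = n$ and $P(u) = 0$, which is precisely Assertion $\mathcal{A}$ of Theorem \ref{Theorem1}. By the equivalence of the four assertions, $u$ satisfies Assertion $\mathcal{A}$ and hence $u$ is $n$-integral over $A$ by Definition \ref{Definition5}.

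There is no real obstacle here: the only content is the observation that the membership conditions $a_i \in I_{n-i}$ force $a_i \in A$ (so the equation of integral dependence over the semifiltration is automatically an equation of integral dependence over $A$ in the ordinary sense). The proof is essentially a one-line unpacking of the definitions, and I would keep the writeup correspondingly short.
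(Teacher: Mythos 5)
Your proof is correct and follows the paper's argument exactly: unfold Definition \ref{Definition9} to get the tuple $(a_0,\ldots,a_n)$, form $P(X)=\sum_{k=0}^n a_k X^k\in A[X]$, and observe that $P$ is monic of degree $n$ with $P(u)=0$, which is Assertion $\mathcal{A}$ of Theorem \ref{Theorem1}. The remark that $a_i\in I_{n-i}\subseteq A$ guarantees $P\in A[X]$ is the same point the paper makes in its detailed version.
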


\begin{proof}
[Proof of Proposition \ref{prop.semifil.stronger-than-ring}.]We know that $u$
is $n$-integral over $\left(  A,\left(  I_{\rho}\right)  _{\rho\in\mathbb{N}%
}\right)  $. Thus, by Definition~\ref{Definition9}, there exists some $\left(
a_{0},a_{1},\ldots,a_{n}\right)  \in A^{n+1}$ such that%
\[
\sum\limits_{k=0}^{n}a_{k}u^{k}=0,\ \ \ \ \ \ \ \ \ \ a_{n}%
=1,\ \ \ \ \ \ \ \ \ \ \text{and}\ \ \ \ \ \ \ \ \ \ a_{i}\in I_{n-i}\text{
for every }i\in\left\{  0,1,\ldots,n\right\}  .
\]
Consider this $\left(  a_{0},a_{1},\ldots,a_{n}\right)  $.

\begin{vershort}
Thus, there exists a monic polynomial $P\in A\left[  X\right]  $ with $\deg
P=n$ and $P\left(  u\right)  =0$ (namely, $P\left(  X\right)  =\sum
\limits_{k=0}^{n}a_{k}X^{k}$).
\end{vershort}

\begin{verlong}
For each $k\in\left\{  0,1,\ldots,n\right\}  $, we have $a_{k}\in I_{n-k}$
(since $a_{i}\in I_{n-i}$ for every $i\in\left\{  0,1,\ldots,n\right\}  $) and
therefore $a_{k}\in I_{n-k}\subseteq A$. Thus, we can define a polynomial
$P\in A\left[  X\right]  $ by $P\left(  X\right)  =\sum\limits_{k=0}^{n}%
a_{k}X^{k}$. Consider this $P$. This polynomial $P$ satisfies $\deg P\leq n$,
and its coefficient before $X^{n}$ is $a_{n}=1$. Hence, this polynomial $P$ is
monic and satisfies $\deg P=n$. Also, by substituting $u$ for $X$ in $P\left(
X\right)  =\sum\limits_{k=0}^{n}a_{k}X^{k}$, we obtain $P\left(  u\right)
=\sum\limits_{k=0}^{n}a_{k}u^{k}=0$. Hence, we have found a monic polynomial
$P\in A\left[  X\right]  $ with $\deg P=n$ and $P\left(  u\right)  =0$.
\end{verlong}

In other words, $u$ is $n$-integral over $A$. This proves Proposition
\ref{prop.semifil.stronger-than-ring}.
\end{proof}

We leave it to the reader to prove the following simple fact, which shows that
nilpotency is an instance of integrality over ideal semifiltrations:

\begin{proposition}
\label{prop.semifil.int-0}Let $A$ be a ring. Let $0A$ be the zero ideal of
$A$. Let $n\in\mathbb{N}$. Let $u\in A$. Then, the element $u$ of $A$ is
$n$-integral over $\left(  A,\left(  \left(  0A\right)  ^{\rho}\right)
_{\rho\in\mathbb{N}}\right)  $ if and only if $u^{n}=0$.
\end{proposition}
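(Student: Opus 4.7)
The plan is to simply unfold Definition~\ref{Definition9} and observe that the ideals $I_\rho = (0A)^\rho$ degenerate so drastically that the integrality condition collapses to the single equation $u^n = 0$. First, I would record the key computation: $I_0 = A$ by the convention for the 0th power, while $I_\rho = (0A)^\rho = 0A$ for every $\rho \geq 1$ (since any product of copies of the zero ideal is the zero ideal). Thus $I_{n-i} = 0A$ for all $i \in \{0,1,\ldots,n-1\}$, and $I_0 = A$.

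For the forward direction, I would assume $u$ is $n$-integral over $(A, (I_\rho)_{\rho\in\mathbb{N}})$ and pick a witness $(a_0,\ldots,a_n) \in A^{n+1}$ as in Definition~\ref{Definition9}. For each $i \in \{0,1,\ldots,n-1\}$, the condition $a_i \in I_{n-i} = 0A$ forces $a_i = 0$. Combined with $a_n = 1$, the equation $\sum_{k=0}^n a_k u^k = 0$ reduces to $u^n = 0$.

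For the backward direction, assume $u^n = 0$. I would exhibit the explicit tuple $(a_0,a_1,\ldots,a_n) \in A^{n+1}$ defined by $a_n = 1$ and $a_i = 0$ for $i < n$. Then $\sum_{k=0}^n a_k u^k = u^n = 0$, the equality $a_n = 1$ holds by construction, and for each $i \in \{0,1,\ldots,n\}$ the containment $a_i \in I_{n-i}$ is clear: if $i = n$ then $a_n = 1 \in A = I_0$, and if $i < n$ then $a_i = 0 \in 0A = I_{n-i}$. Hence $u$ is $n$-integral over $(A,(I_\rho)_{\rho\in\mathbb{N}})$.

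There is no real obstacle here; the whole content of the statement is that the only nontrivial freedom in the defining equation is the leading coefficient, because all lower coefficients are forced to be zero by the ideal containment conditions. The mildest subtlety is remembering the convention $I_0 = A$ (so the leading coefficient $a_n = 1$ is not killed) and that $(0A)^\rho = 0A$ for $\rho \geq 1$, both of which are immediate from Example~\ref{exa.semifil.powers}~\textbf{(a)}.
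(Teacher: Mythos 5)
Your proof is correct. The paper does not actually provide a proof of Proposition~\ref{prop.semifil.int-0} (it is explicitly left to the reader), and your argument is precisely the expected unfolding of Definition~\ref{Definition9} together with the observations $I_0=A$ and $(0A)^\rho = 0A$ for $\rho\geq 1$; the $n=0$ edge case is also handled correctly since the set of indices $i<n$ is then empty and the conclusion reduces to $1=0$ on both sides.
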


\subsection{Polynomial rings and Rees algebras}

In order to study integrality over ideal semifiltrations, we shall now
introduce the concept of a Rees algebra -- a subalgebra of a polynomial ring
that conveniently encodes an ideal semifiltration of the base ring. This,
again, generalizes a classical notion for ideals (namely, the Rees algebra of
an ideal -- see \cite[Definition 5.1.1]{2}). First, we recall a basic fact:

\begin{definition}
\label{Definition7} Let $A$ be a ring. Let $B$ be an $A$-algebra. Then, there
is a canonical ring homomorphism $\iota:A\rightarrow B$ that sends each $a\in
A$ to $a\cdot1_{B}\in B$. This ring homomorphism $\iota$ induces a canonical
ring homomorphism $\iota\left[  Y\right]  :A\left[  Y\right]  \rightarrow
B\left[  Y\right]  $ between the polynomial rings $A\left[  Y\right]  $ and
$B\left[  Y\right]  $ that sends each polynomial $\sum\limits_{i=0}^{m}%
a_{i}Y^{i}\in A\left[  Y\right]  $ (with $m\in\mathbb{N}$ and $\left(
a_{0},a_{1},\ldots,a_{m}\right)  \in A^{m+1}$) to the polynomial
$\sum\limits_{i=0}^{m}\iota\left(  a_{i}\right)  Y^{i}\in B\left[  Y\right]
$. Thus, the polynomial ring $B\left[  Y\right]  $ becomes an $A\left[
Y\right]  $-algebra.
\end{definition}

\begin{definition}
\label{Definition8} Let $A$ be a ring, and let $\left(  I_{\rho}\right)
_{\rho\in\mathbb{N}}$ be an ideal semifiltration of $A$. Thus, $I_{0}%
,I_{1},I_{2},\ldots$ are ideals of $A$, and we have $I_{0}=A$.

Consider the polynomial ring $A\left[  Y\right]  $. For each $i\in\mathbb{N}$,
the subset $I_{i}Y^{i}$ of $A\left[  Y\right]  $ is an $A$-submodule of the
$A$-algebra $A\left[  Y\right]  $ (since $I_{i}$ is an ideal of $A$). Hence,
the sum $\sum\limits_{i\in\mathbb{N}}I_{i}Y^{i}$ of these $A$-submodules must
also be an $A$-submodule of the $A$-algebra $A\left[  Y\right]  $.

Let $A\left[  \left(  I_{\rho}\right)  _{\rho\in\mathbb{N}}\ast Y\right]  $
denote this $A$-submodule $\sum\limits_{i\in\mathbb{N}}I_{i}Y^{i}$ of the
$A$-algebra $A\left[  Y\right]  $. Then,%
\begin{align*}
A\left[  \left(  I_{\rho}\right)  _{\rho\in\mathbb{N}}\ast Y\right]   &
=\sum\limits_{i\in\mathbb{N}}I_{i}Y^{i}\\
&  =\left\{  \sum_{i\in\mathbb{N}}a_{i}Y^{i}\ \mid\ \left(  a_{i}\in
I_{i}\text{ for all }i\in\mathbb{N}\right)  \text{, }\right. \\
&  \ \ \ \ \ \ \ \ \ \ \left.  \text{and }\left(  \text{only finitely many
}i\in\mathbb{N}\text{ satisfy }a_{i}\neq0\right)
\vphantom{\sum_{i\in\mathbb{N}}}\right\} \\
&  =\left\{  P\in A\left[  Y\right]  \ \mid\ \text{the }i\text{-th coefficient
of the polynomial }P\right. \\
&  \ \ \ \ \ \ \ \ \ \ \left.  \text{lies in }I_{i}\text{ for every }%
i\in\mathbb{N}\right\}  .
\end{align*}

Clearly, $A\subseteq A\left[  \left(  I_{\rho}\right)  _{\rho\in\mathbb{N}%
}\ast Y\right]  $, since
\[
A\left[  \left(  I_{\rho}\right)  _{\rho\in\mathbb{N}}\ast Y\right]
=\sum\limits_{i\in\mathbb{N}}I_{i}Y^{i}\supseteq\underbrace{I_{0}}%
_{=A}\underbrace{Y^{0}}_{=1}=A\cdot1=A.
\]

Hence, $1\in A\subseteq A\left[  \left(  I_{\rho}\right)  _{\rho\in\mathbb{N}%
}\ast Y\right]  $. Also, the $A$-submodule $A\left[  \left(  I_{\rho}\right)
_{\rho\in\mathbb{N}}\ast Y\right]  $ of $A\left[  Y\right]  $ is an
$A$-subalgebra of the $A$-algebra $A\left[  Y\right]  $ (by
Lemma~\ref{lem.rees.mult} below), and thus is a subring of $A\left[  Y\right]
$.

This $A$-subalgebra $A\left[  \left(  I_{\rho}\right)  _{\rho\in\mathbb{N}%
}\ast Y\right]  $ is called the \textit{Rees algebra} of the ideal
semifiltration $\left(  I_{\rho}\right)  _{\rho\in\mathbb{N}}$.
\end{definition}

\begin{lemma}
\label{lem.rees.mult}Let $A$ be a ring, and let $\left(  I_{\rho}\right)
_{\rho\in\mathbb{N}}$ be an ideal semifiltration of $A$. Then, the
$A$-submodule $A\left[  \left(  I_{\rho}\right)  _{\rho\in\mathbb{N}}\ast
Y\right]  $ of $A\left[  Y\right]  $ is an $A$-subalgebra of the $A$-algebra
$A\left[  Y\right]  $.
\end{lemma}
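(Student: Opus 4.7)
The plan is to verify the two missing axioms: that the submodule $A\left[\left(I_\rho\right)_{\rho\in\mathbb{N}}\ast Y\right]$ contains $1$ and is closed under multiplication. The first has already been recorded in the discussion preceding the lemma (namely, $1 = 1 \cdot Y^0 \in I_0 Y^0 = A Y^0$), so only multiplicative closure remains.

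For closure, I would argue as follows. Pick arbitrary $P, Q \in A\left[\left(I_\rho\right)_{\rho\in\mathbb{N}}\ast Y\right]$. By the description of this submodule as $\sum_{i\in\mathbb{N}} I_i Y^i$, I may write
\[
P = \sum_{i\in\mathbb{N}} a_i Y^i \qquad \text{and} \qquad Q = \sum_{j\in\mathbb{N}} b_j Y^j,
\]
where $a_i \in I_i$ and $b_j \in I_j$ for all $i, j$, and where only finitely many $a_i$ and $b_j$ are nonzero. Multiplying in $A[Y]$ and collecting coefficients in the usual way yields
\[
PQ = \sum_{k\in\mathbb{N}} c_k Y^k, \qquad \text{where } c_k = \sum_{\substack{(i,j)\in\mathbb{N}^2 \\ i+j = k}} a_i b_j.
\]

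The key step is to check that each $c_k$ lies in $I_k$. For every pair $(i,j)$ with $i+j=k$, the semifiltration condition $I_i I_j \subseteq I_{i+j}$ gives $a_i b_j \in I_i I_j \subseteq I_k$. Since $I_k$ is an ideal (hence closed under finite sums), it follows that $c_k \in I_k$, so $c_k Y^k \in I_k Y^k$. Summing over $k$ (only finitely many $c_k$ are nonzero) gives $PQ \in \sum_{k\in\mathbb{N}} I_k Y^k = A\left[\left(I_\rho\right)_{\rho\in\mathbb{N}}\ast Y\right]$, as desired.

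There is no real obstacle here; the argument is purely a bookkeeping exercise that uses exactly the two defining axioms $I_0 = A$ and $I_a I_b \subseteq I_{a+b}$ of an ideal semifiltration (the first to secure $1$, the second to secure closure under multiplication). The one place where slight care is needed is ensuring that the sums involved are finite, which is automatic since $P$ and $Q$ are polynomials and only finitely many $a_i, b_j$ are nonzero.
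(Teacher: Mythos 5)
Your proof is correct and follows essentially the same route as the paper's: one uses $I_a I_b \subseteq I_{a+b}$ to establish closure under multiplication, the containment of $1$ having already been observed. The only cosmetic difference is that the paper's detailed version carries out the multiplication at the level of $A$-submodules (multiplying $\sum_i I_i Y^i$ by itself and distributing), whereas you unfold to explicit coefficients and check $c_k \in I_k$ directly; these are the same argument.
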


\begin{vershort}
\begin{proof}
[Proof of Lemma \ref{lem.rees.mult}.]This is an easy exercise. (Use
$I_{a}I_{b}\subseteq I_{a+b}$ to prove that $A\left[  \left(  I_{\rho}\right)
_{\rho\in\mathbb{N}}\ast Y\right]  $ is closed under multiplication.)
\end{proof}
\end{vershort}

\begin{verlong}
\begin{proof}
[Proof of Lemma \ref{lem.rees.mult}.]Multiplying the equality $A\left[
\left(  I_{\rho}\right)  _{\rho\in\mathbb{N}}\ast Y\right]  =\sum
\limits_{i\in\mathbb{N}}I_{i}Y^{i}$ with itself, we find
\begin{align*}
&  A\left[  \left(  I_{\rho}\right)  _{\rho\in\mathbb{N}}\ast Y\right]  \cdot
A\left[  \left(  I_{\rho}\right)  _{\rho\in\mathbb{N}}\ast Y\right] \\
&  =\left(  \sum\limits_{i\in\mathbb{N}}I_{i}Y^{i}\right)  \cdot\left(
\sum\limits_{i\in\mathbb{N}}I_{i}Y^{i}\right)  =\left(  \sum\limits_{i\in
\mathbb{N}}I_{i}Y^{i}\right)  \cdot\left(  \sum\limits_{j\in\mathbb{N}}%
I_{j}Y^{j}\right) \\
&  \ \ \ \ \ \ \ \ \ \ \left(  \text{here we renamed the index }i\text{ as
}j\text{ in the second sum}\right) \\
&  =\sum\limits_{i\in\mathbb{N}}\sum\limits_{j\in\mathbb{N}}I_{i}%
\underbrace{Y^{i}I_{j}}_{=I_{j}Y^{i}}Y^{j}=\sum\limits_{i\in\mathbb{N}}%
\sum\limits_{j\in\mathbb{N}}\underbrace{I_{i}I_{j}}_{\substack{\subseteq
I_{i+j}\\\text{(since }\left(  I_{\rho}\right)  _{\rho\in\mathbb{N}}\\\text{is
an ideal}\\\text{semifiltration)}}}\underbrace{Y^{i}Y^{j}}_{=Y^{i+j}}\\
&  \subseteq\sum\limits_{i\in\mathbb{N}}\sum\limits_{j\in\mathbb{N}%
}\underbrace{I_{i+j}Y^{i+j}}_{\subseteq\sum_{k\in\mathbb{N}}I_{k}Y^{k}%
}\subseteq\sum\limits_{i\in\mathbb{N}}\sum\limits_{j\in\mathbb{N}}\sum
_{k\in\mathbb{N}}I_{k}Y^{k}\\
&  \subseteq\sum_{k\in\mathbb{N}}I_{k}Y^{k}\ \ \ \ \ \ \ \ \ \ \left(
\text{since }\sum_{k\in\mathbb{N}}I_{k}Y^{k}\text{ is an }A\text{-module}%
\right) \\
&  =\sum\limits_{i\in\mathbb{N}}I_{i}Y^{i}\ \ \ \ \ \ \ \ \ \ \left(
\text{here we renamed the index }k\text{ as }i\text{ in the sum}\right) \\
&  =A\left[  \left(  I_{\rho}\right)  _{\rho\in\mathbb{N}}\ast Y\right]  .
\end{align*}
Hence, the $A$-submodule $A\left[  \left(  I_{\rho}\right)  _{\rho
\in\mathbb{N}}\ast Y\right]  $ of $A\left[  Y\right]  $ is closed under
multiplication. Thus, it is an $A$-subalgebra of the $A$-algebra $A\left[
Y\right]  $ (since $1\in A\left[  \left(  I_{\rho}\right)  _{\rho\in
\mathbb{N}}\ast Y\right]  $). This proves Lemma \ref{lem.rees.mult}.
\end{proof}
\end{verlong}

\begin{remark}
\label{rmk.BY-over-AIY}Let $A$ be a ring. Let $B$ be an $A$-algebra. Let
$\left(  I_{\rho}\right)  _{\rho\in\mathbb{N}}$ be an ideal semifiltration of
$A$.

Consider the polynomial ring $A\left[  Y\right]  $ and its $A$-subalgebra
$A\left[  \left(  I_{\rho}\right)  _{\rho\in\mathbb{N}}\ast Y\right]  $
defined in Definition~\ref{Definition8}.

The polynomial ring $B\left[  Y\right]  $ is an $A\left[  Y\right]  $-algebra
(as explained in Definition~\ref{Definition7}), and thus is an $A\left[
\left(  I_{\rho}\right)  _{\rho\in\mathbb{N}}\ast Y\right]  $-algebra (since
$A\left[  \left(  I_{\rho}\right)  _{\rho\in\mathbb{N}}\ast Y\right]  $ is a
subring of $A\left[  Y\right]  $). Hence, if $p\in B\left[  Y\right]  $ is a
polynomial and $n\in\mathbb{N}$, then it makes sense to ask whether $p$ is
$n$-integral over $A\left[  \left(  I_{\rho}\right)  _{\rho\in\mathbb{N}}\ast
Y\right]  $. Questions of this form will often appear in what follows.
\end{remark}

We note in passing that the notion of a Rees algebra helps set up a bijection
between ideal semifiltrations of a ring $A$ and graded $A$-subalgebras of the
polynomial ring $A\left[  Y\right]  $:

\begin{proposition}
\label{prop.rees=grad-sub}Let $A$ be a ring. Consider the polynomial ring
$A\left[  Y\right]  $ as a graded $A$-algebra (with the usual degree of polynomials).

\textbf{(a)} If $\left(  I_{\rho}\right)  _{\rho\in\mathbb{N}}$ is an ideal
semifiltration of $A$, then its Rees algebra $A\left[  \left(  I_{\rho
}\right)  _{\rho\in\mathbb{N}}\ast Y\right]  $ is a graded $A$-subalgebra of
$A\left[  Y\right]  $.

\textbf{(b)} If $B$ is any graded $A$-subalgebra of $A\left[  Y\right]  $, and
if $\rho\in\mathbb{N}$, then we let $I_{B,\rho}$ denote the subset $\left\{
a\in A\ \mid\ aY^{\rho}\in B\right\}  $ of $A$. Then, $I_{B,\rho}$ is an ideal
of $A$.

\textbf{(c)} The maps%
\begin{align*}
\left\{  \text{ideal semifiltrations of }A\right\}   &  \rightarrow\left\{
\text{graded }A\text{-subalgebras of }A\left[  Y\right]  \right\}  ,\\
\left(  I_{\rho}\right)  _{\rho\in\mathbb{N}}  &  \mapsto A\left[  \left(
I_{\rho}\right)  _{\rho\in\mathbb{N}}\ast Y\right]
\end{align*}
and%
\begin{align*}
\left\{  \text{graded }A\text{-subalgebras of }A\left[  Y\right]  \right\}
&  \rightarrow\left\{  \text{ideal semifiltrations of }A\right\}  ,\\
B  &  \mapsto\left(  I_{B,\rho}\right)  _{\rho\in\mathbb{N}}%
\end{align*}
are mutually inverse bijections.
\end{proposition}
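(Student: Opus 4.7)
The plan is to verify the three parts in sequence, with part (c) reducing to straightforward checks once (a) and (b) are in hand.

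For part (a), I would first note that the usual grading on $A\left[Y\right]$ places $A\left[Y\right]_{n}=AY^{n}$ as the $n$-th homogeneous component. Since $A\left[\left(I_{\rho}\right)_{\rho\in\mathbb{N}}\ast Y\right]=\sum_{i\in\mathbb{N}}I_{i}Y^{i}$ and $I_{i}Y^{i}\subseteq AY^{i}$, the intersection $A\left[\left(I_{\rho}\right)_{\rho\in\mathbb{N}}\ast Y\right]\cap AY^{n}$ equals $I_{n}Y^{n}$ (one inclusion is clear; the other uses the fact that the sum $\sum_{i}I_{i}Y^{i}$ is a direct sum in $A\left[Y\right]$ because the $AY^{i}$ are). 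Consequently the Rees algebra is the direct sum of its intersections with the homogeneous components, which is exactly the statement that it is graded. Combined with Lemma \ref{lem.rees.mult}, this gives (a).

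For part (b), I would simply verify the ideal axioms directly. The set $I_{B,\rho}$ contains $0$ (since $0\in B$); it is closed under addition because $B$ is closed under addition and $\left(a+a'\right)Y^{\rho}=aY^{\rho}+a'Y^{\rho}$; and it absorbs multiplication by $A$ because $B$ is an $A$-submodule of $A\left[Y\right]$, so $c\cdot aY^{\rho}=\left(ca\right)Y^{\rho}\in B$ whenever $c\in A$ and $aY^{\rho}\in B$. No difficulty is expected here.

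For part (c), I would show that the two composites are identities. Starting with an ideal semifiltration $\left(I_{\rho}\right)_{\rho\in\mathbb{N}}$, the equality $I_{A\left[\left(I_{\rho}\right)_{\rho\in\mathbb{N}}\ast Y\right],\rho}=I_{\rho}$ is essentially a tautology, once one observes (using the direct-sum structure of $A\left[Y\right]$) that $aY^{\rho}$ lies in $\sum_{i}I_{i}Y^{i}$ if and only if $a\in I_{\rho}$. Starting instead with a graded $A$-subalgebra $B$, I would first check that $\left(I_{B,\rho}\right)_{\rho\in\mathbb{N}}$ really is an ideal semifiltration: the condition $I_{B,0}=A$ holds because $B$ contains $A\cdot 1=AY^{0}$ (being an $A$-subalgebra), and the condition $I_{B,a}I_{B,b}\subseteq I_{B,a+b}$ follows from the multiplicative closure of $B$, since $\left(xY^{a}\right)\left(yY^{b}\right)=xyY^{a+b}$. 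Then I would prove $A\left[\left(I_{B,\rho}\right)_{\rho\in\mathbb{N}}\ast Y\right]=B$ by two inclusions: the inclusion $\subseteq$ is immediate from the definition of $I_{B,\rho}$, and the reverse inclusion uses precisely the hypothesis that $B$ is graded, since any $p\in B$ decomposes as $p=\sum_{i}a_{i}Y^{i}$ with each $a_{i}Y^{i}\in B$, forcing $a_{i}\in I_{B,i}$ and hence $p\in\sum_{i}I_{B,i}Y^{i}$.

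The only step that requires any care is this last one: the inclusion $B\subseteq A\left[\left(I_{B,\rho}\right)_{\rho\in\mathbb{N}}\ast Y\right]$ genuinely uses the graded hypothesis on $B$, and without it the map in part (c) would not be surjective. Everything else is a routine unwinding of definitions.
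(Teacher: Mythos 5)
Your proposal is correct and complete. Note that the paper explicitly declines to prove Proposition~\ref{prop.rees=grad-sub}, remarking only that the argument is ``straightforward and easy,'' so there is no proof of record to compare against; your unwinding of the definitions is exactly what such a proof should look like. The three ingredients you identify are the right ones: the direct-sum structure $A\left[Y\right]=\bigoplus_{n}AY^{n}$, which makes the intersection computation $A\left[\left(I_{\rho}\right)_{\rho\in\mathbb{N}}\ast Y\right]\cap AY^{n}=I_{n}Y^{n}$ in part (a) and the identity $I_{A\left[\left(I_{\rho}\right)_{\rho\in\mathbb{N}}\ast Y\right],\rho}=I_{\rho}$ in part (c) work; the appeal to Lemma~\ref{lem.rees.mult} for the subalgebra structure; and, crucially, the gradedness hypothesis on $B$ in the reverse inclusion $B\subseteq A\left[\left(I_{B,\rho}\right)_{\rho\in\mathbb{N}}\ast Y\right]$, which you correctly flag as the one step where the hypothesis is indispensable. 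You also rightly remember to verify that $\left(I_{B,\rho}\right)_{\rho\in\mathbb{N}}$ actually satisfies both axioms of an ideal semifiltration before invoking it as the inverse map, a check that is easy to forget. No gaps.
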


We shall not have any need for this proposition, so we omit its
(straightforward and easy) proof.

\subsection{Reduction to integrality over rings}

We start with a theorem which reduces the question of $n$-integrality over
$\left(  A,\left(  I_{\rho}\right)  _{\rho\in\mathbb{N}}\right)  $ to that of
$n$-integrality over a ring\footnote{Theorem~\ref{Theorem7} is inspired by
\cite[Proposition 5.2.1]{2}.}:

\begin{theorem}
\label{Theorem7} Let $A$ be a ring. Let $B$ be an $A$-algebra. Let $\left(
I_{\rho}\right)  _{\rho\in\mathbb{N}}$ be an ideal semifiltration of $A$. Let
$n\in\mathbb{N}$. Let $u\in B$.

Consider the polynomial ring $A\left[  Y\right]  $ and its $A$-subalgebra
$A\left[  \left(  I_{\rho}\right)  _{\rho\in\mathbb{N}}\ast Y\right]  $
defined in Definition~\ref{Definition8}.

Then, the element $u$ of $B$ is $n$-integral over $\left(  A,\left(  I_{\rho
}\right)  _{\rho\in\mathbb{N}}\right)  $ if and only if the element $uY$ of
the polynomial ring $B\left[  Y\right]  $ is $n$-integral over the ring
$A\left[  \left(  I_{\rho}\right)  _{\rho\in\mathbb{N}}\ast Y\right]  $.
(Here, $B\left[  Y\right]  $ is an $A\left[  \left(  I_{\rho}\right)
_{\rho\in\mathbb{N}}\ast Y\right]  $-algebra, as explained in Remark
\ref{rmk.BY-over-AIY}.)
\end{theorem}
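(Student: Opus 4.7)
Write $R = A\!\left[\left(I_{\rho}\right)_{\rho\in\mathbb{N}}\ast Y\right]$ for brevity. My plan is to prove the two directions of the equivalence by direct translation between equations, using Definition~\ref{Definition9} on one side and Assertion $\mathcal{A}$ of Theorem~\ref{Theorem1} (applied in $B\left[Y\right]$ over $R$) on the other. No deeper machinery should be needed; both directions amount to inspecting coefficients of polynomials in $Y$.

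For the forward direction, suppose $u$ is $n$-integral over $\left(A,\left(I_{\rho}\right)_{\rho\in\mathbb{N}}\right)$. By Definition~\ref{Definition9}, pick $\left(a_{0},a_{1},\ldots,a_{n}\right)\in A^{n+1}$ with $\sum_{k=0}^{n}a_{k}u^{k}=0$, $a_{n}=1$, and $a_{i}\in I_{n-i}$ for each $i$. Set
\[
c_{k}=a_{k}Y^{n-k}\in A\left[Y\right]\qquad\text{for each }k\in\left\{0,1,\ldots,n\right\}.
\]
Each $c_{k}$ lies in $I_{n-k}Y^{n-k}\subseteq R$ (and $c_{n}=a_{n}Y^{0}=1$), so the polynomial $P\left(X\right)=X^{n}+\sum_{k=0}^{n-1}c_{k}X^{k}$ actually equals $\sum_{k=0}^{n}c_{k}X^{k}$ with $c_n = 1$, making it monic of degree $n$ over $R$. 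Evaluating at $uY$,
\[
P\left(uY\right)=\sum_{k=0}^{n}a_{k}Y^{n-k}\left(uY\right)^{k}=\sum_{k=0}^{n}a_{k}u^{k}Y^{n}=Y^{n}\cdot\sum_{k=0}^{n}a_{k}u^{k}=0.
\]
Thus $uY$ satisfies Assertion $\mathcal{A}$ of Theorem~\ref{Theorem1} over $R$, so $uY$ is $n$-integral over $R$.

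For the converse, suppose $uY$ is $n$-integral over $R$. Then there exist $p_{0},p_{1},\ldots,p_{n-1}\in R$ such that, setting $p_{n}=1$, we have $\sum_{k=0}^{n}p_{k}\left(uY\right)^{k}=0$ in $B\left[Y\right]$. Each $p_{k}\in R=\sum_{j\in\mathbb{N}}I_{j}Y^{j}$ expands as $p_{k}=\sum_{j\in\mathbb{N}}p_{k,j}Y^{j}$ with $p_{k,j}\in I_{j}$ (and only finitely many nonzero terms). Substituting and collecting powers of $Y$ yields
\[
0=\sum_{k=0}^{n}p_{k}\left(uY\right)^{k}=\sum_{k=0}^{n}\sum_{j\in\mathbb{N}}p_{k,j}u^{k}Y^{j+k}\quad\text{in }B\left[Y\right].
\]
Now I would simply read off the coefficient of $Y^{n}$. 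On the right this is $\sum_{k=0}^{n}p_{k,n-k}u^{k}$, hence this element of $B$ must vanish. Define $a_{k}=p_{k,n-k}\in I_{n-k}$ for $k\in\left\{0,1,\ldots,n\right\}$. From $p_{n}=1$ (the constant polynomial), we get $a_{n}=p_{n,0}=1$, while $\sum_{k=0}^{n}a_{k}u^{k}=0$ by the coefficient vanishing. This is exactly the data demanded by Definition~\ref{Definition9}, so $u$ is $n$-integral over $\left(A,\left(I_{\rho}\right)_{\rho\in\mathbb{N}}\right)$.

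I do not anticipate any substantive obstacle; the only point requiring a bit of care is the coefficient extraction in the backward direction — one must use that $B\left[Y\right]$ is genuinely a polynomial ring so that the equation $\sum_{k,j}p_{k,j}u^{k}Y^{j+k}=0$ forces each coefficient (in particular that of $Y^{n}$) to vanish in $B$, and that the constraint $p_{n}=1$ in $R$ means $p_{n,0}=1$ and $p_{n,j}=0$ for $j\geq 1$. Everything else is a mechanical matching of the monic condition, the degree condition, and the ideal membership conditions across the two definitions.
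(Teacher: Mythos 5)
Your proof is correct and follows essentially the same route as the paper: in the forward direction you build the monic witness $P(X)=\sum_{k=0}^{n}a_{k}Y^{n-k}X^{k}$ over $A\left[\left(I_{\rho}\right)_{\rho\in\mathbb{N}}\ast Y\right]$, and in the backward direction you expand each $p_{k}$ in powers of $Y$, substitute $uY$, and extract the coefficient of $Y^{n}$ to recover the required tuple $\left(a_{0},\ldots,a_{n}\right)$ — exactly the argument given in the paper.
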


\begin{vershort}
\begin{proof}
[Proof of Theorem~\ref{Theorem7}.]$\Longrightarrow:$ Assume that $u$ is
$n$-integral over $\left(  A,\left(  I_{\rho}\right)  _{\rho\in\mathbb{N}%
}\right)  $. Thus, by Definition~\ref{Definition9}, there exists some $\left(
a_{0},a_{1},\ldots,a_{n}\right)  \in A^{n+1}$ such that%
\[
\sum\limits_{k=0}^{n}a_{k}u^{k}=0,\ \ \ \ \ \ \ \ \ \ a_{n}%
=1,\ \ \ \ \ \ \ \ \ \ \text{and}\ \ \ \ \ \ \ \ \ \ a_{i}\in I_{n-i}\text{
for every }i\in\left\{  0,1,\ldots,n\right\}  .
\]
Consider this $\left(  a_{0},a_{1},\ldots,a_{n}\right)  $.

Hence, there exists a monic polynomial $P\in\left(  A\left[  \left(  I_{\rho
}\right)  _{\rho\in\mathbb{N}}\ast Y\right]  \right)  \left[  X\right]  $ with
$\deg P=n$ and $P\left(  uY\right)  =0$ (viz., the polynomial $P\left(
X\right)  =\sum\limits_{k=0}^{n}a_{k}Y^{n-k}X^{k}$). Hence, $uY$ is
$n$-integral over $A\left[  \left(  I_{\rho}\right)  _{\rho\in\mathbb{N}}\ast
Y\right]  $. This proves the $\Longrightarrow$ direction of
Theorem~\ref{Theorem7}.

$\Longleftarrow:$ Assume that $uY$ is $n$-integral over $A\left[  \left(
I_{\rho}\right)  _{\rho\in\mathbb{N}}\ast Y\right]  $. Thus, there exists a
monic polynomial $P\in\left(  A\left[  \left(  I_{\rho}\right)  _{\rho
\in\mathbb{N}}\ast Y\right]  \right)  \left[  X\right]  $ with $\deg P=n$ and
$P\left(  uY\right)  =0$. Consider this $P$. Since $P\in\left(  A\left[
\left(  I_{\rho}\right)  _{\rho\in\mathbb{N}}\ast Y\right]  \right)  \left[
X\right]  $ satisfies $\deg P=n$, there exists $\left(  p_{0},p_{1}%
,\ldots,p_{n}\right)  \in\left(  A\left[  \left(  I_{\rho}\right)  _{\rho
\in\mathbb{N}}\ast Y\right]  \right)  ^{n+1}$ such that $P\left(  X\right)
=\sum\limits_{k=0}^{n}p_{k}X^{k}$. Consider this $\left(  p_{0},p_{1}%
,\ldots,p_{n}\right)  $. Note that $p_{n}=1$ (since $P$ is monic and $\deg
P=n$).

For every $k\in\left\{  0,1,\ldots,n\right\}  $, we have $p_{k}\in A\left[
\left(  I_{\rho}\right)  _{\rho\in\mathbb{N}}\ast Y\right]  =\sum
\limits_{i\in\mathbb{N}}I_{i}Y^{i}$, and thus there exists a sequence $\left(
p_{k,i}\right)  _{i\in\mathbb{N}}\in A^{\mathbb{N}}$ such that $p_{k}%
=\sum\limits_{i\in\mathbb{N}}p_{k,i}Y^{i}$, such that $\left(  p_{k,i}\in
I_{i}\text{ for every }i\in\mathbb{N}\right)  $, and such that only finitely
many $i\in\mathbb{N}$ satisfy $p_{k,i}\neq0$. Consider this sequence. Thus,
$P\left(  X\right)  =\sum\limits_{k=0}^{n}p_{k}X^{k}$ rewrites as $P\left(
X\right)  =\sum\limits_{k=0}^{n}\sum\limits_{i\in\mathbb{N}}p_{k,i}Y^{i}X^{k}$
(since $p_{k}=\sum\limits_{i\in\mathbb{N}}p_{k,i}Y^{i}$). Hence,
\[
P\left(  uY\right)  =\sum\limits_{k=0}^{n}\sum\limits_{i\in\mathbb{N}}%
p_{k,i}Y^{i}\left(  uY\right)  ^{k}=\sum\limits_{k=0}^{n}\sum\limits_{i\in
\mathbb{N}}p_{k,i}Y^{i+k}u^{k}.
\]
Therefore, $P\left(  uY\right)  =0$ rewrites as $\sum\limits_{k=0}^{n}%
\sum\limits_{i\in\mathbb{N}}p_{k,i}Y^{i+k}u^{k}=0$. In other words, the
polynomial $\sum\limits_{k=0}^{n}\sum\limits_{i\in\mathbb{N}}p_{k,i}%
Y^{i+k}u^{k}\in B\left[  Y\right]  $ equals $0$. Hence, its coefficient before
$Y^{n}$ equals $0$ as well. But its coefficient before $Y^{n}$ is
$\sum\limits_{k=0}^{n}p_{k,n-k}u^{k}$, so we get $\sum\limits_{k=0}%
^{n}p_{k,n-k}u^{k}=0$.

Recall that $\sum\limits_{i\in\mathbb{N}}p_{k,i}Y^{i}=p_{k}$ for every
$k\in\left\{  0,1,\ldots,n\right\}  $ (by the definition of the $p_{k,i}$).
Thus, $\sum\limits_{i\in\mathbb{N}}p_{n,i}Y^{i}=p_{n}=1$ in $A\left[
Y\right]  $, and thus $p_{n,0}=1$ (by comparing coefficients before $Y^{0}$).

Define an $\left(  n+1\right)  $-tuple $\left(  a_{0},a_{1},\ldots
,a_{n}\right)  \in A^{n+1}$ by $a_{k}=p_{k,n-k}$ for every $k\in\left\{
0,1,\ldots,n\right\}  $. Then, $a_{n}=p_{n,0}=1$. Besides, $\sum
\limits_{k=0}^{n}\underbrace{a_{k}}_{=p_{k,n-k}}u^{k}=\sum\limits_{k=0}%
^{n}p_{k,n-k}u^{k}=0$. Finally, $a_{k}=p_{k,n-k}\in I_{n-k}$ (since
$p_{k,i}\in I_{i}$ for every $i\in\mathbb{N}$) for every $k\in\left\{
0,1,\ldots,n\right\}  $. In other words, $a_{i}\in I_{n-i}$ for every
$i\in\left\{  0,1,\ldots,n\right\}  $.

Altogether, we now know that%
\[
\sum\limits_{k=0}^{n}a_{k}u^{k}=0,\ \ \ \ \ \ \ \ \ \ a_{n}%
=1,\ \ \ \ \ \ \ \ \ \ \text{and}\ \ \ \ \ \ \ \ \ \ a_{i}\in I_{n-i}\text{
for every }i\in\left\{  0,1,\ldots,n\right\}  .
\]
Thus, by Definition~\ref{Definition9}, the element $u$ is $n$-integral over
$\left(  A,\left(  I_{\rho}\right)  _{\rho\in\mathbb{N}}\right)  $. This
proves the $\Longleftarrow$ direction of Theorem~\ref{Theorem7}.
\end{proof}
\end{vershort}

\begin{verlong}
\begin{proof}
[Proof of Theorem~\ref{Theorem7}.]In order to verify Theorem~\ref{Theorem7},
we have to prove the following two lemmata:

\begin{statement}
\textit{Lemma }$\mathcal{E}$\textit{:} If $u$ is $n$-integral over $\left(
A,\left(  I_{\rho}\right)  _{\rho\in\mathbb{N}}\right)  $, then $uY$ is
$n$-integral over $A\left[  \left(  I_{\rho}\right)  _{\rho\in\mathbb{N}}\ast
Y\right]  $.
\end{statement}

\begin{statement}
\textit{Lemma} $\mathcal{F}$\textit{:} If $uY$ is $n$-integral over $A\left[
\left(  I_{\rho}\right)  _{\rho\in\mathbb{N}}\ast Y\right]  $, then $u$ is
$n$-integral over $\left(  A,\left(  I_{\rho}\right)  _{\rho\in\mathbb{N}%
}\right)  $.
\end{statement}

[\textit{Proof of Lemma }$\mathcal{E}$\textit{:} Assume that $u$ is
$n$-integral over $\left(  A,\left(  I_{\rho}\right)  _{\rho\in\mathbb{N}%
}\right)  $. Thus, by Definition~\ref{Definition9}, there exists some $\left(
a_{0},a_{1},\ldots,a_{n}\right)  \in A^{n+1}$ such that%
\[
\sum\limits_{k=0}^{n}a_{k}u^{k}=0,\ \ \ \ \ \ \ \ \ \ a_{n}%
=1,\ \ \ \ \ \ \ \ \ \ \text{and}\ \ \ \ \ \ \ \ \ \ a_{i}\in I_{n-i}\text{
for every }i\in\left\{  0,1,\ldots,n\right\}  .
\]
Consider this $\left(  a_{0},a_{1},\ldots,a_{n}\right)  $.

For each $k\in\left\{  0,1,\ldots,n\right\}  $, we have $a_{k}\in I_{n-k}$
(since $a_{i}\in I_{n-i}$ for every $i\in\left\{  0,1,\ldots,n\right\}  $) and
therefore%
\[
\underbrace{a_{k}}_{\in I_{n-k}}Y^{n-k}\in I_{n-k}Y^{n-k}\subseteq
\sum\limits_{i\in\mathbb{N}}I_{i}Y^{i}=A\left[  \left(  I_{\rho}\right)
_{\rho\in\mathbb{N}}\ast Y\right]
\]
(since Definition~\ref{Definition8} yields $A\left[  \left(  I_{\rho}\right)
_{\rho\in\mathbb{N}}\ast Y\right]  =\sum\limits_{i\in\mathbb{N}}I_{i}Y^{i}$).

Thus, we can define a polynomial $P\in\left(  A\left[  \left(  I_{\rho
}\right)  _{\rho\in\mathbb{N}}\ast Y\right]  \right)  \left[  X\right]  $ by
$P\left(  X\right)  =\sum\limits_{k=0}^{n}a_{k}Y^{n-k}X^{k}$. Consider this
$P$. This polynomial $P$ satisfies $\deg P\leq n$, and its coefficient before
$X^{n}$ is $\underbrace{a_{n}}_{=1}\underbrace{Y^{n-n}}_{=Y^{0}=1}=1$. Hence,
this polynomial $P$ is monic and satisfies $\deg P=n$. Also, by substituting
$uY$ for $X$ in $P\left(  X\right)  =\sum\limits_{k=0}^{n}a_{k}Y^{n-k}X^{k}$,
we obtain%
\[
P\left(  uY\right)  =\sum\limits_{k=0}^{n}a_{k}Y^{n-k}\underbrace{\left(
uY\right)  ^{k}}_{=u^{k}Y^{k}}=\sum\limits_{k=0}^{n}a_{k}Y^{n-k}u^{k}%
Y^{k}=\sum\limits_{k=0}^{n}a_{k}u^{k}\underbrace{Y^{n-k}Y^{k}}_{=Y^{n}}%
=Y^{n}\cdot\underbrace{\sum\limits_{k=0}^{n}a_{k}u^{k}}_{=0}=0.
\]
Thus, there exists a monic polynomial $P\in\left(  A\left[  \left(  I_{\rho
}\right)  _{\rho\in\mathbb{N}}\ast Y\right]  \right)  \left[  X\right]  $ with
$\deg P=n$ and $P\left(  uY\right)  =0$. Hence, $uY$ is $n$-integral over
$A\left[  \left(  I_{\rho}\right)  _{\rho\in\mathbb{N}}\ast Y\right]  $. This
proves Lemma $\mathcal{E}$.]

[\textit{Proof of Lemma }$\mathcal{F}$\textit{:} Assume that $uY$ is
$n$-integral over $A\left[  \left(  I_{\rho}\right)  _{\rho\in\mathbb{N}}\ast
Y\right]  $. Thus, there exists a monic polynomial $P\in\left(  A\left[
\left(  I_{\rho}\right)  _{\rho\in\mathbb{N}}\ast Y\right]  \right)  \left[
X\right]  $ with $\deg P=n$ and $P\left(  uY\right)  =0$. Consider this $P$.
Since $P\in\left(  A\left[  \left(  I_{\rho}\right)  _{\rho\in\mathbb{N}}\ast
Y\right]  \right)  \left[  X\right]  $ satisfies $\deg P=n$, there exists
$\left(  p_{0},p_{1},\ldots,p_{n}\right)  \in\left(  A\left[  \left(  I_{\rho
}\right)  _{\rho\in\mathbb{N}}\ast Y\right]  \right)  ^{n+1}$ such that
$P\left(  X\right)  =\sum\limits_{k=0}^{n}p_{k}X^{k}$. Consider this $\left(
p_{0},p_{1},\ldots,p_{n}\right)  $. Note that $p_{n}=1$ (since $P$ is monic
and $\deg P=n$).

Recall that $\left(  p_{0},p_{1},\ldots,p_{n}\right)  \in\left(  A\left[
\left(  I_{\rho}\right)  _{\rho\in\mathbb{N}}\ast Y\right]  \right)  ^{n+1}$.
Hence, for every $k\in\left\{  0,1,\ldots,n\right\}  $, we have $p_{k}\in
A\left[  \left(  I_{\rho}\right)  _{\rho\in\mathbb{N}}\ast Y\right]
=\sum\limits_{i\in\mathbb{N}}I_{i}Y^{i}$, and thus there exists a sequence
$\left(  p_{k,i}\right)  _{i\in\mathbb{N}}\in A^{\mathbb{N}}$ such that
$p_{k}=\sum\limits_{i\in\mathbb{N}}p_{k,i}Y^{i}$, such that $\left(
p_{k,i}\in I_{i}\text{ for every }i\in\mathbb{N}\right)  $, and such that only
finitely many $i\in\mathbb{N}$ satisfy $p_{k,i}\neq0$. Consider this sequence.
Thus,
\[
P\left(  X\right)  =\sum\limits_{k=0}^{n}\underbrace{p_{k}}_{=\sum
\limits_{i\in\mathbb{N}}p_{k,i}Y^{i}}X^{k}=\sum\limits_{k=0}^{n}%
\sum\limits_{i\in\mathbb{N}}p_{k,i}Y^{i}X^{k}.
\]
Substituting $uY$ for $X$ in this equality, we find
\begin{align*}
P\left(  uY\right)   &  =\sum\limits_{k=0}^{n}\sum\limits_{i\in\mathbb{N}%
}p_{k,i}Y^{i}\underbrace{\left(  uY\right)  ^{k}}_{\substack{=u^{k}%
Y^{k}\\=Y^{k}u^{k}}}=\sum\limits_{k=0}^{n}\sum\limits_{i\in\mathbb{N}}%
p_{k,i}\underbrace{Y^{i}Y^{k}}_{=Y^{i+k}}u^{k}\\
&  =\sum\limits_{k=0}^{n}\sum\limits_{i\in\mathbb{N}}p_{k,i}Y^{i+k}u^{k}%
=\sum\limits_{k\in\left\{  0,1,\ldots,n\right\}  }\sum\limits_{i\in\mathbb{N}%
}p_{k,i}Y^{i+k}u^{k}\\
&  =\sum\limits_{\left(  k,i\right)  \in\left\{  0,1,\ldots,n\right\}
\times\mathbb{N}}p_{k,i}Y^{i+k}u^{k}=\sum_{\ell\in\mathbb{N}}\sum
\limits_{\substack{\left(  k,i\right)  \in\left\{  0,1,\ldots,n\right\}
\times\mathbb{N};\\i+k=\ell}}p_{k,i}\underbrace{Y^{i+k}}_{\substack{=Y^{\ell
}\\\text{(since }i+k=\ell\text{)}}}u^{k}\\
&  =\sum_{\ell\in\mathbb{N}}\sum\limits_{\substack{\left(  k,i\right)
\in\left\{  0,1,\ldots,n\right\}  \times\mathbb{N};\\i+k=\ell}}p_{k,i}Y^{\ell
}u^{k}=\sum_{\ell\in\mathbb{N}}\sum\limits_{\substack{\left(  k,i\right)
\in\left\{  0,1,\ldots,n\right\}  \times\mathbb{N};\\i+k=\ell}}p_{k,i}%
u^{k}Y^{\ell}.
\end{align*}
Comparing this with $P\left(  uY\right)  =0$, we find $\sum\limits_{\ell
\in\mathbb{N}}\sum\limits_{\substack{\left(  k,i\right)  \in\left\{
0,1,\ldots,n\right\}  \times\mathbb{N};\\i+k=\ell}}p_{k,i}u^{k}Y^{\ell}=0$. In
other words, the polynomial $\sum\limits_{\ell\in\mathbb{N}}\underbrace{\sum
\limits_{\substack{\left(  k,i\right)  \in\left\{  0,1,\ldots,n\right\}
\times\mathbb{N};\\i+k=\ell}}p_{k,i}u^{k}}_{\in B}Y^{\ell}\in B\left[
Y\right]  $ equals $0$. Hence, its coefficient before $Y^{n}$ equals $0$ as
well. But its coefficient before $Y^{n}$ is $\sum\limits_{\substack{\left(
k,i\right)  \in\left\{  0,1,\ldots,n\right\}  \times\mathbb{N};\\i+k=n}%
}p_{k,i}u^{k}$. Comparing the preceding two sentences, we see that
$\sum\limits_{\substack{\left(  k,i\right)  \in\left\{  0,1,\ldots,n\right\}
\times\mathbb{N};\\i+k=n}}p_{k,i}u^{k}$ equals $0$. Thus,%
\begin{equation}
0=\sum\limits_{\substack{\left(  k,i\right)  \in\left\{  0,1,\ldots,n\right\}
\times\mathbb{N};\\i+k=n}}p_{k,i}u^{k}=\sum\limits_{k\in\left\{
0,1,\ldots,n\right\}  }\sum_{\substack{i\in\mathbb{N};\\i+k=n}}p_{k,i}u^{k}.
\label{pf.Theorem7.5}%
\end{equation}

For each $k\in\left\{  0,1,\ldots,n\right\}  $, we have%
\[
\left\{  i\in\mathbb{N}\text{\ }\mid\ \underbrace{i+k=n}_{\Longleftrightarrow
\ \left(  i=n-k\right)  }\right\}  =\left\{  i\in\mathbb{N}\ \mid
\ i=n-k\right\}  =\left\{  n-k\right\}
\]
(because $n-k\in\mathbb{N}$ (since $k\in\left\{  0,1,\ldots,n\right\}  $)) and
thus%
\[
\sum\limits_{\substack{i\in\mathbb{N};\\i+k=n}}p_{k,i}u^{k}=\sum
\limits_{i\in\left\{  n-k\right\}  }p_{k,i}u^{k}=p_{k,n-k}u^{k}.
\]

Thus, (\ref{pf.Theorem7.5}) becomes%
\begin{equation}
0=\sum\limits_{k\in\left\{  0,1,\ldots,n\right\}  }\underbrace{\sum
_{\substack{i\in\mathbb{N};\\i+k=n}}p_{k,i}u^{k}}_{=p_{k,n-k}u^{k}}%
=\sum\limits_{k\in\left\{  0,1,\ldots,n\right\}  }p_{k,n-k}u^{k}.
\label{pf.Theorem7.7}%
\end{equation}

Recall that $p_{k}=\sum\limits_{i\in\mathbb{N}}p_{k,i}Y^{i}$ for every
$k\in\left\{  0,1,\ldots,n\right\}  $. Applying this to $k=n$, we find
$p_{n}=\sum\limits_{i\in\mathbb{N}}p_{n,i}Y^{i}$. Comparing this with
$p_{n}=1=1\cdot Y^{0}$, we find
\[
\sum\limits_{i\in\mathbb{N}}p_{n,i}Y^{i}=1\cdot Y^{0}%
\ \ \ \ \ \ \ \ \ \ \text{in }A\left[  Y\right]  .
\]
Hence, the coefficient of the polynomial $\sum\limits_{i\in\mathbb{N}}%
p_{n,i}Y^{i}\in A\left[  Y\right]  $ before $Y^{0}$ is $1$. But the
coefficient of the polynomial $\sum\limits_{i\in\mathbb{N}}p_{n,i}Y^{i}\in
A\left[  Y\right]  $ before $Y^{0}$ is $p_{n,0}$ (since $p_{n,i}\in A$ for all
$i\in\mathbb{N}$). Comparing the preceding two sentences, we see that
$p_{n,0}=1$.

Define an $\left(  n+1\right)  $-tuple $\left(  a_{0},a_{1},\ldots
,a_{n}\right)  \in A^{n+1}$ by setting
\[
\left(  a_{k}=p_{k,n-k}\text{ for every }k\in\left\{  0,1,\ldots,n\right\}
\right)  .
\]
Then, $a_{n}=p_{n,n-n}=p_{n,0}=1$. Besides,%
\[
\sum\limits_{k=0}^{n}\underbrace{a_{k}}_{\substack{=p_{k,n-k}\\\text{(by the
definition}\\\text{of }a_{k}\text{)}}}u^{k}=\sum\limits_{k=0}^{n}%
p_{k,n-k}u^{k}=\sum\limits_{k\in\left\{  0,1,\ldots,n\right\}  }p_{k,n-k}%
u^{k}=0
\]
(by (\ref{pf.Theorem7.7})). Finally, for every $k\in\left\{  0,1,\ldots
,n\right\}  $, we have $n-k\in\mathbb{N}$ and thus $a_{k}=p_{k,n-k}\in
I_{n-k}$ (since $p_{k,i}\in I_{i}$ for every $i\in\mathbb{N}$). Renaming the
variable $k$ as $i$ in this statement, we obtain the following: For every
$i\in\left\{  0,1,\ldots,n\right\}  $, we have $a_{i}\in I_{n-i}$.

Altogether, we now know that the $\left(  n+1\right)  $-tuple $\left(
a_{0},a_{1},\ldots,a_{n}\right)  \in A^{n+1}$ satisfies%
\[
\sum\limits_{k=0}^{n}a_{k}u^{k}=0,\ \ \ \ \ \ \ \ \ \ a_{n}%
=1,\ \ \ \ \ \ \ \ \ \ \text{and}\ \ \ \ \ \ \ \ \ \ a_{i}\in I_{n-i}\text{
for every }i\in\left\{  0,1,\ldots,n\right\}  .
\]
Thus, by Definition~\ref{Definition9}, the element $u$ is $n$-integral over
$\left(  A,\left(  I_{\rho}\right)  _{\rho\in\mathbb{N}}\right)  $. This
proves Lemma $\mathcal{F}$.]

Combining Lemma $\mathcal{E}$ and Lemma $\mathcal{F}$, we obtain that $u$ is
$n$-integral over $\left(  A,\left(  I_{\rho}\right)  _{\rho\in\mathbb{N}%
}\right)  $ if and only if $uY$ is $n$-integral over $A\left[  \left(
I_{\rho}\right)  _{\rho\in\mathbb{N}}\ast Y\right]  $. This proves
Theorem~\ref{Theorem7}.
\end{proof}
\end{verlong}

\subsection{Sums and products again}

Let us next state an analogue of Theorem~\ref{Theorem5a} for integrality over
ideal semifiltrations:

\begin{theorem}
\label{Theorem8a} Let $A$ be a ring. Let $B$ be an $A$-algebra. Let $\left(
I_{\rho}\right)  _{\rho\in\mathbb{N}}$ be an ideal semifiltration of $A$. Let
$u\in A$. Then, $u\cdot1_{B}$ is $1$-integral over $\left(  A,\left(  I_{\rho
}\right)  _{\rho\in\mathbb{N}}\right)  $ if and only if $u\cdot1_{B}\in
I_{1}\cdot1_{B}$.
\end{theorem}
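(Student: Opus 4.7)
The plan is to unfold Definition \ref{Definition9} for the case $n = 1$ and observe that the condition collapses to a trivial membership statement. Specifically, $u \cdot 1_B$ is $1$-integral over $\left(A, \left(I_\rho\right)_{\rho \in \mathbb{N}}\right)$ if and only if there exists a pair $\left(a_0, a_1\right) \in A^2$ satisfying $a_0 \cdot 1_B + a_1 \cdot u \cdot 1_B = 0$ in $B$, along with $a_1 = 1$, $a_0 \in I_1$, and $a_1 \in I_0$. The last of these conditions is automatic, since $I_0 = A$.

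Plugging in $a_1 = 1$ reduces the equation to $a_0 \cdot 1_B = -u \cdot 1_B$ in $B$. Hence the existence of a suitable $(a_0, a_1)$ is equivalent to the existence of an element $a_0 \in I_1$ with $a_0 \cdot 1_B = -u \cdot 1_B$, i.e., to the condition $-u \cdot 1_B \in I_1 \cdot 1_B$. Since $I_1 \cdot 1_B$ is an additive subgroup of $B$ (indeed, an $A$-submodule of $B$), this condition is equivalent to $u \cdot 1_B \in I_1 \cdot 1_B$.

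Thus I would structure the proof as a two-direction unfolding: for the ``$\Longrightarrow$'' direction, take the $(a_0, a_1)$ provided by the definition of $1$-integrality, read off $a_0 = -u \cdot 1_B$ from the defining equation (after using $a_1 = 1$), and conclude $u \cdot 1_B = -a_0 \cdot 1_B \in I_1 \cdot 1_B$; for the ``$\Longleftarrow$'' direction, write $u \cdot 1_B = c \cdot 1_B$ for some $c \in I_1$, then set $a_0 = -c \in I_1$ and $a_1 = 1$, and verify directly that the three conditions from Definition \ref{Definition9} hold. There is no real obstacle here: the statement is essentially a sanity check on the definition, and the only subtlety is to keep track of the distinction between elements of $A$ and their images in $B$ under the structure map $a \mapsto a \cdot 1_B$.
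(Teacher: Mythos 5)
Your proposal is correct and follows essentially the same route as the paper's own proof: unfold Definition~\ref{Definition9} at $n=1$, read off the $\Longrightarrow$ direction from the defining equation $a_0\cdot 1_B + u\cdot 1_B = 0$ with $a_0\in I_1$, and construct the witness tuple $(a_0,a_1)=(-c,1)$ for the $\Longleftarrow$ direction. One small caution: in your third paragraph you write ``read off $a_0 = -u\cdot 1_B$,'' which should be $a_0\cdot 1_B = -u\cdot 1_B$ (the element $a_0$ lives in $A$, not $B$), but your second paragraph already states this correctly and you explicitly flag the $A$-versus-$B$ distinction at the end, so this is merely a notational slip, not a gap.
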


\begin{vershort}
\begin{proof}
[Proof of Theorem~\ref{Theorem8a}.]Straightforward and left to the reader.
\end{proof}
\end{vershort}

\begin{verlong}
\begin{proof}
[Proof of Theorem~\ref{Theorem8a}.]In order to verify Theorem~\ref{Theorem8a},
we have to prove the following two lemmata:

\begin{statement}
\textit{Lemma }$\mathcal{G}$\textit{:} If $u\cdot1_{B}$ is $1$-integral over
$\left(  A,\left(  I_{\rho}\right)  _{\rho\in\mathbb{N}}\right)  $, then
$u\cdot1_{B}\in I_{1}\cdot1_{B}$.
\end{statement}

\begin{statement}
\textit{Lemma} $\mathcal{H}$\textit{:} If $u\cdot1_{B}\in I_{1}\cdot1_{B}$,
then $u\cdot1_{B}$ is $1$-integral over $\left(  A,\left(  I_{\rho}\right)
_{\rho\in\mathbb{N}}\right)  $.
\end{statement}

[\textit{Proof of Lemma }$\mathcal{G}$\textit{:} Assume that $u\cdot1_{B}$ is
$1$-integral over $\left(  A,\left(  I_{\rho}\right)  _{\rho\in\mathbb{N}%
}\right)  $. Thus, by Definition~\ref{Definition9} (applied to $u\cdot1_{B}$
and $1$ instead of $u$ and $n$), there exists some $\left(  a_{0}%
,a_{1}\right)  \in A^{2}$ such that%
\[
\sum\limits_{k=0}^{1}a_{k}\left(  u\cdot1_{B}\right)  ^{k}%
=0,\ \ \ \ \ \ \ \ \ \ a_{1}=1,\ \ \ \ \ \ \ \ \ \ \text{and}%
\ \ \ \ \ \ \ \ \ \ a_{i}\in I_{1-i}\text{ for every }i\in\left\{
0,1\right\}  .
\]
Consider this $\left(  a_{0},a_{1}\right)  $. Thus, $a_{0}\in I_{1-0}$ (since
$a_{i}\in I_{1-i}$ for every $i\in\left\{  0,1\right\}  $), so that $a_{0}\in
I_{1-0}=I_{1}$ and thus $-a_{0}\in-I_{1}\subseteq I_{1}$ (since $I_{1}$ is an
ideal of $A$). Also,%
\[
0=\sum\limits_{k=0}^{1}a_{k}\left(  u\cdot1_{B}\right)  ^{k}=a_{0}%
\underbrace{\left(  u\cdot1_{B}\right)  ^{0}}_{=1_{B}}+\underbrace{a_{1}}%
_{=1}\underbrace{\left(  u\cdot1_{B}\right)  ^{1}}_{=u\cdot1_{B}}=a_{0}%
\cdot1_{B}+u\cdot1_{B},
\]
so that $u\cdot1_{B}=\underbrace{-a_{0}}_{\in I_{1}}\cdot1_{B}\in I_{1}%
\cdot1_{B}$. This proves Lemma $\mathcal{G}$.]

[\textit{Proof of Lemma }$\mathcal{H}$\textit{:} Assume that $u\cdot1_{B}\in
I_{1}\cdot1_{B}$. Thus, $u\cdot1_{B}=w\cdot1_{B}$ for some $w\in I_{1}$.
Consider this $w$. Then, $w\in I_{1}$, so that $-w\in-I_{1}\subseteq I_{1}$
(since $I_{1}$ is an ideal of $A$). Define a $2$-tuple $\left(  a_{0}%
,a_{1}\right)  \in A^{2}$ by setting $a_{0}=-w$ and $a_{1}=1$. Then,
\begin{align*}
\sum\limits_{k=0}^{1}a_{k}\left(  u\cdot1_{B}\right)  ^{k}  &
=\underbrace{a_{0}}_{=-w}\underbrace{\left(  u\cdot1_{B}\right)  ^{0}}%
_{=1_{B}}+\underbrace{a_{1}}_{=1}\underbrace{\left(  u\cdot1_{B}\right)  ^{1}%
}_{=u\cdot1_{B}}=-w\cdot1_{B}+\underbrace{u\cdot1_{B}}_{=w\cdot1_{B}}\\
&  =-w\cdot1_{B}+w\cdot1_{B}=0.
\end{align*}
Also, $a_{i}\in I_{1-i}$ for every $i\in\left\{  0,1\right\}  $ (since
$a_{0}=-w\in I_{1}=I_{1-0}$ and $a_{1}=1\in A=I_{0}=I_{1-1}$). Altogether, we
now know that $\left(  a_{0},a_{1}\right)  \in A^{2}$ and%
\[
\sum\limits_{k=0}^{1}a_{k}\left(  u\cdot1_{B}\right)  ^{k}%
=0,\ \ \ \ \ \ \ \ \ \ a_{1}=1,\ \ \ \ \ \ \ \ \ \ \text{and}%
\ \ \ \ \ \ \ \ \ \ a_{i}\in I_{1-i}\text{ for every }i\in\left\{
0,1\right\}  .
\]
Thus, by Definition~\ref{Definition9} (applied to $u\cdot1_{B}$ and $1$
instead of $u$ and $n$), the element $u\cdot1_{B}$ is $1$-integral over
$\left(  A,\left(  I_{\rho}\right)  _{\rho\in\mathbb{N}}\right)  $. This
proves Lemma $\mathcal{H}$.]

Combining Lemma $\mathcal{G}$ and Lemma $\mathcal{H}$, we obtain that
$u\cdot1_{B}$ is $1$-integral over $\left(  A,\left(  I_{\rho}\right)
_{\rho\in\mathbb{N}}\right)  $ if and only if $u\cdot1_{B}\in I_{1}\cdot1_{B}%
$. This proves Theorem~\ref{Theorem8a}.
\end{proof}
\end{verlong}

The next theorem is an analogue of Theorem~\ref{Theorem5bc} \textbf{(a)} for
integrality over ideal semifiltrations:

\begin{theorem}
\label{Theorem8b} Let $A$ be a ring. Let $B$ be an $A$-algebra. Let $\left(
I_{\rho}\right)  _{\rho\in\mathbb{N}}$ be an ideal semifiltration of $A$. Let
$x\in B$ and $y\in B$. Let $m\in\mathbb{N}$ and $n\in\mathbb{N}$. Assume that
$x$ is $m$-integral over $\left(  A,\left(  I_{\rho}\right)  _{\rho
\in\mathbb{N}}\right)  $, and that $y$ is $n$-integral over $\left(  A,\left(
I_{\rho}\right)  _{\rho\in\mathbb{N}}\right)  $.

Then, $x+y$ is $nm$-integral over $\left(  A,\left(  I_{\rho}\right)
_{\rho\in\mathbb{N}}\right)  $.
\end{theorem}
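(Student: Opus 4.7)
The plan is to reduce the statement to its counterpart for ordinary rings (namely, Theorem~\ref{Theorem5bc} \textbf{(a)}) using the Rees algebra translation provided by Theorem~\ref{Theorem7}. The key observation is that Theorem~\ref{Theorem7} converts an integrality statement over the ideal semifiltration $\left(I_\rho\right)_{\rho\in\mathbb{N}}$ into an integrality statement over the single ring $A\left[\left(I_\rho\right)_{\rho\in\mathbb{N}}\ast Y\right]$ inside $B\left[Y\right]$, replacing the original element $u \in B$ by $uY \in B\left[Y\right]$. Since this latter statement is about integrality over a ring, the already-proven Theorem~\ref{Theorem5bc} \textbf{(a)} can be applied directly.

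More precisely, first I would apply Theorem~\ref{Theorem7} (in its $\Longrightarrow$ direction) to the assumption that $x$ is $m$-integral over $\left(A,\left(I_\rho\right)_{\rho\in\mathbb{N}}\right)$ to conclude that $xY$ is $m$-integral over $A\left[\left(I_\rho\right)_{\rho\in\mathbb{N}}\ast Y\right]$. The same reasoning, applied to $y$ and $n$, shows that $yY$ is $n$-integral over $A\left[\left(I_\rho\right)_{\rho\in\mathbb{N}}\ast Y\right]$.

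Next, I would invoke Theorem~\ref{Theorem5bc} \textbf{(a)} with the base ring $A$ replaced by $A\left[\left(I_\rho\right)_{\rho\in\mathbb{N}}\ast Y\right]$, the $A$-algebra $B$ replaced by $B\left[Y\right]$ (which is indeed an $A\left[\left(I_\rho\right)_{\rho\in\mathbb{N}}\ast Y\right]$-algebra, as noted in Remark~\ref{rmk.BY-over-AIY}), and the elements $x,y$ replaced by $xY, yY$. This yields that $xY + yY$ is $nm$-integral over $A\left[\left(I_\rho\right)_{\rho\in\mathbb{N}}\ast Y\right]$. But $xY+yY = (x+y)Y$, so this says that $(x+y)Y$ is $nm$-integral over $A\left[\left(I_\rho\right)_{\rho\in\mathbb{N}}\ast Y\right]$.

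Finally, I would apply Theorem~\ref{Theorem7} in its $\Longleftarrow$ direction, with $u$ replaced by $x+y$ and $n$ replaced by $nm$, to conclude that $x+y$ is $nm$-integral over $\left(A,\left(I_\rho\right)_{\rho\in\mathbb{N}}\right)$. There is no real obstacle: the only thing to double-check is that $B\left[Y\right]$ really carries the structure of an $A\left[\left(I_\rho\right)_{\rho\in\mathbb{N}}\ast Y\right]$-algebra in a way compatible with both applications of Theorem~\ref{Theorem7}, but this is exactly the content of Remark~\ref{rmk.BY-over-AIY}. The entire proof is therefore a short three-line translation-then-application-then-translation-back.
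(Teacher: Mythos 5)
Your proposal is correct and follows essentially the same route as the paper's own proof: apply Theorem~\ref{Theorem7} to translate both hypotheses into ring-integrality statements about $xY$ and $yY$ over $A\left[ \left( I_{\rho}\right) _{\rho\in\mathbb{N}}\ast Y\right]$, invoke Theorem~\ref{Theorem5bc} \textbf{(a)} to handle the sum, and translate back via Theorem~\ref{Theorem7}. No differences worth noting.
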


\begin{proof}
[Proof of Theorem~\ref{Theorem8b}.]Consider the polynomial ring $A\left[
Y\right]  $ and its $A$-subalgebra $A\left[  \left(  I_{\rho}\right)
_{\rho\in\mathbb{N}}\ast Y\right]  $. The polynomial ring $B\left[  Y\right]
$ is an $A\left[  \left(  I_{\rho}\right)  _{\rho\in\mathbb{N}}\ast Y\right]
$-algebra (as explained in Remark \ref{rmk.BY-over-AIY}).

Theorem~\ref{Theorem7} (applied to $x$ and $m$ instead of $u$ and $n$) yields
that $xY$ is $m$-integral over $A\left[  \left(  I_{\rho}\right)  _{\rho
\in\mathbb{N}}\ast Y\right]  $ (since $x$ is $m$-integral over $\left(
A,\left(  I_{\rho}\right)  _{\rho\in\mathbb{N}}\right)  $). Also,
Theorem~\ref{Theorem7} (applied to $y$ instead of $u$) yields that $yY$ is
$n$-integral over \newline$A\left[  \left(  I_{\rho}\right)  _{\rho
\in\mathbb{N}}\ast Y\right]  $ (since $y$ is $n$-integral over $\left(
A,\left(  I_{\rho}\right)  _{\rho\in\mathbb{N}}\right)  $).

Hence, Theorem~\ref{Theorem5bc} \textbf{(a)} (applied to $A\left[  \left(
I_{\rho}\right)  _{\rho\in\mathbb{N}}\ast Y\right]  $, $B\left[  Y\right]  $,
$xY$ and $yY$ instead of $A$, $B$, $x$ and $y$, respectively) yields that
$xY+yY$ is $nm$-integral over $A\left[  \left(  I_{\rho}\right)  _{\rho
\in\mathbb{N}}\ast Y\right]  $. Since $xY+yY=\left(  x+y\right)  Y$, this
means that $\left(  x+y\right)  Y$ is $nm$-integral over $A\left[  \left(
I_{\rho}\right)  _{\rho\in\mathbb{N}}\ast Y\right]  $. Hence,
Theorem~\ref{Theorem7} (applied to $x+y$ and $nm$ instead of $u$ and $n$)
yields that $x+y$ is $nm$-integral over $\left(  A,\left(  I_{\rho}\right)
_{\rho\in\mathbb{N}}\right)  $. This proves Theorem~\ref{Theorem8b}.
\end{proof}

Our next theorem is a somewhat asymmetric analogue of Theorem~\ref{Theorem5bc}
\textbf{(b)} for integrality over ideal semifiltrations:

\begin{theorem}
\label{Theorem8c} Let $A$ be a ring. Let $B$ be an $A$-algebra. Let $\left(
I_{\rho}\right)  _{\rho\in\mathbb{N}}$ be an ideal semifiltration of $A$. Let
$x\in B$ and $y\in B$. Let $m\in\mathbb{N}$ and $n\in\mathbb{N}$. Assume that
$x$ is $m$-integral over $\left(  A,\left(  I_{\rho}\right)  _{\rho
\in\mathbb{N}}\right)  $, and that $y$ is $n$-integral over $A$.

Then, $xy$ is $nm$-integral over $\left(  A,\left(  I_{\rho}\right)  _{\rho
\in\mathbb{N}}\right)  $.
\end{theorem}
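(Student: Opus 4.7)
The plan is to mimic the proof of Theorem~\ref{Theorem8b}, using Theorem~\ref{Theorem7} to pass back and forth between integrality over the ideal semifiltration $(A,(I_\rho)_{\rho\in\mathbb{N}})$ and integrality over the Rees algebra $R:=A\left[(I_\rho)_{\rho\in\mathbb{N}}\ast Y\right]$, and then applying Theorem~\ref{Theorem5bc}~\textbf{(b)} inside $B\left[Y\right]$.

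Concretely, first I would note that since $x$ is $m$-integral over $(A,(I_\rho)_{\rho\in\mathbb{N}})$, Theorem~\ref{Theorem7} (applied to $x$ and $m$) gives that $xY\in B\left[Y\right]$ is $m$-integral over $R$. Next I would handle $y$: the assumption is only that $y$ is $n$-integral over $A$ (not over the semifiltration), but since $A\subseteq R$, any monic polynomial $P\in A\left[X\right]$ of degree $n$ with $P\left(y\right)=0$ can be viewed as a monic polynomial in $R\left[X\right]$ of degree $n$ with $P\left(y\right)=0$, so $y\in B\left[Y\right]$ is $n$-integral over $R$ as well. This is the key asymmetry that lets the hypothesis on $y$ be weaker than the one on $x$.

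Now I would apply Theorem~\ref{Theorem5bc}~\textbf{(b)} with $A$ replaced by $R$, $B$ replaced by $B\left[Y\right]$, $x$ replaced by $xY$, and $y$ unchanged. This yields that $\left(xY\right)\cdot y=xyY$ is $nm$-integral over $R$. Finally, Theorem~\ref{Theorem7} applied in the opposite direction (to $xy$ and $nm$ in place of $u$ and $n$) translates this back into the statement that $xy$ is $nm$-integral over $\left(A,(I_\rho)_{\rho\in\mathbb{N}}\right)$, which is what we wanted.

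No step here should be genuinely difficult: the only thing worth flagging is the quick lemma that integrality over $A$ implies integrality (with the same degree) over any overring, which is immediate from Assertion~$\mathcal{A}$ of Theorem~\ref{Theorem1}; I would state it in one sentence rather than as a separate lemma. Everything else is a mechanical transport of Theorem~\ref{Theorem5bc}~\textbf{(b)} through the Rees-algebra dictionary of Theorem~\ref{Theorem7}, exactly parallel to how Theorem~\ref{Theorem8b} is deduced from Theorem~\ref{Theorem5bc}~\textbf{(a)}.
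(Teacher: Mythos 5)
Your proposal is correct and takes essentially the same approach as the paper: Theorem~\ref{Theorem7} transports $x$'s integrality over $\left(A,(I_\rho)_{\rho\in\mathbb{N}}\right)$ to integrality of $xY$ over the Rees algebra, Theorem~\ref{Theorem5bc}~\textbf{(b)} is applied there, and Theorem~\ref{Theorem7} transports the conclusion back. The one-sentence observation you flag (that $n$-integrality over $A$ gives $n$-integrality over any $A$-algebra) is exactly the paper's Lemma~\ref{lem.I}, which the paper proves separately and cites here; stating it inline as you do is a harmless stylistic shortcut.
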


Before we prove this theorem, we require a trivial observation:

\begin{lemma}
\label{lem.I}Let $A$ be a ring. Let $A^{\prime}$ be an $A$-algebra. Let
$B^{\prime}$ be an $A^{\prime}$-algebra. Let $v\in B^{\prime}$. Let
$n\in\mathbb{N}$. Assume that $v$ is $n$-integral over $A$. (Here, of course,
we are using the fact that $B^{\prime}$ is an $A$-algebra, since $B^{\prime}$
is an $A^{\prime}$-algebra while $A^{\prime}$ is an $A$-algebra.)

Then, $v$ is $n$-integral over $A^{\prime}$.
\end{lemma}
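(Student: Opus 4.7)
The plan is to use the equivalence of assertions $\mathcal{A}$ and $\mathcal{D}$ from Theorem~\ref{Theorem1}, specifically the characterization via a monic polynomial equation (Assertion $\mathcal{A}$). The idea is that a monic polynomial witnessing $n$-integrality over $A$ can be transported along the canonical ring homomorphism $A \to A'$ to yield a monic polynomial witnessing $n$-integrality over $A'$.

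More precisely, I would first apply the hypothesis that $v$ is $n$-integral over $A$ in the form of Assertion $\mathcal{A}$: there exists a monic polynomial $P \in A[X]$ with $\deg P = n$ and $P(v) = 0$ (where here $v$ is viewed in $B'$ via the canonical structure map $A \to B'$ coming from the composition $A \to A' \to B'$). Write $P(X) = X^n + \sum_{k=0}^{n-1} a_k X^k$ with $a_k \in A$. Let $\iota : A \to A'$ denote the canonical ring homomorphism from the $A$-algebra structure on $A'$, and define
\[
Q(X) = X^n + \sum_{k=0}^{n-1} \iota(a_k) X^k \in A'[X].
\]
Then $Q$ is automatically monic with $\deg Q = n$.

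The key observation is that evaluation of $P$ at $v$ in $B'$ agrees with evaluation of $Q$ at $v$ in $B'$. This is because the $A$-algebra structure on $B'$ is, by definition, the composition of the $A$-algebra structure on $A'$ with the $A'$-algebra structure on $B'$; so the image of $a_k \in A$ under $A \to B'$ equals the image of $\iota(a_k) \in A'$ under $A' \to B'$. Consequently,
\[
Q(v) = v^n + \sum_{k=0}^{n-1} \iota(a_k) \cdot v^k = v^n + \sum_{k=0}^{n-1} a_k \cdot v^k = P(v) = 0.
\]
Thus $Q \in A'[X]$ is a monic polynomial with $\deg Q = n$ and $Q(v) = 0$, so Assertion $\mathcal{A}$ of Theorem~\ref{Theorem1} (with $A'$ in place of $A$) is satisfied. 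Hence $v$ is $n$-integral over $A'$, proving Lemma~\ref{lem.I}.

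There is no real obstacle here; the only mildly delicate point is the compatibility of the two evaluation maps, which follows directly from the definition of the $A$-algebra structure on $B'$ as the composition of the structure maps $A \to A' \to B'$. Everything else is routine rewriting of the polynomial coefficients via $\iota$.
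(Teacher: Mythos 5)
Your proof is correct and takes essentially the same approach as the paper: push the monic polynomial witnessing $n$-integrality over $A$ forward along the canonical homomorphism $A\to A'$ (equivalently, $A[X]\to A'[X]$), and note that evaluation at $v$ is unchanged because the $A$-algebra structure on $B'$ factors through $A'$.
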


\begin{verlong}
\begin{proof}
[Proof of Lemma \ref{lem.I}.]We know that $v$ is $n$-integral over $A$. In
other words, there exists a monic polynomial $P\in A\left[  X\right]  $ with
$\deg P=n$ and $P\left(  v\right)  =0$. Consider this $P$, and denote it by
$Q$. Thus, $Q$ is a monic polynomial in $A\left[  X\right]  $ with $\deg Q=n$
and $Q\left(  v\right)  =0$.

Consider the canonical ring homomorphism $A\rightarrow A^{\prime}$ sending
each $a\in A$ to $a\cdot1_{A^{\prime}}\in A^{\prime}$. This homomorphism is
defined because $A^{\prime}$ is an $A$-algebra, and it in turn induces a
canonical ring homomorphism $A\left[  X\right]  \rightarrow A^{\prime}\left[
X\right]  $. Let $\widetilde{Q}\in A^{\prime}\left[  X\right]  $ be the image
of the polynomial $Q\in A\left[  X\right]  $ under this latter homomorphism.
Then, $\widetilde{Q}$ is a monic polynomial with $\deg\widetilde{Q}=n$ (since
$Q$ is a monic polynomial with $\deg Q=n$). Furthermore, the definition of
$\widetilde{Q}$ yields $\widetilde{Q}\left(  v\right)  =Q\left(  v\right)
=0$. Thus, there exists a monic polynomial $P\in A^{\prime}\left[  X\right]  $
with $\deg P=n$ and $P\left(  v\right)  =0$ (namely, $P=\widetilde{Q}$). In
other words, $v$ is $n$-integral over $A^{\prime}$. This proves
Lemma~\ref{lem.I}.
\end{proof}
\end{verlong}

\begin{proof}
[Proof of Theorem \ref{Theorem8c}.]Consider the polynomial ring $A\left[
Y\right]  $ and its $A$-subalgebra $A\left[  \left(  I_{\rho}\right)
_{\rho\in\mathbb{N}}\ast Y\right]  $. The polynomial ring $B\left[  Y\right]
$ is an $A\left[  \left(  I_{\rho}\right)  _{\rho\in\mathbb{N}}\ast Y\right]
$-algebra (as explained in Remark \ref{rmk.BY-over-AIY}).

Theorem~\ref{Theorem7} (applied to $x$ and $m$ instead of $u$ and $n$) yields
that $xY$ is $m$-integral over $A\left[  \left(  I_{\rho}\right)  _{\rho
\in\mathbb{N}}\ast Y\right]  $ (since $x$ is $m$-integral over $\left(
A,\left(  I_{\rho}\right)  _{\rho\in\mathbb{N}}\right)  $). Also, we know that
$y$ is $n$-integral over $A$. Thus, Lemma~\ref{lem.I} (applied to $A^{\prime
}=A\left[  \left(  I_{\rho}\right)  _{\rho\in\mathbb{N}}\ast Y\right]  $,
$B^{\prime}=B\left[  Y\right]  $ and $v=y$) yields that $y$ is $n$-integral
over $A\left[  \left(  I_{\rho}\right)  _{\rho\in\mathbb{N}}\ast Y\right]  $
(since $A\left[  \left(  I_{\rho}\right)  _{\rho\in\mathbb{N}}\ast Y\right]  $
is an $A$-algebra, and $B\left[  Y\right]  $ is an $A\left[  \left(  I_{\rho
}\right)  _{\rho\in\mathbb{N}}\ast Y\right]  $-algebra). On the other hand, we
know that $xY$ is $m$-integral over $A\left[  \left(  I_{\rho}\right)
_{\rho\in\mathbb{N}}\ast Y\right]  $. Hence, Theorem~\ref{Theorem5bc}
\textbf{(b)} (applied to $A\left[  \left(  I_{\rho}\right)  _{\rho
\in\mathbb{N}}\ast Y\right]  $, $B\left[  Y\right]  $ and $xY$ instead of $A$,
$B$ and $x$, respectively) yields that $xY\cdot y$ is $nm$-integral over
$A\left[  \left(  I_{\rho}\right)  _{\rho\in\mathbb{N}}\ast Y\right]  $. Since
$xY\cdot y=xyY$, this means that $xyY$ is $nm$-integral over $A\left[  \left(
I_{\rho}\right)  _{\rho\in\mathbb{N}}\ast Y\right]  $. Hence,
Theorem~\ref{Theorem7} (applied to $xy$ and $nm$ instead of $u$ and $n$)
yields that $xy$ is $nm$-integral over $\left(  A,\left(  I_{\rho}\right)
_{\rho\in\mathbb{N}}\right)  $. This proves Theorem~\ref{Theorem8c}.
\end{proof}

It is easy to state analogues of Corollary~\ref{cor.-x-int} and
Corollary~\ref{cor.x-y-int} for ideal semifiltrations. These analogues can be
derived from Corollary~\ref{cor.-x-int} and Corollary~\ref{cor.x-y-int} in the
same way as how we derived Theorem~\ref{Theorem8b} from
Theorem~\ref{Theorem5bc} \textbf{(a)}.

\subsection{Transitivity again}

The next theorem imitates Theorem~\ref{Theorem4} for integrality over ideal semifiltrations:

\begin{theorem}
\label{Theorem9} Let $A$ be a ring. Let $B$ be an $A$-algebra. Let $\left(
I_{\rho}\right)  _{\rho\in\mathbb{N}}$ be an ideal semifiltration of $A$.

Let $v\in B$ and $u\in B$. Let $m\in\mathbb{N}$ and $n\in\mathbb{N}$.

\textbf{(a)} Then, $\left(  I_{\rho}A\left[  v\right]  \right)  _{\rho
\in\mathbb{N}}$ is an ideal semifiltration of $A\left[  v\right]  $. (See
Convention \ref{conv.IAv} below for the meaning of \textquotedblleft$I_{\rho
}A\left[  v\right]  $\textquotedblright.)

\textbf{(b)} Assume that $v$ is $m$-integral over $A$, and that $u$ is
$n$-integral over $\left(  A\left[  v\right]  ,\left(  I_{\rho}A\left[
v\right]  \right)  _{\rho\in\mathbb{N}}\right)  $. Then, $u$ is $nm$-integral
over $\left(  A,\left(  I_{\rho}\right)  _{\rho\in\mathbb{N}}\right)  $.
\end{theorem}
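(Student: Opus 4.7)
The plan is to handle the two parts as follows. Part \textbf{(a)} is an immediate verification of the two axioms of an ideal semifiltration: from $I_0 = A$ I obtain $I_0 A[v] = A[v]$, and from $I_a I_b \subseteq I_{a+b}$ I obtain $(I_a A[v])(I_b A[v]) = (I_a I_b) A[v] \subseteq I_{a+b} A[v]$ for all $a, b \in \mathbb{N}$.

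For part \textbf{(b)}, my strategy is to reduce everything to integrality over rings via the Rees-algebra translation of Theorem~\ref{Theorem7}, and then invoke the classical transitivity statement Theorem~\ref{Theorem4}. First I would apply Theorem~\ref{Theorem7} (with base ring $A[v]$ and ideal semifiltration $(I_\rho A[v])_{\rho \in \mathbb{N}}$, whose status as an ideal semifiltration was just established in part \textbf{(a)}) to the hypothesis on $u$: this translates it into the statement that $uY \in B[Y]$ is $n$-integral over the Rees algebra $A[v]\bigl[(I_\rho A[v])_{\rho \in \mathbb{N}} \ast Y\bigr]$.

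The pivotal bookkeeping step, and what I expect to be the main obstacle, is the identification
\[
A[v]\bigl[(I_\rho A[v])_{\rho \in \mathbb{N}} \ast Y\bigr] \;=\; A\bigl[(I_\rho)_{\rho \in \mathbb{N}} \ast Y\bigr][v]
\]
of these two subrings of $B[Y]$. The left-hand side equals $\sum_{i \in \mathbb{N}} (I_i A[v]) Y^i$, while the right-hand side is the subring of $B[Y]$ generated by $v$ over $A\bigl[(I_\rho)_{\rho \in \mathbb{N}} \ast Y\bigr] = \sum_{i \in \mathbb{N}} I_i Y^i$. Both are spanned as $A$-modules by the monomials $c\, v^k Y^i$ with $c \in I_i$ and $k \in \mathbb{N}$, so both inclusions follow by routine rearrangement of sums. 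Once this identification is granted, we know that $uY$ is $n$-integral over $A\bigl[(I_\rho)_{\rho \in \mathbb{N}} \ast Y\bigr][v]$.

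The remainder is then quick. Since $v$ is $m$-integral over $A$ by hypothesis, Lemma~\ref{lem.I} (applied with $A' = A\bigl[(I_\rho)_{\rho \in \mathbb{N}} \ast Y\bigr]$ and $B' = B[Y]$) gives that $v$ is also $m$-integral over $A\bigl[(I_\rho)_{\rho \in \mathbb{N}} \ast Y\bigr]$. Theorem~\ref{Theorem4}, applied with base ring $A\bigl[(I_\rho)_{\rho \in \mathbb{N}} \ast Y\bigr]$ and the elements $v$ and $uY$, then yields that $uY$ is $nm$-integral over $A\bigl[(I_\rho)_{\rho \in \mathbb{N}} \ast Y\bigr]$. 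A final application of Theorem~\ref{Theorem7} in the reverse direction translates this into the desired conclusion that $u$ is $nm$-integral over $\bigl(A, (I_\rho)_{\rho \in \mathbb{N}}\bigr)$.
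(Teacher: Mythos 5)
Your proof is correct and follows essentially the same route as the paper's: translate the hypothesis on $u$ via Theorem~\ref{Theorem7} into a statement about $uY$, identify $\left(A\left[v\right]\right)\left[\left(I_{\rho}A\left[v\right]\right)_{\rho\in\mathbb{N}}\ast Y\right]$ with $\left(A\left[\left(I_{\rho}\right)_{\rho\in\mathbb{N}}\ast Y\right]\right)\left[v\right]$, lift the integrality of $v$ via Lemma~\ref{lem.I}, apply Theorem~\ref{Theorem4}, and translate back with Theorem~\ref{Theorem7}. The paper formalizes your ``routine rearrangement of sums'' for the Rees-algebra identification through Lemma~\ref{lem.K} (the equality $A'\cdot A\left[v\right]=A'\left[v\right]$), but the underlying computation is the same.
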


Here and in the following, we are using the following convention:

\begin{convention}
\label{conv.IAv}Let $A$ be a ring. Let $B$ be an $A$-algebra. Let $v\in B$,
and let $I$ be an ideal of $A$. Then, you should read the expression
\textquotedblleft$IA\left[  v\right]  $\textquotedblright\ as $I\cdot\left(
A\left[  v\right]  \right)  $, not as $\left(  IA\right)  \left[  v\right]  $.
For instance, you should read the term \textquotedblleft$I_{\rho}A\left[
v\right]  $\textquotedblright\ (in Theorem~\ref{Theorem9} \textbf{(a)}) as
$I_{\rho}\cdot\left(  A\left[  v\right]  \right)  $, not as $\left(  I_{\rho
}A\right)  \left[  v\right]  $.
\end{convention}

Before we prove Theorem~\ref{Theorem9}, let us state two lemmas. The first is
a more general (but still obvious) version of Theorem~\ref{Theorem9}
\textbf{(a)}:

\begin{lemma}
\label{lem.J}Let $A$ be a ring. Let $A^{\prime}$ be an $A$-algebra. Let
$\left(  I_{\rho}\right)  _{\rho\in\mathbb{N}}$ be an ideal semifiltration of
$A$. Then, $\left(  I_{\rho}A^{\prime}\right)  _{\rho\in\mathbb{N}}$ is an
ideal semifiltration of $A^{\prime}$.
\end{lemma}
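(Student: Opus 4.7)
The plan is to directly verify the two defining conditions of an ideal semifiltration (from Definition~\ref{Definition6}) for the sequence $\left(I_{\rho}A^{\prime}\right)_{\rho\in\mathbb{N}}$, using the corresponding conditions for $\left(I_{\rho}\right)_{\rho\in\mathbb{N}}$. First I would note that each $I_{\rho}A^{\prime}$ is indeed an ideal of $A^{\prime}$: since $I_{\rho}$ is an $A$-submodule of $A$, the product $I_{\rho}\cdot A^{\prime}$ (interpreted in the $A$-algebra $A^{\prime}$ via the structure map $A\to A^{\prime}$) is an $A^{\prime}$-submodule of $A^{\prime}$, which is the same thing as an ideal of $A^{\prime}$.

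Next I would verify $I_{0}A^{\prime}=A^{\prime}$. Since $\left(I_{\rho}\right)_{\rho\in\mathbb{N}}$ is an ideal semifiltration of $A$, we have $I_{0}=A$, so $I_{0}A^{\prime}=A\cdot A^{\prime}$. The inclusion $A\cdot A^{\prime}\subseteq A^{\prime}$ holds since $A^{\prime}$ is an $A$-algebra, and the reverse inclusion follows because $1_{A^{\prime}}=1_{A}\cdot1_{A^{\prime}}\in A\cdot A^{\prime}$, and any ideal of $A^{\prime}$ containing $1_{A^{\prime}}$ must equal $A^{\prime}$.

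Finally I would check that $\left(I_{a}A^{\prime}\right)\left(I_{b}A^{\prime}\right)\subseteq I_{a+b}A^{\prime}$ for all $a,b\in\mathbb{N}$. A typical generator of the left-hand side has the form $\left(x\alpha\right)\left(y\beta\right)$ with $x\in I_{a}$, $y\in I_{b}$, and $\alpha,\beta\in A^{\prime}$. By the commutativity of $A^{\prime}$, this rearranges as $\left(xy\right)\left(\alpha\beta\right)$. Now $xy\in I_{a}I_{b}\subseteq I_{a+b}$ (by the second axiom applied to $\left(I_{\rho}\right)_{\rho\in\mathbb{N}}$) and $\alpha\beta\in A^{\prime}$, so $\left(xy\right)\left(\alpha\beta\right)\in I_{a+b}A^{\prime}$. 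The desired inclusion then follows because both sides are closed under finite sums.

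There is no real obstacle here; the lemma is a routine check that extension of scalars preserves the axioms of an ideal semifiltration. The only mild subtlety is the notational convention (Convention~\ref{conv.IAv}) that $I_{\rho}A^{\prime}$ denotes the product ideal $I_{\rho}\cdot A^{\prime}$ inside $A^{\prime}$ rather than a polynomial-style expression.
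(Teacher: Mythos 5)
Your proof is correct and follows essentially the same route as the paper's: check that each $I_{\rho}A'$ is an ideal, that $I_0 A' = A A' = A'$, and that $(I_a A')(I_b A') = I_a I_b A' \subseteq I_{a+b}A'$. The paper manipulates products of submodules directly rather than spelling out the generator-level computation, but this is a presentational difference only.
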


\begin{vershort}
\begin{proof}
[Proof of Lemma \ref{lem.J}.]This is a straightforward verification of axioms.
\end{proof}
\end{vershort}

\begin{verlong}
\begin{proof}
[Proof of Lemma \ref{lem.J}.]We know that $\left(  I_{\rho}\right)  _{\rho
\in\mathbb{N}}$ is an ideal semifiltration of $A$. In other words, $\left(
I_{\rho}\right)  _{\rho\in\mathbb{N}}$ is a sequence of ideals of $A$ and
satisfies%
\begin{align*}
I_{0}  &  =A;\\
I_{a}I_{b}  &  \subseteq I_{a+b}\ \ \ \ \ \ \ \ \ \ \text{for every }%
a\in\mathbb{N}\text{ and }b\in\mathbb{N}%
\end{align*}
(by Definition~\ref{Definition6}). The set $I_{\rho}$ is an ideal of $A$ for
every $\rho\in\mathbb{N}$ (since $\left(  I_{\rho}\right)  _{\rho\in
\mathbb{N}}$ is a sequence of ideals of $A$).

Now, the set $I_{\rho}A^{\prime}$ is an ideal of $A^{\prime}$ for every
$\rho\in\mathbb{N}$ (since $I_{\rho}$ is an ideal of $A$). Hence, $\left(
I_{\rho}A^{\prime}\right)  _{\rho\in\mathbb{N}}$ is a sequence of ideals of
$A^{\prime}$. It satisfies%
\begin{align*}
\underbrace{I_{0}}_{=A}A^{\prime}  &  =AA^{\prime}=A^{\prime}%
\ \ \ \ \ \ \ \ \ \ \left(  \text{since }A^{\prime}\text{ is an }%
A\text{-algebra}\right)  ;\\
I_{a}A^{\prime}\cdot I_{b}A^{\prime}  &  =\underbrace{I_{a}I_{b}}_{\subseteq
I_{a+b}}A^{\prime}\subseteq I_{a+b}A^{\prime}\ \ \ \ \ \ \ \ \ \ \text{for
every }a\in\mathbb{N}\text{ and }b\in\mathbb{N}.
\end{align*}
Thus, by Definition~\ref{Definition6} (applied to $A^{\prime}$ and $\left(
I_{\rho}A^{\prime}\right)  _{\rho\in\mathbb{N}}$ instead of $A$ and $\left(
I_{\rho}\right)  _{\rho\in\mathbb{N}}$), it follows that $\left(  I_{\rho
}A^{\prime}\right)  _{\rho\in\mathbb{N}}$ is an ideal semifiltration of
$A^{\prime}$. This proves Lemma \ref{lem.J}.
\end{proof}
\end{verlong}

\begin{lemma}
\label{lem.K}Let $A$ be a ring. Let $A^{\prime}$ be an $A$-algebra. Let
$B^{\prime}$ be an $A^{\prime}$-algebra. Let $v\in B^{\prime}$. Then,
$A^{\prime}\cdot A\left[  v\right]  =A^{\prime}\left[  v\right]  $ (an
equality between $A$-submodules of $B^{\prime}$). (Here, we are using the fact
that $B^{\prime}$ is an $A$-algebra, because $B^{\prime}$ is an $A^{\prime}%
$-algebra while $A^{\prime}$ is an $A$-algebra.)
\end{lemma}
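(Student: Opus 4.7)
The plan is to prove the equality $A' \cdot A[v] = A'[v]$ by showing both inclusions directly from the definitions, using Definition~\ref{Definition4} to expand $A[v]$ and $A'[v]$ as sets of polynomial expressions in $v$.

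For the inclusion $A' \cdot A[v] \subseteq A'[v]$: by the definition of the product of submodules, $A' \cdot A[v]$ is the $A$-submodule of $B'$ spanned by products of the form $a' \cdot p$ with $a' \in A'$ and $p \in A[v]$. Since $A'[v]$ is itself an $A$-submodule of $B'$, it suffices to show that every such product lies in $A'[v]$. Writing $p = \sum_{i=0}^{m} a_i v^i$ with $a_i \in A$ (using Definition~\ref{Definition4}), we get $a' p = \sum_{i=0}^{m} (a' \cdot a_i) v^i$, and each coefficient $a' \cdot a_i$ lies in $A'$ (since $A'$ is an $A$-algebra, hence an $A$-module and closed under multiplication by $A'$), so $a' p \in A'[v]$.

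For the reverse inclusion $A'[v] \subseteq A' \cdot A[v]$: take an arbitrary $q \in A'[v]$ and write $q = \sum_{i=0}^{m} a'_i v^i$ with $a'_i \in A'$. Each monomial can be rewritten as $a'_i v^i = a'_i \cdot v^i$, where $a'_i \in A'$ and $v^i \in A[v]$; hence each $a'_i v^i$ is one of the generating products of $A' \cdot A[v]$, and therefore lies in $A' \cdot A[v]$. Since $A' \cdot A[v]$ is an $A$-submodule of $B'$, it is closed under finite sums, so $q \in A' \cdot A[v]$.

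The main obstacle is merely a matter of unpacking definitions correctly; there is no deep step. The only mild subtlety is keeping track of which scalar ring is acting on which submodule: $A' \cdot A[v]$ is defined as an $A$-submodule (not \emph{a priori} an $A'$-submodule), so in the $\supseteq$ direction we use closure under $A$-linear combinations together with the fact that each summand $a'_i v^i$ is \emph{itself} one of the generating products, rather than an $A'$-linear combination of them.
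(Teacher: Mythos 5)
Your proof is correct. The $\supseteq$ inclusion matches the paper's argument exactly: expand $q \in A'[v]$ as $\sum_{i} a'_i v^i$ and observe that each monomial $a'_i v^i$ (with $a'_i \in A'$, $v^i \in A[v]$) is a generating product of $A' \cdot A[v]$, so the sum lies in that $A$-submodule. For the $\subseteq$ inclusion you proceed coefficient-by-coefficient, expanding $p = \sum_i a_i v^i \in A[v]$ and pushing the scalar $a'$ into the coefficients via the $A$-module structure of $A'$ to land in $A'[v]$. The paper instead takes a slightly more structural shortcut: it first notes $A[v] \subseteq A'[v]$ (because the $A$-action on $B'$ factors through $A \to A'$), then concludes $A' \cdot A[v] \subseteq A' \cdot A'[v] \subseteq A'[v]$ using that $A'[v]$ is an $A'$-algebra. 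Both versions are elementary definition-unpackings and are equally valid; the paper's form avoids the coefficient calculation at the cost of explicitly invoking the containment $A[v] \subseteq A'[v]$. Your closing remark about being careful which ring acts on which submodule is exactly the subtlety the paper's submodule-level argument sidesteps.
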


Here, of course, the expression \textquotedblleft$A^{\prime}\cdot A\left[
v\right]  $\textquotedblright\ means \textquotedblleft$A^{\prime}\cdot\left(
A\left[  v\right]  \right)  $\textquotedblright, not \textquotedblleft$\left(
A^{\prime}\cdot A\right)  \left[  v\right]  $\textquotedblright.

\begin{vershort}
\begin{proof}
[Proof of Lemma \ref{lem.K}.]Left to the reader (see \cite{verlong}).
\end{proof}
\end{vershort}

\begin{verlong}
\begin{proof}
[Proof of Lemma \ref{lem.K}.]We have $A\left[  v\right]  \subseteq A^{\prime
}\left[  v\right]  $ (since the ring $A$ acts on $B^{\prime}$ through the
canonical ring homomorphism $A\rightarrow A^{\prime}$). Hence, $A^{\prime
}\cdot\underbrace{A\left[  v\right]  }_{\subseteq A^{\prime}\left[  v\right]
}\subseteq A^{\prime}\cdot A^{\prime}\left[  v\right]  \subseteq A^{\prime
}\left[  v\right]  $ (since $A^{\prime}\left[  v\right]  $ is an $A^{\prime}%
$-algebra). On the other hand, let $x$ be an element of $A^{\prime}\left[
v\right]  $. Then, there exist some $n\in\mathbb{N}$ and some $\left(
a_{0},a_{1},\ldots,a_{n}\right)  \in\left(  A^{\prime}\right)  ^{n+1}$ such
that $x=\sum\limits_{k=0}^{n}a_{k}v^{k}$. Consider this $n$ and this $\left(
a_{0},a_{1},\ldots,a_{n}\right)  $. Thus,%
\[
x=\sum\limits_{k=0}^{n}\underbrace{a_{k}}_{\in A^{\prime}}\underbrace{v^{k}%
}_{\in A\left[  v\right]  }\in\sum\limits_{k=0}^{n}A^{\prime}\cdot A\left[
v\right]  \subseteq A^{\prime}\cdot A\left[  v\right]
\ \ \ \ \ \ \ \ \ \ \left(  \text{since }A^{\prime}\cdot A\left[  v\right]
\text{ is an additive group}\right)  .
\]

Now, forget that we fixed $x$. Thus, we have proved that $x\in A^{\prime}\cdot
A\left[  v\right]  $ for every $x\in A^{\prime}\left[  v\right]  $. Therefore,
$A^{\prime}\left[  v\right]  \subseteq A^{\prime}\cdot A\left[  v\right]  $.
Combined with $A^{\prime}\cdot A\left[  v\right]  \subseteq A^{\prime}\left[
v\right]  $, this yields $A^{\prime}\cdot A\left[  v\right]  =A^{\prime
}\left[  v\right]  $. Hence, we have established Lemma~\ref{lem.K}.
\end{proof}
\end{verlong}

We are now ready to prove Theorem~\ref{Theorem9}:

\begin{proof}
[Proof of Theorem~\ref{Theorem9}.]\textbf{(a)} Lemma \ref{lem.J} (applied to
$A^{\prime}=A\left[  v\right]  $) yields that $\left(  I_{\rho}A\left[
v\right]  \right)  _{\rho\in\mathbb{N}}$ is an ideal semifiltration of
$A\left[  v\right]  $. This proves Theorem~\ref{Theorem9} \textbf{(a)}.

\textbf{(b)} Consider the polynomial ring $A\left[  Y\right]  $ and its
$A$-subalgebra $A\left[  \left(  I_{\rho}\right)  _{\rho\in\mathbb{N}}\ast
Y\right]  $. Then, $\left(  A\left[  v\right]  \right)  \left[  Y\right]  $ is
an $A\left[  Y\right]  $-algebra (since $A\left[  v\right]  $ is an
$A$-algebra) and therefore also an $A\left[  \left(  I_{\rho}\right)
_{\rho\in\mathbb{N}}\ast Y\right]  $-algebra (since $A\left[  \left(  I_{\rho
}\right)  _{\rho\in\mathbb{N}}\ast Y\right]  $ is a subring of $A\left[
Y\right]  $). Hence, $\left(  A\left[  \left(  I_{\rho}\right)  _{\rho
\in\mathbb{N}}\ast Y\right]  \right)  \left[  v\right]  $ is an $A$-subalgebra
of $\left(  A\left[  v\right]  \right)  \left[  Y\right]  $ (since $v\in
A\left[  v\right]  \subseteq\left(  A\left[  v\right]  \right)  \left[
Y\right]  $). On the other hand, $\left(  A\left[  v\right]  \right)  \left[
\left(  I_{\rho}A\left[  v\right]  \right)  _{\rho\in\mathbb{N}}\ast Y\right]
$ is an $A$-subalgebra of $\left(  A\left[  v\right]  \right)  \left[
Y\right]  $ (by its definition).

Note that $B$ is an $A\left[  v\right]  $-algebra (since $A\left[  v\right]  $
is a subring of $B$). Hence, (as explained in Definition~\ref{Definition7})
the polynomial ring $B\left[  Y\right]  $ is an $\left(  A\left[  v\right]
\right)  \left[  Y\right]  $-algebra. Moreover, $B\left[  Y\right]  $ is an
$A\left[  Y\right]  $-algebra (as explained in Definition~\ref{Definition7})
and also an $A\left[  \left(  I_{\rho}\right)  _{\rho\in\mathbb{N}}\ast
Y\right]  $-algebra (as explained in Remark \ref{rmk.BY-over-AIY}).

Now, we will show that $\left(  A\left[  v\right]  \right)  \left[  \left(
I_{\rho}A\left[  v\right]  \right)  _{\rho\in\mathbb{N}}\ast Y\right]
=\left(  A\left[  \left(  I_{\rho}\right)  _{\rho\in\mathbb{N}}\ast Y\right]
\right)  \left[  v\right]  $. (This is an equality between two subrings of
$\left(  A\left[  v\right]  \right)  \left[  Y\right]  $.)

In fact, Definition~\ref{Definition8} yields $A\left[  \left(  I_{\rho
}\right)  _{\rho\in\mathbb{N}}\ast Y\right]  =\sum\limits_{i\in\mathbb{N}%
}I_{i}Y^{i}$. The same definition (but applied to $A\left[  v\right]  $ and
$\left(  I_{\rho}A\left[  v\right]  \right)  _{\rho\in\mathbb{N}}$ instead of
$A$ and $\left(  I_{\rho}\right)  _{\rho\in\mathbb{N}}$) yields%
\begin{align}
\left(  A\left[  v\right]  \right)  \left[  \left(  I_{\rho}A\left[  v\right]
\right)  _{\rho\in\mathbb{N}}\ast Y\right]   &  =\sum\limits_{i\in\mathbb{N}%
}I_{i}\underbrace{A\left[  v\right]  \cdot Y^{i}}_{=Y^{i}\cdot A\left[
v\right]  }=\sum\limits_{i\in\mathbb{N}}I_{i}Y^{i}\cdot A\left[  v\right]
\nonumber\\
&  =\underbrace{\left(  \sum\limits_{i\in\mathbb{N}}I_{i}Y^{i}\right)
}_{=A\left[  \left(  I_{\rho}\right)  _{\rho\in\mathbb{N}}\ast Y\right]
}\cdot A\left[  v\right]  =A\left[  \left(  I_{\rho}\right)  _{\rho
\in\mathbb{N}}\ast Y\right]  \cdot A\left[  v\right] \nonumber\\
&  =\left(  A\left[  \left(  I_{\rho}\right)  _{\rho\in\mathbb{N}}\ast
Y\right]  \right)  \left[  v\right]  \label{pf.Theorem9.3}%
\end{align}
(by Lemma \ref{lem.K}, applied to $A^{\prime}=A\left[  \left(  I_{\rho
}\right)  _{\rho\in\mathbb{N}}\ast Y\right]  $ and $B^{\prime}=\left(
A\left[  v\right]  \right)  \left[  Y\right]  $).

Recall that $B\left[  Y\right]  $ is an $A\left[  \left(  I_{\rho}\right)
_{\rho\in\mathbb{N}}\ast Y\right]  $-algebra. Hence, Lemma \ref{lem.I}
(applied to $A\left[  \left(  I_{\rho}\right)  _{\rho\in\mathbb{N}}\ast
Y\right]  $, $B\left[  Y\right]  $ and $m$ instead of $A^{\prime}$,
$B^{\prime}$ and $n$) yields that $v$ is $m$-integral over $A\left[  \left(
I_{\rho}\right)  _{\rho\in\mathbb{N}}\ast Y\right]  $ (since $v$ is
$m$-integral over $A$).

Now, Theorem~\ref{Theorem7} (applied to $A\left[  v\right]  $ and $\left(
I_{\rho}A\left[  v\right]  \right)  _{\rho\in\mathbb{N}}$ instead of $A$ and
$\left(  I_{\rho}\right)  _{\rho\in\mathbb{N}}$) yields that the element $uY$
is $n$-integral over $\left(  A\left[  v\right]  \right)  \left[  \left(
I_{\rho}A\left[  v\right]  \right)  _{\rho\in\mathbb{N}}\ast Y\right]  $
(since $u$ is $n$-integral over $\left(  A\left[  v\right]  ,\left(  I_{\rho
}A\left[  v\right]  \right)  _{\rho\in\mathbb{N}}\right)  $). In view of
(\ref{pf.Theorem9.3}), this rewrites as follows: The element $uY$ is
$n$-integral over $\left(  A\left[  \left(  I_{\rho}\right)  _{\rho
\in\mathbb{N}}\ast Y\right]  \right)  \left[  v\right]  $. Hence,
Theorem~\ref{Theorem4} (applied to $A\left[  \left(  I_{\rho}\right)
_{\rho\in\mathbb{N}}\ast Y\right]  $, $B\left[  Y\right]  $ and $uY$ instead
of $A$, $B$ and $u$) yields that $uY$ is $nm$-integral over $A\left[  \left(
I_{\rho}\right)  _{\rho\in\mathbb{N}}\ast Y\right]  $ (since $v$ is
$m$-integral over $A\left[  \left(  I_{\rho}\right)  _{\rho\in\mathbb{N}}\ast
Y\right]  $). Thus, Theorem~\ref{Theorem7} (applied to $nm$ instead of $n$)
yields that $u$ is $nm$-integral over $\left(  A,\left(  I_{\rho}\right)
_{\rho\in\mathbb{N}}\right)  $. This proves Theorem~\ref{Theorem9}
\textbf{(b)}.
\end{proof}

\section{\label{sect.3}Generalizing to two ideal semifiltrations}

Theorem~\ref{Theorem8c} can be generalized: Instead of requiring $y$ to be
integral over the ring $A$, we can require $y$ to be integral over a further
ideal semifiltration $\left(  J_{\rho}\right)  _{\rho\in\mathbb{N}}$ of $A$.
In that case, $xy$ will be integral over the ideal semifiltration $\left(
I_{\rho}J_{\rho}\right)  _{\rho\in\mathbb{N}}$ (see Theorem~\ref{Theorem13}
for the precise statement). To get a grip on this, let us study two ideal semifiltrations.

\subsection{The product of two ideal semifiltrations}

\begin{theorem}
\label{Theorem10} Let $A$ be a ring.

\textbf{(a)} Then, $\left(  A\right)  _{\rho\in\mathbb{N}}$ is an ideal
semifiltration of $A$.

\textbf{(b)} Let $\left(  I_{\rho}\right)  _{\rho\in\mathbb{N}}$ and $\left(
J_{\rho}\right)  _{\rho\in\mathbb{N}}$ be two ideal semifiltrations of $A$.
Then, $\left(  I_{\rho}J_{\rho}\right)  _{\rho\in\mathbb{N}}$ is an ideal
semifiltration of $A$.
\end{theorem}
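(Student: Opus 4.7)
The plan is to verify, in each part, the two axioms from Definition~\ref{Definition6}: that the $0$-th term equals $A$, and that $I_{a}I_{b}\subseteq I_{a+b}$ for all $a,b\in\mathbb{N}$. Both parts amount to essentially routine checks; no deep ideas are needed.

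For part \textbf{(a)}, I would take the constant sequence $(I_{\rho})_{\rho\in\mathbb{N}}$ with $I_{\rho}=A$ for all $\rho$. Each $I_{\rho}=A$ is of course an ideal of $A$ (the improper ideal). The first axiom $I_{0}=A$ holds by definition. For the second axiom, I would note that for any $a,b\in\mathbb{N}$, we have $I_{a}I_{b}=A\cdot A=A=I_{a+b}$, so in particular $I_{a}I_{b}\subseteq I_{a+b}$.

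For part \textbf{(b)}, the first step is to observe that $I_{\rho}J_{\rho}$ is an ideal of $A$ for each $\rho\in\mathbb{N}$, since the product of two ideals of a commutative ring is again an ideal. Thus $(I_{\rho}J_{\rho})_{\rho\in\mathbb{N}}$ is a sequence of ideals of $A$. For the axiom $I_{0}J_{0}=A$, I would use that $I_{0}=A$ and $J_{0}=A$ (because $(I_{\rho})_{\rho\in\mathbb{N}}$ and $(J_{\rho})_{\rho\in\mathbb{N}}$ are themselves ideal semifiltrations), so $I_{0}J_{0}=A\cdot A=A$.

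The remaining and only mildly nontrivial step is the multiplicativity inclusion. For any $a,b\in\mathbb{N}$, I would compute
\[
(I_{a}J_{a})(I_{b}J_{b})=I_{a}I_{b}J_{a}J_{b}\subseteq I_{a+b}J_{a+b},
\]
where the first equality uses commutativity of the multiplication of ideals (which rests on the commutativity of $A$), and the inclusion uses $I_{a}I_{b}\subseteq I_{a+b}$ and $J_{a}J_{b}\subseteq J_{a+b}$ from the hypothesis that both $(I_{\rho})_{\rho\in\mathbb{N}}$ and $(J_{\rho})_{\rho\in\mathbb{N}}$ are ideal semifiltrations. The only potential obstacle here is being careful about what ``$I_{a}J_{a}\cdot I_{b}J_{b}$'' means: it denotes the product of the two ideals $I_{a}J_{a}$ and $I_{b}J_{b}$ in the monoid of ideals of $A$, and the rewriting $I_{a}J_{a}I_{b}J_{b}=I_{a}I_{b}J_{a}J_{b}$ is simply the commutativity and associativity of this monoid operation. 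With this in hand, both axioms hold, and the theorem follows.
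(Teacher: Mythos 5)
Your proof is correct and follows essentially the same route as the paper: verify directly that each proposed sequence consists of ideals, that the $0$-th term is $A$, and that the multiplicativity axiom holds, with part (b) resting on the rearrangement $I_aJ_a\cdot I_bJ_b=I_aI_b\cdot J_aJ_b\subseteq I_{a+b}J_{a+b}$. No differences worth noting.
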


\begin{vershort}
\begin{proof}
[Proof of Theorem~\ref{Theorem10}.]The proof of this is just basic axiom
checking (see \cite{verlong} for details).
\end{proof}
\end{vershort}

\begin{verlong}
\begin{proof}
[Proof of Theorem~\ref{Theorem10}.]\textbf{(a)} Clearly, $\left(  A\right)
_{\rho\in\mathbb{N}}$ is a sequence of ideals of $A$. Hence, in order to prove
that $\left(  A\right)  _{\rho\in\mathbb{N}}$ is an ideal semifiltration of
$A$, it is enough to verify that it satisfies the two conditions%
\begin{align*}
A  &  =A;\\
AA  &  \subseteq A\ \ \ \ \ \ \ \ \ \ \text{for every }a\in\mathbb{N}\text{
and }b\in\mathbb{N}.
\end{align*}
But these two conditions are obviously satisfied. Hence, $\left(  A\right)
_{\rho\in\mathbb{N}}$ is an ideal semifiltration of $A$ (by
Definition~\ref{Definition6}, applied to $\left(  A\right)  _{\rho
\in\mathbb{N}}$ instead of $\left(  I_{\rho}\right)  _{\rho\in\mathbb{N}}$).
This proves Theorem~\ref{Theorem10} \textbf{(a)}.

\textbf{(b)} Since $\left(  I_{\rho}\right)  _{\rho\in\mathbb{N}}$ is an ideal
semifiltration of $A$, it is a sequence of ideals of $A$, and it satisfies the
two conditions%
\begin{align}
I_{0}  &  =A;\nonumber\\
I_{a}I_{b}  &  \subseteq I_{a+b}\ \ \ \ \ \ \ \ \ \ \text{for every }%
a\in\mathbb{N}\text{ and }b\in\mathbb{N} \label{pf.Theorem10.b.2}%
\end{align}
(by Definition~\ref{Definition6}). Since $\left(  J_{\rho}\right)  _{\rho
\in\mathbb{N}}$ is an ideal semifiltration of $A$, it is a sequence of ideals
of $A$, and it satisfies the two conditions%
\begin{align}
J_{0}  &  =A;\nonumber\\
J_{a}J_{b}  &  \subseteq J_{a+b}\ \ \ \ \ \ \ \ \ \ \text{for every }%
a\in\mathbb{N}\text{ and }b\in\mathbb{N} \label{pf.Theorem10.b.4}%
\end{align}
(by Definition~\ref{Definition6}, applied to $\left(  J_{\rho}\right)
_{\rho\in\mathbb{N}}$ instead of $\left(  I_{\rho}\right)  _{\rho\in
\mathbb{N}}$).

Now, we know that both $\left(  I_{\rho}\right)  _{\rho\in\mathbb{N}}$ and
$\left(  J_{\rho}\right)  _{\rho\in\mathbb{N}}$ are sequences of ideals of
$A$. Hence, if $\rho\in\mathbb{N}$, then both $I_{\rho}$ and $J_{\rho}$ are
ideals of $A$, and therefore $I_{\rho}J_{\rho}$ is an ideal of $A$ as well
(since the product of any two ideals of $A$ is an ideal of $A$). Thus,
$I_{\rho}J_{\rho}$ is an ideal of $A$ for each $\rho\in\mathbb{N}$. In other
words, $\left(  I_{\rho}J_{\rho}\right)  _{\rho\in\mathbb{N}}$ is a sequence
of ideals of $A$. Thus, in order to prove that $\left(  I_{\rho}J_{\rho
}\right)  _{\rho\in\mathbb{N}}$ is an ideal semifiltration of $A$, it is
enough to verify that it satisfies the two conditions%
\begin{align*}
I_{0}J_{0}  &  =A;\\
I_{a}J_{a}\cdot I_{b}J_{b}  &  \subseteq I_{a+b}J_{a+b}%
\ \ \ \ \ \ \ \ \ \ \text{for every }a\in\mathbb{N}\text{ and }b\in\mathbb{N}.
\end{align*}
But these two conditions are satisfied, since%
\begin{align*}
\underbrace{I_{0}}_{=A}\underbrace{J_{0}}_{=A}  &  =AA=A;\\
I_{a}J_{a}\cdot I_{b}J_{b}  &  =\underbrace{I_{a}I_{b}}_{\substack{\subseteq
I_{a+b}\\\text{(by (\ref{pf.Theorem10.b.2}))}}}\underbrace{J_{a}J_{b}%
}_{\substack{\subseteq J_{a+b}\\\text{(by (\ref{pf.Theorem10.b.4}))}%
}}\subseteq I_{a+b}J_{a+b}\ \ \ \ \ \ \ \ \ \ \text{for every }a\in
\mathbb{N}\text{ and }b\in\mathbb{N}.
\end{align*}
Hence, $\left(  I_{\rho}J_{\rho}\right)  _{\rho\in\mathbb{N}}$ is an ideal
semifiltration of $A$ (by Definition~\ref{Definition6}, applied to $\left(
I_{\rho}J_{\rho}\right)  _{\rho\in\mathbb{N}}$ instead of $\left(  I_{\rho
}\right)  _{\rho\in\mathbb{N}}$). This proves Theorem~\ref{Theorem10}
\textbf{(b)}.
\end{proof}
\end{verlong}

\subsection{Half-reduction}

Now let us generalize Theorem~\ref{Theorem7}:

\begin{theorem}
\label{Theorem11} Let $A$ be a ring. Let $B$ be an $A$-algebra. Let $\left(
I_{\rho}\right)  _{\rho\in\mathbb{N}}$ and $\left(  J_{\rho}\right)  _{\rho
\in\mathbb{N}}$ be two ideal semifiltrations of $A$. Let $n\in\mathbb{N}$. Let
$u\in B$.

We know that $\left(  I_{\rho}J_{\rho}\right)  _{\rho\in\mathbb{N}}$ is an
ideal semifiltration of $A$ (according to Theorem~\ref{Theorem10} \textbf{(b)}).

Consider the polynomial ring $A\left[  Y\right]  $ and its $A$-subalgebra
$A\left[  \left(  I_{\rho}\right)  _{\rho\in\mathbb{N}}\ast Y\right]  $.

We will abbreviate this $A$-subalgebra $A\left[  \left(  I_{\rho}\right)
_{\rho\in\mathbb{N}}\ast Y\right]  $ by $A_{\left[  I\right]  }$.

\textbf{(a)} The sequence $\left(  J_{\tau}A_{\left[  I\right]  }\right)
_{\tau\in\mathbb{N}}$ is an ideal semifiltration of $A_{\left[  I\right]  }$.

\textbf{(b)} The element $u$ of $B$ is $n$-integral over $\left(  A,\left(
I_{\rho}J_{\rho}\right)  _{\rho\in\mathbb{N}}\right)  $ if and only if the
element $uY$ of the polynomial ring $B\left[  Y\right]  $ is $n$-integral over
$\left(  A_{\left[  I\right]  },\left(  J_{\tau}A_{\left[  I\right]  }\right)
_{\tau\in\mathbb{N}}\right)  $. (Here, we are using the fact that $B\left[
Y\right]  $ is an $A_{\left[  I\right]  }$-algebra, because $A_{\left[
I\right]  }=A\left[  \left(  I_{\rho}\right)  _{\rho\in\mathbb{N}}\ast
Y\right]  $ is a subring of $A\left[  Y\right]  $ and because $B\left[
Y\right]  $ is an $A\left[  Y\right]  $-algebra as explained in
Definition~\ref{Definition7}.)
\end{theorem}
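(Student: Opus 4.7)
My plan is to dispatch part (a) instantly from prior lemmas and then prove part (b) by directly unfolding Definition~\ref{Definition9} in both directions, using the obvious ``degree in $Y$'' matching between the two notions of integrality.

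For part (a), I would apply Lemma~\ref{lem.J} with $A' = A_{\left[I\right]}$ (viewed as an $A$-algebra via the inclusion $A \subseteq A_{\left[I\right]}$) to the ideal semifiltration $\left(J_\rho\right)_{\rho\in\mathbb{N}}$. This immediately yields that $\left(J_\tau A_{\left[I\right]}\right)_{\tau\in\mathbb{N}}$ is an ideal semifiltration of $A_{\left[I\right]}$.

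For the forward direction of part (b), suppose $u$ is $n$-integral over $\left(A, \left(I_\rho J_\rho\right)_{\rho\in\mathbb{N}}\right)$, witnessed by $\left(b_0,b_1,\ldots,b_n\right) \in A^{n+1}$ with $\sum_{k=0}^n b_k u^k = 0$, $b_n = 1$, and $b_i \in I_{n-i} J_{n-i}$ for every $i$. I would then define $a_k := b_k Y^{n-k} \in A\left[Y\right]$ and verify:
\begin{itemize}
\item $a_k \in \left(I_{n-k} J_{n-k}\right) Y^{n-k} = J_{n-k} \cdot \left(I_{n-k} Y^{n-k}\right) \subseteq J_{n-k} \cdot A_{\left[I\right]}$, so in particular $a_k \in A_{\left[I\right]}$;
\item $a_n = b_n Y^0 = 1$;
\item $\sum_{k=0}^n a_k \left(uY\right)^k = \sum_{k=0}^n b_k Y^{n-k} u^k Y^k = Y^n \sum_{k=0}^n b_k u^k = 0$.
\end{itemize}
This exhibits $\left(a_0,\ldots,a_n\right)$ as a witness that $uY$ is $n$-integral over $\left(A_{\left[I\right]}, \left(J_\tau A_{\left[I\right]}\right)_{\tau\in\mathbb{N}}\right)$.

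For the converse, suppose $uY$ is $n$-integral over $\left(A_{\left[I\right]}, \left(J_\tau A_{\left[I\right]}\right)_{\tau\in\mathbb{N}}\right)$, witnessed by $\left(a_0,\ldots,a_n\right) \in A_{\left[I\right]}^{n+1}$ as above. From $a_i \in J_{n-i} A_{\left[I\right]} = J_{n-i} \cdot \sum_{j\in\mathbb{N}} I_j Y^j = \sum_{j\in\mathbb{N}} J_{n-i} I_j \cdot Y^j$, I can write $a_i = \sum_{j\in\mathbb{N}} a_{i,j} Y^j$ with $a_{i,j} \in J_{n-i} I_j$ (only finitely many nonzero). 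Expanding the relation $\sum_k a_k \left(uY\right)^k = 0$ in $B\left[Y\right]$ gives $\sum_k \sum_j a_{k,j} u^k Y^{k+j} = 0$, so every coefficient vanishes; extracting the coefficient of $Y^n$ yields $\sum_{k=0}^n a_{k,n-k} u^k = 0$. Setting $b_k := a_{k, n-k}$, I get $b_k \in J_{n-k} I_{n-k} = I_{n-k} J_{n-k}$, and the equality $a_n = 1$ forces $a_{n,0} = 1$, i.e.\ $b_n = 1$. This is exactly an integral dependence of $u$ over $\left(A, \left(I_\rho J_\rho\right)_{\rho\in\mathbb{N}}\right)$. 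The only real obstacle anywhere in this argument is the bookkeeping in the converse: checking that the ``degree-$j$ part'' of $J_{n-i} A_{\left[I\right]}$ lies in $J_{n-i} I_j$, and recognizing that the coefficient of $Y^n$ is precisely the piece that encodes integrality over the product semifiltration. Everything else is formal.
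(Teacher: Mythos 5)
Your proof is correct and takes essentially the same route as the paper: part (a) via Lemma~\ref{lem.J} applied to $A'=A_{[I]}$, and part (b) by unfolding Definition~\ref{Definition9} in both directions, defining $b_k Y^{n-k}$ for the forward implication and extracting the coefficient of $Y^n$ in $B[Y]$ for the converse. The two small things worth making explicit (both of which the paper spells out) are that the coefficient $a_{i,j}$ of $Y^j$ in $a_i$ is unique in $A[Y]$, so $a_n=1$ really does force $a_{n,0}=1$; and that $J_{n-i}A_{[I]}=\sum_{j}J_{n-i}I_jY^j$, so the coefficient of $Y^j$ in any element of $J_{n-i}A_{[I]}$ does lie in $J_{n-i}I_j$.
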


\begin{vershort}
\begin{proof}
[Proof of Theorem~\ref{Theorem11}.]\textbf{(a)} We know that $\left(  J_{\tau
}\right)  _{\tau\in\mathbb{N}}=\left(  J_{\rho}\right)  _{\rho\in\mathbb{N}}$
is an ideal semifiltration of $A$. Thus, by Lemma \ref{lem.J} (applied to
$A_{\left[  I\right]  }$ and $\left(  J_{\tau}\right)  _{\tau\in\mathbb{N}}$
instead of $A^{\prime}$ and $\left(  I_{\rho}\right)  _{\rho\in\mathbb{N}}$),
the sequence $\left(  J_{\tau}A_{\left[  I\right]  }\right)  _{\tau
\in\mathbb{N}}$ is an ideal semifiltration of $A_{\left[  I\right]  }$. This
proves Theorem~\ref{Theorem11} \textbf{(a)}.

\textbf{(b)} In order to verify Theorem~\ref{Theorem11} \textbf{(b)}, we have
to prove the $\Longrightarrow$ and $\Longleftarrow$ statements.

$\Longrightarrow:$ Assume that $u$ is $n$-integral over $\left(  A,\left(
I_{\rho}J_{\rho}\right)  _{\rho\in\mathbb{N}}\right)  $. Thus, by
Definition~\ref{Definition9} (applied to $\left(  I_{\rho}J_{\rho}\right)
_{\rho\in\mathbb{N}}$ instead of $\left(  I_{\rho}\right)  _{\rho\in
\mathbb{N}}$), there exists some $\left(  a_{0},a_{1},\ldots,a_{n}\right)  \in
A^{n+1}$ such that%
\[
\sum\limits_{k=0}^{n}a_{k}u^{k}=0,\ \ \ \ \ \ \ \ \ \ a_{n}%
=1,\ \ \ \ \ \ \ \ \ \ \text{and}\ \ \ \ \ \ \ \ \ \ a_{i}\in I_{n-i}%
J_{n-i}\text{ for every }i\in\left\{  0,1,\ldots,n\right\}  .
\]
Consider this $\left(  a_{0},a_{1},\ldots,a_{n}\right)  $.

For each $k\in\left\{  0,1,\ldots,n\right\}  $, we have $a_{k}\in
I_{n-k}J_{n-k}\subseteq I_{n-k}$ (since $I_{n-k}$ is an ideal of $A$) and thus
$a_{k}Y^{n-k}\in I_{n-k}Y^{n-k}\subseteq\sum_{i\in\mathbb{N}}I_{i}%
Y^{i}=A_{\left[  I\right]  }$. Thus, we can define an $\left(  n+1\right)
$-tuple $\left(  b_{0},b_{1},\ldots,b_{n}\right)  \in\left(  A_{\left[
I\right]  }\right)  ^{n+1}$ by $b_{k}=a_{k}Y^{n-k}$ for every $k\in\left\{
0,1,\ldots,n\right\}  $. This $\left(  n+1\right)  $-tuple satisfies%
\[
\sum\limits_{k=0}^{n}b_{k}\cdot\left(  uY\right)  ^{k}%
=0,\ \ \ \ \ \ \ \ \ \ b_{n}=1,\ \ \ \ \ \ \ \ \ \ \text{and}%
\ \ \ \ \ \ \ \ \ \ b_{i}\in J_{n-i}A_{\left[  I\right]  }\text{ for every
}i\in\left\{  0,1,\ldots,n\right\}
\]
(as can be easily checked). Hence, by Definition~\ref{Definition9} (applied to
$A_{\left[  I\right]  }$, $B\left[  Y\right]  $, $\left(  J_{\tau}A_{\left[
I\right]  }\right)  _{\tau\in\mathbb{N}}$, $uY$ and $\left(  b_{0}%
,b_{1},\ldots,b_{n}\right)  $ instead of $A$, $B$, $\left(  I_{\rho}\right)
_{\rho\in\mathbb{N}}$, $u$ and $\left(  a_{0},a_{1},\ldots,a_{n}\right)  $),
the element $uY$ is $n$-integral over $\left(  A_{\left[  I\right]  },\left(
J_{\tau}A_{\left[  I\right]  }\right)  _{\tau\in\mathbb{N}}\right)  $. This
proves the $\Longrightarrow$ direction of Theorem~\ref{Theorem11} \textbf{(b)}.

$\Longleftarrow:$ Assume that $uY$ is $n$-integral over $\left(  A_{\left[
I\right]  },\left(  J_{\tau}A_{\left[  I\right]  }\right)  _{\tau\in
\mathbb{N}}\right)  $. Thus, by Definition~\ref{Definition9} (applied to
$A_{\left[  I\right]  }$, $B\left[  Y\right]  $, $\left(  J_{\tau}A_{\left[
I\right]  }\right)  _{\tau\in\mathbb{N}}$, $uY$ and $\left(  p_{0}%
,p_{1},\ldots,p_{n}\right)  $ instead of $A$, $B$, $\left(  I_{\rho}\right)
_{\rho\in\mathbb{N}}$, $u$ and $\left(  a_{0},a_{1},\ldots,a_{n}\right)  $),
there exists some $\left(  p_{0},p_{1},\ldots,p_{n}\right)  \in\left(
A_{\left[  I\right]  }\right)  ^{n+1}$ such that%
\[
\sum\limits_{k=0}^{n}p_{k}\cdot\left(  uY\right)  ^{k}%
=0,\ \ \ \ \ \ \ \ \ \ p_{n}=1,\ \ \ \ \ \ \ \ \ \ \text{and}%
\ \ \ \ \ \ \ \ \ \ p_{i}\in J_{n-i}A_{\left[  I\right]  }\text{ for every
}i\in\left\{  0,1,\ldots,n\right\}  .
\]
Consider this $\left(  p_{0},p_{1},\ldots,p_{n}\right)  $. For every
$k\in\left\{  0,1,\ldots,n\right\}  $, we have%
\begin{align*}
p_{k}  &  \in J_{n-k}A_{\left[  I\right]  }\ \ \ \ \ \ \ \ \ \ \left(
\text{since }p_{i}\in J_{n-i}A_{\left[  I\right]  }\text{ for every }%
i\in\left\{  0,1,\ldots,n\right\}  \right) \\
&  =J_{n-k}\sum\limits_{i\in\mathbb{N}}I_{i}Y^{i}\ \ \ \ \ \ \ \ \ \ \left(
\text{since }A_{\left[  I\right]  }=A\left[  \left(  I_{\rho}\right)
_{\rho\in\mathbb{N}}\ast Y\right]  =\sum\limits_{i\in\mathbb{N}}I_{i}%
Y^{i}\right) \\
&  =\sum\limits_{i\in\mathbb{N}}J_{n-k}I_{i}Y^{i}=\sum\limits_{i\in\mathbb{N}%
}I_{i}J_{n-k}Y^{i},
\end{align*}
and thus there exists a sequence $\left(  p_{k,i}\right)  _{i\in\mathbb{N}}\in
A^{\mathbb{N}}$ such that $p_{k}=\sum\limits_{i\in\mathbb{N}}p_{k,i}Y^{i}$,
such that $\left(  p_{k,i}\in I_{i}J_{n-k}\text{ for every }i\in
\mathbb{N}\right)  $, and such that only finitely many $i\in\mathbb{N}$
satisfy $p_{k,i}\neq0$. Consider this sequence. Thus,%
\[
\sum\limits_{k=0}^{n}\underbrace{p_{k}}_{=\sum\limits_{i\in\mathbb{N}}%
p_{k,i}Y^{i}}\cdot\underbrace{\left(  uY\right)  ^{k}}_{\substack{=u^{k}%
Y^{k}\\=Y^{k}u^{k}}}=\sum\limits_{k=0}^{n}\sum\limits_{i\in\mathbb{N}}%
p_{k,i}\underbrace{Y^{i}\cdot Y^{k}}_{=Y^{i+k}}u^{k}=\sum\limits_{k=0}^{n}%
\sum\limits_{i\in\mathbb{N}}p_{k,i}Y^{i+k}u^{k}.
\]
Hence, $\sum\limits_{k=0}^{n}p_{k}\cdot\left(  uY\right)  ^{k}=0$ rewrites as
$\sum\limits_{k=0}^{n}\sum\limits_{i\in\mathbb{N}}p_{k,i}Y^{i+k}u^{k}=0$. In
other words, the polynomial $\sum\limits_{k=0}^{n}\sum\limits_{i\in\mathbb{N}%
}p_{k,i}Y^{i+k}u^{k}\in B\left[  Y\right]  $ equals $0$. Hence, its
coefficient before $Y^{n}$ equals $0$ as well. But its coefficient before
$Y^{n}$ is $\sum\limits_{k=0}^{n}p_{k,n-k}u^{k}$. Hence, we obtain
$\sum\limits_{k=0}^{n}p_{k,n-k}u^{k}=0$.

Recall that $\sum\limits_{i\in\mathbb{N}}p_{k,i}Y^{i}=p_{k}$ for every
$k\in\left\{  0,1,\ldots,n\right\}  $ (by the definition of the $p_{k,i}$).
Thus, $\sum\limits_{i\in\mathbb{N}}p_{n,i}Y^{i}=p_{n}=1$ in $A\left[
Y\right]  $, and thus $p_{n,0}=1$ (by comparing coefficients before $Y^{0}$).

Define an $\left(  n+1\right)  $-tuple $\left(  a_{0},a_{1},\ldots
,a_{n}\right)  \in A^{n+1}$ by $a_{k}=p_{k,n-k}$ for every $k\in\left\{
0,1,\ldots,n\right\}  $. Then, $a_{n}=p_{n,0}=1$. Besides,%
\[
\sum\limits_{k=0}^{n}\underbrace{a_{k}}_{=p_{k,n-k}}u^{k}=\sum\limits_{k=0}%
^{n}p_{k,n-k}u^{k}=0.
\]
Finally, for every $k\in\left\{  0,1,\ldots,n\right\}  $, we have
$n-k\in\mathbb{N}$ and thus $a_{k}=p_{k,n-k}\in I_{n-k}J_{n-k}$ (since
$p_{k,i}\in I_{i}J_{n-k}$ for every $i\in\mathbb{N}$). Renaming the variable
$k$ as $i$ in this statement, we obtain the following: For every $i\in\left\{
0,1,\ldots,n\right\}  $, we have $a_{i}\in I_{n-i}J_{n-i}$.

Altogether, we now know that the $\left(  n+1\right)  $-tuple $\left(
a_{0},a_{1},\ldots,a_{n}\right)  \in A^{n+1}$ satisfies%
\[
\sum\limits_{k=0}^{n}a_{k}u^{k}=0,\ \ \ \ \ \ \ \ \ \ a_{n}%
=1,\ \ \ \ \ \ \ \ \ \ \text{and}\ \ \ \ \ \ \ \ \ \ a_{i}\in I_{n-i}%
J_{n-i}\text{ for every }i\in\left\{  0,1,\ldots,n\right\}  .
\]
Thus, by Definition~\ref{Definition9} (applied to $\left(  I_{\rho}J_{\rho
}\right)  _{\rho\in\mathbb{N}}$ instead of $\left(  I_{\rho}\right)  _{\rho
\in\mathbb{N}}$), the element $u$ is $n$-integral over $\left(  A,\left(
I_{\rho}J_{\rho}\right)  _{\rho\in\mathbb{N}}\right)  $. This proves the
$\Longleftarrow$ direction of Theorem~\ref{Theorem11} \textbf{(b)}, and thus
Theorem~\ref{Theorem11} \textbf{(b)} is shown.
\end{proof}
\end{vershort}

\begin{verlong}
\begin{proof}
[Proof of Theorem~\ref{Theorem11}.]The definition of $A_{\left[  I\right]  }$
yields%
\begin{align*}
A_{\left[  I\right]  }  &  =A\left[  \left(  I_{\rho}\right)  _{\rho
\in\mathbb{N}}\ast Y\right]  =\sum\limits_{i\in\mathbb{N}}I_{i}Y^{i}%
\ \ \ \ \ \ \ \ \ \ \left(  \text{by Definition~\ref{Definition8}}\right) \\
&  =\sum\limits_{\ell\in\mathbb{N}}I_{\ell}Y^{\ell}\ \ \ \ \ \ \ \ \ \ \left(
\text{here we renamed }i\text{ as }\ell\text{ in the sum}\right)  .
\end{align*}
As a consequence of this chain of equalities, we have $\sum\limits_{i\in
\mathbb{N}}I_{i}Y^{i}=A_{\left[  I\right]  }$ and $\sum\limits_{\ell
\in\mathbb{N}}I_{\ell}Y^{\ell}=A_{\left[  I\right]  }$.

\textbf{(a)} We know that $\left(  J_{\rho}\right)  _{\rho\in\mathbb{N}}$ is
an ideal semifiltration of $A$. In other words, $\left(  J_{\tau}\right)
_{\tau\in\mathbb{N}}$ is an ideal semifiltration of $A$ (since $\left(
J_{\tau}\right)  _{\tau\in\mathbb{N}}=\left(  J_{\rho}\right)  _{\rho
\in\mathbb{N}}$). Thus, by Lemma \ref{lem.J} (applied to $A_{\left[  I\right]
}$ and $\left(  J_{\tau}\right)  _{\tau\in\mathbb{N}}$ instead of $A^{\prime}$
and $\left(  I_{\rho}\right)  _{\rho\in\mathbb{N}}$), the sequence $\left(
J_{\tau}A_{\left[  I\right]  }\right)  _{\tau\in\mathbb{N}}$ is an ideal
semifiltration of $A_{\left[  I\right]  }$. This proves
Theorem~\ref{Theorem11} \textbf{(a)}.

\textbf{(b)} In order to verify Theorem~\ref{Theorem11} \textbf{(b)}, we have
to prove the following two lemmata:

\begin{statement}
\textit{Lemma }$\mathcal{E}^{\prime}$\textit{:} If $u$ is $n$-integral over
$\left(  A,\left(  I_{\rho}J_{\rho}\right)  _{\rho\in\mathbb{N}}\right)  $,
then $uY$ is $n$-integral over $\left(  A_{\left[  I\right]  },\left(
J_{\tau}A_{\left[  I\right]  }\right)  _{\tau\in\mathbb{N}}\right)  $.
\end{statement}

\begin{statement}
\textit{Lemma} $\mathcal{F}^{\prime}$\textit{:} If $uY$ is $n$-integral over
$\left(  A_{\left[  I\right]  },\left(  J_{\tau}A_{\left[  I\right]  }\right)
_{\tau\in\mathbb{N}}\right)  $, then $u$ is $n$-integral over $\left(
A,\left(  I_{\rho}J_{\rho}\right)  _{\rho\in\mathbb{N}}\right)  $.
\end{statement}

[\textit{Proof of Lemma }$\mathcal{E}^{\prime}$\textit{:} Assume that $u$ is
$n$-integral over $\left(  A,\left(  I_{\rho}J_{\rho}\right)  _{\rho
\in\mathbb{N}}\right)  $. Thus, by Definition~\ref{Definition9} (applied to
$\left(  I_{\rho}J_{\rho}\right)  _{\rho\in\mathbb{N}}$ instead of $\left(
I_{\rho}\right)  _{\rho\in\mathbb{N}}$), there exists some $\left(
a_{0},a_{1},\ldots,a_{n}\right)  \in A^{n+1}$ such that%
\[
\sum\limits_{k=0}^{n}a_{k}u^{k}=0,\ \ \ \ \ \ \ \ \ \ a_{n}%
=1,\ \ \ \ \ \ \ \ \ \ \text{and}\ \ \ \ \ \ \ \ \ \ a_{i}\in I_{n-i}%
J_{n-i}\text{ for every }i\in\left\{  0,1,\ldots,n\right\}  .
\]
Consider this $\left(  a_{0},a_{1},\ldots,a_{n}\right)  $.

For every $k\in\left\{  0,1,\ldots,n\right\}  $, we have
\begin{align*}
a_{k}  &  \in I_{n-k}\underbrace{J_{n-k}}_{\subseteq A}%
\ \ \ \ \ \ \ \ \ \ \left(  \text{since }a_{i}\in I_{n-i}J_{n-i}\text{ for
every }i\in\left\{  0,1,\ldots,n\right\}  \right) \\
&  \subseteq I_{n-k}A\subseteq I_{n-k}\ \ \ \ \ \ \ \ \ \ \left(  \text{since
}I_{n-k}\text{ is an ideal of }A\right)
\end{align*}
and thus
\[
\underbrace{a_{k}}_{\in I_{n-k}}Y^{n-k}\in I_{n-k}Y^{n-k}\subseteq
\sum\limits_{i\in\mathbb{N}}I_{i}Y^{i}=A_{\left[  I\right]  }.
\]

Thus, we can define an $\left(  n+1\right)  $-tuple $\left(  b_{0}%
,b_{1},\ldots,b_{n}\right)  \in\left(  A_{\left[  I\right]  }\right)  ^{n+1}$
by setting
\[
\left(  b_{k}=a_{k}Y^{n-k}\text{ for every }k\in\left\{  0,1,\ldots,n\right\}
\right)  .
\]
Consider this $\left(  n+1\right)  $-tuple. The definition of this $\left(
n+1\right)  $-tuple yields%
\begin{align*}
\sum\limits_{k=0}^{n}\underbrace{b_{k}}_{=a_{k}Y^{n-k}}\cdot
\underbrace{\left(  uY\right)  ^{k}}_{=u^{k}Y^{k}}  &  =\sum\limits_{k=0}%
^{n}a_{k}Y^{n-k}u^{k}Y^{k}=\sum\limits_{k=0}^{n}a_{k}u^{k}\underbrace{Y^{n-k}%
Y^{k}}_{=Y^{n}}=Y^{n}\cdot\underbrace{\sum\limits_{k=0}^{n}a_{k}u^{k}}%
_{=0}=0;\\
b_{n}  &  =\underbrace{a_{n}}_{=1}\underbrace{Y^{n-n}}_{=Y^{0}=1}=1,
\end{align*}
and%
\[
b_{i}=\underbrace{a_{i}}_{\substack{\in I_{n-i}J_{n-i}\\=J_{n-i}I_{n-i}%
}}Y^{n-i}\in J_{n-i}\underbrace{I_{n-i}Y^{n-i}}_{\substack{\subseteq
\sum\limits_{\ell\in\mathbb{N}}I_{\ell}Y^{\ell}\\=A_{\left[  I\right]  }%
}}\subseteq J_{n-i}A_{\left[  I\right]  }\ \ \ \ \ \ \ \ \ \ \text{for every
}i\in\left\{  0,1,\ldots,n\right\}  .
\]

Altogether, we now know that $\left(  b_{0},b_{1},\ldots,b_{n}\right)
\in\left(  A_{\left[  I\right]  }\right)  ^{n+1}$ and%
\[
\sum\limits_{k=0}^{n}b_{k}\cdot\left(  uY\right)  ^{k}%
=0,\ \ \ \ \ \ \ \ \ \ b_{n}=1,\ \ \ \ \ \ \ \ \ \ \text{and}%
\ \ \ \ \ \ \ \ \ \ b_{i}\in J_{n-i}A_{\left[  I\right]  }\text{ for every
}i\in\left\{  0,1,\ldots,n\right\}  .
\]
Hence, by Definition~\ref{Definition9} (applied to $A_{\left[  I\right]  }$,
$B\left[  Y\right]  $, $\left(  J_{\tau}A_{\left[  I\right]  }\right)
_{\tau\in\mathbb{N}}$, $uY$ and $\left(  b_{0},b_{1},\ldots,b_{n}\right)  $
instead of $A$, $B$, $\left(  I_{\rho}\right)  _{\rho\in\mathbb{N}}$, $u$ and
$\left(  a_{0},a_{1},\ldots,a_{n}\right)  $), the element $uY$ is $n$-integral
over $\left(  A_{\left[  I\right]  },\left(  J_{\tau}A_{\left[  I\right]
}\right)  _{\tau\in\mathbb{N}}\right)  $. This proves Lemma $\mathcal{E}%
^{\prime}$.]

[\textit{Proof of Lemma }$\mathcal{F}^{\prime}$\textit{:} Assume that $uY$ is
$n$-integral over $\left(  A_{\left[  I\right]  },\left(  J_{\tau}A_{\left[
I\right]  }\right)  _{\tau\in\mathbb{N}}\right)  $. Thus, by
Definition~\ref{Definition9} (applied to $A_{\left[  I\right]  }$, $B\left[
Y\right]  $, $\left(  J_{\tau}A_{\left[  I\right]  }\right)  _{\tau
\in\mathbb{N}}$, $uY$ and $\left(  p_{0},p_{1},\ldots,p_{n}\right)  $ instead
of $A$, $B$, $\left(  I_{\rho}\right)  _{\rho\in\mathbb{N}}$, $u$ and $\left(
a_{0},a_{1},\ldots,a_{n}\right)  $), there exists some $\left(  p_{0}%
,p_{1},\ldots,p_{n}\right)  \in\left(  A_{\left[  I\right]  }\right)  ^{n+1}$
such that%
\[
\sum\limits_{k=0}^{n}p_{k}\cdot\left(  uY\right)  ^{k}%
=0,\ \ \ \ \ \ \ \ \ \ p_{n}=1,\ \ \ \ \ \ \ \ \ \ \text{and}%
\ \ \ \ \ \ \ \ \ \ p_{i}\in J_{n-i}A_{\left[  I\right]  }\text{ for every
}i\in\left\{  0,1,\ldots,n\right\}  .
\]
Consider this $\left(  p_{0},p_{1},\ldots,p_{n}\right)  $. For every
$k\in\left\{  0,1,\ldots,n\right\}  $, we have%
\begin{align*}
p_{k}  &  \in J_{n-k}A_{\left[  I\right]  }\ \ \ \ \ \ \ \ \ \ \left(
\text{since }p_{i}\in J_{n-i}A_{\left[  I\right]  }\text{ for every }%
i\in\left\{  0,1,\ldots,n\right\}  \right) \\
&  =J_{n-k}\sum\limits_{i\in\mathbb{N}}I_{i}Y^{i}\ \ \ \ \ \ \ \ \ \ \left(
\text{since }A_{\left[  I\right]  }=\sum\limits_{i\in\mathbb{N}}I_{i}%
Y^{i}\right) \\
&  =\sum\limits_{i\in\mathbb{N}}\underbrace{J_{n-k}I_{i}}_{=I_{i}J_{n-k}}%
Y^{i}=\sum\limits_{i\in\mathbb{N}}I_{i}J_{n-k}Y^{i},
\end{align*}
and thus there exists a sequence $\left(  p_{k,i}\right)  _{i\in\mathbb{N}}\in
A^{\mathbb{N}}$ such that $p_{k}=\sum\limits_{i\in\mathbb{N}}p_{k,i}Y^{i}$,
such that $\left(  p_{k,i}\in I_{i}J_{n-k}\text{ for every }i\in
\mathbb{N}\right)  $, and such that only finitely many $i\in\mathbb{N}$
satisfy $p_{k,i}\neq0$. Consider this sequence. Thus,%
\begin{align*}
&  \sum\limits_{k=0}^{n}\underbrace{p_{k}}_{=\sum\limits_{i\in\mathbb{N}%
}p_{k,i}Y^{i}}\cdot\underbrace{\left(  uY\right)  ^{k}}_{\substack{=u^{k}%
Y^{k}\\=Y^{k}u^{k}}}\\
&  =\sum\limits_{k=0}^{n}\left(  \sum\limits_{i\in\mathbb{N}}p_{k,i}%
Y^{i}\right)  \cdot Y^{k}u^{k}=\underbrace{\sum\limits_{k=0}^{n}}%
_{=\sum\limits_{k\in\left\{  0,1,\ldots,n\right\}  }}\sum\limits_{i\in
\mathbb{N}}p_{k,i}\underbrace{Y^{i}\cdot Y^{k}}_{=Y^{i+k}}u^{k}=\sum
\limits_{k\in\left\{  0,1,\ldots,n\right\}  }\sum\limits_{i\in\mathbb{N}%
}p_{k,i}Y^{i+k}u^{k}\\
&  =\sum\limits_{\left(  k,i\right)  \in\left\{  0,1,\ldots,n\right\}
\times\mathbb{N}}p_{k,i}Y^{i+k}u^{k}=\sum_{\ell\in\mathbb{N}}\sum
\limits_{\substack{\left(  k,i\right)  \in\left\{  0,1,\ldots,n\right\}
\times\mathbb{N};\\i+k=\ell}}p_{k,i}\underbrace{Y^{i+k}}_{\substack{=Y^{\ell
}\\\text{(since }i+k=\ell\text{)}}}u^{k}\\
&  =\sum_{\ell\in\mathbb{N}}\sum\limits_{\substack{\left(  k,i\right)
\in\left\{  0,1,\ldots,n\right\}  \times\mathbb{N};\\i+k=\ell}}p_{k,i}Y^{\ell
}u^{k}=\sum_{\ell\in\mathbb{N}}\sum\limits_{\substack{\left(  k,i\right)
\in\left\{  0,1,\ldots,n\right\}  \times\mathbb{N};\\i+k=\ell}}p_{k,i}%
u^{k}Y^{\ell}.
\end{align*}
Comparing this with $\sum\limits_{k=0}^{n}p_{k}\cdot\left(  uY\right)  ^{k}%
=0$, we obtain $\sum\limits_{\ell\in\mathbb{N}}\sum\limits_{\substack{\left(
k,i\right)  \in\left\{  0,1,\ldots,n\right\}  \times\mathbb{N};\\i+k=\ell
}}p_{k,i}u^{k}Y^{\ell}=0$. In other words, the polynomial $\sum\limits_{\ell
\in\mathbb{N}}\underbrace{\sum\limits_{\substack{\left(  k,i\right)
\in\left\{  0,1,\ldots,n\right\}  \times\mathbb{N};\\i+k=\ell}}p_{k,i}u^{k}%
}_{\in B}Y^{\ell}\in B\left[  Y\right]  $ equals $0$. Hence, its coefficient
before $Y^{n}$ equals $0$ as well. But its coefficient before $Y^{n}$ is
$\sum\limits_{\substack{\left(  k,i\right)  \in\left\{  0,1,\ldots,n\right\}
\times\mathbb{N};\\i+k=n}}p_{k,i}u^{k}$. Comparing the preceding two
sentences, we see that $\sum\limits_{\substack{\left(  k,i\right)  \in\left\{
0,1,\ldots,n\right\}  \times\mathbb{N};\\i+k=n}}p_{k,i}u^{k}$ equals $0$.
Thus,%
\begin{equation}
0=\sum\limits_{\substack{\left(  k,i\right)  \in\left\{  0,1,\ldots,n\right\}
\times\mathbb{N};\\i+k=n}}p_{k,i}u^{k}=\sum\limits_{k\in\left\{
0,1,\ldots,n\right\}  }\sum_{\substack{i\in\mathbb{N};\\i+k=n}}p_{k,i}u^{k}.
\label{T11.pf.6}%
\end{equation}

But for any given $k\in\left\{  0,1,\ldots,n\right\}  $, we have%
\[
\left\{  i\in\mathbb{N}\text{\ }\mid\ \underbrace{i+k=n}_{\Longleftrightarrow
\ \left(  i=n-k\right)  }\right\}  =\left\{  i\in\mathbb{N}\ \mid
\ i=n-k\right\}  =\left\{  n-k\right\}
\]
(since $n-k\in\mathbb{N}$ (because $k\in\left\{  0,1,\ldots,n\right\}  $)) and
therefore%
\[
\sum\limits_{\substack{i\in\mathbb{N};\\i+k=n}}p_{k,i}u^{k}=\sum
\limits_{i\in\left\{  n-k\right\}  }p_{k,i}u^{k}=p_{k,n-k}u^{k}.
\]
Hence, (\ref{T11.pf.6}) becomes%
\begin{equation}
0=\sum\limits_{k\in\left\{  0,1,\ldots,n\right\}  }\underbrace{\sum
_{\substack{i\in\mathbb{N};\\i+k=n}}p_{k,i}u^{k}}_{=p_{k,n-k}u^{k}}%
=\sum\limits_{k\in\left\{  0,1,\ldots,n\right\}  }p_{k,n-k}u^{k}.
\label{pf.Theorem11.7}%
\end{equation}

Recall that $p_{k}=\sum\limits_{i\in\mathbb{N}}p_{k,i}Y^{i}$ for every
$k\in\left\{  0,1,\ldots,n\right\}  $. Applying this to $k=n$, we find
$p_{n}=\sum\limits_{i\in\mathbb{N}}p_{n,i}Y^{i}$. Comparing this with
$p_{n}=1=1\cdot Y^{0}$, we find
\[
\sum\limits_{i\in\mathbb{N}}p_{n,i}Y^{i}=1\cdot Y^{0}%
\ \ \ \ \ \ \ \ \ \ \text{in }A\left[  Y\right]  .
\]
Hence, the coefficient of the polynomial $\sum\limits_{i\in\mathbb{N}}%
p_{n,i}Y^{i}\in A\left[  Y\right]  $ before $Y^{0}$ is $1$. But the
coefficient of the polynomial $\sum\limits_{i\in\mathbb{N}}p_{n,i}Y^{i}\in
A\left[  Y\right]  $ before $Y^{0}$ is $p_{n,0}$ (since $p_{n,i}\in A$ for all
$i\in\mathbb{N}$). Comparing the preceding two sentences, we see that
$p_{n,0}=1$.

Define an $\left(  n+1\right)  $-tuple $\left(  a_{0},a_{1},\ldots
,a_{n}\right)  \in A^{n+1}$ by setting%
\[
\left(  a_{k}=p_{k,n-k}\text{ for every }k\in\left\{  0,1,\ldots,n\right\}
\right)  .
\]
Then, $a_{n}=p_{n,n-n}=p_{n,0}=1$. Besides,%
\[
\sum\limits_{k=0}^{n}\underbrace{a_{k}}_{\substack{=p_{k,n-k}\\\text{(by the
definition}\\\text{of }a_{k}\text{)}}}u^{k}=\sum\limits_{k=0}^{n}%
p_{k,n-k}u^{k}=\sum\limits_{k\in\left\{  0,1,\ldots,n\right\}  }p_{k,n-k}%
u^{k}=0\ \ \ \ \ \ \ \ \ \ \left(  \text{by (\ref{pf.Theorem11.7})}\right)  .
\]
Finally, for every $k\in\left\{  0,1,\ldots,n\right\}  $, we have
$n-k\in\mathbb{N}$ and thus $a_{k}=p_{k,n-k}\in I_{n-k}J_{n-k}$ (since
$p_{k,i}\in I_{i}J_{n-k}$ for every $i\in\mathbb{N}$). Renaming the variable
$k$ as $i$ in this statement, we obtain the following: For every $i\in\left\{
0,1,\ldots,n\right\}  $, we have $a_{i}\in I_{n-i}J_{n-i}$.

Altogether, we now know that the $\left(  n+1\right)  $-tuple $\left(
a_{0},a_{1},\ldots,a_{n}\right)  \in A^{n+1}$ satisfies%
\[
\sum\limits_{k=0}^{n}a_{k}u^{k}=0,\ \ \ \ \ \ \ \ \ \ a_{n}%
=1,\ \ \ \ \ \ \ \ \ \ \text{and}\ \ \ \ \ \ \ \ \ \ a_{i}\in I_{n-i}%
J_{n-i}\text{ for every }i\in\left\{  0,1,\ldots,n\right\}  .
\]
Thus, by Definition~\ref{Definition9} (applied to $\left(  I_{\rho}J_{\rho
}\right)  _{\rho\in\mathbb{N}}$ instead of $\left(  I_{\rho}\right)  _{\rho
\in\mathbb{N}}$), the element $u$ is $n$-integral over $\left(  A,\left(
I_{\rho}J_{\rho}\right)  _{\rho\in\mathbb{N}}\right)  $. This proves Lemma
$\mathcal{F}^{\prime}$.]

Combining Lemma $\mathcal{E}^{\prime}$ and Lemma $\mathcal{F}^{\prime}$, we
obtain that $u$ is $n$-integral over $\left(  A,\left(  I_{\rho}J_{\rho
}\right)  _{\rho\in\mathbb{N}}\right)  $ if and only if $uY$ is $n$-integral
over $\left(  A_{\left[  I\right]  },\left(  J_{\tau}A_{\left[  I\right]
}\right)  _{\tau\in\mathbb{N}}\right)  $. This proves Theorem~\ref{Theorem11}
\textbf{(b)}.
\end{proof}
\end{verlong}

The reason why Theorem~\ref{Theorem11} \textbf{(b)} generalizes
Theorem~\ref{Theorem7} (more precisely, Theorem~\ref{Theorem7} is the
particular case of Theorem~\ref{Theorem11} \textbf{(b)} for $J_{\rho}=A$) is
the following fact, which we mention here for the pure sake of completeness:

\begin{theorem}
\label{Theorem12} Let $A$ be a ring. Let $B$ be an $A$-algebra. Let
$n\in\mathbb{N}$. Let $u\in B$.

We know that $\left(  A\right)  _{\rho\in\mathbb{N}}$ is an ideal
semifiltration of $A$ (according to Theorem~\ref{Theorem10} \textbf{(a)}).

Then, the element $u$ of $B$ is $n$-integral over $\left(  A,\left(  A\right)
_{\rho\in\mathbb{N}}\right)  $ if and only if $u$ is $n$-integral over $A$.
\end{theorem}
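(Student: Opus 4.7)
The plan is to prove Theorem~\ref{Theorem12} by directly unwinding the two definitions and observing that they coincide in this particular case, since the condition \textquotedblleft$a_i \in I_{n-i}$\textquotedblright\ becomes vacuous when all $I_\rho = A$.

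First I would recall that by Definition~\ref{Definition9} (applied to the ideal semifiltration $(A)_{\rho \in \mathbb{N}}$), the element $u$ is $n$-integral over $(A, (A)_{\rho \in \mathbb{N}})$ if and only if there exists a tuple $(a_0, a_1, \ldots, a_n) \in A^{n+1}$ with
\[
\sum_{k=0}^{n} a_k u^k = 0, \qquad a_n = 1, \qquad \text{and} \qquad a_i \in A \text{ for every } i \in \{0, 1, \ldots, n\}.
\]
The third condition is automatic, since $a_i$ is already assumed to lie in $A$. Hence, $u$ is $n$-integral over $(A, (A)_{\rho \in \mathbb{N}})$ if and only if there exists a tuple $(a_0, a_1, \ldots, a_n) \in A^{n+1}$ with $\sum_{k=0}^n a_k u^k = 0$ and $a_n = 1$.

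Next I would observe that the latter condition is nothing but the assertion that there exists a monic polynomial $P \in A[X]$ with $\deg P = n$ and $P(u) = 0$: given such a tuple, set $P(X) = \sum_{k=0}^n a_k X^k$ (which is monic of degree $n$ because $a_n = 1$) and note that $P(u) = 0$; conversely, given such a $P$, write it as $P(X) = \sum_{k=0}^n a_k X^k$ and observe that $a_n = 1$ and $\sum_{k=0}^n a_k u^k = P(u) = 0$. By Definition~\ref{Definition5}, the existence of such a polynomial $P$ is precisely the statement that $u$ is $n$-integral over $A$. Combining these equivalences gives Theorem~\ref{Theorem12}.

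There is no real obstacle here; the result is essentially a sanity check confirming that the degenerate ideal semifiltration $(A)_{\rho \in \mathbb{N}}$ recovers the classical notion of integrality over a ring, so that Theorem~\ref{Theorem11}~\textbf{(b)} indeed generalizes Theorem~\ref{Theorem7} via the specialization $J_\rho = A$.
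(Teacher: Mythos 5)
Your proof is correct and takes essentially the same approach as the paper's: both directions are established by unwinding Definition~\ref{Definition9}, noting that the membership condition $a_i \in I_{n-i} = A$ is vacuous, and matching the remaining data $(a_0,\ldots,a_n)$ with $a_n=1$ and $\sum_k a_k u^k = 0$ against the coefficients of a monic degree-$n$ polynomial annihilating $u$.
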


\begin{verlong}
\begin{proof}
[Proof of Theorem~\ref{Theorem12}.]In order to verify Theorem~\ref{Theorem12},
we have to prove the following two lemmata:

\begin{statement}
\textit{Lemma }$\mathcal{L}$\textit{:} If $u$ is $n$-integral over $\left(
A,\left(  A\right)  _{\rho\in\mathbb{N}}\right)  $, then $u$ is $n$-integral
over $A$.
\end{statement}

\begin{statement}
\textit{Lemma} $\mathcal{M}$\textit{:} If $u$ is $n$-integral over $A$, then
$u$ is $n$-integral over $\left(  A,\left(  A\right)  _{\rho\in\mathbb{N}%
}\right)  $.
\end{statement}

[\textit{Proof of Lemma }$\mathcal{L}$\textit{:} Assume that $u$ is
$n$-integral over $\left(  A,\left(  A\right)  _{\rho\in\mathbb{N}}\right)  $.
Thus, by Definition~\ref{Definition9} (applied to $\left(  A\right)  _{\rho
\in\mathbb{N}}$ instead of $\left(  I_{\rho}\right)  _{\rho\in\mathbb{N}}$),
there exists some $\left(  a_{0},a_{1},\ldots,a_{n}\right)  \in A^{n+1}$ such
that%
\[
\sum\limits_{k=0}^{n}a_{k}u^{k}=0,\ \ \ \ \ \ \ \ \ \ a_{n}%
=1,\ \ \ \ \ \ \ \ \ \ \text{and}\ \ \ \ \ \ \ \ \ \ a_{i}\in A\text{ for
every }i\in\left\{  0,1,\ldots,n\right\}  .
\]
Consider this $\left(  a_{0},a_{1},\ldots,a_{n}\right)  $.

Define a polynomial $P\in A\left[  X\right]  $ by $P\left(  X\right)
=\sum\limits_{k=0}^{n}a_{k}X^{k}$. Then, $P\left(  X\right)  =\sum
\limits_{k=0}^{n}a_{k}X^{k}=\underbrace{a_{n}}_{=1}X^{n}+\sum\limits_{k=0}%
^{n-1}a_{k}X^{k}=X^{n}+\sum\limits_{k=0}^{n-1}a_{k}X^{k}$. Hence, the
polynomial $P$ is monic, and $\deg P=n$. Besides, $P\left(  u\right)  =0$
(since $P\left(  X\right)  =\sum\limits_{k=0}^{n}a_{k}X^{k}$ yields $P\left(
u\right)  =\sum\limits_{k=0}^{n}a_{k}u^{k}=0$). Thus, there exists a monic
polynomial $P\in A\left[  X\right]  $ with $\deg P=n$ and $P\left(  u\right)
=0$. Hence, $u$ is $n$-integral over $A$. This proves Lemma $\mathcal{L}$.]

[\textit{Proof of Lemma }$\mathcal{M}$\textit{:} Assume that $u$ is
$n$-integral over $A$. Thus, there exists a monic polynomial $P\in A\left[
X\right]  $ with $\deg P=n$ and $P\left(  u\right)  =0$. Consider this $P$.
Since $\deg P=n$, there exists some $\left(  n+1\right)  $-tuple $\left(
a_{0},a_{1},\ldots,a_{n}\right)  \in A^{n+1}$ such that $P\left(  X\right)
=\sum\limits_{k=0}^{n}a_{k}X^{k}$. Consider this $\left(  a_{0},a_{1}%
,\ldots,a_{n}\right)  $. Thus, $a_{n}=1$ (since $P$ is monic, and $\deg P=n$).
Also, substituting $u$ for $X$ in the equality $\sum\limits_{k=0}^{n}%
a_{k}X^{k}=P\left(  X\right)  $ yields $\sum\limits_{k=0}^{n}a_{k}%
u^{k}=P\left(  u\right)  =0$. Altogether, we now know that $\left(
a_{0},a_{1},\ldots,a_{n}\right)  \in A^{n+1}$ and%
\[
\sum\limits_{k=0}^{n}a_{k}u^{k}=0,\ \ \ \ \ \ \ \ \ \ a_{n}%
=1,\ \ \ \ \ \ \ \ \ \ \text{and}\ \ \ \ \ \ \ \ \ \ a_{i}\in A\text{ for
every }i\in\left\{  0,1,\ldots,n\right\}  .
\]
Hence, by Definition~\ref{Definition9} (applied to $\left(  A\right)
_{\rho\in\mathbb{N}}$ instead of $\left(  I_{\rho}\right)  _{\rho\in
\mathbb{N}}$), the element $u$ is $n$-integral over $\left(  A,\left(
A\right)  _{\rho\in\mathbb{N}}\right)  $. This proves Lemma $\mathcal{M}$.]

Combining Lemma $\mathcal{L}$ and Lemma $\mathcal{M}$, we obtain that $u$ is
$n$-integral over $\left(  A,\left(  A\right)  _{\rho\in\mathbb{N}}\right)  $
if and only if $u$ is $n$-integral over $A$. This proves
Theorem~\ref{Theorem12}.
\end{proof}
\end{verlong}

\subsection{Integrality of products over the product semifiltration}

Finally, let us generalize Theorem~\ref{Theorem8c}:

\begin{theorem}
\label{Theorem13} Let $A$ be a ring. Let $B$ be an $A$-algebra. Let $\left(
I_{\rho}\right)  _{\rho\in\mathbb{N}}$ and $\left(  J_{\rho}\right)  _{\rho
\in\mathbb{N}}$ be two ideal semifiltrations of $A$.

Let $x\in B$ and $y\in B$. Let $m\in\mathbb{N}$ and $n\in\mathbb{N}$. Assume
that $x$ is $m$-integral over $\left(  A,\left(  I_{\rho}\right)  _{\rho
\in\mathbb{N}}\right)  $, and that $y$ is $n$-integral over $\left(  A,\left(
J_{\rho}\right)  _{\rho\in\mathbb{N}}\right)  $. Then, $xy$ is $nm$-integral
over $\left(  A,\left(  I_{\rho}J_{\rho}\right)  _{\rho\in\mathbb{N}}\right)
$.
\end{theorem}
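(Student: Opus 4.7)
The plan is to imitate the strategy behind Theorem~\ref{Theorem8c}, deploying Theorem~\ref{Theorem11} in place of Theorem~\ref{Theorem7}. Setting $A_{[I]} := A\left[\left(I_{\rho}\right)_{\rho\in\mathbb{N}}\ast Y\right]$ as in Theorem~\ref{Theorem11}, that theorem's part~\textbf{(b)} tells me that proving $xy$ is $nm$-integral over $\left(A, \left(I_{\rho} J_{\rho}\right)_{\rho\in\mathbb{N}}\right)$ is equivalent to proving that $xyY$ is $nm$-integral over $\left(A_{[I]}, \left(J_{\tau} A_{[I]}\right)_{\tau\in\mathbb{N}}\right)$. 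This is the only place where the Rees-algebra machinery is needed; once the problem is moved inside $A_{[I]}$, a single application of Theorem~\ref{Theorem8c} will finish it.

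To apply Theorem~\ref{Theorem8c} inside the ring $A_{[I]}$ (with $B[Y]$ as the ambient $A_{[I]}$-algebra and $\left(J_{\tau} A_{[I]}\right)_{\tau\in\mathbb{N}}$ as the ideal semifiltration, which is legitimate by Theorem~\ref{Theorem11}~\textbf{(a)}), I will assign the roles asymmetrically: $y$ plays the part of the factor with ideal-semifiltration integrality, and $xY$ plays the part of the factor with mere ring integrality. For the $xY$-factor, Theorem~\ref{Theorem7} applied to $x$ over $\left(A, \left(I_{\rho}\right)_{\rho\in\mathbb{N}}\right)$ immediately yields that $xY$ is $m$-integral over $A_{[I]}$. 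For the $y$-factor, any equation of integral dependence of $y$ over $\left(A, \left(J_{\rho}\right)_{\rho\in\mathbb{N}}\right)$---with coefficients $a_i \in J_{n-i} \subseteq A$---is simultaneously an equation of integral dependence of $y$ over $\left(A_{[I]}, \left(J_{\tau} A_{[I]}\right)_{\tau\in\mathbb{N}}\right)$, since $A \subseteq A_{[I]}$ and $a_i \in J_{n-i} \subseteq J_{n-i} A_{[I]}$.

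With both hypotheses verified, Theorem~\ref{Theorem8c} (applied with $y$ in the role of $x$, $xY$ in the role of $y$, $n$ in the role of $m$, and $m$ in the role of $n$) delivers the conclusion that $y \cdot (xY) = xyY$ is $nm$-integral over $\left(A_{[I]}, \left(J_{\tau} A_{[I]}\right)_{\tau\in\mathbb{N}}\right)$. Combined with the equivalence from Theorem~\ref{Theorem11}~\textbf{(b)}, this yields the desired conclusion that $xy$ is $nm$-integral over $\left(A, \left(I_{\rho} J_{\rho}\right)_{\rho\in\mathbb{N}}\right)$.

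The main obstacle is organizational rather than technical: a naive attempt to apply Theorem~\ref{Theorem7} to both $x$ and $y$ and multiply the results would yield $(xY)(yY) = xy\, Y^{2}$, whose $Y$-degree does not match the shape demanded by Theorem~\ref{Theorem11}~\textbf{(b)}. The asymmetric role assignment---trivially absorbing the $(J_{\rho})$-data into $y$ on one side while letting Theorem~\ref{Theorem7} genuinely produce the factor of $Y$ on the other---is precisely what forces the two factors to multiply to $xyY$ rather than $xyY^{2}$, so that Theorem~\ref{Theorem11}~\textbf{(b)} can be used to descend back to the statement about $\left(A, \left(I_{\rho} J_{\rho}\right)_{\rho\in\mathbb{N}}\right)$.
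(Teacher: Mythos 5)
Your proof is correct and follows essentially the same route as the paper's: first lift $x$ via Theorem~\ref{Theorem7} to get $xY$ being $m$-integral over $A_{[I]}$, observe that $y$ remains $n$-integral over $\left(A_{[I]},\left(J_{\tau}A_{[I]}\right)_{\tau\in\mathbb{N}}\right)$, apply Theorem~\ref{Theorem8c} asymmetrically to $y$ and $xY$, and descend via Theorem~\ref{Theorem11}~\textbf{(b)}. The only cosmetic difference is that where you verify by hand that the integrality equation for $y$ over $\left(A,\left(J_{\rho}\right)_{\rho\in\mathbb{N}}\right)$ transfers to $\left(A_{[I]},\left(J_{\tau}A_{[I]}\right)_{\tau\in\mathbb{N}}\right)$, the paper packages this step as Lemma~\ref{lem.I'}; your observation about why the naive symmetric approach would produce $xyY^2$ and therefore fail is exactly the right reason this asymmetric decomposition is forced.
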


The proof of this theorem will require a generalization of Lemma \ref{lem.I}:

\begin{lemma}
\label{lem.I'}Let $A$ be a ring. Let $A^{\prime}$ be an $A$-algebra. Let
$B^{\prime}$ be an $A^{\prime}$-algebra. Let $\left(  I_{\rho}\right)
_{\rho\in\mathbb{N}}$ be an ideal semifiltration of $A$. Let $v\in B^{\prime}%
$. Let $n\in\mathbb{N}$. Assume that $v$ is $n$-integral over $\left(
A,\left(  I_{\rho}\right)  _{\rho\in\mathbb{N}}\right)  $. (Here, of course,
we are using the fact that $B^{\prime}$ is an $A$-algebra, since $B^{\prime}$
is an $A^{\prime}$-algebra while $A^{\prime}$ is an $A$-algebra.)

Then, $v$ is $n$-integral over $\left(  A^{\prime},\left(  I_{\rho}A^{\prime
}\right)  _{\rho\in\mathbb{N}}\right)  $. (Note that $\left(  I_{\rho
}A^{\prime}\right)  _{\rho\in\mathbb{N}}$ is an ideal semifiltration of
$A^{\prime}$, according to Lemma \ref{lem.J}.)
\end{lemma}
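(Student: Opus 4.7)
The plan is to mimic the proof of Lemma~\ref{lem.I}, but keeping track of the ideal data. Since $v$ is $n$-integral over $\left(A,\left(I_\rho\right)_{\rho\in\mathbb{N}}\right)$, Definition~\ref{Definition9} gives us a tuple $\left(a_0,a_1,\ldots,a_n\right)\in A^{n+1}$ satisfying
\[
\sum_{k=0}^n a_k v^k = 0, \qquad a_n = 1, \qquad a_i \in I_{n-i} \text{ for every } i\in\left\{0,1,\ldots,n\right\}.
\]
The natural candidate for a witness of $n$-integrality of $v$ over $\left(A',\left(I_\rho A'\right)_{\rho\in\mathbb{N}}\right)$ is the tuple $\left(b_0,b_1,\ldots,b_n\right)\in \left(A'\right)^{n+1}$ defined by $b_i = a_i\cdot 1_{A'}$ for each $i$, i.e., the image of $a_i$ under the canonical ring homomorphism $A\to A'$ coming from the $A$-algebra structure of $A'$.

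Next I would check the three conditions of Definition~\ref{Definition9} for this tuple. First, $b_n = a_n\cdot 1_{A'} = 1\cdot 1_{A'} = 1_{A'}$. Second, because the $A$-module structure on $B'$ is the one obtained from $A'$ via the $A$-algebra structure of $A'$, the action of $a_k\in A$ on an element of $B'$ coincides with the action of $b_k=a_k\cdot 1_{A'}\in A'$. Therefore
\[
\sum_{k=0}^n b_k v^k = \sum_{k=0}^n \left(a_k\cdot 1_{A'}\right) v^k = \sum_{k=0}^n a_k v^k = 0.
\]
Third, for every $i\in\left\{0,1,\ldots,n\right\}$, the relation $a_i\in I_{n-i}$ gives $b_i = a_i\cdot 1_{A'} \in I_{n-i}\cdot 1_{A'}\subseteq I_{n-i}A'$ (since $1_{A'}\in A'$).

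Taken together, these three verifications show that $\left(b_0,b_1,\ldots,b_n\right)$ witnesses $n$-integrality of $v$ over $\left(A',\left(I_\rho A'\right)_{\rho\in\mathbb{N}}\right)$ via Definition~\ref{Definition9}. There is no real obstacle here; the only subtle point is the coherence of the two $A$-actions on $B'$ (the direct one from $B'$ being an $A$-algebra, and the one factoring through $A'$), and this coherence is built into the statement that $B'$ is an $A'$-algebra while $A'$ is an $A$-algebra. The invocation of Lemma~\ref{lem.J} in the statement already reassures the reader that $\left(I_\rho A'\right)_{\rho\in\mathbb{N}}$ is an ideal semifiltration of $A'$, so Definition~\ref{Definition9} is applicable for the target claim.
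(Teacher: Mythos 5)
Your argument is correct and matches the paper's own proof: both take the witnessing tuple $\left(a_0,\ldots,a_n\right)$ from Definition~\ref{Definition9}, push it to $A'$ via $b_i=a_i\cdot 1_{A'}$, and verify the three conditions, using the compatibility of the two $A$-module structures on $B'$ for the vanishing sum and $I_{n-i}\cdot 1_{A'}\subseteq I_{n-i}A'$ for the membership condition.
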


\begin{vershort}
\begin{proof}
[Proof of Lemma \ref{lem.I'}.]This becomes obvious upon unraveling the
definitions of \textquotedblleft$n$-integral over $\left(  A,\left(  I_{\rho
}\right)  _{\rho\in\mathbb{N}}\right)  $\textquotedblright\ and of
\textquotedblleft$n$-integral over $\left(  A^{\prime},\left(  I_{\rho
}A^{\prime}\right)  _{\rho\in\mathbb{N}}\right)  $\textquotedblright, and by
realizing that every $\rho\in\mathbb{N}$ and every $a\in I_{\rho}$ satisfy
$a\cdot1_{A^{\prime}}\in I_{\rho}A^{\prime}$. (See \cite{verlong} for details.)
\end{proof}
\end{vershort}

\begin{verlong}
\begin{proof}
[Proof of Lemma \ref{lem.I'}.]We know that $v$ is $n$-integral over $\left(
A,\left(  I_{\rho}\right)  _{\rho\in\mathbb{N}}\right)  $. Thus, by
Definition~\ref{Definition9} (applied to $B=B^{\prime}$ and $u=v$), there
exists some $\left(  a_{0},a_{1},\ldots,a_{n}\right)  \in A^{n+1}$ such that%
\[
\sum\limits_{k=0}^{n}a_{k}v^{k}=0,\ \ \ \ \ \ \ \ \ \ a_{n}%
=1,\ \ \ \ \ \ \ \ \ \ \text{and}\ \ \ \ \ \ \ \ \ \ a_{i}\in I_{n-i}\text{
for every }i\in\left\{  0,1,\ldots,n\right\}  .
\]
Consider this $\left(  a_{0},a_{1},\ldots,a_{n}\right)  $.

Now, recall that $A^{\prime}$ is an $A$-algebra. Define an $\left(
n+1\right)  $-tuple $\left(  b_{0},b_{1},\ldots,b_{n}\right)  \in\left(
A^{\prime}\right)  ^{n+1}$ by setting%
\[
\left(  b_{i}=a_{i}\cdot1_{A^{\prime}}\ \ \ \ \ \ \ \ \ \ \text{for each }%
i\in\left\{  0,1,\ldots,n\right\}  \right)  .
\]
Then, we have $b_{i}=\underbrace{a_{i}}_{\in I_{n-i}}\cdot
\underbrace{1_{A^{\prime}}}_{\in A^{\prime}}\in I_{n-i}A^{\prime}$ for every
$i\in\left\{  0,1,\ldots,n\right\}  $. Also,
\[
\sum\limits_{k=0}^{n}\underbrace{b_{k}}_{\substack{=a_{k}\cdot1_{A^{\prime}%
}\\\text{(by the definition of }b_{k}\text{)}}}v^{k}=\sum\limits_{k=0}%
^{n}\underbrace{\left(  a_{k}\cdot1_{A^{\prime}}\right)  v^{k}}_{=a_{k}v^{k}%
}=\sum\limits_{k=0}^{n}a_{k}v^{k}=0.
\]
Furthermore, the definition of $b_{n}$ yields $b_{n}=\underbrace{a_{n}}%
_{=1}\cdot1_{A^{\prime}}=1_{A^{\prime}}=1$.

Thus, $\left(  b_{0},b_{1},\ldots,b_{n}\right)  \in\left(  A^{\prime}\right)
^{n+1}$ and%
\[
\sum\limits_{k=0}^{n}b_{k}v^{k}=0,\ \ \ \ \ \ \ \ \ \ b_{n}%
=1,\ \ \ \ \ \ \ \ \ \ \text{and}\ \ \ \ \ \ \ \ \ \ b_{i}\in I_{n-i}%
A^{\prime}\text{ for every }i\in\left\{  0,1,\ldots,n\right\}  .
\]
Hence, by Definition~\ref{Definition9} (applied to $B^{\prime}$, $A^{\prime}$,
$\left(  I_{\rho}A^{\prime}\right)  _{\rho\in\mathbb{N}}$, $v$ and $\left(
b_{0},b_{1},\ldots,b_{n}\right)  $ instead of $B$, $A$, $\left(  I_{\rho
}\right)  _{\rho\in\mathbb{N}}$, $u$ and $\left(  a_{0},a_{1},\ldots
,a_{n}\right)  $), the element $v$ is $n$-integral over $\left(  A^{\prime
},\left(  I_{\rho}A^{\prime}\right)  _{\rho\in\mathbb{N}}\right)  $. This
proves Lemma~\ref{lem.I'}.
\end{proof}
\end{verlong}

\begin{proof}
[Proof of Theorem~\ref{Theorem13}.]We have $\left(  J_{\rho}\right)  _{\rho
\in\mathbb{N}}=\left(  J_{\tau}\right)  _{\tau\in\mathbb{N}}$. Hence, $y$ is
$n$-integral over $\left(  A,\left(  J_{\tau}\right)  _{\tau\in\mathbb{N}%
}\right)  $ (since $y$ is $n$-integral over $\left(  A,\left(  J_{\rho
}\right)  _{\rho\in\mathbb{N}}\right)  $). Also, $\left(  J_{\tau}\right)
_{\tau\in\mathbb{N}}$ is an ideal semifiltration of $A$ (since $\left(
J_{\rho}\right)  _{\rho\in\mathbb{N}}$ is an ideal semifiltration of $A$, but
we have $\left(  J_{\rho}\right)  _{\rho\in\mathbb{N}}=\left(  J_{\tau
}\right)  _{\tau\in\mathbb{N}}$). Thus, $\left(  J_{\tau}A_{\left[  I\right]
}\right)  _{\tau\in\mathbb{N}}$ is an ideal semifiltration of $A_{\left[
I\right]  }$ (by Lemma \ref{lem.J}, applied to $A_{\left[  I\right]  }$ and
$\left(  J_{\tau}\right)  _{\tau\in\mathbb{N}}$ instead of $A^{\prime}$ and
$\left(  I_{\rho}\right)  _{\rho\in\mathbb{N}}$).

Consider the polynomial ring $A\left[  Y\right]  $ and its $A$-subalgebra
$A\left[  \left(  I_{\rho}\right)  _{\rho\in\mathbb{N}}\ast Y\right]  $. We
will abbreviate this $A$-subalgebra $A\left[  \left(  I_{\rho}\right)
_{\rho\in\mathbb{N}}\ast Y\right]  $ by $A_{\left[  I\right]  }$. Thus,
$A_{\left[  I\right]  }=A\left[  \left(  I_{\rho}\right)  _{\rho\in\mathbb{N}%
}\ast Y\right]  $ is a subring of $A\left[  Y\right]  $. Hence, $B\left[
Y\right]  $ is an $A_{\left[  I\right]  }$-algebra (since $B\left[  Y\right]
$ is an $A\left[  Y\right]  $-algebra as explained in
Definition~\ref{Definition7}).

Theorem~\ref{Theorem7} (applied to $x$ and $m$ instead of $u$ and $n$) yields
that $xY$ is $m$-integral over $A\left[  \left(  I_{\rho}\right)  _{\rho
\in\mathbb{N}}\ast Y\right]  $ (since $x$ is $m$-integral over $\left(
A,\left(  I_{\rho}\right)  _{\rho\in\mathbb{N}}\right)  $). In other words,
$xY$ is $m$-integral over $A_{\left[  I\right]  }$ (since $A\left[  \left(
I_{\rho}\right)  _{\rho\in\mathbb{N}}\ast Y\right]  =A_{\left[  I\right]  }$).

On the other hand, $A_{\left[  I\right]  }$ is an $A$-algebra, and $B\left[
Y\right]  $ is an $A_{\left[  I\right]  }$-algebra. Hence, Lemma \ref{lem.I'}
(applied to $A_{\left[  I\right]  }$, $B\left[  Y\right]  $, $\left(  J_{\tau
}\right)  _{\tau\in\mathbb{N}}$ and $y$ instead of $A^{\prime}$, $B^{\prime}$,
$\left(  I_{\rho}\right)  _{\rho\in\mathbb{N}}$ and $v$) yields that $y$ is
$n$-integral over $\left(  A_{\left[  I\right]  },\left(  J_{\tau}A_{\left[
I\right]  }\right)  _{\tau\in\mathbb{N}}\right)  $ (since $y$ is $n$-integral
over $\left(  A,\left(  J_{\tau}\right)  _{\tau\in\mathbb{N}}\right)  $).

Hence, Theorem~\ref{Theorem8c} (applied to $A_{\left[  I\right]  }$, $B\left[
Y\right]  $, $\left(  J_{\tau}A_{\left[  I\right]  }\right)  _{\tau
\in\mathbb{N}}$, $y$, $xY$, $n$ and $m$ instead of $A$, $B$, $\left(  I_{\rho
}\right)  _{\rho\in\mathbb{N}}$, $x$, $y$, $m$ and $n$, respectively) yields
that $y\cdot xY$ is $mn$-integral over $\left(  A_{\left[  I\right]  },\left(
J_{\tau}A_{\left[  I\right]  }\right)  _{\tau\in\mathbb{N}}\right)  $ (since
$xY$ is $m$-integral over $A_{\left[  I\right]  }$).

Since $y\cdot xY=xyY$ and $mn=nm$, this means that $xyY$ is $nm$-integral over
$\left(  A_{\left[  I\right]  },\left(  J_{\tau}A_{\left[  I\right]  }\right)
_{\tau\in\mathbb{N}}\right)  $. Hence, Theorem~\ref{Theorem11} \textbf{(b)}
(applied to $xy$ and $nm$ instead of $u$ and $n$) yields that $xy$ is
$nm$-integral over $\left(  A,\left(  I_{\rho}J_{\rho}\right)  _{\rho
\in\mathbb{N}}\right)  $. This proves Theorem~\ref{Theorem13}.
\end{proof}

\section{\label{sect.4}Accelerating ideal semifiltrations}

\subsection{Definition of $\lambda$-acceleration}

We start this section with an obvious observation:

\begin{theorem}
\label{Theorem14} Let $A$ be a ring. Let $\left(  I_{\rho}\right)  _{\rho
\in\mathbb{N}}$ be an ideal semifiltration of $A$. Let $\lambda\in\mathbb{N}$.
Then, $\left(  I_{\lambda\rho}\right)  _{\rho\in\mathbb{N}}$ is an ideal
semifiltration of $A$.
\end{theorem}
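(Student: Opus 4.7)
The plan is to simply verify the two defining conditions of an ideal semifiltration (as stated in Definition~\ref{Definition6}) for the sequence $\left(I_{\lambda\rho}\right)_{\rho\in\mathbb{N}}$. This is a routine check, and I expect no real obstacle.

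First I would note that each $I_{\lambda\rho}$ is an ideal of $A$, since $\left(I_{\rho}\right)_{\rho\in\mathbb{N}}$ is a sequence of ideals of $A$ and $\lambda\rho\in\mathbb{N}$ for each $\rho\in\mathbb{N}$. Hence $\left(I_{\lambda\rho}\right)_{\rho\in\mathbb{N}}$ is a sequence of ideals of $A$.

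Next I would verify the first axiom: applying $\rho=0$ gives $I_{\lambda\cdot 0}=I_{0}=A$, where we used that $\left(I_{\rho}\right)_{\rho\in\mathbb{N}}$ is an ideal semifiltration (hence $I_{0}=A$).

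Finally I would verify the second axiom: for every $a,b\in\mathbb{N}$, we have
\[
I_{\lambda a}\cdot I_{\lambda b}\subseteq I_{\lambda a+\lambda b}=I_{\lambda(a+b)},
\]
where the inclusion follows from the second axiom for $\left(I_{\rho}\right)_{\rho\in\mathbb{N}}$ (applied to $\lambda a$ and $\lambda b$ in place of $a$ and $b$), and the equality is just arithmetic. Together these two conditions show (via Definition~\ref{Definition6}, applied to $\left(I_{\lambda\rho}\right)_{\rho\in\mathbb{N}}$ instead of $\left(I_{\rho}\right)_{\rho\in\mathbb{N}}$) that $\left(I_{\lambda\rho}\right)_{\rho\in\mathbb{N}}$ is an ideal semifiltration of $A$, completing the proof.
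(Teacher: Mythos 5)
Your proof is correct and follows exactly the same routine axiom-check that the paper's own proof carries out: verifying that the entries are ideals, checking $I_{\lambda\cdot 0}=I_0=A$, and applying the multiplicativity axiom of $(I_\rho)_{\rho\in\mathbb{N}}$ at indices $\lambda a$ and $\lambda b$ to get $I_{\lambda a}I_{\lambda b}\subseteq I_{\lambda a+\lambda b}=I_{\lambda(a+b)}$.
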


\begin{verlong}
\begin{proof}
[Proof of Theorem~\ref{Theorem14}.]Since $\left(  I_{\rho}\right)  _{\rho
\in\mathbb{N}}$ is an ideal semifiltration of $A$, it is a sequence of ideals
of $A$, and it satisfies the two conditions%
\begin{align}
I_{0}  &  =A;\nonumber\\
I_{a}I_{b}  &  \subseteq I_{a+b}\ \ \ \ \ \ \ \ \ \ \text{for every }%
a\in\mathbb{N}\text{ and }b\in\mathbb{N} \label{pf.Theorem14.2}%
\end{align}
(by Definition~\ref{Definition6}).

Now, $I_{\lambda\rho}$ is an ideal of $A$ for every $\rho\in\mathbb{N}$ (since
$\left(  I_{\rho}\right)  _{\rho\in\mathbb{N}}$ is a sequence of ideals of
$A$). Hence, $\left(  I_{\lambda\rho}\right)  _{\rho\in\mathbb{N}}$ is a
sequence of ideals of $A$. Thus, in order to prove that $\left(
I_{\lambda\rho}\right)  _{\rho\in\mathbb{N}}$ is an ideal semifiltration of
$A$, it is enough to verify that it satisfies the two conditions%
\begin{align*}
I_{\lambda\cdot0}  &  =A;\\
I_{\lambda a}I_{\lambda b}  &  \subseteq I_{\lambda\left(  a+b\right)
}\ \ \ \ \ \ \ \ \ \ \text{for every }a\in\mathbb{N}\text{ and }b\in
\mathbb{N}.
\end{align*}
But these two conditions are satisfied, since%
\begin{align*}
I_{\lambda\cdot0}  &  =I_{0}=A;\\
I_{\lambda a}I_{\lambda b}  &  \subseteq I_{\lambda a+\lambda b}%
\ \ \ \ \ \ \ \ \ \ \left(  \text{by (\ref{pf.Theorem14.2}), applied to
}\lambda a\text{ and }\lambda b\text{ instead of }a\text{ and }b\right) \\
&  =I_{\lambda\left(  a+b\right)  }\ \ \ \ \ \ \ \ \ \ \left(  \text{since
}\lambda a+\lambda b=\lambda\left(  a+b\right)  \right)
\ \ \ \ \ \ \ \ \ \ \text{for every }a\in\mathbb{N}\text{ and }b\in\mathbb{N}.
\end{align*}
Hence, $\left(  I_{\lambda\rho}\right)  _{\rho\in\mathbb{N}}$ is an ideal
semifiltration of $A$ (by Definition~\ref{Definition6}, applied to $\left(
I_{\lambda\rho}\right)  _{\rho\in\mathbb{N}}$ instead of $\left(  I_{\rho
}\right)  _{\rho\in\mathbb{N}}$). This proves Theorem~\ref{Theorem14}.
\end{proof}
\end{verlong}

I refer to the ideal semifiltration $\left(  I_{\lambda\rho}\right)  _{\rho
\in\mathbb{N}}$ in Theorem~\ref{Theorem14} as the $\lambda$%
\textit{-acceleration} of the ideal semifiltration $\left(  I_{\rho}\right)
_{\rho\in\mathbb{N}}$.

\subsection{Half-reduction and reduction}

Now, Theorem~\ref{Theorem11}, itself a generalization of
Theorem~\ref{Theorem7}, can be generalized once more:

\begin{theorem}
\label{Theorem15} Let $A$ be a ring. Let $B$ be an $A$-algebra. Let $\left(
I_{\rho}\right)  _{\rho\in\mathbb{N}}$ and $\left(  J_{\rho}\right)  _{\rho
\in\mathbb{N}}$ be two ideal semifiltrations of $A$. Let $n\in\mathbb{N}$. Let
$u\in B$. Let $\lambda\in\mathbb{N}$.

We know that $\left(  I_{\lambda\rho}\right)  _{\rho\in\mathbb{N}}$ is an
ideal semifiltration of $A$ (according to Theorem~\ref{Theorem14}).

Hence, $\left(  I_{\lambda\rho}J_{\rho}\right)  _{\rho\in\mathbb{N}}$ is an
ideal semifiltration of $A$ (according to Theorem~\ref{Theorem10}
\textbf{(b)}, applied to $\left(  I_{\lambda\rho}\right)  _{\rho\in\mathbb{N}%
}$ instead of $\left(  I_{\rho}\right)  _{\rho\in\mathbb{N}}$).

Consider the polynomial ring $A\left[  Y\right]  $ and its $A$-subalgebra
$A\left[  \left(  I_{\rho}\right)  _{\rho\in\mathbb{N}}\ast Y\right]  $.

We will abbreviate this $A$-subalgebra $A\left[  \left(  I_{\rho}\right)
_{\rho\in\mathbb{N}}\ast Y\right]  $ by $A_{\left[  I\right]  }$.

\textbf{(a)} The sequence $\left(  J_{\tau}A_{\left[  I\right]  }\right)
_{\tau\in\mathbb{N}}$ is an ideal semifiltration of $A_{\left[  I\right]  }$.

\textbf{(b)} The element $u$ of $B$ is $n$-integral over $\left(  A,\left(
I_{\lambda\rho}J_{\rho}\right)  _{\rho\in\mathbb{N}}\right)  $ if and only if
the element $uY^{\lambda}$ of the polynomial ring $B\left[  Y\right]  $ is
$n$-integral over $\left(  A_{\left[  I\right]  },\left(  J_{\tau}A_{\left[
I\right]  }\right)  _{\tau\in\mathbb{N}}\right)  $. (Here, we are using the
fact that $B\left[  Y\right]  $ is an $A_{\left[  I\right]  }$-algebra,
because $A_{\left[  I\right]  }=A\left[  \left(  I_{\rho}\right)  _{\rho
\in\mathbb{N}}\ast Y\right]  $ is a subring of $A\left[  Y\right]  $ and
because $B\left[  Y\right]  $ is an $A\left[  Y\right]  $-algebra as explained
in Definition~\ref{Definition7}.)
\end{theorem}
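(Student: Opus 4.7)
\textbf{Proof plan for Theorem~\ref{Theorem15}.}

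Part \textbf{(a)} is identical to Theorem~\ref{Theorem11}~\textbf{(a)}: the sequence $\left(J_{\tau}\right)_{\tau \in \mathbb{N}} = \left(J_{\rho}\right)_{\rho \in \mathbb{N}}$ is an ideal semifiltration of $A$, so Lemma~\ref{lem.J} (applied to $A^{\prime} = A_{[I]}$ and $\left(J_{\tau}\right)_{\tau \in \mathbb{N}}$ in place of $\left(I_{\rho}\right)_{\rho \in \mathbb{N}}$) immediately yields that $\left(J_{\tau} A_{[I]}\right)_{\tau \in \mathbb{N}}$ is an ideal semifiltration of $A_{[I]}$. No adjustment is needed for the parameter $\lambda$.

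For part \textbf{(b)}, the plan is to mimic the proof of Theorem~\ref{Theorem11}~\textbf{(b)} almost verbatim, with the single adjustment of tracking $\lambda$ in the exponents of $Y$. For the $\Longrightarrow$ direction, I would start from a witness $\left(a_{0}, a_{1}, \ldots, a_{n}\right) \in A^{n+1}$ with $\sum_{k=0}^{n} a_{k} u^{k} = 0$, $a_{n} = 1$, and $a_{i} \in I_{\lambda(n-i)} J_{n-i}$ for all $i$. I would define $b_{k} = a_{k} Y^{\lambda(n-k)}$; the key membership check is
\[
b_{k} = a_{k} Y^{\lambda(n-k)} \in J_{n-k} \cdot I_{\lambda(n-k)} Y^{\lambda(n-k)} \subseteq J_{n-k} A_{[I]},
\]
since $I_{\lambda(n-k)} Y^{\lambda(n-k)} \subseteq \sum_{i \in \mathbb{N}} I_{i} Y^{i} = A_{[I]}$. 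Then $\sum_{k=0}^{n} b_{k} \cdot (uY^{\lambda})^{k} = Y^{\lambda n} \cdot \sum_{k=0}^{n} a_{k} u^{k} = 0$ and $b_{n} = a_{n} Y^{0} = 1$, showing that $uY^{\lambda}$ is $n$-integral over $\left(A_{[I]}, \left(J_{\tau} A_{[I]}\right)_{\tau \in \mathbb{N}}\right)$.

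For the $\Longleftarrow$ direction, I would start from a witness $\left(p_{0}, p_{1}, \ldots, p_{n}\right) \in A_{[I]}^{n+1}$ with $\sum_{k=0}^{n} p_{k} \cdot (uY^{\lambda})^{k} = 0$, $p_{n} = 1$, and $p_{i} \in J_{n-i} A_{[I]}$. Each $p_{k}$ expands uniquely as $p_{k} = \sum_{i \in \mathbb{N}} p_{k,i} Y^{i}$ with $p_{k,i} \in I_{i} J_{n-k}$ (this is where the two-ideal structure of $J_{n-k} A_{[I]} = \sum_{i} I_{i} J_{n-k} Y^{i}$ is used, exactly as in Theorem~\ref{Theorem11}). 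Substituting, I obtain
\[
\sum_{\ell \in \mathbb{N}} \left( \sum_{\substack{(k,i) \in \{0,1,\ldots,n\} \times \mathbb{N} \\ i + \lambda k = \ell}} p_{k,i} u^{k} \right) Y^{\ell} = 0
\]
in $B[Y]$. The decisive step is to extract the coefficient of $Y^{\lambda n}$ (rather than $Y^{n}$ as in Theorem~\ref{Theorem11}); the condition $i + \lambda k = \lambda n$ forces $i = \lambda(n-k)$. Hence $\sum_{k=0}^{n} p_{k, \lambda(n-k)} u^{k} = 0$.

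Setting $a_{k} = p_{k, \lambda(n-k)}$, I get $a_{k} \in I_{\lambda(n-k)} J_{n-k}$ from $p_{k,i} \in I_{i} J_{n-k}$ with $i = \lambda(n-k)$, and $a_{n} = p_{n, 0}$, which equals $1$ because $p_{n} = 1 = 1 \cdot Y^{0}$ forces $p_{n,0} = 1$. Therefore $u$ is $n$-integral over $\left(A, \left(I_{\lambda \rho} J_{\rho}\right)_{\rho \in \mathbb{N}}\right)$ by Definition~\ref{Definition9}. The main (modest) obstacle is simply bookkeeping: making sure that extracting the coefficient of $Y^{\lambda n}$ singles out precisely the pair $i = \lambda(n-k)$ and that this $i$ matches the index $\lambda(n-k)$ on the $I$-side of $I_{\lambda(n-k)} J_{n-k}$; everything else is a direct transcription of the $\lambda = 1$ proof of Theorem~\ref{Theorem11}.
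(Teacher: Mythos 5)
Your proposal is correct and follows essentially the same route as the paper: part (a) is inherited from Theorem~\ref{Theorem11}~\textbf{(a)}, the $\Longrightarrow$ direction uses the witness $b_k = a_k Y^{\lambda(n-k)}$, and the $\Longleftarrow$ direction expands $p_k = \sum_i p_{k,i} Y^i$ with $p_{k,i} \in I_i J_{n-k}$ and extracts the coefficient of $Y^{\lambda n}$, forcing $i = \lambda(n-k)$. All the key steps match the paper's argument, so no further changes are needed.
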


\begin{vershort}
\begin{proof}
[Proof of Theorem~\ref{Theorem15}.]\textbf{(a)} This is precisely
Theorem~\ref{Theorem11} \textbf{(a)}.

\textbf{(b)} The definition of $A_{\left[  I\right]  }$ yields%
\begin{align*}
A_{\left[  I\right]  }  &  =A\left[  \left(  I_{\rho}\right)  _{\rho
\in\mathbb{N}}\ast Y\right]  =\sum\limits_{i\in\mathbb{N}}I_{i}Y^{i}%
\ \ \ \ \ \ \ \ \ \ \left(  \text{by Definition~\ref{Definition8}}\right) \\
&  =\sum\limits_{\ell\in\mathbb{N}}I_{\ell}Y^{\ell}\ \ \ \ \ \ \ \ \ \ \left(
\text{here we renamed }i\text{ as }\ell\text{ in the sum}\right)  .
\end{align*}

In order to verify Theorem~\ref{Theorem15} \textbf{(b)}, we have to prove the
$\Longrightarrow$ and $\Longleftarrow$ statements.

$\Longrightarrow:$ Assume that $u$ is $n$-integral over $\left(  A,\left(
I_{\lambda\rho}J_{\rho}\right)  _{\rho\in\mathbb{N}}\right)  $. Thus, by
Definition~\ref{Definition9} (applied to $\left(  I_{\lambda\rho}J_{\rho
}\right)  _{\rho\in\mathbb{N}}$ instead of $\left(  I_{\rho}\right)  _{\rho
\in\mathbb{N}}$), there exists some $\left(  a_{0},a_{1},\ldots,a_{n}\right)
\in A^{n+1}$ such that%
\[
\sum\limits_{k=0}^{n}a_{k}u^{k}=0,\ \ \ \ \ \ \ \ \ \ a_{n}%
=1,\ \ \ \ \ \ \ \ \ \ \text{and}\ \ \ \ \ \ \ \ \ \ a_{i}\in I_{\lambda
\left(  n-i\right)  }J_{n-i}\text{ for every }i\in\left\{  0,1,\ldots
,n\right\}  .
\]
Consider this $\left(  a_{0},a_{1},\ldots,a_{n}\right)  $.

For each $k\in\left\{  0,1,\ldots,n\right\}  $, we have $a_{k}\in
I_{\lambda\left(  n-k\right)  }J_{n-k}\subseteq I_{\lambda\left(  n-k\right)
}$ (since $I_{\lambda\left(  n-k\right)  }$ is an ideal of $A$) and thus
$a_{k}Y^{\lambda\left(  n-k\right)  }\in I_{\lambda\left(  n-k\right)
}Y^{\lambda\left(  n-k\right)  }\subseteq\sum_{i\in\mathbb{N}}I_{i}%
Y^{i}=A_{\left[  I\right]  }$. Thus, we can find an $\left(  n+1\right)
$-tuple $\left(  b_{0},b_{1},\ldots,b_{n}\right)  \in\left(  A_{\left[
I\right]  }\right)  ^{n+1}$ satisfying%
\[
\sum\limits_{k=0}^{n}b_{k}\cdot\left(  uY^{\lambda}\right)  ^{k}%
=0,\ \ \ \ \ \ \ \ \ \ b_{n}=1,\ \ \ \ \ \ \ \ \ \ \text{and}%
\ \ \ \ \ \ \ \ \ \ b_{i}\in J_{n-i}A_{\left[  I\right]  }\text{ for every
}i\in\left\{  0,1,\ldots,n\right\}  .
\]
\footnote{Namely, the $\left(  n+1\right)  $-tuple $\left(  b_{0},b_{1}%
,\ldots,b_{n}\right)  \in\left(  A_{\left[  I\right]  }\right)  ^{n+1}$
defined by $\left(  b_{k}=a_{k}Y^{\lambda\left(  n-k\right)  }\text{ for every
}k\in\left\{  0,1,\ldots,n\right\}  \right)  $ satisfies this. The proof is
very easy (see \cite{verlong} for details).} Hence, by
Definition~\ref{Definition9} (applied to $A_{\left[  I\right]  }$, $B\left[
Y\right]  $, $\left(  J_{\tau}A_{\left[  I\right]  }\right)  _{\tau
\in\mathbb{N}}$, $uY^{\lambda}$ and $\left(  b_{0},b_{1},\ldots,b_{n}\right)
$ instead of $A$, $B$, $\left(  I_{\rho}\right)  _{\rho\in\mathbb{N}}$, $u$
and $\left(  a_{0},a_{1},\ldots,a_{n}\right)  $), the element $uY^{\lambda}$
is $n$-integral over $\left(  A_{\left[  I\right]  },\left(  J_{\tau
}A_{\left[  I\right]  }\right)  _{\tau\in\mathbb{N}}\right)  $. This proves
the $\Longrightarrow$ direction of Theorem~\ref{Theorem15} \textbf{(b)}.

$\Longleftarrow:$ Assume that $uY^{\lambda}$ is $n$-integral over $\left(
A_{\left[  I\right]  },\left(  J_{\tau}A_{\left[  I\right]  }\right)
_{\tau\in\mathbb{N}}\right)  $. Thus, by Definition~\ref{Definition9} (applied
to $A_{\left[  I\right]  }$, $B\left[  Y\right]  $, $\left(  J_{\tau
}A_{\left[  I\right]  }\right)  _{\tau\in\mathbb{N}}$, $uY^{\lambda}$ and
$\left(  p_{0},p_{1},\ldots,p_{n}\right)  $ instead of $A$, $B$, $\left(
I_{\rho}\right)  _{\rho\in\mathbb{N}}$, $u$ and $\left(  a_{0},a_{1}%
,\ldots,a_{n}\right)  $), there exists some $\left(  p_{0},p_{1},\ldots
,p_{n}\right)  \in\left(  A_{\left[  I\right]  }\right)  ^{n+1}$ such that%
\[
\sum\limits_{k=0}^{n}p_{k}\cdot\left(  uY^{\lambda}\right)  ^{k}%
=0,\ \ \ \ \ \ \ \ \ \ p_{n}=1,\ \ \ \ \ \ \ \ \ \ \text{and}%
\ \ \ \ \ \ \ \ \ \ p_{i}\in J_{n-i}A_{\left[  I\right]  }\text{ for every
}i\in\left\{  0,1,\ldots,n\right\}  .
\]
Consider this $\left(  p_{0},p_{1},\ldots,p_{n}\right)  $. For every
$k\in\left\{  0,1,\ldots,n\right\}  $, we have%
\begin{align*}
p_{k}  &  \in J_{n-k}A_{\left[  I\right]  }=J_{n-k}\sum\limits_{i\in
\mathbb{N}}I_{i}Y^{i}\ \ \ \ \ \ \ \ \ \ \left(  \text{since }A_{\left[
I\right]  }=\sum\limits_{i\in\mathbb{N}}I_{i}Y^{i}\right) \\
&  =\sum\limits_{i\in\mathbb{N}}J_{n-k}I_{i}Y^{i}=\sum\limits_{i\in\mathbb{N}%
}I_{i}J_{n-k}Y^{i},
\end{align*}
and thus there exists a sequence $\left(  p_{k,i}\right)  _{i\in\mathbb{N}}\in
A^{\mathbb{N}}$ such that $p_{k}=\sum\limits_{i\in\mathbb{N}}p_{k,i}Y^{i}$,
such that $\left(  p_{k,i}\in I_{i}J_{n-k}\text{ for every }i\in
\mathbb{N}\right)  $, and such that only finitely many $i\in\mathbb{N}$
satisfy $p_{k,i}\neq0$. Consider this sequence. Thus,%
\[
\sum\limits_{k=0}^{n}\underbrace{p_{k}}_{=\sum\limits_{i\in\mathbb{N}}%
p_{k,i}Y^{i}}\cdot\left(  uY^{\lambda}\right)  ^{k}=\sum\limits_{k=0}^{n}%
\sum\limits_{i\in\mathbb{N}}p_{k,i}\underbrace{Y^{i}\cdot\left(  uY^{\lambda
}\right)  ^{k}}_{=u^{k}Y^{i+\lambda k}}=\sum\limits_{k=0}^{n}\sum
\limits_{i\in\mathbb{N}}p_{k,i}u^{k}Y^{i+\lambda k}.
\]
Compared with $\sum\limits_{k=0}^{n}p_{k}\cdot\left(  uY^{\lambda}\right)
^{k}=0$, this yields $\sum\limits_{k=0}^{n}\sum\limits_{i\in\mathbb{N}}%
p_{k,i}u^{k}Y^{i+\lambda k}=0$. In other words, the polynomial $\sum
\limits_{k=0}^{n}\sum\limits_{i\in\mathbb{N}}\underbrace{p_{k,i}u^{k}}_{\in
B}Y^{i+\lambda k}\in B\left[  Y\right]  $ equals $0$. Hence, its coefficient
before $Y^{\lambda n}$ equals $0$ as well. But its coefficient before
$Y^{\lambda n}$ is $\sum\limits_{k=0}^{n}p_{k,\lambda\left(  n-k\right)
}u^{k}$ (since $i+\lambda k=\lambda n$ holds if and only if $i=\lambda\left(
n-k\right)  $). Hence, $\sum\limits_{k=0}^{n}p_{k,\lambda\left(  n-k\right)
}u^{k}=0$.

Recall that $\sum\limits_{i\in\mathbb{N}}p_{k,i}Y^{i}=p_{k}$ for every
$k\in\left\{  0,1,\ldots,n\right\}  $ (by the definition of the $p_{k,i}$).
Thus, $\sum\limits_{i\in\mathbb{N}}p_{n,i}Y^{i}=p_{n}=1$ in $A\left[
Y\right]  $, and thus $p_{n,0}=1$ (by comparing coefficients before $Y^{0}$).

Define an $\left(  n+1\right)  $-tuple $\left(  a_{0},a_{1},\ldots
,a_{n}\right)  \in A^{n+1}$ by $a_{k}=p_{k,\lambda\left(  n-k\right)  }$ for
every $k\in\left\{  0,1,\ldots,n\right\}  $. Then, $a_{n}=p_{n,0}=1$. Besides,%
\[
\sum\limits_{k=0}^{n}\underbrace{a_{k}}_{=p_{k,\lambda\left(  n-k\right)  }%
}u^{k}=\sum\limits_{k=0}^{n}p_{k,\lambda\left(  n-k\right)  }u^{k}=0.
\]
Finally, for every $k\in\left\{  0,1,\ldots,n\right\}  $, we have
$a_{k}=p_{k,\lambda\left(  n-k\right)  }\in I_{\lambda\left(  n-k\right)
}J_{n-k}$ (since $p_{k,i}\in I_{i}J_{n-k}$ for every $i\in\mathbb{N}$).
Renaming the variable $k$ as $i$ in this statement, we obtain the following:
For every $i\in\left\{  0,1,\ldots,n\right\}  $, we have $a_{i}\in
I_{\lambda\left(  n-i\right)  }J_{n-i}$.

Altogether, we now know that%
\[
\sum\limits_{k=0}^{n}a_{k}u^{k}=0,\ \ \ \ \ \ \ \ \ \ a_{n}%
=1,\ \ \ \ \ \ \ \ \ \ \text{and}\ \ \ \ \ \ \ \ \ \ a_{i}\in I_{\lambda
\left(  n-i\right)  }J_{n-i}\text{ for every }i\in\left\{  0,1,\ldots
,n\right\}  .
\]
Thus, by Definition~\ref{Definition9} (applied to $\left(  I_{\lambda\rho
}J_{\rho}\right)  _{\rho\in\mathbb{N}}$ instead of $\left(  I_{\rho}\right)
_{\rho\in\mathbb{N}}$), the element $u$ is $n$-integral over $\left(
A,\left(  I_{\lambda\rho}J_{\rho}\right)  _{\rho\in\mathbb{N}}\right)  $. This
proves the $\Longleftarrow$ direction of Theorem~\ref{Theorem15} \textbf{(b)},
and thus completes the proof.
\end{proof}
\end{vershort}

\begin{verlong}
\begin{proof}
[Proof of Theorem~\ref{Theorem15}.]\textbf{(a)} This is precisely the claim of
Theorem~\ref{Theorem11} \textbf{(a)}; thus, we don't need to prove it again.

\textbf{(b)} The definition of $A_{\left[  I\right]  }$ yields%
\begin{align*}
A_{\left[  I\right]  }  &  =A\left[  \left(  I_{\rho}\right)  _{\rho
\in\mathbb{N}}\ast Y\right]  =\sum\limits_{i\in\mathbb{N}}I_{i}Y^{i}%
\ \ \ \ \ \ \ \ \ \ \left(  \text{by Definition~\ref{Definition8}}\right) \\
&  =\sum\limits_{\ell\in\mathbb{N}}I_{\ell}Y^{\ell}\ \ \ \ \ \ \ \ \ \ \left(
\text{here we renamed }i\text{ as }\ell\text{ in the sum}\right)  .
\end{align*}
As a consequence of this chain of equalities, we have $\sum\limits_{i\in
\mathbb{N}}I_{i}Y^{i}=A_{\left[  I\right]  }$ and $\sum\limits_{\ell
\in\mathbb{N}}I_{\ell}Y^{\ell}=A_{\left[  I\right]  }$.

In order to verify Theorem~\ref{Theorem15} \textbf{(b)}, we have to prove the
following two lemmata:

\begin{statement}
\textit{Lemma }$\mathcal{E}^{\prime\prime}$\textit{:} If $u$ is $n$-integral
over $\left(  A,\left(  I_{\lambda\rho}J_{\rho}\right)  _{\rho\in\mathbb{N}%
}\right)  $, then $uY^{\lambda}$ is $n$-integral over $\left(  A_{\left[
I\right]  },\left(  J_{\tau}A_{\left[  I\right]  }\right)  _{\tau\in
\mathbb{N}}\right)  $.
\end{statement}

\begin{statement}
\textit{Lemma} $\mathcal{F}^{\prime\prime}$\textit{:} If $uY^{\lambda}$ is
$n$-integral over $\left(  A_{\left[  I\right]  },\left(  J_{\tau}A_{\left[
I\right]  }\right)  _{\tau\in\mathbb{N}}\right)  $, then $u$ is $n$-integral
over $\left(  A,\left(  I_{\lambda\rho}J_{\rho}\right)  _{\rho\in\mathbb{N}%
}\right)  $.
\end{statement}

[\textit{Proof of Lemma }$\mathcal{E}^{\prime\prime}$\textit{:} Assume that
$u$ is $n$-integral over $\left(  A,\left(  I_{\lambda\rho}J_{\rho}\right)
_{\rho\in\mathbb{N}}\right)  $. Thus, by Definition~\ref{Definition9} (applied
to $\left(  I_{\lambda\rho}J_{\rho}\right)  _{\rho\in\mathbb{N}}$ instead of
$\left(  I_{\rho}\right)  _{\rho\in\mathbb{N}}$), there exists some $\left(
a_{0},a_{1},\ldots,a_{n}\right)  \in A^{n+1}$ such that%
\[
\sum\limits_{k=0}^{n}a_{k}u^{k}=0,\ \ \ \ \ \ \ \ \ \ a_{n}%
=1,\ \ \ \ \ \ \ \ \ \ \text{and}\ \ \ \ \ \ \ \ \ \ a_{i}\in I_{\lambda
\left(  n-i\right)  }J_{n-i}\text{ for every }i\in\left\{  0,1,\ldots
,n\right\}  .
\]
Consider this $\left(  a_{0},a_{1},\ldots,a_{n}\right)  $.

For each $k\in\left\{  0,1,\ldots,n\right\}  $, we have%
\begin{align*}
a_{k}  &  \in I_{\lambda\left(  n-k\right)  }\underbrace{J_{n-k}}_{\subseteq
A}\ \ \ \ \ \ \ \ \ \ \left(  \text{since }a_{i}\in I_{\lambda\left(
n-i\right)  }J_{n-i}\text{ for every }i\in\left\{  0,1,\ldots,n\right\}
\right) \\
&  \subseteq I_{\lambda\left(  n-k\right)  }A\subseteq I_{\lambda\left(
n-k\right)  }\ \ \ \ \ \ \ \ \ \ \left(  \text{since }I_{\lambda\left(
n-k\right)  }\text{ is an ideal of }A\right)
\end{align*}
and thus%
\[
\underbrace{a_{k}}_{\in I_{\lambda\left(  n-k\right)  }}Y^{\lambda\left(
n-k\right)  }\in I_{\lambda\left(  n-k\right)  }Y^{\lambda\left(  n-k\right)
}\subseteq\sum\limits_{i\in\mathbb{N}}I_{i}Y^{i}=A_{\left[  I\right]  }.
\]
Thus, we can define an $\left(  n+1\right)  $-tuple $\left(  b_{0}%
,b_{1},\ldots,b_{n}\right)  \in\left(  A_{\left[  I\right]  }\right)  ^{n+1}$
by%
\[
\left(  b_{k}=a_{k}Y^{\lambda\left(  n-k\right)  }\text{ for every }%
k\in\left\{  0,1,\ldots,n\right\}  \right)  .
\]
Consider this $\left(  n+1\right)  $-tuple. Then,%
\begin{align*}
\sum\limits_{k=0}^{n}\underbrace{b_{k}}_{\substack{=a_{k}Y^{\lambda\left(
n-k\right)  }\\\text{(by the}\\\text{definition of }b_{k}\text{)}}%
}\cdot\underbrace{\left(  uY^{\lambda}\right)  ^{k}}_{\substack{=u^{k}\left(
Y^{\lambda}\right)  ^{k}\\=u^{k}Y^{\lambda k}}}  &  =\sum\limits_{k=0}%
^{n}a_{k}\underbrace{Y^{\lambda\left(  n-k\right)  }u^{k}}_{=u^{k}%
Y^{\lambda\left(  n-k\right)  }}Y^{\lambda k}=\sum\limits_{k=0}^{n}a_{k}%
u^{k}\underbrace{Y^{\lambda\left(  n-k\right)  }Y^{\lambda k}}%
_{\substack{=Y^{\lambda\left(  n-k\right)  +\lambda k}\\=Y^{\lambda n}}}\\
&  =\sum\limits_{k=0}^{n}a_{k}u^{k}Y^{\lambda n}=Y^{\lambda n}\cdot
\underbrace{\sum\limits_{k=0}^{n}a_{k}u^{k}}_{=0}=0,
\end{align*}
Furthermore, the definition of $b_{n}$ yields%
\[
b_{n}=\underbrace{a_{n}}_{=1}\underbrace{Y^{\lambda\left(  n-n\right)  }%
}_{=Y^{\lambda\cdot0}=Y^{0}=1}=1.
\]
Finally, the definition of $b_{i}$ yields%
\[
b_{i}=\underbrace{a_{i}}_{\substack{\in I_{\lambda\left(  n-i\right)  }%
J_{n-i}\\=J_{n-i}I_{\lambda\left(  n-i\right)  }}}Y^{\lambda\left(
n-i\right)  }\in J_{n-i}\underbrace{I_{\lambda\left(  n-i\right)  }%
Y^{\lambda\left(  n-i\right)  }}_{\substack{\subseteq\sum\limits_{\ell
\in\mathbb{N}}I_{\ell}Y^{\ell}\\=A_{\left[  I\right]  }}}\subseteq
J_{n-i}A_{\left[  I\right]  }\ \ \ \ \ \ \ \ \ \ \text{for every }i\in\left\{
0,1,\ldots,n\right\}  .
\]

Altogether, we now know that $\left(  b_{0},b_{1},\ldots,b_{n}\right)
\in\left(  A_{\left[  I\right]  }\right)  ^{n+1}$ and%
\[
\sum\limits_{k=0}^{n}b_{k}\cdot\left(  uY^{\lambda}\right)  ^{k}%
=0,\ \ \ \ \ \ \ \ \ \ b_{n}=1,\ \ \ \ \ \ \ \ \ \ \text{and}%
\ \ \ \ \ \ \ \ \ \ b_{i}\in J_{n-i}A_{\left[  I\right]  }\text{ for every
}i\in\left\{  0,1,\ldots,n\right\}  .
\]
Hence, by Definition~\ref{Definition9} (applied to $A_{\left[  I\right]  }$,
$B\left[  Y\right]  $, $\left(  J_{\tau}A_{\left[  I\right]  }\right)
_{\tau\in\mathbb{N}}$, $uY^{\lambda}$ and $\left(  b_{0},b_{1},\ldots
,b_{n}\right)  $ instead of $A$, $B$, $\left(  I_{\rho}\right)  _{\rho
\in\mathbb{N}}$, $u$ and $\left(  a_{0},a_{1},\ldots,a_{n}\right)  $), the
element $uY^{\lambda}$ is $n$-integral over $\left(  A_{\left[  I\right]
},\left(  J_{\tau}A_{\left[  I\right]  }\right)  _{\tau\in\mathbb{N}}\right)
$. This proves Lemma $\mathcal{E}^{\prime\prime}$.]

[\textit{Proof of Lemma }$\mathcal{F}^{\prime\prime}$\textit{:} Assume that
$uY^{\lambda}$ is $n$-integral over $\left(  A_{\left[  I\right]  },\left(
J_{\tau}A_{\left[  I\right]  }\right)  _{\tau\in\mathbb{N}}\right)  $. Thus,
by Definition~\ref{Definition9} (applied to $A_{\left[  I\right]  }$,
$B\left[  Y\right]  $, $\left(  J_{\tau}A_{\left[  I\right]  }\right)
_{\tau\in\mathbb{N}}$, $uY^{\lambda}$ and $\left(  p_{0},p_{1},\ldots
,p_{n}\right)  $ instead of $A$, $B$, $\left(  I_{\rho}\right)  _{\rho
\in\mathbb{N}}$, $u$ and $\left(  a_{0},a_{1},\ldots,a_{n}\right)  $), there
exists some $\left(  p_{0},p_{1},\ldots,p_{n}\right)  \in\left(  A_{\left[
I\right]  }\right)  ^{n+1}$ such that%
\[
\sum\limits_{k=0}^{n}p_{k}\cdot\left(  uY^{\lambda}\right)  ^{k}%
=0,\ \ \ \ \ \ \ \ \ \ p_{n}=1,\ \ \ \ \ \ \ \ \ \ \text{and}%
\ \ \ \ \ \ \ \ \ \ p_{i}\in J_{n-i}A_{\left[  I\right]  }\text{ for every
}i\in\left\{  0,1,\ldots,n\right\}  .
\]
Consider this $\left(  p_{0},p_{1},\ldots,p_{n}\right)  $. For every
$k\in\left\{  0,1,\ldots,n\right\}  $, we have%
\begin{align*}
p_{k}  &  \in J_{n-k}A_{\left[  I\right]  }\ \ \ \ \ \ \ \ \ \ \left(
\text{since }p_{i}\in J_{n-i}A_{\left[  I\right]  }\text{ for every }%
i\in\left\{  0,1,\ldots,n\right\}  \right) \\
&  =J_{n-k}\sum\limits_{i\in\mathbb{N}}I_{i}Y^{i}\ \ \ \ \ \ \ \ \ \ \left(
\text{since }A_{\left[  I\right]  }=\sum\limits_{i\in\mathbb{N}}I_{i}%
Y^{i}\right) \\
&  =\sum\limits_{i\in\mathbb{N}}\underbrace{J_{n-k}I_{i}}_{=I_{i}J_{n-k}}%
Y^{i}=\sum\limits_{i\in\mathbb{N}}I_{i}J_{n-k}Y^{i},
\end{align*}
and thus there exists a sequence $\left(  p_{k,i}\right)  _{i\in\mathbb{N}}\in
A^{\mathbb{N}}$ such that $p_{k}=\sum\limits_{i\in\mathbb{N}}p_{k,i}Y^{i}$,
such that $\left(  p_{k,i}\in I_{i}J_{n-k}\text{ for every }i\in
\mathbb{N}\right)  $, and such that only finitely many $i\in\mathbb{N}$
satisfy $p_{k,i}\neq0$. Consider this sequence. Thus,%
\begin{align*}
\sum\limits_{k=0}^{n}\underbrace{p_{k}}_{=\sum\limits_{i\in\mathbb{N}}%
p_{k,i}Y^{i}}\cdot\underbrace{\left(  uY^{\lambda}\right)  ^{k}}%
_{\substack{=u^{k}\left(  Y^{\lambda}\right)  ^{k}\\=u^{k}Y^{\lambda
k}\\=Y^{\lambda k}u^{k}}}  &  =\sum\limits_{k=0}^{n}\left(  \sum
\limits_{i\in\mathbb{N}}p_{k,i}Y^{i}\right)  \cdot Y^{\lambda k}%
u^{k}\ \ \ \ \ \ \ \ \ \ \left(  \text{since }p_{k}=\sum\limits_{i\in
\mathbb{N}}p_{k,i}Y^{i}\right) \\
&  =\sum\limits_{k=0}^{n}\sum\limits_{i\in\mathbb{N}}p_{k,i}\underbrace{Y^{i}%
\cdot Y^{\lambda k}}_{=Y^{i+\lambda k}}u^{k}=\underbrace{\sum\limits_{k=0}%
^{n}}_{=\sum\limits_{k\in\left\{  0,1,\ldots,n\right\}  }}\sum\limits_{i\in
\mathbb{N}}p_{k,i}Y^{i+\lambda k}u^{k}\\
&  =\sum\limits_{k\in\left\{  0,1,\ldots,n\right\}  }\sum\limits_{i\in
\mathbb{N}}p_{k,i}Y^{i+\lambda k}u^{k}=\sum\limits_{\left(  k,i\right)
\in\left\{  0,1,\ldots,n\right\}  \times\mathbb{N}}p_{k,i}Y^{i+\lambda k}%
u^{k}\\
&  =\sum_{\ell\in\mathbb{N}}\sum\limits_{\substack{\left(  k,i\right)
\in\left\{  0,1,\ldots,n\right\}  \times\mathbb{N};\\i+\lambda k=\ell}%
}p_{k,i}\underbrace{Y^{i+\lambda k}}_{\substack{=Y^{\ell}\\\text{(since
}i+\lambda k=\ell\text{)}}}u^{k}\\
&  =\sum_{\ell\in\mathbb{N}}\sum\limits_{\substack{\left(  k,i\right)
\in\left\{  0,1,\ldots,n\right\}  \times\mathbb{N};\\i+\lambda k=\ell}%
}p_{k,i}\underbrace{Y^{\ell}u^{k}}_{=u^{k}Y^{\ell}}=\sum_{\ell\in\mathbb{N}%
}\sum\limits_{\substack{\left(  k,i\right)  \in\left\{  0,1,\ldots,n\right\}
\times\mathbb{N};\\i+\lambda k=\ell}}p_{k,i}u^{k}Y^{\ell}.
\end{align*}
Comparing this with $\sum\limits_{k=0}^{n}p_{k}\cdot\left(  uY^{\lambda
}\right)  ^{k}=0$, we obtain $\sum_{\ell\in\mathbb{N}}\sum
\limits_{\substack{\left(  k,i\right)  \in\left\{  0,1,\ldots,n\right\}
\times\mathbb{N};\\i+\lambda k=\ell}}p_{k,i}u^{k}Y^{\ell}=0$. In other words,
the polynomial $\sum\limits_{\ell\in\mathbb{N}}\underbrace{\sum
\limits_{\substack{\left(  k,i\right)  \in\left\{  0,1,\ldots,n\right\}
\times\mathbb{N};\\i+\lambda k=\ell}}p_{k,i}u^{k}}_{\in B}Y^{\ell}\in B\left[
Y\right]  $ equals $0$. Hence, its coefficient before $Y^{\lambda n}$ equals
$0$ as well. But its coefficient before $Y^{\lambda n}$ is $\sum
\limits_{\substack{\left(  k,i\right)  \in\left\{  0,1,\ldots,n\right\}
\times\mathbb{N};\\i+\lambda k=\lambda n}}p_{k,i}u^{k}$. Comparing the
preceding two sentences, we see that $\sum\limits_{\substack{\left(
k,i\right)  \in\left\{  0,1,\ldots,n\right\}  \times\mathbb{N};\\i+\lambda
k=\lambda n}}p_{k,i}u^{k}$ equals $0$. Thus,%
\begin{equation}
0=\sum\limits_{\substack{\left(  k,i\right)  \in\left\{  0,1,\ldots,n\right\}
\times\mathbb{N};\\i+\lambda k=\lambda n}}p_{k,i}u^{k}=\sum\limits_{k\in
\left\{  0,1,\ldots,n\right\}  }\sum_{\substack{i\in\mathbb{N};\\i+\lambda
k=\lambda n}}p_{k,i}u^{k}. \label{T15.pf.6}%
\end{equation}

But for each given $k\in\left\{  0,1,\ldots,n\right\}  $, we have
$n-k\in\mathbb{N}$ and thus $\lambda\left(  n-k\right)  \in\mathbb{N}$ (since
$\lambda\in\mathbb{N}$) and thus
\begin{align*}
\left\{  i\in\mathbb{N}\text{\ }\mid\ \underbrace{i+\lambda k=\lambda
n}_{\Longleftrightarrow\ \left(  i=\lambda n-\lambda k\right)  }\right\}   &
=\left\{  i\in\mathbb{N}\ \mid\ i=\underbrace{\lambda n-\lambda k}%
_{=\lambda\left(  n-k\right)  }\right\} \\
&  =\left\{  i\in\mathbb{N}\ \mid\ i=\lambda\left(  n-k\right)  \right\}
=\left\{  \lambda\left(  n-k\right)  \right\}
\end{align*}
(since $\lambda\left(  n-k\right)  \in\mathbb{N}$) and therefore%
\[
\sum_{\substack{i\in\mathbb{N};\\i+\lambda k=\lambda n}}p_{k,i}u^{k}%
=\sum_{i\in\left\{  \lambda\left(  n-k\right)  \right\}  }p_{k,i}%
u^{k}=p_{k,\lambda\left(  n-k\right)  }u^{k}.
\]
Hence, (\ref{T15.pf.6}) becomes%
\begin{equation}
0=\sum\limits_{k\in\left\{  0,1,\ldots,n\right\}  }\underbrace{\sum
_{\substack{i\in\mathbb{N};\\i+\lambda k=\lambda n}}p_{k,i}u^{k}%
}_{=p_{k,\lambda\left(  n-k\right)  }u^{k}}=\sum\limits_{k\in\left\{
0,1,\ldots,n\right\}  }p_{k,\lambda\left(  n-k\right)  }u^{k}.
\label{pf.Theorem15.7}%
\end{equation}

Recall that $p_{k}=\sum\limits_{i\in\mathbb{N}}p_{k,i}Y^{i}$ for every
$k\in\left\{  0,1,\ldots,n\right\}  $. Applying this to $k=n$, we find
$p_{n}=\sum\limits_{i\in\mathbb{N}}p_{n,i}Y^{i}$. Comparing this with
$p_{n}=1=1\cdot Y^{0}$, we find
\[
\sum\limits_{i\in\mathbb{N}}p_{n,i}Y^{i}=1\cdot Y^{0}%
\ \ \ \ \ \ \ \ \ \ \text{in }A\left[  Y\right]  .
\]
Hence, the coefficient of the polynomial $\sum\limits_{i\in\mathbb{N}}%
p_{n,i}Y^{i}\in A\left[  Y\right]  $ before $Y^{0}$ is $1$. But the
coefficient of the polynomial $\sum\limits_{i\in\mathbb{N}}p_{n,i}Y^{i}\in
A\left[  Y\right]  $ before $Y^{0}$ is $p_{n,0}$ (since $p_{n,i}\in A$ for all
$i\in\mathbb{N}$). Comparing the preceding two sentences, we see that
$p_{n,0}=1$.

Define an $\left(  n+1\right)  $-tuple $\left(  a_{0},a_{1},\ldots
,a_{n}\right)  \in A^{n+1}$ by setting
\[
\left(  a_{k}=p_{k,\lambda\left(  n-k\right)  }\text{ for every }k\in\left\{
0,1,\ldots,n\right\}  \right)  .
\]
Then, $a_{n}=p_{n,\lambda\left(  n-n\right)  }=p_{n,\lambda\cdot0}=p_{n,0}=1$.
Besides,%
\[
\sum\limits_{k=0}^{n}\underbrace{a_{k}}_{\substack{=p_{k,\lambda\left(
n-k\right)  }\\\text{(by the definition}\\\text{of }a_{k}\text{)}}}u^{k}%
=\sum\limits_{k=0}^{n}p_{k,\lambda\left(  n-k\right)  }u^{k}=\sum
\limits_{k\in\left\{  0,1,\ldots,n\right\}  }p_{k,\lambda\left(  n-k\right)
}u^{k}=0\ \ \ \ \ \ \ \ \ \ \left(  \text{by (\ref{pf.Theorem15.7})}\right)
.
\]
Finally, for every $k\in\left\{  0,1,\ldots,n\right\}  $, we have
$n-k\in\mathbb{N}$ and therefore $\lambda\left(  n-k\right)  \in\mathbb{N}$
(since $\lambda\in\mathbb{N}$) and thus $a_{k}=p_{k,\lambda\left(  n-k\right)
}\in I_{\lambda\left(  n-k\right)  }J_{n-k}$ (since $p_{k,i}\in I_{i}J_{n-k}$
for every $i\in\mathbb{N}$). Renaming the variable $k$ as $i$ in this
statement, we obtain the following: For every $i\in\left\{  0,1,\ldots
,n\right\}  $, we have $a_{i}\in I_{\lambda\left(  n-i\right)  }J_{n-i}$.

Altogether, we now know that the $\left(  n+1\right)  $-tuple $\left(
a_{0},a_{1},\ldots,a_{n}\right)  \in A^{n+1}$ satisfies%
\[
\sum\limits_{k=0}^{n}a_{k}u^{k}=0,\ \ \ \ \ \ \ \ \ \ a_{n}%
=1,\ \ \ \ \ \ \ \ \ \ \text{and}\ \ \ \ \ \ \ \ \ \ a_{i}\in I_{\lambda
\left(  n-i\right)  }J_{n-i}\text{ for every }i\in\left\{  0,1,\ldots
,n\right\}  .
\]
Thus, by Definition~\ref{Definition9} (applied to $\left(  I_{\lambda\rho
}J_{\rho}\right)  _{\rho\in\mathbb{N}}$ instead of $\left(  I_{\rho}\right)
_{\rho\in\mathbb{N}}$), the element $u$ is $n$-integral over $\left(
A,\left(  I_{\lambda\rho}J_{\rho}\right)  _{\rho\in\mathbb{N}}\right)  $. This
proves Lemma $\mathcal{F}^{\prime\prime}$.]

Combining Lemma $\mathcal{E}^{\prime\prime}$ and Lemma $\mathcal{F}%
^{\prime\prime}$, we obtain that $u$ is $n$-integral over $\left(  A,\left(
I_{\lambda\rho}J_{\rho}\right)  _{\rho\in\mathbb{N}}\right)  $ if and only if
$uY^{\lambda}$ is $n$-integral over $\left(  A_{\left[  I\right]  },\left(
J_{\tau}A_{\left[  I\right]  }\right)  _{\tau\in\mathbb{N}}\right)  $. This
proves Theorem~\ref{Theorem15} \textbf{(b)}.
\end{proof}
\end{verlong}

A particular case of Theorem~\ref{Theorem15} \textbf{(b)} is the following fact:

\begin{theorem}
\label{Theorem16} Let $A$ be a ring. Let $B$ be an $A$-algebra. Let $\left(
I_{\rho}\right)  _{\rho\in\mathbb{N}}$ be an ideal semifiltration of $A$. Let
$n\in\mathbb{N}$. Let $u\in B$. Let $\lambda\in\mathbb{N}$.

We know that $\left(  I_{\lambda\rho}\right)  _{\rho\in\mathbb{N}}$ is an
ideal semifiltration of $A$ (according to Theorem~\ref{Theorem14}).

Consider the polynomial ring $A\left[  Y\right]  $ and its $A$-subalgebra
$A\left[  \left(  I_{\rho}\right)  _{\rho\in\mathbb{N}}\ast Y\right]  $
defined in Definition~\ref{Definition8}.

Then, the element $u$ of $B$ is $n$-integral over $\left(  A,\left(
I_{\lambda\rho}\right)  _{\rho\in\mathbb{N}}\right)  $ if and only if the
element $uY^{\lambda}$ of the polynomial ring $B\left[  Y\right]  $ is
$n$-integral over the ring $A\left[  \left(  I_{\rho}\right)  _{\rho
\in\mathbb{N}}\ast Y\right]  $. (Here, we are using the fact that $B\left[
Y\right]  $ is an $A\left[  \left(  I_{\rho}\right)  _{\rho\in\mathbb{N}}\ast
Y\right]  $-algebra, because $A\left[  \left(  I_{\rho}\right)  _{\rho
\in\mathbb{N}}\ast Y\right]  $ is a subring of $A\left[  Y\right]  $ and
because $B\left[  Y\right]  $ is an $A\left[  Y\right]  $-algebra as explained
in Definition~\ref{Definition7}.)
\end{theorem}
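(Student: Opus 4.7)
The plan is to deduce Theorem~\ref{Theorem16} directly from Theorem~\ref{Theorem15}~\textbf{(b)} by specializing the second ideal semifiltration to be the trivial one. Concretely, I would invoke Theorem~\ref{Theorem10}~\textbf{(a)} to see that $\left(A\right)_{\rho\in\mathbb{N}}$ is an ideal semifiltration of $A$, and then apply Theorem~\ref{Theorem15}~\textbf{(b)} with $\left(J_{\rho}\right)_{\rho\in\mathbb{N}}=\left(A\right)_{\rho\in\mathbb{N}}$.

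Once this substitution is made, the two ideal semifiltrations in the statement of Theorem~\ref{Theorem15}~\textbf{(b)} simplify in an essentially mechanical way. On the \textquotedblleft base ring\textquotedblright\ side, the semifiltration $\left(I_{\lambda\rho}J_{\rho}\right)_{\rho\in\mathbb{N}}$ becomes $\left(I_{\lambda\rho}\cdot A\right)_{\rho\in\mathbb{N}}=\left(I_{\lambda\rho}\right)_{\rho\in\mathbb{N}}$, because each $I_{\lambda\rho}$ is an ideal of $A$ and therefore absorbs multiplication by $A$. On the \textquotedblleft Rees algebra\textquotedblright\ side, the semifiltration $\left(J_{\tau}A_{\left[I\right]}\right)_{\tau\in\mathbb{N}}$ collapses to $\left(A\cdot A_{\left[I\right]}\right)_{\tau\in\mathbb{N}}=\left(A_{\left[I\right]}\right)_{\tau\in\mathbb{N}}$. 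So Theorem~\ref{Theorem15}~\textbf{(b)} gives that $u$ is $n$-integral over $\left(A,\left(I_{\lambda\rho}\right)_{\rho\in\mathbb{N}}\right)$ if and only if $uY^{\lambda}$ is $n$-integral over $\left(A_{\left[I\right]},\left(A_{\left[I\right]}\right)_{\tau\in\mathbb{N}}\right)$.

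Finally, I would apply Theorem~\ref{Theorem12} (with $A_{\left[I\right]}$ and $B\left[Y\right]$ playing the roles of $A$ and $B$, and $uY^{\lambda}$ playing the role of $u$) to rewrite the latter condition as $n$-integrality of $uY^{\lambda}$ over the ring $A_{\left[I\right]}=A\left[\left(I_{\rho}\right)_{\rho\in\mathbb{N}}\ast Y\right]$. Chaining the two equivalences yields exactly the statement of Theorem~\ref{Theorem16}.

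I do not expect any real obstacle: the content of the proof lies entirely in Theorem~\ref{Theorem15}~\textbf{(b)} and Theorem~\ref{Theorem12}, and the only things to check are the two trivial ideal-product simplifications $I_{\lambda\rho}A=I_{\lambda\rho}$ and $A\cdot A_{\left[I\right]}=A_{\left[I\right]}$. The mildest care needed is perhaps just confirming that $B\left[Y\right]$ is indeed an $A_{\left[I\right]}$-algebra in the sense required by both invoked theorems, but this is already spelled out in the hypothesis statement of Theorem~\ref{Theorem16}.
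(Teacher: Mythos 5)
Your proposal is correct and follows essentially the same route as the paper: specialize Theorem~\ref{Theorem15}~\textbf{(b)} to $\left(J_{\rho}\right)_{\rho\in\mathbb{N}}=\left(A\right)_{\rho\in\mathbb{N}}$ (using Theorem~\ref{Theorem10}~\textbf{(a)}), simplify $I_{\lambda\rho}A=I_{\lambda\rho}$ and $A\cdot A_{\left[I\right]}=A_{\left[I\right]}$, and then invoke Theorem~\ref{Theorem12} to convert $n$-integrality over $\left(A_{\left[I\right]},\left(A_{\left[I\right]}\right)_{\rho\in\mathbb{N}}\right)$ into plain $n$-integrality over $A_{\left[I\right]}$. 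This is exactly the paper's chain of equivalences.
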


\begin{vershort}
\begin{proof}
[Proof of Theorem~\ref{Theorem16}.]Theorem~\ref{Theorem10} \textbf{(a)} states
that $\left(  A\right)  _{\rho\in\mathbb{N}}$ is an ideal semifiltration of
$A$.

Every $\rho\in\mathbb{N}$ satisfies $I_{\lambda\rho}=I_{\lambda\rho}A$ (since
$I_{\lambda\rho}$ is an ideal of $A$). Thus, $\left(  I_{\lambda\rho}\right)
_{\rho\in\mathbb{N}}=\left(  I_{\lambda\rho}A\right)  _{\rho\in\mathbb{N}}$.

We will abbreviate the $A$-subalgebra $A\left[  \left(  I_{\rho}\right)
_{\rho\in\mathbb{N}}\ast Y\right]  $ of $A\left[  Y\right]  $ by $A_{\left[
I\right]  }$. Thus, $B\left[  Y\right]  $ is an $A_{\left[  I\right]  }%
$-algebra (since $B\left[  Y\right]  $ is an $A\left[  \left(  I_{\rho
}\right)  _{\rho\in\mathbb{N}}\ast Y\right]  $-algebra).

It is easy to see that $AA_{\left[  I\right]  }=A_{\left[  I\right]  }$ (since
$A_{\left[  I\right]  }$ is an $A$-algebra). Hence, $\left(  AA_{\left[
I\right]  }\right)  _{\tau\in\mathbb{N}}=\left(  A_{\left[  I\right]
}\right)  _{\tau\in\mathbb{N}}=\left(  A_{\left[  I\right]  }\right)
_{\rho\in\mathbb{N}}$.

Now, we have the following chain of equivalences:%
\begin{align*}
&  \ \left(  u\text{ is }n\text{-integral over }\left(  A,\left(
I_{\lambda\rho}\right)  _{\rho\in\mathbb{N}}\right)  \right) \\
&  \Longleftrightarrow\ \left(  u\text{ is }n\text{-integral over }\left(
A,\left(  I_{\lambda\rho}A\right)  _{\rho\in\mathbb{N}}\right)  \right) \\
&  \ \ \ \ \ \ \ \ \ \ \left(  \text{since }\left(  I_{\lambda\rho}\right)
_{\rho\in\mathbb{N}}=\left(  I_{\lambda\rho}A\right)  _{\rho\in\mathbb{N}%
}\right) \\
&  \Longleftrightarrow\ \left(  uY^{\lambda}\text{ is }n\text{-integral over
}\left(  A_{\left[  I\right]  },\left(  AA_{\left[  I\right]  }\right)
_{\tau\in\mathbb{N}}\right)  \right) \\
&  \ \ \ \ \ \ \ \ \ \ \left(  \text{by Theorem~\ref{Theorem15} \textbf{(b)},
applied to }\left(  J_{\rho}\right)  _{\rho\in\mathbb{N}}=\left(  A\right)
_{\rho\in\mathbb{N}}\right) \\
&  \Longleftrightarrow\ \left(  uY^{\lambda}\text{ is }n\text{-integral over
}\left(  A_{\left[  I\right]  },\left(  A_{\left[  I\right]  }\right)
_{\rho\in\mathbb{N}}\right)  \right) \\
&  \ \ \ \ \ \ \ \ \ \ \left(  \text{since }\left(  AA_{\left[  I\right]
}\right)  _{\tau\in\mathbb{N}}=\left(  A_{\left[  I\right]  }\right)
_{\rho\in\mathbb{N}}\right) \\
&  \Longleftrightarrow\ \left(  uY^{\lambda}\text{ is }n\text{-integral over
}A_{\left[  I\right]  }\right) \\
&  \ \ \ \ \ \ \ \ \ \ \left(  \text{by Theorem~\ref{Theorem12}, applied to
}A_{\left[  I\right]  }\text{, }B\left[  Y\right]  \text{ and }uY^{\lambda
}\text{ instead of }A\text{, }B\text{ and }u\right) \\
&  \Longleftrightarrow\ \left(  uY^{\lambda}\text{ is }n\text{-integral over
}A\left[  \left(  I_{\rho}\right)  _{\rho\in\mathbb{N}}\ast Y\right]  \right)
\\
&  \ \ \ \ \ \ \ \ \ \ \left(  \text{since }A_{\left[  I\right]  }=A\left[
\left(  I_{\rho}\right)  _{\rho\in\mathbb{N}}\ast Y\right]  \right)  .
\end{align*}
This proves Theorem~\ref{Theorem16}.
\end{proof}
\end{vershort}

\begin{verlong}
\begin{proof}
[Proof of Theorem~\ref{Theorem16}.]Theorem~\ref{Theorem10} \textbf{(a)} states
that $\left(  A\right)  _{\rho\in\mathbb{N}}$ is an ideal semifiltration of
$A$.

We have $\left(  I_{\lambda\rho}\right)  _{\rho\in\mathbb{N}}=\left(
I_{\lambda\rho}A\right)  _{\rho\in\mathbb{N}}$%
\ \ \ \ \footnote{\textit{Proof.} We know that $\left(  I_{\lambda\rho
}\right)  _{\rho\in\mathbb{N}}$ is an ideal semifiltration of $A$, thus a
sequence of ideals of $A$. In other words, for each $\rho\in\mathbb{N}$, the
set $I_{\lambda\rho}$ is an ideal of $A$.
\par
Now, let $\rho\in\mathbb{N}$. Then, the set $I_{\lambda\rho}$ is an ideal of
$A$ (as we have just seen). Hence, $I_{\lambda\rho}A\subseteq I_{\lambda\rho}%
$. Combining this with $I_{\lambda\rho}=I_{\lambda\rho}\underbrace{1_{A}}_{\in
A}\subseteq I_{\lambda\rho}A$, we obtain $I_{\lambda\rho}=I_{\lambda\rho}A$.
\par
Forget that we fixed $\rho$. We thus have shown that $I_{\lambda\rho
}=I_{\lambda\rho}A$ for each $\rho\in\mathbb{N}$. In other words, $\left(
I_{\lambda\rho}\right)  _{\rho\in\mathbb{N}}=\left(  I_{\lambda\rho}A\right)
_{\rho\in\mathbb{N}}$.}.

We will abbreviate the $A$-subalgebra $A\left[  \left(  I_{\rho}\right)
_{\rho\in\mathbb{N}}\ast Y\right]  $ of $A\left[  Y\right]  $ by $A_{\left[
I\right]  }$. Thus, $B\left[  Y\right]  $ is an $A_{\left[  I\right]  }%
$-algebra (since $B\left[  Y\right]  $ is an $A\left[  \left(  I_{\rho
}\right)  _{\rho\in\mathbb{N}}\ast Y\right]  $-algebra).

It is easy to see that $AA_{\left[  I\right]  }=A_{\left[  I\right]  }%
$\ \ \ \ \footnote{\textit{Proof.} We have $AA_{\left[  I\right]  }\subseteq
A_{\left[  I\right]  }$ (since $A_{\left[  I\right]  }$ is an $A$-algebra).
Combining this with $A_{\left[  I\right]  }=\underbrace{1_{A}}_{\in A}\cdot
A_{\left[  I\right]  }\subseteq AA_{\left[  I\right]  }$, we obtain
$AA_{\left[  I\right]  }=A_{\left[  I\right]  }$, qed.}. Hence, $\left(
\underbrace{AA_{\left[  I\right]  }}_{=A_{\left[  I\right]  }}\right)
_{\tau\in\mathbb{N}}=\left(  A_{\left[  I\right]  }\right)  _{\tau
\in\mathbb{N}}=\left(  A_{\left[  I\right]  }\right)  _{\rho\in\mathbb{N}}$.

We have the following five equivalences:

\begin{itemize}
\item The element $u$ of $B$ is $n$-integral over $\left(  A,\left(
I_{\lambda\rho}\right)  _{\rho\in\mathbb{N}}\right)  $ if and only if the
element $u$ of $B$ is $n$-integral over $\left(  A,\left(  I_{\lambda\rho
}A\right)  _{\rho\in\mathbb{N}}\right)  $ (since $\left(  I_{\lambda\rho
}\right)  _{\rho\in\mathbb{N}}=\left(  I_{\lambda\rho}A\right)  _{\rho
\in\mathbb{N}}$).

\item The element $u$ of $B$ is $n$-integral over $\left(  A,\left(
I_{\lambda\rho}A\right)  _{\rho\in\mathbb{N}}\right)  $ if and only if the
element $uY^{\lambda}$ of the polynomial ring $B\left[  Y\right]  $ is
$n$-integral over $\left(  A_{\left[  I\right]  },\left(  AA_{\left[
I\right]  }\right)  _{\tau\in\mathbb{N}}\right)  $ (according to
Theorem~\ref{Theorem15} \textbf{(b)}, applied to $\left(  A\right)  _{\rho
\in\mathbb{N}}$ instead of $\left(  J_{\rho}\right)  _{\rho\in\mathbb{N}}$).

\item The element $uY^{\lambda}$ of the polynomial ring $B\left[  Y\right]  $
is $n$-integral over $\left(  A_{\left[  I\right]  },\left(  AA_{\left[
I\right]  }\right)  _{\tau\in\mathbb{N}}\right)  $ if and only if the element
$uY^{\lambda}$ of the polynomial ring $B\left[  Y\right]  $ is $n$-integral
over $\left(  A_{\left[  I\right]  },\left(  A_{\left[  I\right]  }\right)
_{\rho\in\mathbb{N}}\right)  $ (since $\left(  AA_{\left[  I\right]  }\right)
_{\tau\in\mathbb{N}}=\left(  A_{\left[  I\right]  }\right)  _{\rho
\in\mathbb{N}}$).

\item The element $uY^{\lambda}$ of the polynomial ring $B\left[  Y\right]  $
is $n$-integral over $\left(  A_{\left[  I\right]  },\left(  A_{\left[
I\right]  }\right)  _{\rho\in\mathbb{N}}\right)  $ if and only if the element
$uY^{\lambda}$ of the polynomial ring $B\left[  Y\right]  $ is $n$-integral
over $A_{\left[  I\right]  }$ (by Theorem~\ref{Theorem12}, applied to
$A_{\left[  I\right]  }$, $B\left[  Y\right]  $ and $uY^{\lambda}$ instead of
$A$, $B$ and $u$).

\item The element $uY^{\lambda}$ of the polynomial ring $B\left[  Y\right]  $
is $n$-integral over $A_{\left[  I\right]  }$ if and only if the element
$uY^{\lambda}$ of the polynomial ring $B\left[  Y\right]  $ is $n$-integral
over $A\left[  \left(  I_{\rho}\right)  _{\rho\in\mathbb{N}}\ast Y\right]  $
(since $A_{\left[  I\right]  }=A\left[  \left(  I_{\rho}\right)  _{\rho
\in\mathbb{N}}\ast Y\right]  $).
\end{itemize}

Combining these five equivalences, we obtain that the element $u$ of $B$ is
$n$-integral over $\left(  A,\left(  I_{\lambda\rho}\right)  _{\rho
\in\mathbb{N}}\right)  $ if and only if the element $uY^{\lambda}$ of the
polynomial ring $B\left[  Y\right]  $ is $n$-integral over $A\left[  \left(
I_{\rho}\right)  _{\rho\in\mathbb{N}}\ast Y\right]  $. This proves
Theorem~\ref{Theorem16}.
\end{proof}
\end{verlong}

Note that Theorem~\ref{Theorem7} is the particular case of
Theorem~\ref{Theorem16} for $\lambda=1$.

Finally we can generalize even Theorem~\ref{Theorem2}:

\begin{theorem}
\label{Theorem17} Let $A$ be a ring. Let $B$ be an $A$-algebra. Let $\left(
I_{\rho}\right)  _{\rho\in\mathbb{N}}$ be an ideal semifiltration of $A$. Let
$n\in\mathbb{N}^{+}$. Let $v\in B$. Let $a_{0},a_{1},\ldots,a_{n}$ be $n+1$
elements of $A$ such that $\sum\limits_{i=0}^{n}a_{i}v^{i}=0$. Assume further
that $a_{i}\in I_{n-i}$ for every $i\in\left\{  0,1,\ldots,n\right\}  $.

Let $k\in\left\{  0,1,\ldots,n\right\}  $. We know that $\left(  I_{\left(
n-k\right)  \rho}\right)  _{\rho\in\mathbb{N}}$ is an ideal semifiltration of
$A$ (according to Theorem~\ref{Theorem14}, applied to $\lambda=n-k$).

Then, $\sum\limits_{i=0}^{n-k}a_{i+k}v^{i}$ is $n$-integral over $\left(
A,\left(  I_{\left(  n-k\right)  \rho}\right)  _{\rho\in\mathbb{N}}\right)  $.
\end{theorem}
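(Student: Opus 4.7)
The plan is to reduce Theorem~\ref{Theorem17} to Theorem~\ref{Theorem2} by passing to the Rees algebra, exactly mirroring how the earlier theorems of Section~\ref{sect.2} and Section~\ref{sect.4} leveraged Theorem~\ref{Theorem7} and Theorem~\ref{Theorem16} to lift ring-level results to ideal-semifiltration-level ones. The point is that the accelerated target semifiltration $\left(I_{(n-k)\rho}\right)_{\rho\in\mathbb{N}}$ is exactly what the $\lambda$-acceleration version Theorem~\ref{Theorem16} (with $\lambda=n-k$) is designed to handle.

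\medskip

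\noindent\textbf{Step 1 (Setup).} Let $u = \sum_{i=0}^{n-k} a_{i+k} v^i \in B$. Consider the polynomial ring $A\left[Y\right]$ and its Rees subalgebra $A_{[I]} := A\left[\left(I_{\rho}\right)_{\rho\in\mathbb{N}} \ast Y\right]$, so that $B\left[Y\right]$ is an $A_{[I]}$-algebra. For each $i \in \{0,1,\ldots,n\}$, the hypothesis $a_i \in I_{n-i}$ gives $a_i Y^{n-i} \in I_{n-i} Y^{n-i} \subseteq A_{[I]}$, so we may define $c_i := a_i Y^{n-i} \in A_{[I]}$.

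\medskip

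\noindent\textbf{Step 2 (Apply Theorem~\ref{Theorem2} in the Rees algebra).} Using $\sum_{i=0}^{n} a_i v^i = 0$ and $(vY)^i = v^i Y^i$, I compute
\[
\sum_{i=0}^{n} c_i (vY)^i = \sum_{i=0}^{n} a_i Y^{n-i} v^i Y^i = Y^n \sum_{i=0}^{n} a_i v^i = 0.
\]
Hence Theorem~\ref{Theorem2}, applied to $A_{[I]}$, $B\left[Y\right]$, $vY$ and the coefficients $c_0, c_1, \ldots, c_n$ (and the same $k$), yields that $\sum_{i=0}^{n-k} c_{i+k} (vY)^i$ is $n$-integral over $A_{[I]}$. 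A routine calculation gives
\[
\sum_{i=0}^{n-k} c_{i+k} (vY)^i = \sum_{i=0}^{n-k} a_{i+k} Y^{n-i-k} v^i Y^i = Y^{n-k} \sum_{i=0}^{n-k} a_{i+k} v^i = u Y^{n-k}.
\]
Thus $u Y^{n-k}$ is $n$-integral over $A_{[I]} = A\left[\left(I_{\rho}\right)_{\rho\in\mathbb{N}} \ast Y\right]$.

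\medskip

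\noindent\textbf{Step 3 (Descend via Theorem~\ref{Theorem16}).} Apply Theorem~\ref{Theorem16} with $\lambda = n-k$ and this element $u$. That theorem states precisely that $u$ is $n$-integral over $\left(A, \left(I_{(n-k)\rho}\right)_{\rho\in\mathbb{N}}\right)$ if and only if $u Y^{n-k}$ is $n$-integral over $A\left[\left(I_{\rho}\right)_{\rho\in\mathbb{N}} \ast Y\right]$. Since we established the latter in Step 2, the former follows, which is the claim of Theorem~\ref{Theorem17}.

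\medskip

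\noindent\textbf{Where difficulty might lurk.} There is no conceptual obstacle here once the pattern of ``lift to $B[Y]$, prove integrality over $A_{[I]}$, descend'' is in place; the only real care required is index bookkeeping, namely checking that the coefficients $c_i = a_i Y^{n-i}$ do land in $A_{[I]}$ (which uses exactly the hypothesis $a_i \in I_{n-i}$) and that multiplying the identity $\sum a_i v^i = 0$ by $Y^n$ produces the identity $\sum c_i (vY)^i = 0$ needed to feed into Theorem~\ref{Theorem2}. The choice of $\lambda = n-k$ in the final step is forced by the power of $Y$ that naturally appears after applying Theorem~\ref{Theorem2}, which is why acceleration by $n-k$ rather than some other integer is the correct statement.
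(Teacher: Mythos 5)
Your proof is correct and is essentially identical to the paper's own argument: both lift via the Rees algebra $A_{[I]}$, apply Theorem~\ref{Theorem2} to $vY$ with coefficients $a_iY^{n-i}$ to get that $uY^{n-k}$ is $n$-integral over $A_{[I]}$, and then descend via Theorem~\ref{Theorem16} with $\lambda=n-k$. The only difference is your cosmetic introduction of the shorthand $c_i := a_iY^{n-i}$.
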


\begin{proof}
[Proof of Theorem~\ref{Theorem17}.]Consider the polynomial ring $A\left[
Y\right]  $ and its $A$-subalgebra $A\left[  \left(  I_{\rho}\right)
_{\rho\in\mathbb{N}}\ast Y\right]  $ defined in Definition~\ref{Definition8}.
Note that $A\left[  \left(  I_{\rho}\right)  _{\rho\in\mathbb{N}}\ast
Y\right]  $ is a subring of $A\left[  Y\right]  $; hence, $B\left[  Y\right]
$ is an $A\left[  \left(  I_{\rho}\right)  _{\rho\in\mathbb{N}}\ast Y\right]
$-algebra (because $B\left[  Y\right]  $ is an $A\left[  Y\right]  $-algebra
as explained in Definition~\ref{Definition7}).

Definition~\ref{Definition8} yields%
\[
A\left[  \left(  I_{\rho}\right)  _{\rho\in\mathbb{N}}\ast Y\right]
=\sum\limits_{i\in\mathbb{N}}I_{i}Y^{i}=\sum\limits_{\ell\in\mathbb{N}}%
I_{\ell}Y^{\ell}\ \ \ \ \ \ \ \ \ \ \left(  \text{here we renamed }i\text{ as
}\ell\text{ in the sum}\right)  .
\]
Hence, $\sum\limits_{\ell\in\mathbb{N}}I_{\ell}Y^{\ell}=A\left[  \left(
I_{\rho}\right)  _{\rho\in\mathbb{N}}\ast Y\right]  $.

Define $u\in B$ by%
\begin{equation}
u=\sum\limits_{i=0}^{n-k}a_{i+k}v^{i}. \label{pf.Theorem17.u=}%
\end{equation}

In the ring $B\left[  Y\right]  $, we have%
\[
\sum_{i=0}^{n}a_{i}Y^{n-i}\underbrace{\left(  vY\right)  ^{i}}_{=v^{i}%
Y^{i}=Y^{i}v^{i}}=\sum_{i=0}^{n}a_{i}\underbrace{Y^{n-i}Y^{i}}_{=Y^{n}}%
v^{i}=Y^{n}\underbrace{\sum_{i=0}^{n}a_{i}v^{i}}_{=0}=0.
\]
Besides, every $i\in\left\{  0,1,\ldots,n\right\}  $ satisfies
\[
\underbrace{a_{i}}_{\substack{\in I_{n-i}\\\text{(by assumption)}}}Y^{n-i}\in
I_{n-i}Y^{n-i}\subseteq\sum\limits_{\ell\in\mathbb{N}}I_{\ell}Y^{\ell
}=A\left[  \left(  I_{\rho}\right)  _{\rho\in\mathbb{N}}\ast Y\right]  .
\]
In other words, $a_{0}Y^{n-0},a_{1}Y^{n-1},\ldots,a_{n}Y^{n-n}$ are $n+1$
elements of $A\left[  \left(  I_{\rho}\right)  _{\rho\in\mathbb{N}}\ast
Y\right]  $. Hence, Theorem~\ref{Theorem2} (applied to $A\left[  \left(
I_{\rho}\right)  _{\rho\in\mathbb{N}}\ast Y\right]  $, $B\left[  Y\right]  $,
$vY$ and $a_{i}Y^{n-i}$ instead of $A$, $B$, $v$ and $a_{i}$) yields that
$\sum\limits_{i=0}^{n-k}a_{i+k}Y^{n-\left(  i+k\right)  }\left(  vY\right)
^{i}$ is $n$-integral over $A\left[  \left(  I_{\rho}\right)  _{\rho
\in\mathbb{N}}\ast Y\right]  $. Since%
\[
\sum\limits_{i=0}^{n-k}a_{i+k}Y^{n-\left(  i+k\right)  }\underbrace{\left(
vY\right)  ^{i}}_{=v^{i}Y^{i}=Y^{i}v^{i}}=\sum\limits_{i=0}^{n-k}%
a_{i+k}\underbrace{Y^{n-\left(  i+k\right)  }Y^{i}}_{=Y^{\left(  n-\left(
i+k\right)  \right)  +i}=Y^{n-k}}v^{i}=\underbrace{\sum\limits_{i=0}%
^{n-k}a_{i+k}v^{i}}_{\substack{=u\\\text{(by (\ref{pf.Theorem17.u=}))}}}\cdot
Y^{n-k}=uY^{n-k},
\]
this means that $uY^{n-k}$ is $n$-integral over $A\left[  \left(  I_{\rho
}\right)  _{\rho\in\mathbb{N}}\ast Y\right]  $.

But Theorem~\ref{Theorem16} (applied to $\lambda=n-k$) yields that $u$ is
$n$-integral over \newline$\left(  A,\left(  I_{\left(  n-k\right)  \rho
}\right)  _{\rho\in\mathbb{N}}\right)  $ if and only if $uY^{n-k}$ is
$n$-integral over the ring $A\left[  \left(  I_{\rho}\right)  _{\rho
\in\mathbb{N}}\ast Y\right]  $. Since we know that $uY^{n-k}$ is $n$-integral
over the ring $A\left[  \left(  I_{\rho}\right)  _{\rho\in\mathbb{N}}\ast
Y\right]  $, this yields that $u$ is $n$-integral over $\left(  A,\left(
I_{\left(  n-k\right)  \rho}\right)  _{\rho\in\mathbb{N}}\right)  $. In other
words, $\sum\limits_{i=0}^{n-k}a_{i+k}v^{i}$ is $n$-integral over $\left(
A,\left(  I_{\left(  n-k\right)  \rho}\right)  _{\rho\in\mathbb{N}}\right)  $
(since $u=\sum\limits_{i=0}^{n-k}a_{i+k}v^{i}$). This proves
Theorem~\ref{Theorem17}.
\end{proof}

\section{\label{sect.5}On a lemma by Lombardi}

\subsection{A lemma on products of powers}

Now, we shall show a rather technical lemma:

\begin{lemma}
\label{Lemma18} Let $A$ be a ring. Let $B$ be an $A$-algebra. Let $x\in B$.
Let $m\in\mathbb{N}$ and $n\in\mathbb{N}$. Let $u\in B$. Let $\mu\in
\mathbb{N}$ and $\nu\in\mathbb{N}$ be such that $\mu+\nu\in\mathbb{N}^{+}$.
Assume that%
\begin{equation}
u^{n}\in\left\langle u^{0},u^{1},\ldots,u^{n-1}\right\rangle _{A}%
\cdot\left\langle x^{0},x^{1},\ldots,x^{\nu}\right\rangle _{A} \label{L18-1}%
\end{equation}
and that%
\begin{align}
u^{m}x^{\mu}  &  \in\left\langle u^{0},u^{1},\ldots,u^{m-1}\right\rangle
_{A}\cdot\left\langle x^{0},x^{1},\ldots,x^{\mu}\right\rangle _{A}\nonumber\\
&  \ \ \ \ \ \ \ \ \ \ +\left\langle u^{0},u^{1},\ldots,u^{m}\right\rangle
_{A}\cdot\left\langle x^{0},x^{1},\ldots,x^{\mu-1}\right\rangle _{A}.
\label{L18-2}%
\end{align}
Then, $u$ is $\left(  n\mu+m\nu\right)  $-integral over $A$.
\end{lemma}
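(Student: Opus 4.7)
My plan is to invoke Assertion $\mathcal{C}$ of Theorem~\ref{Theorem1}, which reduces the problem to exhibiting an $(n\mu + m\nu)$-generated $A$-submodule $U$ of $B$ with $1 \in U$ and $uU \subseteq U$. The natural candidate is
\[
U = \bigl\langle u^i x^j : (i,j) \in S \bigr\rangle_A, \qquad S = \bigl\{(i,j) : 0 \le i < n,\ 0 \le j < \mu\bigr\} \cup \bigl\{(i,j) : 0 \le i < m,\ \mu \le j < \mu+\nu\bigr\},
\]
whose cardinality is exactly $n\mu + m\nu$. After separately disposing of the degenerate cases where one of $m, n, \mu, \nu$ vanishes, the membership $(0,0) \in S$ guarantees $1 \in U$.

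The heart of the proof is verifying the stability $uU \subseteq U$, i.e., $u^{i+1} x^j \in U$ for every $(i,j) \in S$. For most pairs $(i,j)$ the shifted pair $(i+1,j)$ remains in $S$ and there is nothing to prove. The two remaining families are the boundary pairs $(n-1, j)$ with $0 \le j < \mu$ (forcing $u^n x^j \in U$) and $(m-1, j)$ with $\mu \le j < \mu+\nu$ (forcing $u^m x^j \in U$). The first is handled by expanding $u^n$ via \eqref{L18-1} and multiplying by $x^j$, thereby writing $u^n x^j$ as an $A$-combination of monomials $u^{i'} x^{j+j'}$ with $i' < n$ and $j + j' \le \mu + \nu - 1$. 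The second is handled by multiplying \eqref{L18-2} by $x^{j-\mu}$, which expresses $u^m x^j$ as an $A$-combination of monomials whose exponents live in a small region strictly below $(m, j)$.

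The main obstacle is that some of the intermediate monomials produced above still lie outside $S$, specifically those $u^{i'} x^{j''}$ with $m \le i' < n$ and $\mu \le j'' \le \mu + \nu - 1$. To resolve this, I would establish, by strong induction on $i' + j''$, the auxiliary closure claim that every monomial $u^{i'} x^{j''}$ with $0 \le i' < n$ and $0 \le j'' \le \mu + \nu - 1$ belongs to $U$. The inductive step uses the factorization $u^{i'} x^{j''} = u^{i'-m} x^{j''-\mu} \cdot u^m x^\mu$ combined with \eqref{L18-2}: each summand produced has either a strictly smaller $u$-exponent or a strictly smaller $x$-exponent, hence $i' + j''$ strictly drops. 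The delicate part is the bookkeeping --- one must verify that no intermediate monomial escapes the rectangle $\{0 \le i' < n,\ 0 \le j'' \le \mu + \nu - 1\}$, which may require an auxiliary round of \eqref{L18-1} in the subcase $m > n$ to pull the $u$-exponent back below $n$. Once this closure claim is established, both nontrivial families collapse into $U$, giving $uU \subseteq U$, and Assertion $\mathcal{C}$ of Theorem~\ref{Theorem1} delivers the desired $(n\mu + m\nu)$-integrality of $u$ over $A$.
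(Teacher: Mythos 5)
Your overall setup — the module $U$, the set $S$, and the reduction to Assertion $\mathcal{C}$ of Theorem~\ref{Theorem1} — matches the paper's proof exactly, and your identification of the two boundary families and of the obstacle (intermediate monomials escaping $S$) is accurate. However, the specific auxiliary claim and induction variable you propose have two genuine defects.

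First, the auxiliary claim as you state it, covering only $0 \le i' < n$, is vacuous precisely in the subcase you flag as delicate: the only $(i',j'')$ with $i' < n$, $j'' < \mu+\nu$ that lie outside $S$ are those with $m \le i' < n$ and $\mu \le j'' < \mu+\nu$, and this region is empty when $m \ge n$. In that case the claim tells you nothing, yet you still need $u^m x^j \in U$ for $\mu \le j < \mu+\nu$ with $m \ge n$, so the restriction cannot be repaired by a single external ``auxiliary round of \eqref{L18-1}'': applying \eqref{L18-1} to a monomial with $j'' \ge \mu$ produces $x$-exponents as large as $j'' + \nu \ge \mu+\nu$, outside your rectangle, while applying \eqref{L18-2} keeps $u^m$ around. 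Second, and independently, strong induction on $i'+j''$ does not terminate when \eqref{L18-1} is invoked: that relation trades a decrease of at least $1$ (and at most $n$) in the $u$-exponent for an increase of up to $\nu$ in the $x$-exponent, so the sum can strictly \emph{increase} whenever $\nu \ge 2$. Your sentence ``each summand produced has either a strictly smaller $u$-exponent or a strictly smaller $x$-exponent'' is correct, but that property justifies a lexicographic drop, not a drop of the sum. The paper's proof sidesteps both issues at once: it proves the \emph{unrestricted} claim that $u^i x^j \in U$ for \emph{all} $i\in\mathbb{N}$ and all $j<\mu+\nu$, and it inducts on the lexicographic order on $\mathbb{N}^2$ (by $i$ first, then $j$). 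In the four-way case split, \eqref{L18-1} strictly lowers $i$, and \eqref{L18-2} either lowers $i$ or keeps $i$ fixed while lowering $j$; lex order handles both. If you adopt the unrestricted claim and lex order (or, equivalently, a weighted sum $Mi+j$ with $M>\nu$), your argument goes through and coincides with the paper's.
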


This lemma can be seen as a variant of \cite[Theorem 2]{3}%
\footnote{\textit{Caveat:} The notion \textquotedblleft integral over $\left(
A,J\right)  $\textquotedblright\ defined in \cite{3} has nothing to do with
\textbf{our} notion \textquotedblleft$n$-integral over $\left(  A,\left(
I_{n}\right)  _{n\in\mathbb{N}}\right)  $\textquotedblright.}. Indeed, the
particular case of \cite[Theorem 2]{3} when $J=0$ can easily be obtained from
Lemma~\ref{Lemma18} (applied to $x$ and $\alpha$ instead of $u$ and $x$).

\begin{noncompile}
I used to believe that \textquotedblleft The general case of \cite[Theorem
2]{3} can, in turn, be obtained from this particular case by passing from the
ring $A$ to its localization $A_{1+J}$.\textquotedblright\ But now I am no
longer sure how this works.
\end{noncompile}

\begin{vershort}
The proof of Lemma \ref{Lemma18} is not difficult but rather elaborate. For a
completely detailed writeup of this proof, see \cite{verlong}. Here let me
give the skeleton of the proof:

\begin{proof}
[Proof of Lemma~\ref{Lemma18} (sketched).]Define the set%
\begin{align*}
S  &  =\left(  \left\{  0,1,\ldots,n-1\right\}  \times\left\{  0,1,\ldots
,\mu-1\right\}  \right) \\
&  \ \ \ \ \ \ \ \ \ \ \cup\left(  \left\{  0,1,\ldots,m-1\right\}
\times\left\{  \mu,\mu+1,\ldots,\mu+\nu-1\right\}  \right)  .
\end{align*}
Clearly, $\left\vert S\right\vert =n\mu+m\nu$ and%
\begin{equation}
j<\mu+\nu\text{ for every }\left(  i,j\right)  \in S. \label{L18-banal}%
\end{equation}

Let $U$ be the $A$-submodule $\left\langle u^{i}x^{j}\ \mid\ \left(
i,j\right)  \in S\right\rangle _{A}$ of $B$. Then, $U$ is an $\left(
n\mu+m\nu\right)  $-generated $A$-module (since $\left\vert S\right\vert
=n\mu+m\nu$). Besides, clearly,
\begin{equation}
u^{i}x^{j}\in U\text{ for every }\left(  i,j\right)  \in S. \label{L18-U}%
\end{equation}

Now, we will show that%
\begin{equation}
\text{every }i\in\mathbb{N}\text{ and }j\in\mathbb{N}\text{ satisfying }%
j<\mu+\nu\text{ satisfy }u^{i}x^{j}\in U. \label{L18-indU}%
\end{equation}

[The \textit{proof of (\ref{L18-indU})} can be done either by double induction
(over $i$ and over $j$) or by the well-ordering principle. The induction proof
has the advantage that it is completely constructive, but it is clumsy (I give
this induction proof in \cite{verlong}). So, for the sake of brevity, the
proof I am going to give here is by the well-ordering principle:

For the sake of contradiction, we assume that (\ref{L18-indU}) is not true.
That is, there exists some pair $\left(  i,j\right)  \in\mathbb{N}^{2}$
satisfying $j<\mu+\nu$ but \textbf{not} satisfying $u^{i}x^{j}\in U$. Let
$\left(  I,J\right)  $ be the lexicographically
smallest\footnote{\textquotedblleft Lexicographically
smallest\textquotedblright\ means \textquotedblleft smallest with respect to
the lexicographic order\textquotedblright. Here, the \textit{lexicographic
order} on $\mathbb{N}^{2}$ is defined to be the total order on $\mathbb{N}%
^{2}$ in which two pairs $\left(  a_{1},b_{1}\right)  \in\mathbb{N}^{2}$ and
$\left(  a_{2},b_{2}\right)  \in\mathbb{N}^{2}$ satisfy $\left(  a_{1}%
,b_{1}\right)  <\left(  a_{2},b_{2}\right)  $ if and only if either
$a_{1}<a_{2}$ or $\left(  a_{1}=a_{2}\text{ and }b_{1}<b_{2}\right)  $. It is
well-known that this total order is well-defined and turns $\mathbb{N}^{2}$
into a well-ordered set.} such pair\footnote{This is well-defined, since the
lexicographic order is a well-ordering on $\mathbb{N}^{2}$.}. Then, $J<\mu
+\nu$ but $u^{I}x^{J}\notin U$, and since $\left(  I,J\right)  $ is the
lexicographically smallest such pair, we have%
\begin{equation}
u^{I}x^{j}\in U\text{ for every }j\in\mathbb{N}\text{ such that }j<J
\label{L18-indL1}%
\end{equation}
and%
\begin{equation}
u^{i}x^{j}\in U\text{ for every }i\in\mathbb{N}\text{ and }j\in\mathbb{N}%
\text{ such that }i<I\text{ and }j<\mu+\nu. \label{L18-indL2}%
\end{equation}

Recall that $U$ is an $A$-module. Hence, (\ref{L18-indL1}) entails%
\begin{equation}
\left\langle u^{I}\right\rangle _{A}\cdot\left\langle x^{0},x^{1}%
,\ldots,x^{J-1}\right\rangle _{A}\subseteq U, \label{L18-indL1final}%
\end{equation}
and (\ref{L18-indL2}) entails
\begin{equation}
\left\langle u^{0},u^{1},\ldots,u^{I-1}\right\rangle _{A}\cdot\left\langle
x^{0},x^{1},\ldots,x^{\mu+\nu-1}\right\rangle _{A}\subseteq U.
\label{L18-indL2final}%
\end{equation}
Also, from $J<\mu+\nu$, we obtain $J\leq\mu+\nu-1$ (since $J$ and $\mu+\nu$
are integers).

We now want to prove that $u^{I}x^{J}\in U$.

We are in one of the following four cases:

\textit{Case 1:} We have $I\geq m$ and $J\geq\mu$.

\textit{Case 2:} We have $I<m$ and $J\geq\mu$.

\textit{Case 3:} We have $I\geq n$ and $J<\mu$.

\textit{Case 4:} We have $I<n$ and $J<\mu$.

In Case 1, we have $I-m\geq0$ (since $I\geq m$) and $J-\mu\geq0$ (since
$J\geq\mu$), thus%
\begin{align*}
&  \underbrace{u^{I}}_{=u^{I-m}u^{m}}\underbrace{x^{J}}_{=x^{\mu}x^{J-\mu}}\\
&  =u^{I-m}\underbrace{u^{m}x^{\mu}}_{\substack{\in\left\langle u^{0}%
,u^{1},\ldots,u^{m-1}\right\rangle _{A}\cdot\left\langle x^{0},x^{1}%
,\ldots,x^{\mu}\right\rangle _{A}+\left\langle u^{0},u^{1},\ldots
,u^{m}\right\rangle _{A}\cdot\left\langle x^{0},x^{1},\ldots,x^{\mu
-1}\right\rangle _{A}\\\left(  \text{by (\ref{L18-2})}\right)  }}x^{J-\mu}\\
&  \in u^{I-m}\left(  \left\langle u^{0},u^{1},\ldots,u^{m-1}\right\rangle
_{A}\cdot\left\langle x^{0},x^{1},\ldots,x^{\mu}\right\rangle _{A}\right. \\
&  \ \ \ \ \ \ \ \ \ \ \ \ \ \ \ \ \ \ \ \ \left.  +\left\langle u^{0}%
,u^{1},\ldots,u^{m}\right\rangle _{A}\cdot\left\langle x^{0},x^{1}%
,\ldots,x^{\mu-1}\right\rangle _{A}\right)  x^{J-\mu}\\
&  =\underbrace{u^{I-m}\left\langle u^{0},u^{1},\ldots,u^{m-1}\right\rangle
_{A}}_{\substack{=\left\langle u^{I-m},u^{I-m+1},\ldots,u^{I-1}\right\rangle
_{A}\\\subseteq\left\langle u^{0},u^{1},\ldots,u^{I-1}\right\rangle _{A}%
}}\cdot\underbrace{\left\langle x^{0},x^{1},\ldots,x^{\mu}\right\rangle
_{A}x^{J-\mu}}_{\substack{=\left\langle x^{J-\mu},x^{J-\mu+1},\ldots
,x^{J}\right\rangle _{A}\\\subseteq\left\langle x^{0},x^{1},\ldots,x^{\mu
+\nu-1}\right\rangle _{A}\\\text{(since }J-\mu\geq0\text{ and }J\leq\mu
+\nu-1\text{)}}}\\
&  \ \ \ \ \ \ \ \ \ \ +\underbrace{u^{I-m}\left\langle u^{0},u^{1}%
,\ldots,u^{m}\right\rangle _{A}}_{\substack{=\left\langle u^{I-m}%
,u^{I-m+1},\ldots,u^{I}\right\rangle _{A}\\\subseteq\left\langle u^{0}%
,u^{1},\ldots,u^{I}\right\rangle _{A}}}\cdot\underbrace{\left\langle
x^{0},x^{1},\ldots,x^{\mu-1}\right\rangle _{A}x^{J-\mu}}%
_{\substack{=\left\langle x^{J-\mu},x^{J-\mu+1},\ldots,x^{J-1}\right\rangle
_{A}\\\subseteq\left\langle x^{0},x^{1},\ldots,x^{J-1}\right\rangle _{A}}}\\
&  \subseteq\underbrace{\left\langle u^{0},u^{1},\ldots,u^{I-1}\right\rangle
_{A}\cdot\left\langle x^{0},x^{1},\ldots,x^{\mu+\nu-1}\right\rangle _{A}%
}_{\substack{\subseteq U\\\text{(by (\ref{L18-indL2final}))}}}\\
&  \ \ \ \ \ \ \ \ \ \ +\underbrace{\left\langle u^{0},u^{1},\ldots
,u^{I}\right\rangle _{A}}_{=\left\langle u^{0},u^{1},\ldots,u^{I-1}%
\right\rangle _{A}+\left\langle u^{I}\right\rangle _{A}}\cdot\left\langle
x^{0},x^{1},\ldots,x^{J-1}\right\rangle _{A}\\
&  \subseteq U+\underbrace{\left(  \left\langle u^{0},u^{1},\ldots
,u^{I-1}\right\rangle _{A}+\left\langle u^{I}\right\rangle _{A}\right)
\cdot\left\langle x^{0},x^{1},\ldots,x^{J-1}\right\rangle _{A}}_{=\left\langle
u^{0},u^{1},\ldots,u^{I-1}\right\rangle _{A}\cdot\left\langle x^{0}%
,x^{1},\ldots,x^{J-1}\right\rangle _{A}+\left\langle u^{I}\right\rangle
_{A}\cdot\left\langle x^{0},x^{1},\ldots,x^{J-1}\right\rangle _{A}}\\
&  =U+\left\langle u^{0},u^{1},\ldots,u^{I-1}\right\rangle _{A}\cdot
\underbrace{\left\langle x^{0},x^{1},\ldots,x^{J-1}\right\rangle _{A}%
}_{\substack{\subseteq\left\langle x^{0},x^{1},\ldots,x^{\mu+\nu
-1}\right\rangle _{A}\\\text{(since }J-1\leq J\leq\mu+\nu-1\text{)}%
}}+\left\langle u^{I}\right\rangle _{A}\cdot\left\langle x^{0},x^{1}%
,\ldots,x^{J-1}\right\rangle _{A}\\
&  \subseteq U+\underbrace{\left\langle u^{0},u^{1},\ldots,u^{I-1}%
\right\rangle _{A}\cdot\left\langle x^{0},x^{1},\ldots,x^{\mu+\nu
-1}\right\rangle _{A}}_{\substack{\subseteq U\\\text{(by (\ref{L18-indL2final}%
))}}}+\underbrace{\left\langle u^{I}\right\rangle _{A}\cdot\left\langle
x^{0},x^{1},\ldots,x^{J-1}\right\rangle _{A}}_{\substack{\subseteq
U\\\text{(by (\ref{L18-indL1final}))}}}\\
&  \subseteq U+U+U\subseteq U\ \ \ \ \ \ \ \ \ \ \left(  \text{since }U\text{
is an }A\text{-module}\right)  .
\end{align*}
Thus, we have proved that $u^{I}x^{J}\in U$ holds in Case 1.

In Case 2, we have $\left(  I,J\right)  \in\left\{  0,1,\ldots,m-1\right\}
\times\left\{  \mu,\mu+1,\ldots,\mu+\nu-1\right\}  \subseteq S$ and thus
$u^{I}x^{J}\in U$ (by (\ref{L18-U}), applied to $I$ and $J$ instead of $i$ and
$j$). Thus, we have proved that $u^{I}x^{J}\in U$ holds in Case 2.

In Case 3, we have $I-n\geq0$ (since $I\geq n$) and $J+\nu\leq\mu+\nu-1$
(since $J<\mu$ yields $J+\nu<\mu+\nu$, and since $J+\nu$ and $\mu+\nu$ are
integers), thus%
\begin{align*}
&  \underbrace{u^{I}}_{=u^{I-n}u^{n}}x^{J}\\
&  =u^{I-n}\underbrace{u^{n}}_{\substack{\in\left\langle u^{0},u^{1}%
,\ldots,u^{n-1}\right\rangle _{A}\cdot\left\langle x^{0},x^{1},\ldots,x^{\nu
}\right\rangle _{A}\\\left(  \text{by (\ref{L18-1})}\right)  }}x^{J}\\
&  \in\underbrace{u^{I-n}\left\langle u^{0},u^{1},\ldots,u^{n-1}\right\rangle
_{A}}_{\substack{=\left\langle u^{I-n},u^{I-n+1},\ldots,u^{I-1}\right\rangle
_{A}\\\subseteq\left\langle u^{0},u^{1},\ldots,u^{I-1}\right\rangle _{A}%
}}\cdot\underbrace{\left\langle x^{0},x^{1},\ldots,x^{\nu}\right\rangle
_{A}x^{J}}_{\substack{=\left\langle x^{J},x^{J+1},\ldots,x^{J+\nu
}\right\rangle _{A}\\\subseteq\left\langle x^{0},x^{1},\ldots,x^{\mu+\nu
-1}\right\rangle _{A}\\\text{(since }J+\nu\leq\mu+\nu-1\text{)}}}\\
&  \subseteq\left\langle u^{0},u^{1},\ldots,u^{I-1}\right\rangle _{A}%
\cdot\left\langle x^{0},x^{1},\ldots,x^{\mu+\nu-1}\right\rangle _{A}\subseteq
U\ \ \ \ \ \ \ \ \ \ \left(  \text{by (\ref{L18-indL2final})}\right)  .
\end{align*}
Thus, we have proved that $u^{I}x^{J}\in U$ holds in Case 3.

In Case 4, we have $\left(  I,J\right)  \in\left\{  0,1,\ldots,n-1\right\}
\times\left\{  0,1,\ldots,\mu-1\right\}  \subseteq S$ and thus $u^{I}x^{J}\in
U$ (by (\ref{L18-U}), applied to $I$ and $J$ instead of $i$ and $j$). Thus, we
have proved that $u^{I}x^{J}\in U$ holds in Case 4.

By now, we have proved that $u^{I}x^{J}\in U$ holds in each of the four cases
1, 2, 3 and 4. Hence, $u^{I}x^{J}\in U$ always holds, contradicting
$u^{I}x^{J}\notin U$. This contradiction completes the proof of
(\ref{L18-indU}).]

Now that (\ref{L18-indU}) is proven, we can easily conclude that $uU\subseteq
U$ (since every $\left(  i,j\right)  \in S$ satisfies $j<\mu+\nu$, and thus
(\ref{L18-indU}) shows that $u^{i+1}x^{j}\in U$) and $1\in U$ (this follows by
applying (\ref{L18-indU}) to $i=0$ and $j=0$). Altogether, $U$ is an $\left(
n\mu+m\nu\right)  $-generated $A$-submodule of $B$ such that $1\in U$ and
$uU\subseteq U$. Thus, $u\in B$ satisfies Assertion $\mathcal{C}$ of
Theorem~\ref{Theorem1} with $n$ replaced by $n\mu+m\nu$. Hence, $u\in B$
satisfies the four equivalent assertions $\mathcal{A}$, $\mathcal{B}$,
$\mathcal{C}$ and $\mathcal{D}$ of Theorem~\ref{Theorem1} with $n$ replaced by
$n\mu+m\nu$. Consequently, $u$ is $\left(  n\mu+m\nu\right)  $-integral over
$A$. This proves Lemma~\ref{Lemma18}.
\end{proof}
\end{vershort}

\begin{verlong}
Before we prove Lemma \ref{Lemma18}, we recall a basic mathematical principle:

\begin{proposition}
\label{prop.strind1}Let $\mathfrak{A}\left(  i\right)  $ be an assertion for
every $i\in\mathbb{N}$. If%
\[
\text{every }I\in\mathbb{N}\text{ satisfying }\left(  \mathfrak{A}\left(
i\right)  \text{ for every }i\in\mathbb{N}\text{ such that }i<I\right)  \text{
satisfies }\mathfrak{A}\left(  I\right)  ,
\]
then%
\[
\text{every }i\in\mathbb{N}\text{ satisfies }\mathfrak{A}\left(  i\right)
\text{.}%
\]

\end{proposition}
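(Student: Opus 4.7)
The plan is to reduce the claimed strong induction principle to ordinary induction on $\mathbb{N}$ by strengthening the induction hypothesis. Concretely, for each $n\in\mathbb{N}$, let $\mathfrak{B}(n)$ denote the auxiliary assertion
\[
\mathfrak{B}(n)\ :=\ \bigl(\mathfrak{A}(i)\text{ holds for every }i\in\mathbb{N}\text{ such that }i<n\bigr).
\]
I would then prove, by ordinary induction on $n$, that $\mathfrak{B}(n)$ holds for every $n\in\mathbb{N}$.

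For the base case, $\mathfrak{B}(0)$ is vacuously true, since there exists no $i\in\mathbb{N}$ with $i<0$. For the induction step, I would fix $n\in\mathbb{N}$ and assume $\mathfrak{B}(n)$; my goal is to establish $\mathfrak{B}(n+1)$, that is, $\mathfrak{A}(i)$ for every $i\in\mathbb{N}$ with $i<n+1$. If $i<n$, then $\mathfrak{A}(i)$ follows immediately from $\mathfrak{B}(n)$. The only remaining case is $i=n$, and here I would invoke the hypothesis of the proposition with $I=n$: the assumption $\mathfrak{B}(n)$ is exactly the precondition $\bigl(\mathfrak{A}(i)$ for every $i\in\mathbb{N}$ with $i<n\bigr)$, so the hypothesis yields $\mathfrak{A}(n)$. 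This gives $\mathfrak{B}(n+1)$ and completes the induction step.

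Finally, to conclude the proposition, I would fix any $i\in\mathbb{N}$ and apply the just-proved statement to $n=i+1$, obtaining $\mathfrak{B}(i+1)$, which in particular contains $\mathfrak{A}(i)$. Hence $\mathfrak{A}(i)$ holds for every $i\in\mathbb{N}$, as required.

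There is no real obstacle in this proof; the entire content is bookkeeping about vacuous quantification and correctly invoking the hypothesis. The only subtlety worth flagging is that in the induction step one must be careful to note that $\mathfrak{B}(n)$ is logically identical to the premise of the proposition's hypothesis at $I=n$, so no additional work is needed beyond recognizing this match. Since the paper's preceding proofs (notably that of Lemma~\ref{Lemma18}) implicitly rely on well-ordering/strong induction on $\mathbb{N}$, this proposition serves purely as a certificate that ordinary induction suffices, and its proof accordingly uses nothing beyond the induction axiom for $\mathbb{N}$.
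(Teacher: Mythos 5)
Your proof is correct and complete. Note, however, that the paper does not actually prove Proposition~\ref{prop.strind1} at all: it simply states it as \textquotedblleft the principle of strong induction,\textquotedblright\ treats it as a known fact, and then records the renamed variant (Proposition~\ref{prop.strind2}) before using both in the detailed proof of Lemma~\ref{Lemma18}. What you have supplied is the standard reduction of strong induction to ordinary induction via the strengthened predicate $\mathfrak{B}(n) = \bigl(\mathfrak{A}(i)\text{ for every }i\in\mathbb{N}\text{ with }i<n\bigr)$; the base case is vacuous, the step correctly identifies $\mathfrak{B}(n)$ with the precondition of the hypothesis at $I=n$, and the final extraction via $n=i+1$ is exactly right. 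This fills a gap the paper deliberately leaves open, and it does so in the most elementary way available (using only the ordinary induction axiom for $\mathbb{N}$, which aligns with the paper's aim of keeping proofs constructive).
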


Proposition \ref{prop.strind1} is known as the \textit{principle of strong
induction}. By renaming $i$, $I$ and $\mathfrak{A}$ as $j$, $J$ and
$\mathfrak{B}$, respectively, we can rewrite this principle as follows:

\begin{proposition}
\label{prop.strind2}Let $\mathfrak{B}\left(  j\right)  $ be an assertion for
every $j\in\mathbb{N}$. If%
\[
\text{every }J\in\mathbb{N}\text{ satisfying }\left(  \mathfrak{B}\left(
j\right)  \text{ for every }j\in\mathbb{N}\text{ such that }j<J\right)  \text{
satisfies }\mathfrak{B}\left(  J\right)  ,
\]
then%
\[
\text{every }j\in\mathbb{N}\text{ satisfies }\mathfrak{B}\left(  j\right)
\text{.}%
\]

\end{proposition}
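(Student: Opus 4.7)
The plan is to observe that Proposition \ref{prop.strind2} is literally the same statement as Proposition \ref{prop.strind1}, just with the bound variables and the assertion symbol renamed. Since Proposition \ref{prop.strind1} is stated earlier (and the paper itself explicitly introduces Proposition \ref{prop.strind2} as obtained from Proposition \ref{prop.strind1} by renaming $i$, $I$, $\mathfrak{A}$ as $j$, $J$, $\mathfrak{B}$), no actual induction argument is needed: one simply invokes Proposition \ref{prop.strind1} with the right choice of assertion.

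Concretely, I would proceed as follows. First, suppose that the hypothesis of Proposition \ref{prop.strind2} holds, i.e., every $J\in\mathbb{N}$ satisfying $\bigl(\mathfrak{B}(j)\text{ for every }j\in\mathbb{N}\text{ such that }j<J\bigr)$ satisfies $\mathfrak{B}(J)$. For each $i\in\mathbb{N}$, define the assertion $\mathfrak{A}(i)$ to be $\mathfrak{B}(i)$. Then the hypothesis of Proposition \ref{prop.strind1} is satisfied verbatim: every $I\in\mathbb{N}$ such that $\mathfrak{A}(i)$ holds for all $i<I$ (which, under our definition, just says $\mathfrak{B}(i)$ holds for all $i<I$) satisfies $\mathfrak{A}(I)$ (i.e., $\mathfrak{B}(I)$). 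By Proposition \ref{prop.strind1}, it follows that $\mathfrak{A}(i)$ holds for every $i\in\mathbb{N}$, i.e., $\mathfrak{B}(i)$ holds for every $i\in\mathbb{N}$. Renaming the dummy variable $i$ to $j$ yields the conclusion of Proposition \ref{prop.strind2}.

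There is essentially no obstacle here; the only \emph{potential} subtlety is the very mild one of confirming that applying Proposition \ref{prop.strind1} to a specific family of assertions $\mathfrak{A}(i)=\mathfrak{B}(i)$ is legitimate (it is, since Proposition \ref{prop.strind1} is stated for an arbitrary family of assertions indexed by $\mathbb{N}$). Thus the proof can be given in one or two sentences, and does not require any of the ring-theoretic or module-theoretic machinery developed earlier in the paper.
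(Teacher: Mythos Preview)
Your proposal is correct and matches the paper's own treatment exactly: the paper does not give a separate proof of Proposition~\ref{prop.strind2} but simply introduces it as Proposition~\ref{prop.strind1} with $i$, $I$, $\mathfrak{A}$ renamed to $j$, $J$, $\mathfrak{B}$. Your explicit instantiation $\mathfrak{A}(i):=\mathfrak{B}(i)$ is the obvious way to make this rigorous, and there is nothing more to say.
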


\begin{proof}
[Proof of Lemma~\ref{Lemma18}.]Define the set%
\begin{align}
S  &  =\left(  \left\{  0,1,\ldots,n-1\right\}  \times\left\{  0,1,\ldots
,\mu-1\right\}  \right) \nonumber\\
&  \ \ \ \ \ \ \ \ \ \ \cup\left(  \left\{  0,1,\ldots,m-1\right\}
\times\left\{  \mu,\mu+1,\ldots,\mu+\nu-1\right\}  \right)  .
\label{pf.Lemma18.S=}%
\end{align}
Then, $\left\vert S\right\vert =n\mu+m\nu\ \ \ \ $\footnote{\textit{Proof.} We
have $\left(  U\times V\right)  \cap\left(  X\times Y\right)  =\left(  U\cap
X\right)  \times\left(  V\cap Y\right)  $ for any four sets $U$, $V$, $X$ and
$Y$. Applying this to $U=\left\{  0,1,\ldots,n-1\right\}  $, $V=\left\{
0,1,\ldots,\mu-1\right\}  $, $X=\left\{  0,1,\ldots,m-1\right\}  $ and
$Y=\left\{  \mu,\mu+1,\ldots,\mu+\nu-1\right\}  $, we find%
\begin{align*}
&  \left(  \left\{  0,1,\ldots,n-1\right\}  \times\left\{  0,1,\ldots
,\mu-1\right\}  \right)  \cap\left(  \left\{  0,1,\ldots,m-1\right\}
\times\left\{  \mu,\mu+1,\ldots,\mu+\nu-1\right\}  \right) \\
&  =\left(  \left\{  0,1,\ldots,n-1\right\}  \cap\left\{  0,1,\ldots
,m-1\right\}  \right)  \times\underbrace{\left(  \left\{  0,1,\ldots
,\mu-1\right\}  \cap\left\{  \mu,\mu+1,\ldots,\mu+\nu-1\right\}  \right)
}_{=\varnothing}\\
&  =\left(  \left\{  0,1,\ldots,n-1\right\}  \cap\left\{  0,1,\ldots
,m-1\right\}  \right)  \times\varnothing=\varnothing.
\end{align*}
Hence,%
\begin{align*}
&  \left\vert \left(  \left\{  0,1,\ldots,n-1\right\}  \times\left\{
0,1,\ldots,\mu-1\right\}  \right)  \cup\left(  \left\{  0,1,\ldots
,m-1\right\}  \times\left\{  \mu,\mu+1,\ldots,\mu+\nu-1\right\}  \right)
\right\vert \\
&  =\underbrace{\left\vert \left\{  0,1,\ldots,n-1\right\}  \times\left\{
0,1,\ldots,\mu-1\right\}  \right\vert }_{=\left\vert \left\{  0,1,\ldots
,n-1\right\}  \right\vert \cdot\left\vert \left\{  0,1,\ldots,\mu-1\right\}
\right\vert }+\underbrace{\left\vert \left\{  0,1,\ldots,m-1\right\}
\times\left\{  \mu,\mu+1,\ldots,\mu+\nu-1\right\}  \right\vert }_{=\left\vert
\left\{  0,1,\ldots,m-1\right\}  \right\vert \cdot\left\vert \left\{  \mu
,\mu+1,\ldots,\mu+\nu-1\right\}  \right\vert }\\
&  \ \ \ \ \ \ \ \ \ \ \left(
\begin{array}
[c]{c}%
\text{because any two finite sets }U\text{ and }V\text{ satisfying }U\cap
V=\varnothing\\
\text{satisfy }\left\vert U\cup V\right\vert =\left\vert U\right\vert
+\left\vert V\right\vert
\end{array}
\right) \\
&  =\underbrace{\left\vert \left\{  0,1,\ldots,n-1\right\}  \right\vert }%
_{=n}\cdot\underbrace{\left\vert \left\{  0,1,\ldots,\mu-1\right\}
\right\vert }_{=\mu}+\underbrace{\left\vert \left\{  0,1,\ldots,m-1\right\}
\right\vert }_{=m}\cdot\underbrace{\left\vert \left\{  \mu,\mu+1,\ldots
,\mu+\nu-1\right\}  \right\vert }_{=\nu}=n\mu+m\nu.
\end{align*}
In view of
\[
S=\left(  \left\{  0,1,\ldots,n-1\right\}  \times\left\{  0,1,\ldots
,\mu-1\right\}  \right)  \cup\left(  \left\{  0,1,\ldots,m-1\right\}
\times\left\{  \mu,\mu+1,\ldots,\mu+\nu-1\right\}  \right)  ,
\]
this rewrites as $\left\vert S\right\vert =n\mu+m\nu$.}. Also,%
\begin{equation}
j<\mu+\nu\text{ for every }\left(  i,j\right)  \in S \label{L18-banal-long}%
\end{equation}
\footnote{In fact, $\nu\geq0$ (since $\nu\in\mathbb{N}$), so that
$\mu+\underbrace{\nu}_{\geq0}-1\geq\mu-1$. Hence, $\mu-1\leq\mu+\nu-1$, so
that%
\begin{align*}
S  &  =\left(  \left\{  0,1,\ldots,n-1\right\}  \times\underbrace{\left\{
0,1,\ldots,\mu-1\right\}  }_{\substack{\subseteq\left\{  0,1,\ldots,\mu
+\nu-1\right\}  \\\text{(since }\mu-1\leq\mu+\nu-1\text{)}}}\right) \\
&  \ \ \ \ \ \ \ \ \ \ \cup\left(  \left\{  0,1,\ldots,m-1\right\}
\times\underbrace{\left\{  \mu,\mu+1,\ldots,\mu+\nu-1\right\}  }%
_{\substack{\subseteq\left\{  0,1,\ldots,\mu+\nu-1\right\}  \\\text{(since
}\mu\geq0\text{)}}}\right) \\
&  \subseteq\left(  \left\{  0,1,\ldots,n-1\right\}  \times\left\{
0,1,\ldots,\mu+\nu-1\right\}  \right) \\
&  \ \ \ \ \ \ \ \ \ \ \cup\left(  \left\{  0,1,\ldots,m-1\right\}
\times\left\{  0,1,\ldots,\mu+\nu-1\right\}  \right) \\
&  =\left(  \left\{  0,1,\ldots,n-1\right\}  \cup\left\{  0,1,\ldots
,m-1\right\}  \right)  \times\left\{  0,1,\ldots,\mu+\nu-1\right\}
\end{align*}
(since $\left(  U\times X\right)  \cup\left(  V\times X\right)  =\left(  U\cup
V\right)  \times X$ for any three sets $U$, $V$ and $X$). Hence, for every
$\left(  i,j\right)  \in S$, we have $\left(  i,j\right)  \in S\subseteq
\left(  \left\{  0,1,\ldots,n-1\right\}  \cup\left\{  0,1,\ldots,m-1\right\}
\right)  \times\left\{  0,1,\ldots,\mu+\nu-1\right\}  $ and thus $j\in\left\{
0,1,\ldots,\mu+\nu-1\right\}  $ and thus $j<\mu+\nu$. This proves
(\ref{L18-banal-long}).}.

Let $U$ be the $A$-submodule $\left\langle u^{i}x^{j}\ \mid\ \left(
i,j\right)  \in S\right\rangle _{A}$ of $B$. Then, $U$ is an $\left(
n\mu+m\nu\right)  $-generated $A$-module (since $\left\vert S\right\vert
=n\mu+m\nu$). Besides, clearly,
\begin{equation}
u^{i}x^{j}\in U\text{ for every }\left(  i,j\right)  \in S \label{L18-U-long}%
\end{equation}
(since $U=\left\langle u^{i}x^{j}\ \mid\ \left(  i,j\right)  \in
S\right\rangle _{A}$).

Now, we will show that%
\begin{equation}
\text{every }i\in\mathbb{N}\text{ and }j\in\mathbb{N}\text{ satisfying }%
j<\mu+\nu\text{ satisfy }u^{i}x^{j}\in U. \label{L18-indU-long}%
\end{equation}

[\textit{Proof of (\ref{L18-indU-long}).} For every $i\in\mathbb{N}$, define
an assertion $\mathfrak{A}\left(  i\right)  $ by%
\[
\mathfrak{A}\left(  i\right)  =\left(  \text{every }j\in\mathbb{N}\text{
satisfies }\left(  \text{if }j<\mu+\nu\text{, then }u^{i}x^{j}\in U\right)
\right)  .
\]

Let us now show that%
\begin{equation}
\text{every }I\in\mathbb{N}\text{ satisfying }\left(  \mathfrak{A}\left(
i\right)  \text{ for every }i\in\mathbb{N}\text{ such that }i<I\right)  \text{
satisfies }\mathfrak{A}\left(  I\right)  . \label{L18-indI-long}%
\end{equation}

[\textit{Proof of (\ref{L18-indI-long}).} Let $I\in\mathbb{N}$ be such that%
\begin{equation}
\left(  \mathfrak{A}\left(  i\right)  \text{ for every }i\in\mathbb{N}\text{
such that }i<I\right)  . \label{L18-indIass-long}%
\end{equation}
We must prove that $\mathfrak{A}\left(  I\right)  $ holds.

The definition of the assertion $\mathfrak{A}\left(  I\right)  $ yields%
\[
\mathfrak{A}\left(  I\right)  =\left(  \text{every }j\in\mathbb{N}\text{
satisfies }\left(  \text{if }j<\mu+\nu\text{, then }u^{I}x^{j}\in U\right)
\right)  .
\]

For every $j\in\mathbb{N}$, define an assertion $\mathfrak{B}\left(  j\right)
$ by%
\begin{equation}
\mathfrak{B}\left(  j\right)  =\left(  \text{if }j<\mu+\nu\text{, then }%
u^{I}x^{j}\in U\right)  . \label{pf.Lemma18.Bj=}%
\end{equation}

Let us now show that%
\begin{equation}
\text{every }J\in\mathbb{N}\text{ satisfying }\left(  \mathfrak{B}\left(
j\right)  \text{ for every }j\in\mathbb{N}\text{ such that }j<J\right)  \text{
satisfies }\mathfrak{B}\left(  J\right)  . \label{L18-indJ-long}%
\end{equation}

[\textit{Proof of (\ref{L18-indJ-long}).} Let $J\in\mathbb{N}$ be such that%
\begin{equation}
\left(  \mathfrak{B}\left(  j\right)  \text{ for every }j\in\mathbb{N}\text{
such that }j<J\right)  . \label{L18-indJass-long}%
\end{equation}
We must prove that $\mathfrak{B}\left(  J\right)  $ holds.

The definition of the assertion $\mathfrak{B}\left(  J\right)  $ yields%
\[
\mathfrak{B}\left(  J\right)  =\left(  \text{if }J<\mu+\nu\text{, then }%
u^{I}x^{J}\in U\right)  .
\]

Assume that $J<\mu+\nu$. Then, for every $j\in\mathbb{N}$ such that $j<J$, the
assertion $\mathfrak{B}\left(  j\right)  $ holds (due to
(\ref{L18-indJass-long})). In other words, for every $j\in\mathbb{N}$ such
that $j<J$, we have $\left(  \text{if }j<\mu+\nu\text{, then }u^{I}x^{j}\in
U\right)  $ (because this is precisely what the assertion $\mathfrak{B}\left(
j\right)  $ says) and therefore $u^{I}x^{j}\in U$ (since $j<\mu+\nu$
automatically holds\footnote{because $j<J<\mu+\nu$}). Thus we have shown that
\begin{equation}
u^{I}x^{j}\in U\text{ for every }j\in\mathbb{N}\text{ such that }j<J.
\label{L18-indL1-long}%
\end{equation}
In other words,%
\begin{equation}
u^{I}x^{j}\in U\text{ for every }j\in\left\{  0,1,\ldots,J-1\right\}
\label{L18-indL1-long'}%
\end{equation}
(since the numbers $j\in\left\{  0,1,\ldots,J-1\right\}  $ are precisely the
numbers $j\in\mathbb{N}$ such that $j<J$). Hence,
\begin{align}
&  \sum\limits_{j\in\left\{  0,1,\ldots,J-1\right\}  }a_{j}u^{I}x^{j}\in
U\label{L18-indL1-long''}\\
&  \ \ \ \ \ \ \ \ \ \ \text{for every }\left(  a_{j}\right)  _{j\in\left\{
0,1,\ldots,J-1\right\}  }\in A^{\left\{  0,1,\ldots,J-1\right\}  }\nonumber
\end{align}
(since $U$ is an $A$-module, and thus is closed under $A$-linear combination).

Also, if $i\in\mathbb{N}$ satisfies $i<I$, then the assertion $\mathfrak{A}%
\left(  i\right)  $ holds (by (\ref{L18-indIass-long})). In view of the
definition of $\mathfrak{A}\left(  i\right)  $, we can restate this as
follows: If $i\in\mathbb{N}$ satisfies $i<I$, then every $j\in\mathbb{N}$
satisfies $\left(  \text{if }j<\mu+\nu\text{, then }u^{i}x^{j}\in U\right)  $.
In other words,%
\begin{equation}
u^{i}x^{j}\in U\text{ for every }i\in\mathbb{N}\text{ and }j\in\mathbb{N}%
\text{ such that }i<I\text{ and }j<\mu+\nu. \label{L18-indL2-long}%
\end{equation}
Hence,%
\begin{equation}
u^{i}x^{j}\in U\text{ for every }\left(  i,j\right)  \in\left\{
0,1,\ldots,I-1\right\}  \times\left\{  0,1,\ldots,\mu+\nu-1\right\}
\label{L18-indL2-long'}%
\end{equation}
(because for every $\left(  i,j\right)  \in\left\{  0,1,\ldots,I-1\right\}
\times\left\{  0,1,\ldots,\mu+\nu-1\right\}  $, we have $i<I$ (since
$i\in\left\{  0,1,\ldots,I-1\right\}  $) and $j<\mu+\nu$ (since $j\in\left\{
0,1,\ldots,\mu+\nu-1\right\}  $) and therefore $u^{i}x^{j}\in U$ (by
(\ref{L18-indL2-long}))). Hence,%
\begin{align}
&  \sum\limits_{\left(  i,j\right)  \in\left\{  0,1,\ldots,I-1\right\}
\times\left\{  0,1,\ldots,\mu+\nu-1\right\}  }a_{i,j}u^{i}x^{j}\in
U\label{L18-indL2-long''}\\
&  \ \ \ \ \ \ \ \ \ \ \text{for every }\left(  a_{i,j}\right)  _{\left(
i,j\right)  \in\left\{  0,1,\ldots,I-1\right\}  \times\left\{  0,1,\ldots
,\mu+\nu-1\right\}  }\in A^{\left\{  0,1,\ldots,I-1\right\}  \times\left\{
0,1,\ldots,\mu+\nu-1\right\}  }\nonumber
\end{align}
(since $U$ is an $A$-module, and thus is closed under $A$-linear combination).

Now,%
\begin{align}
&  \left\langle u^{I}\right\rangle _{A}\cdot\underbrace{\left\langle
x^{0},x^{1},\ldots,x^{J-1}\right\rangle _{A}}_{=\left\langle x^{j}\ \mid
\ j\in\left\{  0,1,\ldots,J-1\right\}  \right\rangle _{A}}\nonumber\\
&  =\left\langle u^{I}\right\rangle _{A}\cdot\left\langle x^{j}\ \mid
\ j\in\left\{  0,1,\ldots,J-1\right\}  \right\rangle _{A}=\left\langle
u^{I}x^{j}\ \mid\ j\in\left\{  0,1,\ldots,J-1\right\}  \right\rangle
_{A}\nonumber\\
&  =\left\{  \sum\limits_{j\in\left\{  0,1,\ldots,J-1\right\}  }a_{j}%
u^{I}x^{j}\ \mid\ \left(  a_{j}\right)  _{j\in\left\{  0,1,\ldots,J-1\right\}
}\in A^{\left\{  0,1,\ldots,J-1\right\}  }\right\}  \subseteq U
\label{L18-indL1final-long}%
\end{align}
(by (\ref{L18-indL1-long''})).

Furthermore,%
\begin{align}
&  \underbrace{\left\langle u^{0},u^{1},\ldots,u^{I-1}\right\rangle _{A}%
}_{=\left\langle u^{i}\ \mid\ i\in\left\{  0,1,\ldots,I-1\right\}
\right\rangle _{A}}\cdot\underbrace{\left\langle x^{0},x^{1},\ldots,x^{\mu
+\nu-1}\right\rangle _{A}}_{=\left\langle x^{j}\ \mid\ j\in\left\{
0,1,\ldots,\mu+\nu-1\right\}  \right\rangle _{A}}\nonumber\\
&  =\left\langle u^{i}\ \mid\ i\in\left\{  0,1,\ldots,I-1\right\}
\right\rangle _{A}\cdot\left\langle x^{j}\ \mid\ j\in\left\{  0,1,\ldots
,\mu+\nu-1\right\}  \right\rangle _{A}\nonumber\\
&  =\left\langle u^{i}x^{j}\ \mid\ \left(  i,j\right)  \in\left\{
0,1,\ldots,I-1\right\}  \times\left\{  0,1,\ldots,\mu+\nu-1\right\}
\right\rangle _{A}\nonumber\\
&  =\left\{  \sum\limits_{\left(  i,j\right)  \in\left\{  0,1,\ldots
,I-1\right\}  \times\left\{  0,1,\ldots,\mu+\nu-1\right\}  }a_{i,j}u^{i}%
x^{j}\right. \nonumber\\
&  \ \ \ \ \ \ \ \ \ \ \left.  \ \mid\ \left(  a_{i,j}\right)  _{\left(
i,j\right)  \in\left\{  0,1,\ldots,I-1\right\}  \times\left\{  0,1,\ldots
,\mu+\nu-1\right\}  }\in A^{\left\{  0,1,\ldots,I-1\right\}  \times\left\{
0,1,\ldots,\mu+\nu-1\right\}  }\vphantom{\sum_{i}}\right\} \nonumber\\
&  \subseteq U \label{L18-indL2final-long}%
\end{align}
(by (\ref{L18-indL2-long''})).

From $J<\mu+\nu$, we obtain $J\leq\mu+\nu-1$ (since $J$ and $\mu+\nu$ are
integers). We are now going to show that $u^{I}x^{J}\in U$.

Trivially, we have\footnote{Here, an expression like \textquotedblleft$I\geq
m\ \wedge\ J\geq\mu$\textquotedblright\ should be read as \textquotedblleft%
$\left(  I\geq m\right)  \ \wedge\ \left(  J\geq\mu\right)  $%
\textquotedblright.}%
\[
\left(  I\geq m\ \wedge\ J\geq\mu\right)  \ \vee\ \left(  I<m\ \wedge
\ J\geq\mu\right)  \vee\ \left(  I\geq n\ \wedge\ J<\mu\right)  \ \vee
\ \left(  I<n\ \wedge\ J<\mu\right)
\]
\footnote{since
\begin{align*}
&  \underbrace{\left(  I\geq m\ \wedge\ J\geq\mu\right)  \ \vee\ \left(
I<m\ \wedge\ J\geq\mu\right)  }_{\substack{=\ \left(  I\geq m\ \vee
\ I<m\right)  \ \wedge\ \left(  J\geq\mu\right)  \\=\ \left(  J\geq\mu\right)
\\\text{(since }\left(  I\geq m\ \vee\ I<m\right)  \text{ is true)}}%
}\vee\ \underbrace{\left(  I\geq n\ \wedge\ J<\mu\right)  \ \vee\ \left(
I<n\ \wedge\ J<\mu\right)  }_{\substack{=\ \left(  I\geq n\ \vee\ I<n\right)
\ \wedge\ \left(  J<\mu\right)  \\=\ \left(  J<\mu\right)  \\\text{(since
}\left(  I\geq n\ \vee\ I<n\right)  \text{ is true)}}}\\
&  =\ \left(  J\geq\mu\right)  \ \vee\ \left(  J<\mu\right)  \ =\ \text{true}%
\end{align*}
}. Hence, one of the following four cases must hold:

\textit{Case 1:} We have $I\geq m\ \wedge\ J\geq\mu$.

\textit{Case 2:} We have $I<m\ \wedge\ J\geq\mu$.

\textit{Case 3:} We have $I\geq n\ \wedge\ J<\mu$.

\textit{Case 4:} We have $I<n\ \wedge\ J<\mu$.

Let us first consider Case 1. In this case, we have $I\geq m$ and $J\geq\mu$.
Hence, $I-m\geq0$ (since $I\geq m$) and $J-\mu\geq0$ (since $J\geq\mu$). Thus,%
\begin{align*}
&  \underbrace{u^{I}}_{\substack{=u^{I-m}u^{m}\\\text{(since }I\geq m\text{)}%
}}\underbrace{x^{J}}_{\substack{=x^{\mu}x^{J-\mu}\\\text{(since }J\geq
\mu\text{)}}}\\
&  =u^{I-m}\underbrace{u^{m}x^{\mu}}_{\substack{\in\left\langle u^{0}%
,u^{1},\ldots,u^{m-1}\right\rangle _{A}\cdot\left\langle x^{0},x^{1}%
,\ldots,x^{\mu}\right\rangle _{A}+\left\langle u^{0},u^{1},\ldots
,u^{m}\right\rangle _{A}\cdot\left\langle x^{0},x^{1},\ldots,x^{\mu
-1}\right\rangle _{A}\\\left(  \text{by (\ref{L18-2})}\right)  }}x^{J-\mu}\\
&  \in u^{I-m}\left(  \left\langle u^{0},u^{1},\ldots,u^{m-1}\right\rangle
_{A}\cdot\left\langle x^{0},x^{1},\ldots,x^{\mu}\right\rangle _{A}\right. \\
&  \ \ \ \ \ \ \ \ \ \ \ \ \ \ \ \ \ \ \ \ \left.  +\left\langle u^{0}%
,u^{1},\ldots,u^{m}\right\rangle _{A}\cdot\left\langle x^{0},x^{1}%
,\ldots,x^{\mu-1}\right\rangle _{A}\right)  x^{J-\mu}\\
&  =\underbrace{u^{I-m}\left\langle u^{0},u^{1},\ldots,u^{m-1}\right\rangle
_{A}}_{\substack{=\left\langle u^{I-m}u^{0},u^{I-m}u^{1},\ldots,u^{I-m}%
u^{m-1}\right\rangle _{A}\\=\left\langle u^{\left(  I-m\right)  +0},u^{\left(
I-m\right)  +1},\ldots,u^{\left(  I-m\right)  +\left(  m-1\right)
}\right\rangle _{A}\\=\left\langle u^{I-m},u^{I-m+1},\ldots,u^{I-1}%
\right\rangle _{A}\\\subseteq\left\langle u^{0},u^{1},\ldots,u^{I-1}%
\right\rangle _{A}\\\text{(since }\left\{  I-m,I-m+1,\ldots,I-1\right\}
\subseteq\left\{  0,1,\ldots,I-1\right\}  \\\text{(since }I-m\geq0\text{))}%
}}\cdot\underbrace{\left\langle x^{0},x^{1},\ldots,x^{\mu}\right\rangle
_{A}x^{J-\mu}}_{\substack{=\left\langle x^{0}x^{J-\mu},x^{1}x^{J-\mu}%
,\ldots,x^{\mu}x^{J-\mu}\right\rangle _{A}\\=\left\langle x^{0+\left(
J-\mu\right)  },x^{1+\left(  J-\mu\right)  },\ldots,x^{\mu+\left(
J-\mu\right)  }\right\rangle _{A}\\=\left\langle x^{J-\mu},x^{J-\mu+1}%
,\ldots,x^{J}\right\rangle _{A}\\\subseteq\left\langle x^{0},x^{1}%
,\ldots,x^{\mu+\nu-1}\right\rangle _{A}\\\text{(since }\left\{  J-\mu
,J-\mu+1,\ldots,J\right\}  \subseteq\left\{  0,1,\ldots,\mu+\nu-1\right\}
\\\text{(since }J-\mu\geq0\text{ and }J\leq\mu+\nu-1\text{))}}}\\
&  \ \ \ \ \ \ \ \ \ \ +\underbrace{u^{I-m}\left\langle u^{0},u^{1}%
,\ldots,u^{m}\right\rangle _{A}}_{\substack{=\left\langle u^{I-m}u^{0}%
,u^{I-m}u^{1},\ldots,u^{I-m}u^{m}\right\rangle _{A}\\=\left\langle u^{\left(
I-m\right)  +0},u^{\left(  I-m\right)  +1},\ldots,u^{\left(  I-m\right)
+m}\right\rangle _{A}\\=\left\langle u^{I-m},u^{I-m+1},\ldots,u^{I}%
\right\rangle _{A}\\\subseteq\left\langle u^{0},u^{1},\ldots,u^{I}%
\right\rangle _{A}\\\text{(since }\left\{  I-m,I-m+1,\ldots,I\right\}
\subseteq\left\{  0,1,\ldots,I\right\}  \\\text{(since }I-m\geq0\text{))}%
}}\cdot\underbrace{\left\langle x^{0},x^{1},\ldots,x^{\mu-1}\right\rangle
_{A}x^{J-\mu}}_{\substack{=\left\langle x^{0}x^{J-\mu},x^{1}x^{J-\mu}%
,\ldots,x^{\mu-1}x^{J-\mu}\right\rangle _{A}\\=\left\langle x^{0+\left(
J-\mu\right)  },x^{1+\left(  J-\mu\right)  },\ldots,x^{\left(  \mu-1\right)
+\left(  J-\mu\right)  }\right\rangle _{A}\\=\left\langle x^{J-\mu}%
,x^{J-\mu+1},\ldots,x^{J-1}\right\rangle _{A}\\\subseteq\left\langle
x^{0},x^{1},\ldots,x^{J-1}\right\rangle _{A}\\\text{(since }\left\{
J-\mu,J-\mu+1,\ldots,J-1\right\}  \subseteq\left\{  0,1,\ldots,J-1\right\}
\\\text{(since }J-\mu\geq0\text{))}}}
\end{align*}%
\begin{align*}
&  \subseteq\underbrace{\left\langle u^{0},u^{1},\ldots,u^{I-1}\right\rangle
_{A}\cdot\left\langle x^{0},x^{1},\ldots,x^{\mu+\nu-1}\right\rangle _{A}%
}_{\substack{\subseteq U\\\text{(by (\ref{L18-indL2final-long}))}}}\\
&  \ \ \ \ \ \ \ \ \ \ +\underbrace{\left\langle u^{0},u^{1},\ldots
,u^{I}\right\rangle _{A}}_{=\left\langle u^{0},u^{1},\ldots,u^{I-1}%
\right\rangle _{A}+\left\langle u^{I}\right\rangle _{A}}\cdot\left\langle
x^{0},x^{1},\ldots,x^{J-1}\right\rangle _{A}\\
&  \subseteq U+\underbrace{\left(  \left\langle u^{0},u^{1},\ldots
,u^{I-1}\right\rangle _{A}+\left\langle u^{I}\right\rangle _{A}\right)
\cdot\left\langle x^{0},x^{1},\ldots,x^{J-1}\right\rangle _{A}}_{=\left\langle
u^{0},u^{1},\ldots,u^{I-1}\right\rangle _{A}\cdot\left\langle x^{0}%
,x^{1},\ldots,x^{J-1}\right\rangle _{A}+\left\langle u^{I}\right\rangle
_{A}\cdot\left\langle x^{0},x^{1},\ldots,x^{J-1}\right\rangle _{A}}\\
&  =U+\left\langle u^{0},u^{1},\ldots,u^{I-1}\right\rangle _{A}\cdot
\underbrace{\left\langle x^{0},x^{1},\ldots,x^{J-1}\right\rangle _{A}%
}_{\substack{\subseteq\left\langle x^{0},x^{1},\ldots,x^{\mu+\nu
-1}\right\rangle _{A}\\\text{(since }\left\{  0,1,\ldots,J-1\right\}
\subseteq\left\{  0,1,\ldots,\mu+\nu-1\right\}  \\\text{(since }J-1\leq
J\leq\mu+\nu-1\text{))}}}+\left\langle u^{I}\right\rangle _{A}\cdot
\left\langle x^{0},x^{1},\ldots,x^{J-1}\right\rangle _{A}\\
&  \subseteq U+\underbrace{\left\langle u^{0},u^{1},\ldots,u^{I-1}%
\right\rangle _{A}\cdot\left\langle x^{0},x^{1},\ldots,x^{\mu+\nu
-1}\right\rangle _{A}}_{\substack{\subseteq U\\\text{(by
(\ref{L18-indL2final-long}))}}}+\underbrace{\left\langle u^{I}\right\rangle
_{A}\cdot\left\langle x^{0},x^{1},\ldots,x^{J-1}\right\rangle _{A}%
}_{\substack{\subseteq U\\\text{(by (\ref{L18-indL1final-long}))}}}\\
&  \subseteq U+U+U\subseteq U\ \ \ \ \ \ \ \ \ \ \left(  \text{since }U\text{
is an }A\text{-module}\right)  .
\end{align*}
Thus, we have proved that $u^{I}x^{J}\in U$ holds in Case 1.

Let us next consider Case 2. In this case, we have $I<m$ and $J\geq\mu$. Thus,
$I\in\left\{  0,1,\ldots,m-1\right\}  $ (since $I<m$ and $I\in\mathbb{N}$) and
$J\in\left\{  \mu,\mu+1,\ldots,\mu+\nu-1\right\}  $ (since $J\geq\mu$ and
$J<\mu+\nu$). Thus,%
\begin{align*}
\left(  I,J\right)   &  \in\left\{  0,1,\ldots,m-1\right\}  \times\left\{
\mu,\mu+1,\ldots,\mu+\nu-1\right\} \\
&  \subseteq\left(  \left\{  0,1,\ldots,n-1\right\}  \times\left\{
0,1,\ldots,\mu-1\right\}  \right) \\
&  \ \ \ \ \ \ \ \ \ \ \cup\left(  \left\{  0,1,\ldots,m-1\right\}
\times\left\{  \mu,\mu+1,\ldots,\mu+\nu-1\right\}  \right) \\
&  =S\ \ \ \ \ \ \ \ \ \ \left(  \text{by (\ref{pf.Lemma18.S=})}\right)  .
\end{align*}
Hence, $u^{I}x^{J}\in U$ (by (\ref{L18-U-long}), applied to $I$ and $J$
instead of $i$ and $j$). Thus, we have proved that $u^{I}x^{J}\in U$ holds in
Case 2.

Let us next consider Case 3. In this case, we have $I\geq n$ and $J<\mu$.
Hence, $I-n\geq0$ (since $I\geq n$) and $J+\nu\leq\mu+\nu-1$ (since
$\underbrace{J}_{<\mu}+\nu<\mu+\nu$, and since $J+\nu$ and $\mu+\nu$ are
integers). Thus,%
\begin{align*}
\underbrace{u^{I}}_{\substack{=u^{I-n}u^{n}\\\text{(since }I\geq n\text{)}%
}}x^{J}  &  =u^{I-n}\underbrace{u^{n}}_{\substack{\in\left\langle u^{0}%
,u^{1},\ldots,u^{n-1}\right\rangle _{A}\cdot\left\langle x^{0},x^{1}%
,\ldots,x^{\nu}\right\rangle _{A}\\\left(  \text{by (\ref{L18-1})}\right)
}}x^{J}\\
&  \in\underbrace{u^{I-n}\left\langle u^{0},u^{1},\ldots,u^{n-1}\right\rangle
_{A}}_{\substack{=\left\langle u^{I-n}u^{0},u^{I-n}u^{1},\ldots,u^{I-n}%
u^{n-1}\right\rangle _{A}\\=\left\langle u^{\left(  I-n\right)  +0},u^{\left(
I-n\right)  +1},\ldots,u^{\left(  I-n\right)  +\left(  n-1\right)
}\right\rangle _{A}\\=\left\langle u^{I-n},u^{I-n+1},\ldots,u^{I-1}%
\right\rangle _{A}\\\subseteq\left\langle u^{0},u^{1},\ldots,u^{I-1}%
\right\rangle _{A}\\\text{(since }\left\{  I-n,I-n+1,\ldots,I-1\right\}
\subseteq\left\{  0,1,\ldots,I-1\right\}  \\\text{(since }I-n\geq0\text{))}%
}}\cdot\underbrace{\left\langle x^{0},x^{1},\ldots,x^{\nu}\right\rangle
_{A}x^{J}}_{\substack{=\left\langle x^{0}x^{J},x^{1}x^{J},\ldots,x^{\nu}%
x^{J}\right\rangle _{A}\\=\left\langle x^{0+J},x^{1+J},\ldots,x^{\nu
+J}\right\rangle _{A}\\=\left\langle x^{J},x^{J+1},\ldots,x^{J+\nu
}\right\rangle _{A}\\\subseteq\left\langle x^{0},x^{1},\ldots,x^{\mu+\nu
-1}\right\rangle _{A}\\\text{(since }\left\{  J,J+1,\ldots,J+\nu\right\}
\subseteq\left\{  0,1,\ldots,\mu+\nu-1\right\}  \\\text{(since }J\geq0\text{
and }J+\nu\leq\mu+\nu-1\text{))}}}\\
&  \subseteq\left\langle u^{0},u^{1},\ldots,u^{I-1}\right\rangle _{A}%
\cdot\left\langle x^{0},x^{1},\ldots,x^{\mu+\nu-1}\right\rangle _{A}\subseteq
U\ \ \ \ \ \ \ \ \ \ \left(  \text{by (\ref{L18-indL2final-long})}\right)  .
\end{align*}
Thus, we have proved that $u^{I}x^{J}\in U$ holds in Case 3.

Finally, let us consider Case 4. In this case, we have $I<n$ and $J<\mu$.
Thus, $I\in\left\{  0,1,\ldots,n-1\right\}  $ (since $I<n$ and $I\in
\mathbb{N}$) and $J\in\left\{  0,1,\ldots,\mu-1\right\}  $ (since $J<\mu$ and
$J\in\mathbb{N}$). Thus,%
\begin{align*}
\left(  I,J\right)   &  \in\left\{  0,1,\ldots,n-1\right\}  \times\left\{
0,1,\ldots,\mu-1\right\} \\
&  \subseteq\left(  \left\{  0,1,\ldots,n-1\right\}  \times\left\{
0,1,\ldots,\mu-1\right\}  \right) \\
&  \ \ \ \ \ \ \ \ \ \ \cup\left(  \left\{  0,1,\ldots,m-1\right\}
\times\left\{  \mu,\mu+1,\ldots,\mu+\nu-1\right\}  \right) \\
&  =S\ \ \ \ \ \ \ \ \ \ \left(  \text{by (\ref{pf.Lemma18.S=})}\right)  ,
\end{align*}
so that $u^{I}x^{J}\in U$ (by (\ref{L18-U-long}), applied to $I$ and $J$
instead of $i$ and $j$). Thus, we have proved that $u^{I}x^{J}\in U$ holds in
Case 4.

Therefore, we have proved that $u^{I}x^{J}\in U$ holds in each of the four
Cases 1, 2, 3 and 4. Hence, $u^{I}x^{J}\in U$ always holds.

Now, forget our assumption that $J<\mu+\nu$. Hence, we have proved that if
$J<\mu+\nu$, then $u^{I}x^{J}\in U$. In other words, we have proved the
assertion $\mathfrak{B}\left(  J\right)  $ (because $\mathfrak{B}\left(
J\right)  =\left(  \text{if }J<\mu+\nu\text{, then }u^{I}x^{J}\in U\right)  $).

Thus, we have proved (\ref{L18-indJ-long}).]

Hence, Proposition \ref{prop.strind2} yields that%
\[
\text{every }j\in\mathbb{N}\text{ satisfies }\mathfrak{B}\left(  j\right)
\text{.}%
\]
In other words,%
\[
\text{every }j\in\mathbb{N}\text{ satisfies }\left(  \text{if }j<\mu
+\nu\text{, then }u^{I}x^{j}\in U\right)
\]
(because of (\ref{pf.Lemma18.Bj=})). In other words, the assertion
$\mathfrak{A}\left(  I\right)  $ holds (because \newline$\mathfrak{A}\left(
I\right)  =\left(  \text{every }j\in\mathbb{N}\text{ satisfies }\left(
\text{if }j<\mu+\nu\text{, then }u^{I}x^{j}\in U\right)  \right)  $).

Thus, we have proved (\ref{L18-indI-long}).]

Hence, Proposition \ref{prop.strind1} yields that%
\[
\text{every }i\in\mathbb{N}\text{ satisfies }\mathfrak{A}\left(  i\right)
\text{.}%
\]
In other words,%
\[
\text{every }i\in\mathbb{N}\text{ satisfies }\left(  \text{every }%
j\in\mathbb{N}\text{ satisfies }\left(  \text{if }j<\mu+\nu\text{, then }%
u^{i}x^{j}\in U\right)  \right)
\]
(since $\mathfrak{A}\left(  i\right)  =\left(  \text{every }j\in
\mathbb{N}\text{ satisfies }\left(  \text{if }j<\mu+\nu\text{, then }%
u^{i}x^{j}\in U\right)  \right)  $). This is equivalent to
(\ref{L18-indU-long}). Thus, (\ref{L18-indU-long}) is proven.]

We have $0<\mu+\nu$ (since $\mu+\nu\in\mathbb{N}^{+}$). Thus, we can apply
(\ref{L18-indU-long}) to $i=0$ and $j=0$. As a result, we obtain $u^{0}%
x^{0}\in U$. In view of $\underbrace{u^{0}}_{=1}\underbrace{x^{0}}_{=1}=1$,
this rewrites as $1\in U$.

Furthermore, if $i\in\mathbb{N}$ and $j\in\mathbb{N}$ satisfy $j<\mu+\nu$,
then%
\[
\underbrace{u\cdot u^{i}}_{=u^{i+1}}x^{j}=u^{i+1}x^{j}\in U
\]
(by (\ref{L18-indU-long}) (applied to $i+1$ instead of $i$)). Hence,
\begin{equation}
u\cdot u^{i}x^{j}\in U\text{ for every }\left(  i,j\right)  \in S,
\label{L18-indUkonsequenz-long}%
\end{equation}
because every $\left(  i,j\right)  \in S$ satisfies $i\in\mathbb{N}$ and
$j\in\mathbb{N}$ and $j<\mu+\nu$ (by (\ref{L18-banal-long})). Hence,
$\sum\limits_{\left(  i,j\right)  \in S}a_{i,j}\underbrace{u\cdot u^{i}x^{j}%
}_{\substack{\in U\\\text{(by (\ref{L18-indUkonsequenz-long}))}}}\in U$ for
every $\left(  a_{i,j}\right)  _{\left(  i,j\right)  \in S}\in A^{S}$ (since
$U$ is an $A$-module and thus is closed under $A$-linear combination).

Now, from $U=\left\langle u^{i}x^{j}\ \mid\ \left(  i,j\right)  \in
S\right\rangle _{A}$, we obtain%
\begin{align*}
uU  &  =u\left\langle u^{i}x^{j}\ \mid\ \left(  i,j\right)  \in S\right\rangle
_{A}=\left\langle u\cdot u^{i}x^{j}\ \mid\ \left(  i,j\right)  \in
S\right\rangle _{A}\\
&  =\left\{  \sum\limits_{\left(  i,j\right)  \in S}a_{i,j}u\cdot u^{i}%
x^{j}\ \mid\ \left(  a_{i,j}\right)  _{\left(  i,j\right)  \in S}\in
A^{S}\right\}  \subseteq U
\end{align*}
(because $\sum\limits_{\left(  i,j\right)  \in S}a_{i,j}u\cdot u^{i}x^{j}\in
U$ for every $\left(  a_{i,j}\right)  _{\left(  i,j\right)  \in S}\in A^{S}$).

Altogether, $U$ is an $\left(  n\mu+m\nu\right)  $-generated $A$-submodule of
$B$ such that $1\in U$ and $uU\subseteq U$. Thus, $u\in B$ satisfies Assertion
$\mathcal{C}$ of Theorem~\ref{Theorem1} with $n$ replaced by $n\mu+m\nu$.
Hence, $u\in B$ satisfies the four equivalent assertions $\mathcal{A}$,
$\mathcal{B}$, $\mathcal{C}$ and $\mathcal{D}$ of Theorem~\ref{Theorem1} with
$n$ replaced by $n\mu+m\nu$. Consequently, $u$ is $\left(  n\mu+m\nu\right)
$-integral over $A$. This proves Lemma~\ref{Lemma18}.
\end{proof}
\end{verlong}

We record a weaker variant of Lemma~\ref{Lemma18}:

\begin{lemma}
\label{Lemma19} Let $A$ be a ring. Let $B$ be an $A$-algebra. Let $x\in B$ and
$y\in B$ be such that $xy\in A$. Let $m\in\mathbb{N}$ and $n\in\mathbb{N}$.
Let $u\in B$. Let $\mu\in\mathbb{N}$ and $\nu\in\mathbb{N}$ be such that
$\mu+\nu\in\mathbb{N}^{+}$. Assume that%
\begin{equation}
u^{n}\in\left\langle u^{0},u^{1},\ldots,u^{n-1}\right\rangle _{A}%
\cdot\left\langle x^{0},x^{1},\ldots,x^{\nu}\right\rangle _{A} \label{L19-1}%
\end{equation}
and that%
\begin{align}
u^{m}  &  \in\left\langle u^{0},u^{1},\ldots,u^{m-1}\right\rangle _{A}%
\cdot\left\langle y^{0},y^{1},\ldots,y^{\mu}\right\rangle _{A}\nonumber\\
&  \ \ \ \ \ \ \ \ \ \ +\left\langle u^{0},u^{1},\ldots,u^{m}\right\rangle
_{A}\cdot\left\langle y^{1},y^{2},\ldots,y^{\mu}\right\rangle _{A}.
\label{L19-2}%
\end{align}
Then, $u$ is $\left(  n\mu+m\nu\right)  $-integral over $A$.
\end{lemma}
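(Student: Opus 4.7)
The plan is to reduce Lemma~\ref{Lemma19} directly to Lemma~\ref{Lemma18} by multiplying the hypothesis \eqref{L19-2} by $x^{\mu}$ and then using the hypothesis $xy\in A$ to convert powers of $y$ into (scalars times) negative-indexed powers of $x$. Concretely, since $xy\in A$, every $j\in\mathbb{N}$ with $j\leq\mu$ satisfies
\[
y^{j}x^{\mu}=(xy)^{j}\,x^{\mu-j}\in A\cdot x^{\mu-j},
\]
so $y^{j}x^{\mu}\in\left\langle x^{\mu-j}\right\rangle _{A}$. This is the only piece of actual computation the proof needs.

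From this key observation I would first derive two inclusions between $A$-submodules of $B$:
\[
\left\langle y^{0},y^{1},\ldots,y^{\mu}\right\rangle _{A}\cdot x^{\mu
}\subseteq\left\langle x^{0},x^{1},\ldots,x^{\mu}\right\rangle _{A}
\qquad\text{and}\qquad
\left\langle y^{1},y^{2},\ldots,y^{\mu}\right\rangle _{A}\cdot x^{\mu
}\subseteq\left\langle x^{0},x^{1},\ldots,x^{\mu-1}\right\rangle _{A}.
\]
Both are immediate from the displayed formula together with Lemma~\ref{lem.mil}: for the first, $\mu-j$ ranges over $\{0,1,\ldots,\mu\}$ as $j$ does; for the second, $\mu-j$ ranges over $\{0,1,\ldots,\mu-1\}$ as $j$ runs through $\{1,2,\ldots,\mu\}$. (In the edge case $\mu=0$ the second inclusion reads $\{0\}\subseteq\{0\}$ and is trivial.)

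Next, I would multiply both sides of the hypothesis \eqref{L19-2} by $x^{\mu}$ and apply the two inclusions above term by term. This converts \eqref{L19-2} into
\[
u^{m}x^{\mu}\in\left\langle u^{0},u^{1},\ldots,u^{m-1}\right\rangle _{A}
\cdot\left\langle x^{0},x^{1},\ldots,x^{\mu}\right\rangle _{A}
+\left\langle u^{0},u^{1},\ldots,u^{m}\right\rangle _{A}\cdot\left\langle
x^{0},x^{1},\ldots,x^{\mu-1}\right\rangle _{A},
\]
which is precisely the hypothesis \eqref{L18-2} of Lemma~\ref{Lemma18}. The remaining hypothesis \eqref{L18-1} of Lemma~\ref{Lemma18} is identical to \eqref{L19-1}. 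Thus Lemma~\ref{Lemma18} applies verbatim with the same $x$, $u$, $m$, $n$, $\mu$, $\nu$, and yields that $u$ is $(n\mu+m\nu)$-integral over $A$, as desired.

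There is no real obstacle in this argument: the only nontrivial ingredient is the use of $xy\in A$ to rewrite $y^{j}x^{\mu}$ as $(xy)^{j}x^{\mu-j}$, and once that is in place the rest is bookkeeping with submodule notation and a direct invocation of Lemma~\ref{Lemma18}.
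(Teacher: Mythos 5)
Your proof is correct and follows essentially the same route as the paper: exploit $xy\in A$ to write $y^{j}x^{\mu}=(xy)^{j}x^{\mu-j}\in\left\langle x^{\mu-j}\right\rangle _{A}$, deduce the two submodule inclusions, multiply \eqref{L19-2} by $x^{\mu}$ to obtain \eqref{L18-2}, and invoke Lemma~\ref{Lemma18}. The edge-case remark for $\mu=0$ is a nice touch, though unnecessary since the empty spans are automatically the zero module on both sides.
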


\begin{vershort}
\begin{proof}
[Proof of Lemma~\ref{Lemma19}.](Again, this proof appears in greater detail in
\cite{verlong}.) We have%
\begin{equation}
\left\langle y^{0},y^{1},\ldots,y^{\mu}\right\rangle _{A}x^{\mu}%
\subseteq\left\langle x^{0},x^{1},\ldots,x^{\mu}\right\rangle _{A},
\label{L19-Pa-short}%
\end{equation}
since every $i\in\left\{  0,1,\ldots,\mu\right\}  $ satisfies%
\begin{align}
y^{i}\underbrace{x^{\mu}}_{=x^{\mu-i}x^{i}}  &  =y^{i}x^{\mu-i}x^{i}%
=\underbrace{x^{i}y^{i}}_{\substack{=\left(  xy\right)  ^{i}\in
A\\\text{(since }xy\in A\text{)}}}x^{\mu-i}\in Ax^{\mu-i}=\left\langle
x^{\mu-i}\right\rangle _{A}\label{L19-P1-short}\\
&  \subseteq\left\langle x^{0},x^{1},\ldots,x^{\mu}\right\rangle
_{A}\ \ \ \ \ \ \ \ \ \ \left(  \text{since }\mu-i\in\left\{  0,1,\ldots
,\mu\right\}  \right)  .\nonumber
\end{align}
Besides,%
\begin{equation}
\left\langle y^{1},y^{2},\ldots,y^{\mu}\right\rangle _{A}x^{\mu}%
\subseteq\left\langle x^{0},x^{1},\ldots,x^{\mu-1}\right\rangle _{A},
\label{L19-Pb-short}%
\end{equation}
since every $i\in\left\{  1,2,\ldots,\mu\right\}  $ satisfies%
\begin{align*}
y^{i}x^{\mu}  &  \in\left\langle x^{\mu-i}\right\rangle _{A}%
\ \ \ \ \ \ \ \ \ \ \left(  \text{by (\ref{L19-P1-short})}\right) \\
&  \subseteq\left\langle x^{0},x^{1},\ldots,x^{\mu-1}\right\rangle
_{A}\ \ \ \ \ \ \ \ \ \ \left(  \text{since }\mu-i\in\left\{  0,1,\ldots
,\mu-1\right\}  \right)  .
\end{align*}

Now, (\ref{L19-2}) yields%
\begin{align*}
u^{m}x^{\mu}  &  \in\left(  \left\langle u^{0},u^{1},\ldots,u^{m-1}%
\right\rangle _{A}\cdot\left\langle y^{0},y^{1},\ldots,y^{\mu}\right\rangle
_{A}\right. \\
&  \ \ \ \ \ \ \ \ \ \ \left.  +\left\langle u^{0},u^{1},\ldots,u^{m}%
\right\rangle _{A}\cdot\left\langle y^{1},y^{2},\ldots,y^{\mu}\right\rangle
_{A}\right)  x^{\mu}\\
&  =\left\langle u^{0},u^{1},\ldots,u^{m-1}\right\rangle _{A}\cdot
\underbrace{\left\langle y^{0},y^{1},\ldots,y^{\mu}\right\rangle _{A}x^{\mu}%
}_{\substack{\subseteq\left\langle x^{0},x^{1},\ldots,x^{\mu}\right\rangle
_{A}\\\left(  \text{by (\ref{L19-Pa-short})}\right)  }}\\
&  \ \ \ \ \ \ \ \ \ \ +\left\langle u^{0},u^{1},\ldots,u^{m}\right\rangle
_{A}\cdot\underbrace{\left\langle y^{1},y^{2},\ldots,y^{\mu}\right\rangle
_{A}x^{\mu}}_{\substack{\subseteq\left\langle x^{0},x^{1},\ldots,x^{\mu
-1}\right\rangle _{A}\\\left(  \text{by (\ref{L19-Pb-short})}\right)  }}\\
&  \subseteq\left\langle u^{0},u^{1},\ldots,u^{m-1}\right\rangle _{A}%
\cdot\left\langle x^{0},x^{1},\ldots,x^{\mu}\right\rangle _{A}\\
&  \ \ \ \ \ \ \ \ \ \ +\left\langle u^{0},u^{1},\ldots,u^{m}\right\rangle
_{A}\cdot\left\langle x^{0},x^{1},\ldots,x^{\mu-1}\right\rangle _{A}.
\end{align*}
In other words, (\ref{L18-2}) holds. Also, (\ref{L18-1}) holds (because
(\ref{L19-1}) holds, and because (\ref{L18-1}) is the same as (\ref{L19-1})).
Thus, Lemma~\ref{Lemma18} yields that $u$ is $\left(  n\mu+m\nu\right)
$-integral over $A$. This proves Lemma~\ref{Lemma19}.
\end{proof}
\end{vershort}

\begin{verlong}
\begin{proof}
[Proof of Lemma~\ref{Lemma19}.]Fix $p\in\mathbb{N}$.

Let $i\in\left\{  p,p+1,\ldots,\mu\right\}  $. Thus, $i\geq p$ and $i\leq\mu$.
From $i\in\left\{  p,p+1,\ldots,\mu\right\}  $, we obtain $\mu-i\in\left\{
0,1,\ldots,\mu-p\right\}  $, so that $\left\{  \mu-i\right\}  \subseteq
\left\{  0,1,\ldots,\mu-p\right\}  $. Also, $i\leq\mu$, thus $\mu-i\geq0$, so
that%
\begin{align}
y^{i}\underbrace{x^{\mu}}_{=x^{\mu-i}x^{i}}  &  =y^{i}x^{\mu-i}x^{i}%
=\underbrace{x^{i}y^{i}}_{\substack{=\left(  xy\right)  ^{i}\in
A\\\text{(since }xy\in A\text{)}}}x^{\mu-i}\in Ax^{\mu-i}=\left\langle
x^{\mu-i}\right\rangle _{A}\label{L19-P1}\\
&  \subseteq\left\langle x^{0},x^{1},\ldots,x^{\mu-p}\right\rangle _{A}
\label{L19-P2}%
\end{align}
(since $\left\{  \mu-i\right\}  \subseteq\left\{  0,1,\ldots,\mu-p\right\}  $).

Now, forget that we fixed $i$. We thus have proven (\ref{L19-P2}) for each
$i\in\left\{  p,p+1,\ldots,\mu\right\}  $. Hence, every $\left(  a_{i}\right)
_{i\in\left\{  p,p+1,\ldots,\mu\right\}  }\in A^{\left\{  p,p+1,\ldots
,\mu\right\}  }$ satisfies%
\[
\sum\limits_{i\in\left\{  p,p+1,\ldots,\mu\right\}  }a_{i}\underbrace{y^{i}%
x^{\mu}}_{\substack{\in\left\langle x^{0},x^{1},\ldots,x^{\mu-p}\right\rangle
_{A}\\\text{(by (\ref{L19-P2}))}}}\in\sum\limits_{i\in\left\{  p,p+1,\ldots
,\mu\right\}  }a_{i}\left\langle x^{0},x^{1},\ldots,x^{\mu-p}\right\rangle
_{A}\subseteq\left\langle x^{0},x^{1},\ldots,x^{\mu-p}\right\rangle _{A}%
\]
(because $\left\langle x^{0},x^{1},\ldots,x^{\mu-p}\right\rangle _{A}$ is an
$A$-module). In other words,%
\begin{equation}
\left\{  \sum\limits_{i\in\left\{  p,p+1,\ldots,\mu\right\}  }a_{i}y^{i}%
x^{\mu}\ \mid\ \left(  a_{i}\right)  _{i\in\left\{  p,p+1,\ldots,\mu\right\}
}\in A^{\left\{  p,p+1,\ldots,\mu\right\}  }\right\}  \subseteq\left\langle
x^{0},x^{1},\ldots,x^{\mu-p}\right\rangle _{A}. \label{L19-P2c}%
\end{equation}
Now,%
\begin{align}
\underbrace{\left\langle y^{p},y^{p+1},\ldots,y^{\mu}\right\rangle _{A}%
}_{=\left\langle y^{i}\ \mid\ i\in\left\{  p,p+1,\ldots,\mu\right\}
\right\rangle _{A}}x^{\mu}  &  =\left\langle y^{i}\ \mid\ i\in\left\{
p,p+1,\ldots,\mu\right\}  \right\rangle _{A}x^{\mu}\nonumber\\
&  =\left\langle y^{i}x^{\mu}\ \mid\ i\in\left\{  p,p+1,\ldots,\mu\right\}
\right\rangle _{A}\nonumber\\
&  =\left\{  \sum\limits_{i\in\left\{  p,p+1,\ldots,\mu\right\}  }a_{i}%
y^{i}x^{\mu}\ \mid\ \left(  a_{i}\right)  _{i\in\left\{  p,p+1,\ldots
,\mu\right\}  }\in A^{\left\{  p,p+1,\ldots,\mu\right\}  }\right\} \nonumber\\
&  \subseteq\left\langle x^{0},x^{1},\ldots,x^{\mu-p}\right\rangle _{A}
\label{L19-Pa}%
\end{align}
(by (\ref{L19-P2c})).

Forget that we fixed $p$. We thus have proven (\ref{L19-Pa}) for each
$p\in\mathbb{N}$. Applying (\ref{L19-Pa}) to $p=0$, we find%
\begin{equation}
\left\langle y^{0},y^{1},\ldots,y^{\mu}\right\rangle _{A}x^{\mu}%
\subseteq\left\langle x^{0},x^{1},\ldots,x^{\mu-0}\right\rangle _{A}%
=\left\langle x^{0},x^{1},\ldots,x^{\mu}\right\rangle _{A} \label{L19-Pa0}%
\end{equation}
(since $\mu-0=\mu$). Applying (\ref{L19-Pa}) to $p=1$, we find%
\begin{equation}
\left\langle y^{1},y^{2},\ldots,y^{\mu}\right\rangle _{A}x^{\mu}%
\subseteq\left\langle x^{0},x^{1},\ldots,x^{\mu-1}\right\rangle _{A}.
\label{L19-Pa1}%
\end{equation}

Now, (\ref{L19-2}) yields%
\begin{align*}
&  u^{m}x^{\mu}\\
&  \in\left(  \left\langle u^{0},u^{1},\ldots,u^{m-1}\right\rangle _{A}%
\cdot\left\langle y^{0},y^{1},\ldots,y^{\mu}\right\rangle _{A}+\left\langle
u^{0},u^{1},\ldots,u^{m}\right\rangle _{A}\cdot\left\langle y^{1},y^{2}%
,\ldots,y^{\mu}\right\rangle _{A}\right)  x^{\mu}\\
&  =\left\langle u^{0},u^{1},\ldots,u^{m-1}\right\rangle _{A}\cdot
\underbrace{\left\langle y^{0},y^{1},\ldots,y^{\mu}\right\rangle _{A}x^{\mu}%
}_{\substack{\subseteq\left\langle x^{0},x^{1},\ldots,x^{\mu}\right\rangle
_{A}\\\left(  \text{by (\ref{L19-Pa0})}\right)  }}+\left\langle u^{0}%
,u^{1},\ldots,u^{m}\right\rangle _{A}\cdot\underbrace{\left\langle y^{1}%
,y^{2},\ldots,y^{\mu}\right\rangle _{A}x^{\mu}}_{\substack{\subseteq
\left\langle x^{0},x^{1},\ldots,x^{\mu-1}\right\rangle _{A}\\\left(  \text{by
(\ref{L19-Pa1})}\right)  }}\\
&  \subseteq\left\langle u^{0},u^{1},\ldots,u^{m-1}\right\rangle _{A}%
\cdot\left\langle x^{0},x^{1},\ldots,x^{\mu}\right\rangle _{A}+\left\langle
u^{0},u^{1},\ldots,u^{m}\right\rangle _{A}\cdot\left\langle x^{0},x^{1}%
,\ldots,x^{\mu-1}\right\rangle _{A}.
\end{align*}
In other words, (\ref{L18-2}) holds. Also, (\ref{L18-1}) holds (because
(\ref{L19-1}) holds, and because (\ref{L18-1}) is the same as (\ref{L19-1})).
Thus, Lemma~\ref{Lemma18} yields that $u$ is $\left(  n\mu+m\nu\right)
$-integral over $A$. This proves Lemma~\ref{Lemma19}.
\end{proof}
\end{verlong}

We now come to something trivial:

\begin{lemma}
\label{Lemma20} Let $A$ be a ring. Let $B$ be an $A$-algebra. Let $x\in B$.
Let $n\in\mathbb{N}$. Let $u\in B$. Assume that $u$ is $n$-integral over
$A\left[  x\right]  $. Then, there exists some $\nu\in\mathbb{N}^{+}$ such
that%
\[
u^{n}\in\left\langle u^{0},u^{1},\ldots,u^{n-1}\right\rangle _{A}%
\cdot\left\langle x^{0},x^{1},\ldots,x^{\nu}\right\rangle _{A}.
\]

\end{lemma}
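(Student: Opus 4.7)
The plan is to unpack the assumption that $u$ is $n$-integral over $A[x]$ using Assertion $\mathcal{A}$ of Theorem~\ref{Theorem1}, and then observe that the coefficients of the resulting monic polynomial are polynomials in $x$ over $A$, which gives the desired factorization.

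More precisely, first I would apply Theorem~\ref{Theorem1} (with $A$ replaced by $A[x]$) to obtain a monic polynomial $P \in (A[x])[X]$ with $\deg P = n$ and $P(u) = 0$. Writing $P(X) = X^n + \sum_{k=0}^{n-1} b_k X^k$ for some elements $b_0, b_1, \ldots, b_{n-1} \in A[x]$, the equation $P(u) = 0$ rearranges to
\[
u^n = -\sum_{k=0}^{n-1} b_k u^k.
\]

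Next, since each $b_k$ lies in $A[x]$, there exist $\nu_k \in \mathbb{N}$ and elements $a_{k,0}, a_{k,1}, \ldots, a_{k,\nu_k} \in A$ with $b_k = \sum_{j=0}^{\nu_k} a_{k,j} x^j$. Set $\nu = \max(1, \nu_0, \nu_1, \ldots, \nu_{n-1})$; then $\nu \in \mathbb{N}^+$, and each $b_k$ (and hence each $-b_k$) lies in $\langle x^0, x^1, \ldots, x^\nu\rangle_A$. At the same time, $u^k \in \langle u^0, u^1, \ldots, u^{n-1}\rangle_A$ for each $k \in \{0, 1, \ldots, n-1\}$.

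Finally, combining these observations,
\[
u^n = \sum_{k=0}^{n-1} u^k \cdot (-b_k) \in \langle u^0, u^1, \ldots, u^{n-1}\rangle_A \cdot \langle x^0, x^1, \ldots, x^\nu\rangle_A,
\]
which is the desired conclusion. There is essentially no obstacle here: the lemma amounts to repackaging the monic equation of integrality $P(u) = 0$ so that the coefficients in $A[x]$ are explicitly written as $A$-linear combinations of powers of $x$. The only mild care needed is choosing $\nu \geq 1$ so that $\nu \in \mathbb{N}^+$ (which the hypothesis requires), which is harmless since enlarging $\nu$ only enlarges the right-hand side.
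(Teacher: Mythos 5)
Your proof is correct and follows essentially the same route as the paper's: extract a monic polynomial $P\in (A[x])[X]$ of degree $n$ annihilating $u$, rearrange $P(u)=0$ to express $u^n$ as an $A[x]$-combination of $u^0,\ldots,u^{n-1}$, write each coefficient as an $A$-combination of powers of $x$, and take $\nu$ to be the maximum of the individual degree bounds and $1$. No gaps; the handling of $\nu\in\mathbb{N}^{+}$ is exactly as in the paper.
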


\begin{vershort}
\begin{proof}
[Proof of Lemma~\ref{Lemma20}.]Again, see \cite{verlong} for more details on
this argument; here we only show a quick sketch: Since $u$ is $n$-integral
over $A\left[  x\right]  $, there exists a monic polynomial $P\in\left(
A\left[  x\right]  \right)  \left[  X\right]  $ with $\deg P=n$ and $P\left(
u\right)  =0$. Denoting the coefficients of this polynomial $P$ by $\alpha
_{0},\alpha_{1},\ldots,\alpha_{n}$ (where $\alpha_{n}=1$), we can rewrite the
equality $P\left(  u\right)  =0$ as $u^{n}=-\sum\limits_{i=0}^{n-1}\alpha
_{i}u^{i}$. Note that $\alpha_{i}\in A\left[  x\right]  $ for all $i$. Now,
there exists some $\nu\in\mathbb{N}^{+}$ such that $\alpha_{i}\in\left\langle
x^{0},x^{1},\ldots,x^{\nu}\right\rangle _{A}$ for every $i\in\left\{
0,1,\ldots,n-1\right\}  $ (because for each $i\in\left\{  0,1,\ldots
,n-1\right\}  $, we have $\alpha_{i}\in A\left[  x\right]  =\bigcup
\limits_{\nu=0}^{\infty}\left\langle x^{0},x^{1},\ldots,x^{\nu}\right\rangle
_{A}$, so that $\alpha_{i}\in\left\langle x^{0},x^{1},\ldots,x^{\nu_{i}%
}\right\rangle _{A}$ for some $\nu_{i}\in\mathbb{N}$; now take $\nu
=\max\left\{  \nu_{0},\nu_{1},\ldots,\nu_{n-1},1\right\}  $). This $\nu$ then
satisfies%
\begin{align*}
u^{n}  &  =-\sum\limits_{i=0}^{n-1}\alpha_{i}u^{i}=-\sum\limits_{i=0}%
^{n-1}\underbrace{u^{i}}_{\in\left\langle u^{0},u^{1},\ldots,u^{n-1}%
\right\rangle _{A}}\underbrace{\alpha_{i}}_{\in\left\langle x^{0},x^{1}%
,\ldots,x^{\nu}\right\rangle _{A}}\\
&  \in\left\langle u^{0},u^{1},\ldots,u^{n-1}\right\rangle _{A}\cdot
\left\langle x^{0},x^{1},\ldots,x^{\nu}\right\rangle _{A},
\end{align*}
and Lemma~\ref{Lemma20} is proven.
\end{proof}
\end{vershort}

\begin{verlong}
\begin{proof}
[Proof of Lemma~\ref{Lemma20}.]There exists a monic polynomial $P\in\left(
A\left[  x\right]  \right)  \left[  X\right]  $ with $\deg P=n$ and $P\left(
u\right)  =0$ (since $u$ is $n$-integral over $A\left[  x\right]  $). Consider
this $P$. Since $P\in\left(  A\left[  x\right]  \right)  \left[  X\right]  $
is a monic polynomial with $\deg P=n$, there exist elements $\alpha_{0}%
,\alpha_{1},\ldots,\alpha_{n-1}$ of $A\left[  x\right]  $ such that $P\left(
X\right)  =X^{n}+\sum\limits_{i=0}^{n-1}\alpha_{i}X^{i}$. Consider these
$\alpha_{0},\alpha_{1},\ldots,\alpha_{n-1}$. Substituting $u$ for $X$ in the
equality $P\left(  X\right)  =X^{n}+\sum\limits_{i=0}^{n-1}\alpha_{i}X^{i}$,
we find $P\left(  u\right)  =u^{n}+\sum\limits_{i=0}^{n-1}\alpha_{i}u^{i}$.
Comparing this with $P\left(  u\right)  =0$, we obtain $u^{n}+\sum
\limits_{i=0}^{n-1}\alpha_{i}u^{i}=0$. Hence, $u^{n}=-\sum\limits_{i=0}%
^{n-1}\alpha_{i}u^{i}$.

For every $i\in\left\{  0,1,\ldots,n-1\right\}  $, we have $\alpha_{i}\in
A\left[  x\right]  $, and thus there exist some $\nu_{i}\in\mathbb{N}$ and
some $\left(  \beta_{i,0},\beta_{i,1},\ldots,\beta_{i,\nu_{i}}\right)  \in
A^{\nu_{i}+1}$ such that $\alpha_{i}=\sum\limits_{k=0}^{\nu_{i}}\beta
_{i,k}x^{k}$. Consider these $\nu_{i}$ and $\left(  \beta_{i,0},\beta
_{i,1},\ldots,\beta_{i,\nu_{i}}\right)  $. Hence, for every $i\in\left\{
0,1,\ldots,n-1\right\}  $, we have
\begin{equation}
\alpha_{i}=\sum\limits_{k=0}^{\nu_{i}}\beta_{i,k}x^{k}\in\left\langle
x^{0},x^{1},\ldots,x^{\nu_{i}}\right\rangle _{A}. \label{L20.pf.4}%
\end{equation}

Let $\nu=\max\left\{  \nu_{0},\nu_{1},\ldots,\nu_{n-1},1\right\}  $. Thus,
$\nu$ is an integer satisfying $\nu\geq1$ (since $1\in\left\{  \nu_{0},\nu
_{1},\ldots,\nu_{n-1},1\right\}  $); hence, $\nu\in\mathbb{N}^{+}$.
Furthermore, for every $i\in\left\{  0,1,\ldots,n-1\right\}  $, we have
$\nu_{i}\in\left\{  \nu_{0},\nu_{1},\ldots,\nu_{n-1}\right\}  \subseteq
\left\{  \nu_{0},\nu_{1},\ldots,\nu_{n-1},1\right\}  $ and thus $\nu_{i}%
\leq\max\left\{  \nu_{0},\nu_{1},\ldots,\nu_{n-1},1\right\}  =\nu$, hence
$\left\{  0,1,\ldots,\nu_{i}\right\}  \subseteq\left\{  0,1,\ldots
,\nu\right\}  $, and thus
\begin{align}
\alpha_{i}  &  \in\left\langle x^{0},x^{1},\ldots,x^{\nu_{i}}\right\rangle
_{A}\ \ \ \ \ \ \ \ \ \ \left(  \text{by (\ref{L20.pf.4})}\right) \nonumber\\
&  \subseteq\left\langle x^{0},x^{1},\ldots,x^{\nu}\right\rangle _{A}
\label{L20.pf.5}%
\end{align}
(since $\left\{  0,1,\ldots,\nu_{i}\right\}  \subseteq\left\{  0,1,\ldots
,\nu\right\}  $). Therefore,%
\begin{align*}
u^{n}  &  =-\sum\limits_{i=0}^{n-1}\alpha_{i}u^{i}=-\sum\limits_{i=0}%
^{n-1}\underbrace{u^{i}}_{\substack{\in\left\langle u^{0},u^{1},\ldots
,u^{n-1}\right\rangle _{A}\\\text{(since }i\in\left\{  0,1,\ldots,n-1\right\}
\text{)}}}\underbrace{\alpha_{i}}_{\substack{\in\left\langle x^{0}%
,x^{1},\ldots,x^{\nu}\right\rangle _{A}\\\text{(by \eqref{L20.pf.5})}}}\\
&  \in-\sum\limits_{i=0}^{n-1}\left\langle u^{0},u^{1},\ldots,u^{n-1}%
\right\rangle _{A}\cdot\left\langle x^{0},x^{1},\ldots,x^{\nu}\right\rangle
_{A}\\
&  \subseteq\left\langle u^{0},u^{1},\ldots,u^{n-1}\right\rangle _{A}%
\cdot\left\langle x^{0},x^{1},\ldots,x^{\nu}\right\rangle _{A}%
\end{align*}
(since $\left\langle u^{0},u^{1},\ldots,u^{n-1}\right\rangle _{A}%
\cdot\left\langle x^{0},x^{1},\ldots,x^{\nu}\right\rangle _{A}$ is an
$A$-module). This proves Lemma~\ref{Lemma20}.
\end{proof}
\end{verlong}

\subsection{Integrality over $A\left[  x\right]  $ and over $A\left[
y\right]  $ implies integrality over $A\left[  xy\right]  $}

A consequence of Lemma~\ref{Lemma19} and Lemma~\ref{Lemma20} is the following theorem:

\begin{theorem}
\label{Theorem21} Let $A$ be a ring. Let $B$ be an $A$-algebra. Let $x\in B$
and $y\in B$ be such that $xy\in A$. Let $m\in\mathbb{N}$ and $n\in\mathbb{N}%
$. Let $u\in B$. Assume that $u$ is $n$-integral over $A\left[  x\right]  $,
and that $u$ is $m$-integral over $A\left[  y\right]  $. Then, there exists
some $\lambda\in\mathbb{N}$ such that $u$ is $\lambda$-integral over $A$.
\end{theorem}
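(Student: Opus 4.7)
The plan is to apply Lemma~\ref{Lemma19} in the most direct way possible. Lemma~\ref{Lemma19} is tailor-made for exactly this situation: it takes two containments -- one expressing a power of $u$ in terms of $x$ and $u$, the other expressing a power of $u$ in terms of $y$ and $u$ -- and produces an integrality statement for $u$ over $A$, provided $xy \in A$. Both containments will be supplied by Lemma~\ref{Lemma20}.

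First, I apply Lemma~\ref{Lemma20} to $u$, $x$, $n$ (using the hypothesis that $u$ is $n$-integral over $A\left[x\right]$) to obtain some $\nu \in \mathbb{N}^{+}$ with
\[
u^{n} \in \left\langle u^{0}, u^{1}, \ldots, u^{n-1} \right\rangle_{A} \cdot \left\langle x^{0}, x^{1}, \ldots, x^{\nu} \right\rangle_{A}.
\]
Next, I apply Lemma~\ref{Lemma20} again, but this time to $u$, $y$, $m$ (using the hypothesis that $u$ is $m$-integral over $A\left[y\right]$) to obtain some $\mu \in \mathbb{N}^{+}$ with
\[
u^{m} \in \left\langle u^{0}, u^{1}, \ldots, u^{m-1} \right\rangle_{A} \cdot \left\langle y^{0}, y^{1}, \ldots, y^{\mu} \right\rangle_{A}.
\]
Since the right-hand side of this last containment is a summand of the right-hand side in hypothesis (\ref{L19-2}) of Lemma~\ref{Lemma19}, we immediately get
\begin{align*}
u^{m} &\in \left\langle u^{0}, u^{1}, \ldots, u^{m-1} \right\rangle_{A} \cdot \left\langle y^{0}, y^{1}, \ldots, y^{\mu} \right\rangle_{A} \\
&\qquad + \left\langle u^{0}, u^{1}, \ldots, u^{m} \right\rangle_{A} \cdot \left\langle y^{1}, y^{2}, \ldots, y^{\mu} \right\rangle_{A}.
\end{align*}

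Since $\mu, \nu \in \mathbb{N}^{+}$, we have $\mu + \nu \in \mathbb{N}^{+}$, and $xy \in A$ by hypothesis. Hence, all the hypotheses of Lemma~\ref{Lemma19} are satisfied, and it yields that $u$ is $\left(n\mu + m\nu\right)$-integral over $A$. Setting $\lambda = n\mu + m\nu$ proves Theorem~\ref{Theorem21}. There is no real obstacle here: the content of the theorem is essentially packaged inside Lemma~\ref{Lemma19}, and the only nontrivial observation is that a single application of Lemma~\ref{Lemma20} on the $y$-side suffices because its conclusion fits into the first of the two summands required by Lemma~\ref{Lemma19}.
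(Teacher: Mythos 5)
Your proof is correct and follows essentially the same route as the paper's: apply Lemma~\ref{Lemma20} twice (once for the $A[x]$-hypothesis to produce $\nu$, once for the $A[y]$-hypothesis to produce $\mu$), observe that the second conclusion is stronger than hypothesis (\ref{L19-2}), and invoke Lemma~\ref{Lemma19} to obtain $\lambda = n\mu + m\nu$.
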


\begin{proof}
[Proof of Theorem~\ref{Theorem21}.]Since $u$ is $n$-integral over $A\left[
x\right]  $, Lemma~\ref{Lemma20} yields that there exists some $\nu
\in\mathbb{N}^{+}$ such that%
\[
u^{n}\in\left\langle u^{0},u^{1},\ldots,u^{n-1}\right\rangle _{A}%
\cdot\left\langle x^{0},x^{1},\ldots,x^{\nu}\right\rangle _{A}.
\]
In other words, there exists some $\nu\in\mathbb{N}^{+}$ such that
(\ref{L19-1}) holds. Consider this $\nu$.

\begin{vershort}
Since $u$ is $m$-integral over $A\left[  y\right]  $, Lemma~\ref{Lemma20}
(with $x$, $n$ and $\nu$ replaced by $y$, $m$ and $\mu$) yields that there
exists some $\mu\in\mathbb{N}^{+}$ such that%
\begin{equation}
u^{m}\in\left\langle u^{0},u^{1},\ldots,u^{m-1}\right\rangle _{A}%
\cdot\left\langle y^{0},y^{1},\ldots,y^{\mu}\right\rangle _{A}. \label{T21-P1}%
\end{equation}
Consider this $\mu$. Hence, (\ref{L19-2}) holds as well (because
(\ref{T21-P1}) is even stronger than (\ref{L19-2})).
\end{vershort}

\begin{verlong}
Since $u$ is $m$-integral over $A\left[  y\right]  $, Lemma~\ref{Lemma20}
(with $x$, $n$ and $\nu$ replaced by $y$, $m$ and $\mu$) yields that there
exists some $\mu\in\mathbb{N}^{+}$ such that%
\[
u^{m}\in\left\langle u^{0},u^{1},\ldots,u^{m-1}\right\rangle _{A}%
\cdot\left\langle y^{0},y^{1},\ldots,y^{\mu}\right\rangle _{A}.
\]
Consider this $\mu$. Hence,%
\begin{align*}
u^{m}  &  \in\left\langle u^{0},u^{1},\ldots,u^{m-1}\right\rangle _{A}%
\cdot\left\langle y^{0},y^{1},\ldots,y^{\mu}\right\rangle _{A}\\
&  \subseteq\left\langle u^{0},u^{1},\ldots,u^{m-1}\right\rangle _{A}%
\cdot\left\langle y^{0},y^{1},\ldots,y^{\mu}\right\rangle _{A}+\left\langle
u^{0},u^{1},\ldots,u^{m}\right\rangle _{A}\cdot\left\langle y^{1},y^{2}%
,\ldots,y^{\mu}\right\rangle _{A}.
\end{align*}
In other words, (\ref{L19-2}) holds. From $\mu\in\mathbb{N}^{+}$ and $\nu
\in\mathbb{N}^{+}$, we obtain $\mu+\nu\in\mathbb{N}^{+}$.
\end{verlong}

Since both (\ref{L19-1}) and (\ref{L19-2}) hold, Lemma~\ref{Lemma19} yields
that $u$ is $\left(  n\mu+m\nu\right)  $-integral over $A$. Thus, there exists
some $\lambda\in\mathbb{N}$ such that $u$ is $\lambda$-integral over $A$
(namely, $\lambda=n\mu+m\nu$). This proves Theorem~\ref{Theorem21}.
\end{proof}

We record a generalization of Theorem~\ref{Theorem21} (which will turn out to
be easily seen equivalent to Theorem~\ref{Theorem21}):

\begin{theorem}
\label{Theorem22} Let $A$ be a ring. Let $B$ be an $A$-algebra. Let $x\in B$
and $y\in B$. Let $m\in\mathbb{N}$ and $n\in\mathbb{N}$. Let $u\in B$. Assume
that $u$ is $n$-integral over $A\left[  x\right]  $, and that $u$ is
$m$-integral over $A\left[  y\right]  $. Then, there exists some $\lambda
\in\mathbb{N}$ such that $u$ is $\lambda$-integral over $A\left[  xy\right]  $.
\end{theorem}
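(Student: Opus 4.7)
The plan is to reduce Theorem~\ref{Theorem22} to Theorem~\ref{Theorem21} by enlarging the base ring from $A$ to $A' := A\left[xy\right]$. The point is that while $xy$ need not lie in $A$, it trivially lies in $A'$, so Theorem~\ref{Theorem21} becomes applicable once we replace $A$ by $A'$ everywhere.

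First I would observe that $A'$ is a subring of $B$ (hence $B$ is an $A'$-algebra via Definition~\ref{def.superring-as-alg}), and that $xy \in A'$ by construction. Next, since $A \subseteq A'$, the $A$-subalgebra $A\left[x\right]$ of $B$ is contained in the $A'$-subalgebra $A'\left[x\right]$ of $B$; hence $A'\left[x\right]$ is an $A\left[x\right]$-algebra. Because $u$ is $n$-integral over $A\left[x\right]$ by hypothesis, Lemma~\ref{lem.I} (applied to $A\left[x\right]$, $A'\left[x\right]$, $B$ and $u$ in the roles of $A$, $A'$, $B'$ and $v$) shows that $u$ is $n$-integral over $A'\left[x\right]$. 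The identical argument with $y$ in place of $x$ shows that $u$ is $m$-integral over $A'\left[y\right]$.

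Now I would apply Theorem~\ref{Theorem21} with $A$ replaced by $A' = A\left[xy\right]$, keeping $B$, $x$, $y$, $u$, $m$, $n$ unchanged. The hypothesis $xy \in A'$ is satisfied, and the two integrality assumptions of Theorem~\ref{Theorem21} are precisely what we just verified. The conclusion of Theorem~\ref{Theorem21} (in this relativized form) is that there exists some $\lambda \in \mathbb{N}$ such that $u$ is $\lambda$-integral over $A' = A\left[xy\right]$. This is exactly the statement of Theorem~\ref{Theorem22}.

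There is no serious obstacle here; the whole content is the observation that Theorem~\ref{Theorem21} can be bootstrapped by passing from $A$ to $A\left[xy\right]$. The only thing worth double-checking is that the containments $A\left[x\right] \subseteq A'\left[x\right]$ and $A\left[y\right] \subseteq A'\left[y\right]$ are genuine inclusions of subrings of $B$ (so that Lemma~\ref{lem.I} really does apply), which is immediate from $A \subseteq A' = A\left[xy\right] \subseteq B$ together with the universal property of the subalgebras $A\left[x\right]$ and $A\left[y\right]$.
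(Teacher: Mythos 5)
Your proof is correct and follows essentially the same route as the paper: let $C=A[xy]$, use Lemma~\ref{lem.I} to upgrade the two integrality hypotheses from $A[x]$-, $A[y]$-integrality to $C[x]$-, $C[y]$-integrality, and then invoke Theorem~\ref{Theorem21} over the enlarged base ring $C$, where $xy \in C$ holds trivially. The only minor point to spell out (as the paper does in a footnote) is why $A[x]\subseteq C[x]$ holds — it follows since every coefficient $a_i\in A$ gives $a_i\cdot 1_B\in C$ — but your appeal to the universal property covers this.
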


\begin{proof}
[Proof of Theorem~\ref{Theorem22}.]Let $C$ denote the $A$-subalgebra $A\left[
xy\right]  $ of $A$. Thus, $C=A\left[  xy\right]  $ is an $A$-subalgebra of
$B$, hence a subring of $B$. Thus, $C\left[  x\right]  $ is a $C$-subalgebra
of $B$, hence a subring of $B$. Note that $C=A\left[  xy\right]  =A\left[
yx\right]  $ (since $xy=yx$).

Furthermore, $A\left[  x\right]  $ is a subring of $C\left[  x\right]
$\ \ \ \ \footnote{\textit{Proof.} Both $A\left[  x\right]  $ and $C\left[
x\right]  $ are subrings of $B$.
\par
Now, let $\gamma\in A\left[  x\right]  $. Thus, there exist some
$p\in\mathbb{N}$ and some elements $a_{0},a_{1},\ldots,a_{p}$ of $A$ such that
$\gamma=\sum_{i=0}^{p}a_{i}x^{i}$. Consider this $p$ and these $a_{0}%
,a_{1},\ldots,a_{p}$. For each $i\in\left\{  0,1,\ldots,p\right\}  $, we have
$\underbrace{a_{i}}_{\in A}\cdot1_{B}\in A\cdot1_{B}\subseteq A\left[
xy\right]  =C$ (since $C=A\left[  xy\right]  $). Hence, $\sum_{i=0}^{p}\left(
a_{i}\cdot1_{B}\right)  x^{i}\in C\left[  x\right]  $. In view of
\[
\sum_{i=0}^{p}\left(  a_{i}\cdot1_{B}\right)  x^{i}=\sum_{i=0}^{p}a_{i}%
\cdot\underbrace{1_{B}x^{i}}_{=x^{i}}=\sum_{i=0}^{p}a_{i}x^{i}=\gamma
\ \ \ \ \ \ \ \ \ \ \left(  \text{since }\gamma=\sum_{i=0}^{p}a_{i}%
x^{i}\right)  ,
\]
this rewrites as $\gamma\in C\left[  x\right]  $.
\par
Forget that we fixed $\gamma$. We thus have shown that $\gamma\in C\left[
x\right]  $ for each $\gamma\in A\left[  x\right]  $. In other words,
$A\left[  x\right]  \subseteq C\left[  x\right]  $. Hence, $A\left[  x\right]
$ is a subring of $C\left[  x\right]  $ (since both $A\left[  x\right]  $ and
$C\left[  x\right]  $ are subrings of $B$).}. Thus, $C\left[  x\right]  $ is
an $A\left[  x\right]  $-algebra. Also, $B$ is a $C\left[  x\right]  $-algebra
(since $C\left[  x\right]  $ is a subring of $B$). Since $u$ is $n$-integral
over $A\left[  x\right]  $, Lemma \ref{lem.I} (applied to $B$, $C\left[
x\right]  $, $A\left[  x\right]  $ and $u$ instead of $B^{\prime}$,
$A^{\prime}$, $A$ and $v$) yields that $u$ is $n$-integral over $C\left[
x\right]  $. The same argument (but applied to $y$, $x$, $n$ and $m$ instead
of $x$, $y$, $m$ and $n$) shows that $u$ is $m$-integral over $C\left[
y\right]  $ (since $C=A\left[  yx\right]  $).

Now, $B$ is a $C$-algebra (since $C$ is a subring of $B$) and we have $xy\in
A\left[  xy\right]  =C$. Hence, Theorem~\ref{Theorem21} (applied to $C$
instead of $A$) yields that there exists some $\lambda\in\mathbb{N}$ such that
$u$ is $\lambda$-integral over $C$ (because $u$ is $n$-integral over $C\left[
x\right]  $, and because $u$ is $m$-integral over $C\left[  y\right]  $). In
other words, there exists some $\lambda\in\mathbb{N}$ such that $u$ is
$\lambda$-integral over $A\left[  xy\right]  $ (since $C=A\left[  xy\right]
$). This proves Theorem~\ref{Theorem22}.
\end{proof}

\subsection{Generalization to ideal semifiltrations}

Theorem~\ref{Theorem22} has a ``relative version'':

\begin{theorem}
\label{Theorem23} Let $A$ be a ring. Let $B$ be an $A$-algebra. Let $\left(
I_{\rho}\right)  _{\rho\in\mathbb{N}}$ be an ideal semifiltration of $A$. Let
$x\in B$ and $y\in B$.

\textbf{(a)} Then, $\left(  I_{\rho}A\left[  x\right]  \right)  _{\rho
\in\mathbb{N}}$ is an ideal semifiltration of $A\left[  x\right]  $. Besides,
$\left(  I_{\rho}A\left[  y\right]  \right)  _{\rho\in\mathbb{N}}$ is an ideal
semifiltration of $A\left[  y\right]  $. Besides, $\left(  I_{\rho}A\left[
xy\right]  \right)  _{\rho\in\mathbb{N}}$ is an ideal semifiltration of
$A\left[  xy\right]  $.

\textbf{(b)} Let $m\in\mathbb{N}$ and $n\in\mathbb{N}$. Let $u\in B$. Assume
that $u$ is $n$-integral over $\left(  A\left[  x\right]  ,\left(  I_{\rho
}A\left[  x\right]  \right)  _{\rho\in\mathbb{N}}\right)  $, and that $u$ is
$m$-integral over $\left(  A\left[  y\right]  ,\left(  I_{\rho}A\left[
y\right]  \right)  _{\rho\in\mathbb{N}}\right)  $. Then, there exists some
$\lambda\in\mathbb{N}$ such that $u$ is $\lambda$-integral over $\left(
A\left[  xy\right]  ,\left(  I_{\rho}A\left[  xy\right]  \right)  _{\rho
\in\mathbb{N}}\right)  $.
\end{theorem}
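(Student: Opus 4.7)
Part \textbf{(a)} is an immediate consequence of Lemma~\ref{lem.J}, applied three times: once with $A' = A\left[x\right]$, once with $A' = A\left[y\right]$, and once with $A' = A\left[xy\right]$ (each time using the fact that these are $A$-algebras).

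The plan for part \textbf{(b)} is to bootstrap Theorem~\ref{Theorem22} up from the ring level to the level of ideal semifiltrations, in exactly the same way that Theorem~\ref{Theorem11} \textbf{(b)} bootstraps Theorem~\ref{Theorem7}: by passing to the Rees algebra. Abbreviate $A_{\left[I\right]} := A\left[\left(I_{\rho}\right)_{\rho\in\mathbb{N}}\ast Y\right]$ as in Theorem~\ref{Theorem11}. The key identification, which appeared already as equation \eqref{pf.Theorem9.3} in the proof of Theorem~\ref{Theorem9}, is
\[
\left(A\left[v\right]\right)\left[\left(I_{\rho}A\left[v\right]\right)_{\rho\in\mathbb{N}}\ast Y\right]
= A_{\left[I\right]}\left[v\right]
\qquad\text{for any } v \in B,
\]
which follows from Lemma~\ref{lem.K} applied to $A^{\prime}=A_{\left[I\right]}$ and $B^{\prime}=\left(A\left[v\right]\right)\left[Y\right]$. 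We will use this identity for $v=x$, $v=y$, and $v=xy$.

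Now apply Theorem~\ref{Theorem7} (with $A$, $B$, and $\left(I_{\rho}\right)_{\rho\in\mathbb{N}}$ replaced by $A\left[x\right]$, $B$, and $\left(I_{\rho}A\left[x\right]\right)_{\rho\in\mathbb{N}}$): the element $uY$ of $B\left[Y\right]$ is $n$-integral over $\left(A\left[x\right]\right)\left[\left(I_{\rho}A\left[x\right]\right)_{\rho\in\mathbb{N}}\ast Y\right]$. By the identification above, this ring equals $A_{\left[I\right]}\left[x\right]$, so $uY$ is $n$-integral over $A_{\left[I\right]}\left[x\right]$. The analogous argument, applied with $y$ in place of $x$, shows that $uY$ is $m$-integral over $A_{\left[I\right]}\left[y\right]$.

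We now apply Theorem~\ref{Theorem22} with $A$ replaced by $A_{\left[I\right]}$, $B$ replaced by $B\left[Y\right]$, $u$ replaced by $uY$, and the elements $x, y \in B \subseteq B\left[Y\right]$ (which are now regarded as elements of the $A_{\left[I\right]}$-algebra $B\left[Y\right]$). This yields some $\lambda\in\mathbb{N}$ such that $uY$ is $\lambda$-integral over $A_{\left[I\right]}\left[xy\right]$. Using the identification once more (now with $v=xy$), this becomes: $uY$ is $\lambda$-integral over $\left(A\left[xy\right]\right)\left[\left(I_{\rho}A\left[xy\right]\right)_{\rho\in\mathbb{N}}\ast Y\right]$. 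Finally, applying Theorem~\ref{Theorem7} once more in the reverse direction (with $A$, $B$, $\left(I_{\rho}\right)_{\rho\in\mathbb{N}}$, $n$ replaced by $A\left[xy\right]$, $B$, $\left(I_{\rho}A\left[xy\right]\right)_{\rho\in\mathbb{N}}$, $\lambda$) converts this back into the statement that $u$ is $\lambda$-integral over $\left(A\left[xy\right], \left(I_{\rho}A\left[xy\right]\right)_{\rho\in\mathbb{N}}\right)$, as required.

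The only nontrivial step is the identification of the Rees algebra of the induced semifiltration with the extension of the Rees algebra, and this has already been carried out in the proof of Theorem~\ref{Theorem9}; no new obstacle arises. Everything else is a mechanical transfer of Theorem~\ref{Theorem22} through the Rees algebra correspondence provided by Theorem~\ref{Theorem7}.
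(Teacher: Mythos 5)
Your proof is correct and follows essentially the same route as the paper. The only cosmetic difference is that the paper packages the combination of Theorem~\ref{Theorem7} with the Rees-algebra identification \eqref{pf.Theorem9.3} into a standalone Lemma~\ref{lem.N}\,\textbf{(b)} and then invokes that lemma three times (for $v=x$, $v=y$, $v=xy$), whereas you apply the two constituent facts directly at each step.
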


Our proof of this theorem will rely on a lemma:

\begin{lemma}
\label{lem.N}Let $A$ be a ring. Let $B$ be an $A$-algebra. Let $v\in B$. Let
$\left(  I_{\rho}\right)  _{\rho\in\mathbb{N}}$ be an ideal semifiltration of
$A$. Lemma \ref{lem.J} (applied to $A^{\prime}=A\left[  v\right]  $) yields
that $\left(  I_{\rho}A\left[  v\right]  \right)  _{\rho\in\mathbb{N}}$ is an
ideal semifiltration of $A\left[  v\right]  $. Consider the polynomial ring
$A\left[  Y\right]  $ and its $A$-subalgebra $A\left[  \left(  I_{\rho
}\right)  _{\rho\in\mathbb{N}}\ast Y\right]  $. We know that $A\left[  \left(
I_{\rho}\right)  _{\rho\in\mathbb{N}}\ast Y\right]  $ is a subring of
$A\left[  Y\right]  $, and (as explained in Definition~\ref{Definition7}) the
polynomial ring $\left(  A\left[  v\right]  \right)  \left[  Y\right]  $ is an
$A\left[  Y\right]  $-algebra (since $A\left[  v\right]  $ is an $A$-algebra).
Hence, $\left(  A\left[  v\right]  \right)  \left[  Y\right]  $ is an
$A\left[  \left(  I_{\rho}\right)  _{\rho\in\mathbb{N}}\ast Y\right]
$-algebra (since $A\left[  \left(  I_{\rho}\right)  _{\rho\in\mathbb{N}}\ast
Y\right]  $ is a subring of $A\left[  Y\right]  $). On the other hand,
$\left(  A\left[  v\right]  \right)  \left[  \left(  I_{\rho}A\left[
v\right]  \right)  _{\rho\in\mathbb{N}}\ast Y\right]  \subseteq\left(
A\left[  v\right]  \right)  \left[  Y\right]  $.

\textbf{(a)} We have%
\begin{equation}
\left(  A\left[  v\right]  \right)  \left[  \left(  I_{\rho}A\left[  v\right]
\right)  _{\rho\in\mathbb{N}}\ast Y\right]  =\left(  A\left[  \left(  I_{\rho
}\right)  _{\rho\in\mathbb{N}}\ast Y\right]  \right)  \left[  v\right]  .
\label{eq.lem.N.a}%
\end{equation}

\textbf{(b)} Let $u\in B$. Let $n\in\mathbb{N}$. Then, the element $u$ of $B$
is $n$-integral over $\left(  A\left[  v\right]  ,\left(  I_{\rho}A\left[
v\right]  \right)  _{\rho\in\mathbb{N}}\right)  $ if and only if the element
$uY$ of the polynomial ring $B\left[  Y\right]  $ is $n$-integral over the
ring $\left(  A\left[  \left(  I_{\rho}\right)  _{\rho\in\mathbb{N}}\ast
Y\right]  \right)  \left[  v\right]  $.
\end{lemma}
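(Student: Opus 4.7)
The plan is to reduce both parts to results already in the paper, essentially by extracting the calculation that already appears inside the proof of Theorem~\ref{Theorem9}.

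For part \textbf{(a)}, I would expand both sides via Definition~\ref{Definition8} and then invoke Lemma~\ref{lem.K}. In detail, Definition~\ref{Definition8} applied to $A\left[  v\right]  $ and $\left(  I_{\rho}A\left[  v\right]  \right)  _{\rho\in\mathbb{N}}$ yields
\[
\left(  A\left[  v\right]  \right)  \left[  \left(  I_{\rho}A\left[  v\right]
\right)  _{\rho\in\mathbb{N}}\ast Y\right]  =\sum_{i\in\mathbb{N}}I_{i}
A\left[  v\right]  Y^{i}=\left(  \sum_{i\in\mathbb{N}}I_{i}Y^{i}\right)  \cdot
A\left[  v\right]  ,
\]
because scalars of $A\left[  v\right]  $ commute past the powers of $Y$. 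The factor $\sum_{i\in\mathbb{N}}I_{i}Y^{i}$ is exactly $A\left[  \left(  I_{\rho}\right)  _{\rho\in\mathbb{N}}\ast Y\right]  $ by Definition~\ref{Definition8}. Applying Lemma~\ref{lem.K} to $A^{\prime}=A\left[  \left(  I_{\rho}\right)  _{\rho\in\mathbb{N}}\ast Y\right]  $ and $B^{\prime}=\left(  A\left[  v\right]  \right)  \left[  Y\right]  $ then rewrites $A\left[  \left(  I_{\rho}\right)  _{\rho\in\mathbb{N}}\ast Y\right]  \cdot A\left[  v\right]  $ as $\left(  A\left[  \left(  I_{\rho}\right)  _{\rho\in\mathbb{N}}\ast Y\right]  \right)  \left[  v\right]  $, which gives \eqref{eq.lem.N.a}. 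This is precisely the chain of equalities labelled \eqref{pf.Theorem9.3} in the proof of Theorem~\ref{Theorem9}, so no new ideas are needed.

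For part \textbf{(b)}, I would apply Theorem~\ref{Theorem7} with $A$ replaced by $A\left[  v\right]  $ and $\left(  I_{\rho}\right)  _{\rho\in\mathbb{N}}$ replaced by $\left(  I_{\rho}A\left[  v\right]  \right)  _{\rho\in\mathbb{N}}$. This is legal because $\left(  I_{\rho}A\left[  v\right]  \right)  _{\rho\in\mathbb{N}}$ is an ideal semifiltration of $A\left[  v\right]  $ by Lemma~\ref{lem.J}, and $B$ is an $A\left[  v\right]  $-algebra since $A\left[  v\right]  $ is a subring of $B$. Theorem~\ref{Theorem7} in this form asserts that $u$ is $n$-integral over $\left(  A\left[  v\right]  ,\left(  I_{\rho}A\left[  v\right]  \right)  _{\rho\in\mathbb{N}}\right)  $ if and only if $uY$ is $n$-integral over $\left(  A\left[  v\right]  \right)  \left[  \left(  I_{\rho}A\left[  v\right]  \right)  _{\rho\in\mathbb{N}}\ast Y\right]  $. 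Substituting the identity \eqref{eq.lem.N.a} from part \textbf{(a)} on the right-hand side converts this condition into $n$-integrality of $uY$ over $\left(  A\left[  \left(  I_{\rho}\right)  _{\rho\in\mathbb{N}}\ast Y\right]  \right)  \left[  v\right]  $, which is exactly the desired statement.

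The only delicate bookkeeping is verifying that the rings and algebra structures line up so that Theorem~\ref{Theorem7} and Lemma~\ref{lem.K} can legitimately be applied; in particular that $B$ inherits an $A\left[  v\right]  $-algebra structure and that $\left(  A\left[  v\right]  \right)  \left[  Y\right]  $ is simultaneously an $A\left[  Y\right]  $-algebra containing both $A\left[  \left(  I_{\rho}\right)  _{\rho\in\mathbb{N}}\ast Y\right]  $ and $v$. These are all immediate from the hypotheses and from Definition~\ref{Definition7}, so there is no genuine obstacle; the lemma is essentially a repackaging of the computation already performed in Theorem~\ref{Theorem9}, isolated here because it will be reused later.
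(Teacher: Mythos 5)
Your proposal is correct and matches the paper's proof: part \textbf{(a)} is the computation \eqref{pf.Theorem9.3} from the proof of Theorem~\ref{Theorem9} (Definition~\ref{Definition8} plus Lemma~\ref{lem.K}), and part \textbf{(b)} is Theorem~\ref{Theorem7} applied over $A\left[v\right]$ combined with the identity from part \textbf{(a)}. No discrepancies.
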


\begin{proof}
[Proof of Lemma \ref{lem.N}.]\textbf{(a)} We have proven Lemma \ref{lem.N}
\textbf{(a)} during the proof of Theorem~\ref{Theorem9} \textbf{(b)}.

\textbf{(b)} The ring $B$ is an $A\left[  v\right]  $-algebra (since $A\left[
v\right]  $ is a subring of $B$). Hence, Theorem~\ref{Theorem7} (applied to
$A\left[  v\right]  $ and $\left(  I_{\rho}A\left[  v\right]  \right)
_{\rho\in\mathbb{N}}$ instead of $A$ and $\left(  I_{\rho}\right)  _{\rho
\in\mathbb{N}}$) yields that the element $u$ of $B$ is $n$-integral over
$\left(  A\left[  v\right]  ,\left(  I_{\rho}A\left[  v\right]  \right)
_{\rho\in\mathbb{N}}\right)  $ if and only if the element $uY$ of the
polynomial ring $B\left[  Y\right]  $ is $n$-integral over the ring $\left(
A\left[  v\right]  \right)  \left[  \left(  I_{\rho}A\left[  v\right]
\right)  _{\rho\in\mathbb{N}}\ast Y\right]  $. In view of (\ref{eq.lem.N.a}),
this rewrites as follows: The element $u$ of $B$ is $n$-integral over $\left(
A\left[  v\right]  ,\left(  I_{\rho}A\left[  v\right]  \right)  _{\rho
\in\mathbb{N}}\right)  $ if and only if the element $uY$ of the polynomial
ring $B\left[  Y\right]  $ is $n$-integral over the ring $\left(  A\left[
\left(  I_{\rho}\right)  _{\rho\in\mathbb{N}}\ast Y\right]  \right)  \left[
v\right]  $. This proves Lemma \ref{lem.N} \textbf{(b)}.
\end{proof}

\begin{proof}
[Proof of Theorem~\ref{Theorem23}.]\textbf{(a)} Since $\left(  I_{\rho
}\right)  _{\rho\in\mathbb{N}}$ is an ideal semifiltration of $A$,
Lemma~\ref{lem.J} (applied to $A^{\prime}=A\left[  x\right]  $) yields that
$\left(  I_{\rho}A\left[  x\right]  \right)  _{\rho\in\mathbb{N}}$ is an ideal
semifiltration of $A\left[  x\right]  $.

\begin{vershort}
Similarly, the other two statements of Theorem~\ref{Theorem23} \textbf{(a)}
are proven.
\end{vershort}

\begin{verlong}
Since $\left(  I_{\rho}\right)  _{\rho\in\mathbb{N}}$ is an ideal
semifiltration of $A$, Lemma \ref{lem.J} (applied to $A^{\prime}=A\left[
y\right]  $) yields that $\left(  I_{\rho}A\left[  y\right]  \right)
_{\rho\in\mathbb{N}}$ is an ideal semifiltration of $A\left[  y\right]  $.

Since $\left(  I_{\rho}\right)  _{\rho\in\mathbb{N}}$ is an ideal
semifiltration of $A$, Lemma \ref{lem.J} (applied to $A^{\prime}=A\left[
xy\right]  $) yields that $\left(  I_{\rho}A\left[  xy\right]  \right)
_{\rho\in\mathbb{N}}$ is an ideal semifiltration of $A\left[  xy\right]  $.
\end{verlong}

Thus, Theorem~\ref{Theorem23} \textbf{(a)} is proven.

\textbf{(b)} For every $v\in B$, the family $\left(  I_{\rho}A\left[
v\right]  \right)  _{\rho\in\mathbb{N}}$ is an ideal semifiltration of
$A\left[  v\right]  $ (by Lemma~\ref{lem.J}, applied to $A^{\prime}=A\left[
v\right]  $), and thus we can consider the polynomial ring $\left(  A\left[
v\right]  \right)  \left[  Y\right]  $ and its $A\left[  v\right]
$-subalgebra $\left(  A\left[  v\right]  \right)  \left[  \left(  I_{\rho
}A\left[  v\right]  \right)  _{\rho\in\mathbb{N}}\ast Y\right]  $. For every
$v\in B$, the polynomial ring $B\left[  Y\right]  $ is an $\left(  A\left[
v\right]  \right)  \left[  Y\right]  $-algebra (as explained in
Definition~\ref{Definition7}), since $B$ is an $A\left[  v\right]
$-algebra\footnote{because $A\left[  v\right]  $ is a subring of $B$}. Hence,
this ring $B\left[  Y\right]  $ is an $\left(  A\left[  v\right]  \right)
\left[  \left(  I_{\rho}A\left[  v\right]  \right)  _{\rho\in\mathbb{N}}\ast
Y\right]  $-algebra as well (because $\left(  A\left[  v\right]  \right)
\left[  \left(  I_{\rho}A\left[  v\right]  \right)  _{\rho\in\mathbb{N}}\ast
Y\right]  $ is a subring of $\left(  A\left[  v\right]  \right)  \left[
Y\right]  $). Similarly, the ring $B\left[  Y\right]  $ is an $A\left[
\left(  I_{\rho}\right)  _{\rho\in\mathbb{N}}\ast Y\right]  $-algebra.

Lemma \ref{lem.N} \textbf{(b)} (applied to $v=x$) yields that the element $u$
of $B$ is $n$-integral over $\left(  A\left[  x\right]  ,\left(  I_{\rho
}A\left[  x\right]  \right)  _{\rho\in\mathbb{N}}\right)  $ if and only if the
element $uY$ of the polynomial ring $B\left[  Y\right]  $ is $n$-integral over
the ring $\left(  A\left[  \left(  I_{\rho}\right)  _{\rho\in\mathbb{N}}\ast
Y\right]  \right)  \left[  x\right]  $. But since the element $u$ of $B$ is
$n$-integral over $\left(  A\left[  x\right]  ,\left(  I_{\rho}A\left[
x\right]  \right)  _{\rho\in\mathbb{N}}\right)  $, this yields that the
element $uY$ of the polynomial ring $B\left[  Y\right]  $ is $n$-integral over
the ring $\left(  A\left[  \left(  I_{\rho}\right)  _{\rho\in\mathbb{N}}\ast
Y\right]  \right)  \left[  x\right]  $.

Lemma \ref{lem.N} \textbf{(b)} (applied to $y$ and $m$ instead of $v$ and $n$)
yields that the element $u$ of $B$ is $m$-integral over $\left(  A\left[
y\right]  ,\left(  I_{\rho}A\left[  y\right]  \right)  _{\rho\in\mathbb{N}%
}\right)  $ if and only if the element $uY$ of the polynomial ring $B\left[
Y\right]  $ is $m$-integral over the ring $\left(  A\left[  \left(  I_{\rho
}\right)  _{\rho\in\mathbb{N}}\ast Y\right]  \right)  \left[  y\right]  $. But
since the element $u$ of $B$ is $m$-integral over $\left(  A\left[  y\right]
,\left(  I_{\rho}A\left[  y\right]  \right)  _{\rho\in\mathbb{N}}\right)  $,
this yields that the element $uY$ of the polynomial ring $B\left[  Y\right]  $
is $m$-integral over the ring $\left(  A\left[  \left(  I_{\rho}\right)
_{\rho\in\mathbb{N}}\ast Y\right]  \right)  \left[  y\right]  $.

Thus we know that $uY$ is $n$-integral over the ring $\left(  A\left[  \left(
I_{\rho}\right)  _{\rho\in\mathbb{N}}\ast Y\right]  \right)  \left[  x\right]
$, and that $uY$ is $m$-integral over the ring $\left(  A\left[  \left(
I_{\rho}\right)  _{\rho\in\mathbb{N}}\ast Y\right]  \right)  \left[  y\right]
$. Hence, Theorem~\ref{Theorem22} (applied to $A\left[  \left(  I_{\rho
}\right)  _{\rho\in\mathbb{N}}\ast Y\right]  $, $B\left[  Y\right]  $ and $uY$
instead of $A$, $B$ and $u$) yields that there exists some $\lambda
\in\mathbb{N}$ such that $uY$ is $\lambda$-integral over $\left(  A\left[
\left(  I_{\rho}\right)  _{\rho\in\mathbb{N}}\ast Y\right]  \right)  \left[
xy\right]  $. Consider this $\lambda$.

Lemma \ref{lem.N} \textbf{(b)} (applied to $xy$ and $\lambda$ instead of $v$
and $n$) yields that the element $u$ of $B$ is $\lambda$-integral over
$\left(  A\left[  xy\right]  ,\left(  I_{\rho}A\left[  xy\right]  \right)
_{\rho\in\mathbb{N}}\right)  $ if and only if the element $uY$ of the
polynomial ring $B\left[  Y\right]  $ is $\lambda$-integral over the ring
$\left(  A\left[  \left(  I_{\rho}\right)  _{\rho\in\mathbb{N}}\ast Y\right]
\right)  \left[  xy\right]  $. But since the element $uY$ of the polynomial
ring $B\left[  Y\right]  $ is $\lambda$-integral over the ring $\left(
A\left[  \left(  I_{\rho}\right)  _{\rho\in\mathbb{N}}\ast Y\right]  \right)
\left[  xy\right]  $, this yields that the element $u$ of $B$ is $\lambda
$-integral over $\left(  A\left[  xy\right]  ,\left(  I_{\rho}A\left[
xy\right]  \right)  _{\rho\in\mathbb{N}}\right)  $. Thus,
Theorem~\ref{Theorem23} \textbf{(b)} is proven.
\end{proof}

\subsection{Second proof of Corollary~\ref{Corollary3}}

We notice that Corollary~\ref{Corollary3} can be derived from
Lemma~\ref{Lemma18}:

\begin{vershort}
\begin{proof}
[Second proof of Corollary~\ref{Corollary3}.]Let $n=1$. Let $m=1$. From $n=1$,
we obtain $\left\langle u^{0},u^{1},\ldots,u^{n-1}\right\rangle _{A}%
=\left\langle u^{0}\right\rangle _{A}=\left\langle 1_{B}\right\rangle _{A}$
(since $u^{0}=1_{B}$). Hence,%
\begin{align}
\underbrace{\left\langle u^{0},u^{1},\ldots,u^{n-1}\right\rangle _{A}%
}_{=\left\langle 1_{B}\right\rangle _{A}}\cdot\left\langle v^{0},v^{1}%
,\ldots,v^{\alpha}\right\rangle _{A} &  =\left\langle 1_{B}\right\rangle
\cdot\left\langle v^{0},v^{1},\ldots,v^{\alpha}\right\rangle _{A}\nonumber\\
&  =\left\langle 1_{B}v^{0},1_{B}v^{1},\ldots,1_{B}v^{\alpha}\right\rangle
_{A}\nonumber\\
&  =\left\langle v^{0},v^{1},\ldots,v^{\alpha}\right\rangle _{A}%
.\label{pf.Corollary3.short.1}%
\end{align}
The same argument (applied to $m$ and $\beta$ instead of $n$ and $\alpha$)
yields%
\begin{equation}
\left\langle u^{0},u^{1},\ldots,u^{m-1}\right\rangle _{A}\cdot\left\langle
v^{0},v^{1},\ldots,v^{\beta}\right\rangle _{A}=\left\langle v^{0},v^{1}%
,\ldots,v^{\beta}\right\rangle _{A}.\label{pf.Corollary3.short.2}%
\end{equation}

Now, we have%
\[
u^{n}\in\left\langle u^{0},u^{1},\ldots,u^{n-1}\right\rangle _{A}%
\cdot\left\langle v^{0},v^{1},\ldots,v^{\alpha}\right\rangle _{A}%
\]
\footnote{\textit{Proof.} From $n=1$, we obtain%
\begin{align*}
u^{n}  & =u^{1}=u=\sum\limits_{i=0}^{\alpha}\underbrace{s_{i}}_{\in A}v^{i}%
\in\left\langle v^{0},v^{1},\ldots,v^{\alpha}\right\rangle _{A}=\left\langle
u^{0},u^{1},\ldots,u^{n-1}\right\rangle _{A}\cdot\left\langle v^{0}%
,v^{1},\ldots,v^{\alpha}\right\rangle _{A}\\
& =\left\langle u^{0},u^{1},\ldots,u^{n-1}\right\rangle _{A}\cdot\left\langle
v^{0},v^{1},\ldots,v^{\alpha}\right\rangle _{A}\ \ \ \ \ \ \ \ \ \ \left(
\text{by (\ref{pf.Corollary3.short.1})}\right)  .
\end{align*}
} and%
\begin{align*}
u^{m}v^{\beta} &  \in\left\langle u^{0},u^{1},\ldots,u^{m-1}\right\rangle
_{A}\cdot\left\langle v^{0},v^{1},\ldots,v^{\beta}\right\rangle _{A}\\
&  \ \ \ \ \ \ \ \ \ \ +\left\langle u^{0},u^{1},\ldots,u^{m}\right\rangle
_{A}\cdot\left\langle v^{0},v^{1},\ldots,v^{\beta-1}\right\rangle _{A}%
\end{align*}
\footnote{\textit{Proof.} From $m=1$, we obtain $u^{m}=u^{1}=u$ and thus%
\begin{align*}
\underbrace{u^{m}}_{=u}v^{\beta} &  =uv^{\beta}=\sum\limits_{i=0}^{\beta}%
t_{i}v^{\beta-i}=\sum\limits_{i=0}^{\beta}t_{\beta-i}\underbrace{v^{\beta
-\left(  \beta-i\right)  }}_{=v^{i}}\\
&  \ \ \ \ \ \ \ \ \ \ \left(  \text{here we substituted }\beta-i\text{ for
}i\text{ in the sum}\right)  \\
&  =\sum\limits_{i=0}^{\beta}\underbrace{t_{\beta-i}}_{\in A}v^{i}%
\in\left\langle v^{0},v^{1},\ldots,v^{\beta}\right\rangle _{A}=\left\langle
u^{0},u^{1},\ldots,u^{m-1}\right\rangle _{A}\cdot\left\langle v^{0}%
,v^{1},\ldots,v^{\beta}\right\rangle _{A}\ \ \ \ \ \ \ \ \ \ \left(  \text{by
(\ref{pf.Corollary3.short.2})}\right)  .
\end{align*}
}. Thus, Lemma~\ref{Lemma18} (applied to $v$, $\beta$ and $\alpha$ instead of
$x$, $\mu$ and $\nu$) yields that $u$ is $\left(  n\beta+m\alpha\right)
$-integral over $A$ (since $\beta+\alpha=\alpha+\beta\in\mathbb{N}^{+}$). This
means that $u$ is $\left(  \alpha+\beta\right)  $-integral over $A$ (because
$\underbrace{n}_{=1}\beta+\underbrace{m}_{=1}\alpha=\beta+\alpha=\alpha+\beta
$). This proves Corollary~\ref{Corollary3} once again.
\end{proof}
\end{vershort}

\begin{verlong}
\begin{proof}
[Second proof of Corollary~\ref{Corollary3}.]Let $n=1$. Let $m=1$. From $n=1$,
we obtain $\left\langle u^{0},u^{1},\ldots,u^{n-1}\right\rangle _{A}%
=\left\langle u^{0},u^{1},\ldots,u^{0}\right\rangle _{A}=\left\langle
u^{0}\right\rangle _{A}=\left\langle 1_{B}\right\rangle _{A}$ (since
$u^{0}=1_{B}$). Similarly, from $m=1$, we obtain $\left\langle u^{0}%
,u^{1},\ldots,u^{m-1}\right\rangle _{A}=\left\langle 1_{B}\right\rangle _{A}$.

Now, we have%
\[
u^{n}\in\left\langle u^{0},u^{1},\ldots,u^{n-1}\right\rangle _{A}%
\cdot\left\langle v^{0},v^{1},\ldots,v^{\alpha}\right\rangle _{A}%
\]
\footnote{\textit{Proof.} From $n=1$, we obtain%
\[
u^{n}=u^{1}=u=\sum\limits_{i=0}^{\alpha}\underbrace{s_{i}}_{\in A}v^{i}%
\in\left\langle v^{0},v^{1},\ldots,v^{\alpha}\right\rangle _{A}=\left\langle
u^{0},u^{1},\ldots,u^{n-1}\right\rangle _{A}\cdot\left\langle v^{0}%
,v^{1},\ldots,v^{\alpha}\right\rangle _{A},
\]
since
\begin{align*}
\underbrace{\left\langle u^{0},u^{1},\ldots,u^{n-1}\right\rangle _{A}%
}_{=\left\langle 1_{B}\right\rangle _{A}}\cdot\left\langle v^{0},v^{1}%
,\ldots,v^{\alpha}\right\rangle _{A}  &  =\left\langle 1_{B}\right\rangle
\cdot\left\langle v^{0},v^{1},\ldots,v^{\alpha}\right\rangle _{A}=\left\langle
1_{B}v^{0},1_{B}v^{1},\ldots,1_{B}v^{\alpha}\right\rangle _{A}\\
&  =\left\langle v^{0},v^{1},\ldots,v^{\alpha}\right\rangle _{A}.
\end{align*}
} and%
\begin{align*}
u^{m}v^{\beta}  &  \in\left\langle u^{0},u^{1},\ldots,u^{m-1}\right\rangle
_{A}\cdot\left\langle v^{0},v^{1},\ldots,v^{\beta}\right\rangle _{A}\\
&  \ \ \ \ \ \ \ \ \ \ +\left\langle u^{0},u^{1},\ldots,u^{m}\right\rangle
_{A}\cdot\left\langle v^{0},v^{1},\ldots,v^{\beta-1}\right\rangle _{A}%
\end{align*}
\footnote{\textit{Proof.} We have%
\begin{align}
\underbrace{\left\langle u^{0},u^{1},\ldots,u^{m-1}\right\rangle _{A}%
}_{=\left\langle 1_{B}\right\rangle _{A}}\cdot\left\langle v^{0},v^{1}%
,\ldots,v^{\beta}\right\rangle _{A}  &  =\left\langle 1_{B}\right\rangle
_{A}\cdot\left\langle v^{0},v^{1},\ldots,v^{\beta}\right\rangle _{A}%
=\left\langle 1_{B}v^{0},1_{B}v^{1},\ldots,1_{B}v^{\beta}\right\rangle
_{A}\nonumber\\
&  =\left\langle v^{0},v^{1},\ldots,v^{\beta}\right\rangle _{A}.
\label{pf.Corollary3.2nd.fn2.1}%
\end{align}
From $m=1$, we obtain $u^{m}=u^{1}=u$ and thus%
\begin{align*}
\underbrace{u^{m}}_{=u}v^{\beta}  &  =uv^{\beta}=\sum\limits_{i=0}^{\beta
}t_{i}v^{\beta-i}=\sum\limits_{i=0}^{\beta}t_{\beta-i}\underbrace{v^{\beta
-\left(  \beta-i\right)  }}_{=v^{i}}\\
&  \ \ \ \ \ \ \ \ \ \ \left(  \text{here we substituted }\beta-i\text{ for
}i\text{ in the sum}\right) \\
&  =\sum\limits_{i=0}^{\beta}\underbrace{t_{\beta-i}}_{\in A}v^{i}%
\in\left\langle v^{0},v^{1},\ldots,v^{\beta}\right\rangle _{A}\\
&  =\left\langle u^{0},u^{1},\ldots,u^{m-1}\right\rangle _{A}\cdot\left\langle
v^{0},v^{1},\ldots,v^{\beta}\right\rangle _{A}\ \ \ \ \ \ \ \ \ \ \left(
\text{by (\ref{pf.Corollary3.2nd.fn2.1})}\right) \\
&  \subseteq\left\langle u^{0},u^{1},\ldots,u^{m-1}\right\rangle _{A}%
\cdot\left\langle v^{0},v^{1},\ldots,v^{\beta}\right\rangle _{A}+\left\langle
u^{0},u^{1},\ldots,u^{m}\right\rangle _{A}\cdot\left\langle v^{0},v^{1}%
,\ldots,v^{\beta-1}\right\rangle _{A}.
\end{align*}
}. Thus, Lemma~\ref{Lemma18} (applied to $v$, $\beta$ and $\alpha$ instead of
$x$, $\mu$ and $\nu$) yields that $u$ is $\left(  n\beta+m\alpha\right)
$-integral over $A$ (since $\beta+\alpha=\alpha+\beta\in\mathbb{N}^{+}$). This
means that $u$ is $\left(  \alpha+\beta\right)  $-integral over $A$ (because
$\underbrace{n}_{=1}\beta+\underbrace{m}_{=1}\alpha=1\beta+1\alpha
=\beta+\alpha=\alpha+\beta$). This proves Corollary~\ref{Corollary3} once again.
\end{proof}
\end{verlong}

\end{document}